\theoremstyle{plain}
\newtheorem{Thm}{Theorem}
\newtheorem{Prop}[Thm]{Proposition}
\newtheorem{Lem}[Thm]{Lemma}
\newcommand{\z}{\textstyle}
\newcommand{\ia}{{\rm a}}\newcommand{\iA}{{\rm A}}\newcommand{\ib}{{\rm b}}\newcommand{\iB}{{\rm B}}
\newcommand{\iE}{{\rm E}}\newcommand{\iG}{{\rm G}}
\newcommand{\C}{\mathbb{C}}\newcommand{\M}{\mathbb{M}}\newcommand{\N}{\mathbb{N}}
\newcommand{\Q}{\mathbb{Q}}\newcommand{\R}{\mathbb{R}}\newcommand{\Z}{\mathbb{Z}}
\newcommand{\cE}{\mathcal{E}}
\newcommand{\cH}{\mathcal{H}}\newcommand{\cM}{\mathcal{M}}
\newcommand{\cO}{\mathcal{O}}\newcommand{\cS}{\mathcal{S}}
\newcommand{\BM}{\begin{smallmatrix}}\newcommand{\EM}{\end{smallmatrix}}
\newcommand{\BC}{\begin{cases}}\newcommand{\EC}{\end{cases}}
\newcommand{\VS}[1]{\z\sum\limits_{#1}}\newcommand{\VP}[1]{\z\prod\limits_{#1}}
\newcommand{\VT}[1]{\z\bigoplus\limits_{#1}}
\newcommand{\6}{\;\;\;\;\;\;}
\newcommand{\ang}[1]{\langle #1\rangle}\newcommand{\TL}{\vartriangleleft}
\newcommand{\bto}{\xrightarrow{\sim}}
\begin{document}
\title{graded rings of modular forms (1)}
\author{Suda Tomohiko}
\email{t@mshk1201.com}
\maketitle

For each integer $k\ge0$ and a congruence subgroup $\varGamma\subset{\rm SL}_2(\Z)$, let $\cM(\varGamma)_k$ be the $\C$-vector space of all modular forms of weight $k$ with respect to $\varGamma$ and $\cS(\varGamma)_k$ be the subspace of all cusp forms. Let $\N=\{1,2,\cdots\}$ usually and the semigroup
\[\M=\{0\}\cup\N.\]

We study the graded ring
\[\cM(\varGamma)_\M=\VT{k\in\M}\cM(\varGamma)_k\]
and the homogeneous ideal
\[\cS(\varGamma)_\M=\VT{k\in\M}\cS(\varGamma)_k,\]
especially for the cases $\varGamma$ is the principal congruence subgroup
\[\Gamma(N)=\big\{(\BM a&b\\c&d\EM)\in{\rm SL}_2(\Z)\,\big|\,(\BM a&b\\c&d\EM)\equiv(\BM 1&0\\0&1\EM)\mod N\big\}.\]
We will treat the cases $N$ is 2-power, 3-power or 5 in this paper with many identities among modoular forms.

When $(\BM 1&n\\0&1\EM)\in\varGamma$, we regard $\cM(\varGamma)\subset\C[[q^{\frac1n}]]$ via the Fourier expansion, where
\[q^{\frac1n}=e^{\frac{2\pi iz}n}\6(z\in\cH).\]
Note $\cS(\varGamma)\subset\C[[q^{\frac1n}]]q^{\frac1n}$.

Rational weight modular form are introduced, so graded ring and ideal
\[\cM(\varGamma)_{\frac1n\M}=\VT{\kappa\in\frac1n\M}\cM(\varGamma)_\kappa\]
\[\cS(\varGamma)_{\frac1n\M}=\VT{\kappa\in\frac1n\M}\cS(\varGamma)_\kappa\]
where $\frac1n\M=\{\frac kn \,|\, k\in\M\}$ are also studied.

\newpage

\section{Preparations}

\subsection{Dirichlet characters}

Generally for two sets $X$ and $Y$, let ${\rm map}(X,Y)$ be the maps from $X$ to $Y$. As for two groups $X$ and $Y$, let ${\rm Hom}(X,Y)$ be the homomorphisms from $X$ to $Y$.

We denote the integres $\Z$ and the complex numbers $\C$.

For $N\in\N$ and $X=(\Z/N\Z)^\times$ it is well known that ${\rm Hom}(X,\C^\times)\simeq X$ but it is not canonically (cf. \cite[Proposition 4.3.1]{DS}). So, we will give its elements concletely.

First, generally let ${\tt 1}_X$ be the trivial map in ${\rm Map}(X,\{1\})$.

For $N\in\N$ abbreviate ${\tt 1}_N={\tt 1}_{(\Z/N\Z)^\times}$. For example
\[{\rm Hom}((\Z/2\Z)^\times,\C^\times)=\{{\tt 1}_2\}.\]

Let $(\frac nm)$ is the Kronecker symbol(cf. \cite[p367]{DS}). That is the Legendre symbol if $m$ is odd prime and
\[\z(\frac n2)=\BC 1 &\text{if }n\in8\Z+\{\pm1\}\\-1 &\text{if }n\in8\Z+\{\pm3\} \\ 0 &\text{if }n\in2\Z\EC\]
Moreover define $(\frac n1)=1$ and
\[\z (\frac n{-1})=\BC 1 &\text{if }n\in\M \\ -1 &\text{oth } \EC \6 (\frac n0)=\BC 1 &\text{if }n\in\{\pm1\} \\ -1 &\text{oth} \EC\]\\

For a group $X$ and $x_1,x_2,\cdots,x_n\in X$ let $\ang{x_1,x_2,\cdots,x_n}$ be the subgroup generated by $x_1,x_2,\cdots,x_n$. For example $\Z=\ang1=\ang{-1}$.

Now, put $\chi_4=(\frac{-1}\bullet)$ then
\[{\rm Hom}((\Z/4\Z)^\times,\C^\times)=\ang{\chi_4}.\]

Let the $n$ th root of unity
\[1^{\frac 1n}=e^{\frac{2\pi i}n}\]
for example $1^{\frac12}=-1$, $1^{\frac13}=\frac{-1+\sqrt3i}2$ and $1^{\frac14}=i$.

For $n\ge3$, note $(\Z/2^n\Z)^\times=\ang{-1,5}$ and define $\chi_{2^n}\in{\rm Hom}((\Z/2^n\Z)^\times,\C^\times)$ by
\[\chi_{2^n}(5)=1^{1/2^{n-2}},\6\chi_{2^n}(-1)=1.\]
We see $\chi_8=(\frac\bullet 2)=(\frac2\bullet)$, $\chi_{2^{n+1}}^2=\chi_{2^n}$ and
\[{\rm Hom}((\Z/2^n\Z)^\times,\C^\times)=\ang{\chi_4,\chi_{2^n}}.\]

A number $g$ is a primitive root modulo $N$ if every number $a$ coprime to $N$ is congruent to a power of $g$ modulo $n$, that is $(\Z/N\Z)^\times=\ang g$. For a prime $p\neq2$ note $(\Z/p^r\Z)^\times\simeq\Z/(p^r-p^{r-1})\Z$ and define $\chi_{p^r}\in{\rm Hom}((\Z/p^r\Z)^\times,\C^\times)$ by
\[\chi_{p^r}(g)=1^{1/(p^r-p^{r-1})}\]
where $g$ is the minimum primitive root modulo $N$. For example
\[\chi_3(2)=-1,\6\chi_5(2)=i,\6\chi_7(3)=1^{\frac16},\6\chi_9(2)=1^{\frac16}.\]
We see
\[{\rm Hom}((\Z/p^r\Z)^\times,\C^\times)=\ang{\chi_{p^r}}.\]

Note $\chi_3=(\frac\bullet3)=(\frac{-3}\bullet)$, $\chi_5^2=(\frac\bullet5)$, $\chi_7^3=(\frac\bullet7)$ and $\chi_9^3=\chi_3$.\\

If $\gcd(M,N)=1$ then naturally
\[{\rm Hom}((\Z/MN\Z)^\times,\C^\times)\simeq{\rm Hom}((\Z/M\Z)^\times,\C^\times)\times{\rm Hom}((\Z/N\Z)^\times,\C^\times).\]
For example ${\rm Hom}((\Z/12\Z)^\times,\C^\times)=\ang{\chi_3}\times\ang{\chi_4}$.

Generally for $\chi\in{\rm Map}(X,\C)$ let $\overline\chi(x)=\overline{\chi(x)}$. For example, $\overline{\chi_5}=\chi_5^3$, $\overline{\chi_7}=\chi_7^6$ and $\overline{\chi_9}=\chi_9^3$.

With $G=(\Z/N\Z)^\times$ note the orthogonal relations
\[\z \frac1{|G|}\VS{x\in G}\phi(x)=\delta_{\phi,{\tt 1}}\6\frac1{|G|}\VS{\phi\in{\rm Hom}(G,\C^\times)}\phi(x)=\delta_{x,1}\] where
\[\delta_{x,y}=\delta_x(y)=\BC 1 &\text{if }x=y\\0 &\text{if }x\neq y\EC\]\\

Let $h\in\N$. Every $\chi\in{\rm Hom}((\Z/N\Z)^\times,\C^\times)$ lifts to $\chi\in{\rm Hom}((\Z/Nh\Z)^\times,\C^\times)$ via the natural projection in ${\rm Hom}((\Z/Nh\Z)^\times,(\Z/N\Z)^\times)$. The conductor of $\chi$ is the smallest positive integer $c$ such that $\chi$ arises from ${\rm Hom}((\Z/c\Z)^\times,\C^\times)$. $\chi$ is primitive if when its conductor is $N$.\\

Every $\chi\in{\rm Hom}((\Z/N\Z)^\times,\C^\times)$ extends to $\chi\in{\rm Map}(\Z,\C)$ by
\[n \mapsto \BC \chi(n\text{ mod }N) & \text{if }\gcd(n,N)=1 \\ 0 & {\rm oth}\EC\]
then $\chi$ is completely multiplicative.\\

Generally for two arithmetic functions $f,g\in{\rm Map}(\N,\C)$ the Dirichlet convolution $f*g\in{\rm Map}(\N,\C)$ is defined by
\[(f*g)(n)=\VS{d|n}f(d)g(\frac nd).\]
Note $g*f=f*g$ and $\delta_1*f=f$. Also note if $h\in{\rm Map}(\N,\C)$ is completely multiplicative then $h\cdot(f*g)=(h\cdot f)*(h\cdot g)$.

For two Dirichlet characters $\chi,\psi$, $\psi*\chi$ is denoted $\sigma_0^{\psi,\chi}$ in \cite[p129]{DS}.

For example $({\tt 1}_N*{\tt 1}_\N)(n)$ is the number of divisors of $n$ prime to $N$.

If $\chi\in{\rm Hom}((\Z/N\Z)^\times,\{\pm1\})$ and $\chi(n)=-1$ then
\[\z(\chi*{\tt 1}_\N)(n)=\frac12\Big(\VS{d|n}\chi(d)+\VS{d|n}\chi(\frac nd)\Big)=0.\]\\

Let $\sqrt{\chi_4}\in{\rm Map}((\Z/4\Z)^\times,\C^\times)$ by $1\mapsto1$ and $-1\mapsto i$.

Also let $\xi_8=\overline{\sqrt{\chi_4}}\chi_8\in{\rm Map}((\Z/8\Z)^\times,\C^\times)$ then $\xi_8^2=\chi_4$.

\newpage

\subsection{Congruence groups}

For $N\in\N$ put (cf. \cite[Definition 1.2.1]{DS})
\[\Gamma_0(N)=\big\{(\BM a&b\\c&d\EM)\in{\rm SL}_2(\Z) \,\big|\, c\in N\Z\big\}\]
\[\Gamma^0(N)=\big\{(\BM a&b\\c&d\EM)\in{\rm SL}_2(\Z) \,\big|\, b\in N\Z\big\}\]
and
\[\Gamma_1(N)=\big\{(\BM a&b\\c&d\EM)\in\Gamma_0(N) \,\big|\, d\in N\Z+1\big\}\]
\[\Gamma^1(N)=\big\{(\BM a&b\\c&d\EM)\in\Gamma^0(N) \,\big|\, d\in N\Z+1\big\}\]

We especially focus on the principal congruence subgroup
\[\Gamma(N)=\Gamma_1(N)\cap\Gamma^1(N)\]

A subgroup $\varGamma\subset{\rm SL}_2(\Z)$ is called a congruence subgroup if there exists $N\in\N$ such that $\Gamma(N)\subset\varGamma$. The level of $\varGamma$ is then the smallest such $N$. As for $N=1,2$ note
\[\Gamma(N)=\Gamma_0(N)\cap\Gamma^0(N)\]\\

Let the natural projection $\natural_N:\Z\to\Z/N\Z$ and
\[d_N:(\BM a&b\\c&d\EM)\mapsto\natural_N(d).\]

Since $(\BM a&b\\c&d\EM)(\BM a'&b'\\c'&d'\EM)=(\BM aa'+bc'&ab'+bd'\\ca'+dc'&cb'+dd'\EM)$ it follows
\begin{Prop}$d_{MN}\in{\rm Hom}(\Gamma_0(M)\cap\Gamma^0(N),(\Z/MN\Z)^\times)$.\end{Prop}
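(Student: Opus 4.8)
The plan is to verify directly that $d_{MN}$ takes values in $(\Z/MN\Z)^\times$ on the group $\Gamma_0(M)\cap\Gamma^0(N)$, and then that it respects multiplication. For a matrix $\gamma=(\BM a&b\\c&d\EM)$ in this group we have $c\in M\Z$ and $b\in N\Z$, while $ad-bc=1$. Reducing modulo $M$ gives $ad\equiv1$, so $\natural_M(d)\in(\Z/M\Z)^\times$; reducing modulo $N$ gives $ad\equiv1$ as well, so $\natural_N(d)\in(\Z/N\Z)^\times$. Hence $\natural_{MN}(d)$ is invertible modulo $MN$ (its reductions mod $M$ and mod $N$ are both units, and invertibility mod $MN$ is detected componentwise after the Chinese-type decomposition, though one does not even need $\gcd(M,N)=1$ here: $ad-bc=1$ with $c\in M\Z$, $b\in N\Z$ forces $bc\in MN\Z$, so $ad\equiv1\bmod MN$ directly, exhibiting $\natural_{MN}(a)$ as the inverse of $\natural_{MN}(d)$). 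So the target set is correct and in fact $d_{MN}$ is the single clean observation $ad\equiv1\bmod MN$.

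Next I would check that $\Gamma_0(M)\cap\Gamma^0(N)$ is actually a subgroup, so that the word "homomorphism" makes sense: it is the intersection of two subgroups of ${\rm SL}_2(\Z)$, hence a subgroup. Then, for the homomorphism property, take $\gamma=(\BM a&b\\c&d\EM)$ and $\gamma'=(\BM a'&b'\\c'&d'\EM)$ both in $\Gamma_0(M)\cap\Gamma^0(N)$. Using the product formula quoted just above the statement, the lower-right entry of $\gamma\gamma'$ is $cb'+dd'$. Since $c\in M\Z$ and $b'\in N\Z$, we get $cb'\in MN\Z$, so $cb'+dd'\equiv dd'\bmod MN$. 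Applying $\natural_{MN}$ gives $d_{MN}(\gamma\gamma')=\natural_{MN}(dd')=\natural_{MN}(d)\natural_{MN}(d')=d_{MN}(\gamma)d_{MN}(\gamma')$, which is exactly the homomorphism condition into the (abelian, multiplicative) group $(\Z/MN\Z)^\times$.

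The only genuinely substantive point — and the one I would state carefully — is the vanishing of the cross term $cb'$ modulo $MN$, i.e. the interplay between the "lower-left divisible by $M$" condition from $\Gamma_0(M)$ and the "upper-right divisible by $N$" condition from $\Gamma^0(N)$. Everything else is bookkeeping with the $2\times2$ product. I do not expect any real obstacle; the proof is essentially a two-line computation once the conditions defining the group are unwound, and it does not require $M$ and $N$ to be coprime. I would present it in that order: subgroup remark, well-definedness (values are units, via $ad\equiv1\bmod MN$), then multiplicativity (via $cb'\equiv0\bmod MN$).
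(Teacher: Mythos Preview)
Your proof is correct and follows essentially the same approach as the paper: the paper's argument consists solely of writing out the product formula $(\BM a&b\\c&d\EM)(\BM a'&b'\\c'&d'\EM)=(\BM aa'+bc'&ab'+bd'\\ca'+dc'&cb'+dd'\EM)$ and saying ``it follows,'' which is exactly your key observation that $cb'\in MN\Z$ forces the lower-right entry to be $\equiv dd'\bmod MN$. You have simply filled in the well-definedness step ($ad\equiv1\bmod MN$) and the subgroup remark that the paper leaves implicit.
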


For a subgroup $G\subset(\Z/N\Z)^\times$, we define
\[\Gamma_G(N)=\{\gamma\in\Gamma_0(N) \,|\, d_N(\gamma)\in G\}.\]
In particular, $\Gamma_{(\Z/N\Z)^\times}(N)=\Gamma_0(N)$ and $\Gamma_{\{1\}}(N)=\Gamma_1(N)$. We have
\[d_N:\Gamma_G(N)/\Gamma_1(N)\bto G.\]

Let $\varGamma\subset{\rm SL}_2(\Z)$ be a congruence subgroup. We regard
\[{\rm Map}((\Z/N\Z)^\times,\C^\times)\subset{\rm Map}(\varGamma,\C^\times)\]
via $d_N\in{\rm Map}(\varGamma,(\Z/N\Z)^\times)$. Thus
\[d_N\in{\rm Hom}(\varGamma,(\Z/N\Z)^\times) \Longrightarrow {\rm Hom}((\Z/N\Z)^\times,\C^\times)\subset{\rm Hom}(\varGamma,\C^\times).\]\\

Generally, for a group $G$ we put the action ${\TL}:G\curvearrowleft G$ by $g\TL h=h^{-1}gh$. Since
\[(\BM a&b\\c&d\EM)\TL(\BM h&0\\0&1\EM)=(\BM a&b/h\\ch&d\EM)\]
we see
\[\Gamma(N)\TL(\BM N&0\\0&1\EM)=\Gamma_{\ang{N+1}}(N^2).\]
For example $\Gamma(2)\TL(\BM 2&0\\0&1\EM)=\Gamma_0(4)$ and $\Gamma(3)\TL(\BM 3&0\\0&1\EM)=\Gamma_{\ang4}(9)$. Also, since
\[(\BM a&b\\c&d\EM)\TL(\BM 0&-1\\1&0\EM)=(\BM d&-c\\-b&a\EM)\]
we see
\[\Gamma^0(N)\TL(\BM 0&-1\\1&0\EM)=\Gamma_0(N)\]
\[\Gamma^1(N)\TL(\BM 0&-1\\1&0\EM)=\Gamma_1(N)\]

\newpage

For $N\in\N$ let
\[b_N:(\BM a&b\\c&d\EM)\mapsto1^{\frac{bd}N},\6c_N:(\BM a&b\\c&d\EM)\mapsto1^{-\frac{cd}N}.\]

\begin{Lem}\label{b8c8}We have
\[b_8,c_8\in{\rm Hom}(\Gamma_0(2)\cap\Gamma^0(2),\C^\times)\]
\[b_{32}\in{\rm Hom}(\Gamma_0(2)\cap\Gamma^0(4),\C^\times) \6 c_{32}\in{\rm Hom}(\Gamma_0(4)\cap\Gamma^0(2),\C^\times)\]
\end{Lem}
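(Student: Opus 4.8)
The plan is to reduce everything to the single computation that $b_N$ and $c_N$ are group homomorphisms on the stated congruence subgroups, using the multiplication formula for $2\times2$ matrices already recorded in the excerpt. Write $\gamma=(\BM a&b\\c&d\EM)$ and $\gamma'=(\BM a'&b'\\c'&d'\EM)$, so that the product has lower-right entry $cb'+dd'$ and upper-right entry $ab'+bd'$. For $b_N$ I would compute $b_N(\gamma\gamma')=1^{(ab'+bd')(cb'+dd')/N}$ and expand the exponent as $ab'cb'+ab'dd'+bd'cb'+bd'dd'$; the goal is to show this is congruent mod $N$ to $bd+b'd'$, which is the exponent of $b_N(\gamma)b_N(\gamma')$. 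For $c_N$ the analogous target is $cd+c'd'$ for the exponent of $1^{-cd/N}\cdot1^{-c'd'/N}$, starting from $(ca'+dc')(cb'+dd')$.

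The key steps, in order: first, on $\Gamma_0(2)\cap\Gamma^0(2)$ one has $2\mid b,\,2\mid b',\,2\mid c,\,2\mid c'$, and since $ad-bc=1$, $a'd'-b'c'=1$ both $a,d$ and $a',d'$ are odd. For $b_8$ the cross terms $ab'cb'$ and $bd'cb'$ each carry a factor $b'c$ (respectively $bc$), hence a factor of $4$ from $b'\cdot c$, and I would push these to be $\equiv0\bmod 8$ using that the remaining factor is even as well (e.g.\ $ab'cb'$ has $b'$ appearing twice, giving $4\mid b'^2$, times the even... — more carefully: $b',c$ even gives $4\mid b'c$, and then one extra even factor among $a b'$ or $b' $ forces $8\mid$ the term); the term $bd'dd'=bd\cdot d'^2$ satisfies $d'^2\equiv1\bmod 8$ since $d'$ is odd, so it is $\equiv bd\bmod 8$ (here I use $8\mid b(d'^2-1)$ because $2\mid b$ and $8\mid d'^2-1$, actually just $2\cdot4$); symmetrically $ab'dd'\equiv b'd'\bmod 8$. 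Collecting, the exponent reduces to $bd+b'd'\bmod 8$, which is exactly what is needed, and I would also check the boundary case that both factors lie in the larger group so the product does too (this is immediate from $\Gamma_0,\Gamma^0$ being groups). The argument for $c_8$ is the mirror image, using $c,c'$ even and $a,a',d,d'$ odd, with the term $ca'cb'$ and $dc'cb'$ killed by $4\mid cc'$ times an even factor, and $dc'dd'\equiv c'd'$, $ca'dd'\equiv cd\bmod 8$ after using $d^2\equiv d'^2\equiv1\bmod 8$.

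For the level-$32$ statements the bookkeeping is the same but the divisibility inputs change: on $\Gamma_0(2)\cap\Gamma^0(4)$ we have $2\mid c,\,4\mid b$, so in the exponent $(ab'+bd')(cb'+dd')$ for $b_{32}$ the terms containing $b'\cdot b'$ contribute $16$, the terms containing $b'c$ or $bc$ contribute $4\cdot2=8$ times a further even factor giving $\ge16$ — wait, I would need to be careful and instead argue $ab'cb'$ has $4\mid b'$ and $2\mid c$ so $\ge 8$, times the other copy of $b'$ which is again divisible by $4$, so in fact $\ge32$; the surviving diagonal terms are $ab'dd'\equiv b'd'$ and $bd'dd'\equiv bd\bmod 32$ because $32\mid b(d'^2-1)$ (as $4\mid b$, $8\mid d'^2-1$) and $32\mid b'(d'^2-1)$ (as $4\mid b'$). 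So the exponent is $bd+b'd'\bmod 32$ as required, and $c_{32}$ on $\Gamma_0(4)\cap\Gamma^0(2)$ is obtained by the symmetry $\gamma\mapsto\gamma\TL(\BM 0&-1\\1&0\EM)=(\BM d&-c\\-b&a\EM)$, which swaps the roles of $b$ and $c$ up to sign and exchanges $\Gamma_0(N)\leftrightarrow\Gamma^0(N)$, turning the $b_{32}$ statement into the $c_{32}$ statement. The main obstacle is purely organizational: keeping the mod-$8$ versus mod-$32$ $2$-adic valuation counts straight for each of the (up to) four cross terms, and making sure the diagonal-term reductions $d'^2\equiv1$ are multiplied by a $b$ (or $b'$) with enough $2$-divisibility to reach the stated modulus — there is no conceptual difficulty, only the risk of an off-by-a-power-of-two error, so I would present one case (say $b_8$) in full and indicate the modifications for the rest.
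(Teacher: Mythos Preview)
Your approach is the same as the paper's --- a direct check that the exponent of $b_N(\gamma\gamma')$ matches that of $b_N(\gamma)b_N(\gamma')$ modulo $N$ --- and it goes through. One slip: in the $b_{32}$ case you justify $ab'dd'\equiv b'd'\bmod 32$ by citing ``$32\mid b'(d'^2-1)$'', but $ab'dd'-b'd'=b'd'(ad-1)$, so what you actually need is $8\mid ad-1$ (true since $ad-1=bc$ with $4\mid b$ and $2\mid c$) together with $4\mid b'$. The $d'^2\equiv1$ fact is only relevant to the $bd\cdot d'^2$ term.

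The paper short-circuits all of your cross-term bookkeeping by absorbing the $2$-divisibility into the matrix entries from the start, writing $\gamma=(\BM a&2b\\2c&d\EM)$ for $\Gamma_0(2)\cap\Gamma^0(2)$ and $\gamma=(\BM a&4b\\2c&d\EM)$ for $\Gamma_0(2)\cap\Gamma^0(4)$. Then the lower-right entry of $\gamma\gamma'$ is $4cb'+dd'$ (resp.\ $8cb'+dd'$), which is $\equiv dd'$ modulo the relevant reduced modulus ($4$, resp.\ $8$), so $b_8(\gamma\gamma')=1^{(ab'+bd')dd'/4}$ and only the two congruences $ad\equiv1$ (from $\det=1$) and $d'^2\equiv1$ are needed. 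This collapses your four-term expansion to a one-line computation.
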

\begin{proof}If $\gamma=(\BM a&2b\\2c&d\EM),\gamma'=(\BM a'&2b'\\2c'&d'\EM)\in\Gamma_0(2)\cap\Gamma^0(2)$ then
\[b_8(\gamma\gamma')=1^{\frac{(ab'+bd')dd'}4}=b_8(\gamma)b_8(\gamma')\]
since $ad=4bc+1$, $d'^2\equiv1 \mod 4$.

If $\gamma=(\BM a&4b\\2c&d\EM),\gamma'=(\BM a'&4b'\\2c'&d'\EM)\in\Gamma_0(2)\cap\Gamma^0(4)$ then
\[b_{32}(\gamma\gamma')=1^{\frac{(ab'+bd')dd'}8}=b_{32}(\gamma)b_{32}(\gamma')\]
since $ad=8bc+1$, $d'^2\equiv1 \mod 8$.
\end{proof}

\begin{Lem}\label{b3c3}We have
\[b_9\in{\rm Hom}(\Gamma^0(3),\C^\times) \6 c_9\in{\rm Hom}(\Gamma_0(3),\C^\times)\]
\end{Lem}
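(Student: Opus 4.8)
The plan is to imitate the proof of Lemma~\ref{b8c8}, replacing $2,8$ by $3,9$. Since $b_9(I)=1^0=1$, $c_9(I)=1^0=1$, and $\C^\times$ is a group, it suffices to check that $b_9$ is multiplicative on $\Gamma^0(3)$ and $c_9$ is multiplicative on $\Gamma_0(3)$; compatibility with the identity and with inverses is then automatic.

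For $b_9$ I would take $\gamma=(\BM a&3b\\c&d\EM)$ and $\gamma'=(\BM a'&3b'\\c'&d'\EM)$ in $\Gamma^0(3)$ (with $a,b,\dots,d'\in\Z$ and $c,c'$ unconstrained) and form $\gamma\gamma'$ using $(\BM a&b\\c&d\EM)(\BM a'&b'\\c'&d'\EM)=(\BM aa'+bc'&ab'+bd'\\ca'+dc'&cb'+dd'\EM)$. The upper–right and lower–right entries of $\gamma\gamma'$ come out to $3(ab'+bd')$ and $3cb'+dd'$, so
\[b_9(\gamma\gamma')=1^{\frac{3(ab'+bd')(3cb'+dd')}{9}}=1^{\frac{(ab'+bd')(3cb'+dd')}{3}}=1^{\frac{(ab'+bd')\,dd'}{3}},\]
the last step because the dropped factor $1^{(ab'+bd')cb'}$ has integer exponent. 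I would then expand $(ab'+bd')\,dd'=(ad)(b'd')+bd\cdot d'^2$ and reduce the exponent modulo $3$: the relation $ad-3bc=1$ gives $ad\equiv1\pmod3$ (hence $\gcd(d,3)=1$), and since $(\Z/3\Z)^\times=\ang{-1}$ one has $d'^2\equiv1\pmod3$; therefore $(ab'+bd')\,dd'\equiv b'd'+bd\pmod3$ and
\[b_9(\gamma\gamma')=1^{\frac{bd}{3}}\,1^{\frac{b'd'}{3}}=b_9(\gamma)\,b_9(\gamma').\]

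For $c_9$ on $\Gamma_0(3)$ I see two equally short routes. One is the mirror computation: with $\gamma=(\BM a&b\\3c&d\EM)$, $\gamma'=(\BM a'&b'\\3c'&d'\EM)$ the lower–left and lower–right entries of $\gamma\gamma'$ are $3(ca'+dc')$ and $3cb'+dd'$, the cross term again drops, and $(ca'+dc')\,dd'=(a'd')(cd)+c'd'\cdot d^2\equiv cd+c'd'\pmod3$ using $a'd'\equiv1$ and $d^2\equiv1\pmod3$, giving $c_9(\gamma\gamma')=c_9(\gamma)c_9(\gamma')$. The other is to deduce it from the $b_9$ statement: transposition is an anti-automorphism of ${\rm SL}_2(\Z)$ interchanging $\Gamma^0(3)$ and $\Gamma_0(3)$, and $c_9(\gamma)=\overline{b_9({}^{\rm t}\gamma)}$, so multiplicativity of $b_9$ on $\Gamma^0(3)$ transports to multiplicativity of $c_9$ on $\Gamma_0(3)$.

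I do not expect a genuine obstacle; this is the same kind of calculation as in Lemma~\ref{b8c8}. The one point to watch is that mod-$3$ congruences, not mod-$9$ ones, are what is needed: the factor $3$ in the off-diagonal entry of an element of $\Gamma^0(3)$ (resp.\ $\Gamma_0(3)$) is exactly what pushes the relevant exponent into $\frac13\Z$ and annihilates the $3cb'$-term, after which $ad\equiv1$ and $d^2\equiv1\pmod3$ finish the argument; without that level-$3$ structure (e.g.\ for $b_{27}$ on $\Gamma^0(3)$) the map is not multiplicative in general.
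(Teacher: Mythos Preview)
Your argument is correct and follows exactly the paper's approach: write $\gamma=(\BM a&3b\\c&d\EM)$, $\gamma'=(\BM a'&3b'\\c'&d'\EM)$, compute $b_9(\gamma\gamma')=1^{\frac{(ab'+bd')dd'}3}$, and reduce using $ad\equiv1$ and $d'^2\equiv1\pmod3$. You are in fact more thorough than the paper, which only writes out the $b_9$ case and leaves $c_9$ implicit; your transposition remark is a clean way to handle the second claim.
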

\begin{proof}If $\gamma=(\BM a&3b\\c&d\EM),\gamma'=(\BM a'&3b'\\c'&d'\EM)\in\Gamma^0(3)$ then
\[b_9(\gamma\gamma')=1^{\frac{(ab'+bd')dd'}3}=(-1)^{b'd'+bd}=b_9(\gamma)b_9(\gamma')\]
since $ad=3bc+1$ and $d'^2\equiv1\mod 3$.
\end{proof}

\vspace{0.5cm}

Let $\Q$ be the rational numbers and $\cH=\{\tau\in\C \,|\, \Im \tau>0\}$.

For a congruence group $\varGamma$, denote the modular curve
\[X(\varGamma)=\varGamma\setminus(\cH\cup\Q\cup\{\infty\})\]
and put
\[\varepsilon_2(\varGamma)=\text{the number of elliptic points of period 2 in }X(\varGamma)\]
\[\varepsilon_3(\varGamma)=\text{the number of elliptic points of period 3 in }X(\varGamma)\]
\[\varepsilon_\infty(\varGamma)=\text{the number of cusps of }X(\varGamma)\]
\[d(\varGamma)={\rm deg}\big(X(\varGamma)\to X({\rm SL_2}(\Z))\big)\]
\begin{align*}g(\varGamma)&=\text{the genus of }X(\varGamma)\\
&\z =1+\frac{d(\varGamma)}{12}-\frac{\varepsilon_2(\varGamma)}4-\frac{\varepsilon_3(\varGamma)}3-\frac{\varepsilon_\infty(\varGamma)}2.\end{align*}

In addtion, put
\[\varepsilon_\infty^{\rm reg}(\varGamma)=\text{the number of regular cusps of }X(\varGamma)\]
\[\varepsilon_\infty^{\rm irr}(\varGamma)=\text{the number of irregular cusps of }X(\varGamma)\]

\newpage

\subsection{Power series}

The formal power series ring with a ring $R$ and a variable $x$ are denoted $R[[x]]$.

On $\C[[x]]$ rescaling by $h\in\N$ is defined by
\[\big(\VS{n\in\M}a_nx^n\big)^{\ang h}=\VS{n\in\M}a_nx^{hn}.\]

Define
\[f^{/n/}=\frac{f^{\ang{\frac1n}n}}f,\6 f^{"n"}=\frac{f^{\ang nn}}f\]
for $n\in\N$. For a prime number $p$, we see
\[f^{/p/"p"}=\frac{f^{/p/\ang pp}}{f^{/p/}}=\frac{f^{p^2+1}}{f^{\ang{\frac1p}p}f^{\ang{p}p}}=\frac{f^{"p"\ang{\frac1p}p}}{f^{"p"}}=f^{"p"/p/}.\]
Also, for another prime number $s$
\[f^{/p//s/}=\frac{f^{/p/\ang{\frac1s}s}}{f^{/p/}}=\frac{f^{\ang{\frac1{ps}}ps}f}{f^{\ang{\frac1s}s}f^{\ang{\frac1p}p}}=f^{/s//p/}\]
similarly $f^{/p/"s"}=f^{"s"/p/}$ and $f^{"p""s"}=f^{"s""p"}$.\\

We define $n$-th square for $\phi\in x^r+\C[[x]]x^{r+1}$ by
\[\sqrt[n]\phi\in x^{\frac rn}+\C[[x^{\frac1n}]]x^{\frac{r+1}n},\6\big(\sqrt[n]\phi\big)^n=\phi.\]
For example $\sqrt{1+x}=1+\frac12x-\frac18x^2+\frac1{16}x^3-\frac5{128}x^4+\cdots$.

We also define the complex conjugate $\sigma_4$ by
\[\big(\VS{n\in\M}a_nx^n\big)^{\sigma_4}=\VS{n\in\M}\overline{a_n}x^n.\]\\

Suppose $q=e^{2\pi i\tau}$ and $f\in\cO(\cH)\cap\C[[q]]$.

We see $f^{\ang h}(\tau)=f(h\tau)$.

Note $(f^{\sigma_4})(-\overline \tau)=\overline{f(\tau)}$ since $e^{ri(-x+yi)}=\overline{e^{ri(x+yi)}}$ for $r,x,y\in\R$.\\

\begin{Lem}\label{sumPM}If $A,B\subset\M$ and $A,B\neq\emptyset$ then
\[\#A+\#B-1\le \#(A+B).\]
\end{Lem}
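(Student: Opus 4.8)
The plan is to reduce to the one‑dimensional case by exhibiting, for finite nonempty $A,B\subset\M$, an explicit chain of $\#A+\#B-1$ distinct elements inside $A+B$. Write $A=\{a_1<a_2<\cdots<a_m\}$ and $B=\{b_1<b_2<\cdots<b_n\}$, so $\#A=m$, $\#B=n$. The natural candidate for the chain is
\[
a_1+b_1\;<\;a_1+b_2\;<\;\cdots\;<\;a_1+b_n\;<\;a_2+b_n\;<\;a_3+b_n\;<\;\cdots\;<\;a_m+b_n,
\]
that is, first run up the $B$‑coordinate with $a_1$ fixed, then run up the $A$‑coordinate with $b_n$ fixed. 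Each listed element lies in $A+B$ by construction, and the displayed inequalities are strict because $b_1<\cdots<b_n$ and $a_1<\cdots<a_m$. Counting: the first run contributes $n$ elements, the second contributes $m-1$ further elements (we do not recount $a_1+b_n$), for a total of $n+m-1=\#A+\#B-1$ distinct elements of $A+B$. Hence $\#A+\#B-1\le\#(A+B)$.

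A couple of small points deserve attention. First, the statement as phrased allows $A$ or $B$ to be infinite; but $A,B\subset\M$ with $A,B\neq\emptyset$ and if either is infinite then $A+B$ is infinite as well, so the inequality holds trivially (both sides being $+\infty$ under the obvious convention, or one simply restricts to finite subsets). It is cleanest to remark at the outset that we may assume $A$ and $B$ finite, since the general case follows by taking finite subsets and passing to the supremum. Second, one should note that the argument uses only that $\M$ is a totally ordered set in which addition is strictly monotone in each variable — nothing about $\M$ being $\{0\}\cup\N$ specifically — so the same proof applies verbatim to subsets of $\N$, of $\Z$, or of $\frac1n\M$, which is presumably how the lemma will be used later.

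I do not expect any genuine obstacle here: the only thing to get right is the bookkeeping that ensures the two runs overlap in exactly one element ($a_1+b_n$) and that every inequality in the chain is strict, which is immediate from the strict monotonicity of integer addition. If one wanted to avoid even writing the chain, one could instead argue by induction on $\#A+\#B$: removing the maximal element of $A$ (say $a_m$) gives $\#((A\setminus\{a_m\})+B)\ge\#A+\#B-2$ by induction, and the element $a_m+b_n$ is strictly larger than everything in $(A\setminus\{a_m\})+B$, hence new, giving one more. Either route is routine; I would present the explicit chain as it is the most transparent.
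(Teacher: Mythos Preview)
Your proof is correct and is essentially the same as the paper's. The paper phrases the key step as the containment $\big(\{\min A\}+B\big)\cup\big(A+\{\max B\}\big)\subset A+B$ after reducing to $\#B<\infty$, which is precisely your chain $a_1+b_1<\cdots<a_1+b_n<a_2+b_n<\cdots<a_m+b_n$ written in set form; both exhibit the same $\#A+\#B-1$ distinct elements overlapping only at $\min A+\max B$.
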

\begin{proof}We may assume $\#B<\infty$. We see
\[\big(\{\min A\}+B\big)\cup\big(A+\{\max B\}\big)\subset A+B.\]\end{proof}

For a subspace $V\subset\C[[x]]$ remark
\[\dim V=\#\big\{i\in\M \,\big|\, V\cap(x^i+\C[[x]]x^{i+1})\neq\emptyset\big\}.\]

If $V,V'\subset\C[[x]]$ are subspaces $\neq\{0\}$ then
\[\dim V+\dim V'-1\leq \dim(VV')\]
by Lemma \ref{sumPM}. As $V=V'$ we see $\dim V\leq\frac{\dim(V\cdot V)+1}2$.

\newpage

\section{Fundamental theories}

\subsection{Integer weight theories}

First, denote the real numbers $\R$ and put
\[{\rm GL}_2^+(\R)=\big\{(\BM a&b\\c&d\EM) \,\big|\, a,b,c,d\in\R,\,ad-bc>0\big\}.\]

For $\gamma=(\BM a&b\\c&d\EM)\in{\rm GL}_2^+(\R)$ and $\tau\in\cH$ we write $\gamma\tau=\z\frac{a\tau+b}{c\tau+d}$, then the map $(\gamma,\tau)\mapsto\gamma\tau$ defines an action ${\rm GL}_2^+(\R)\curvearrowright\cH$.\\

Next, we introduce the factor of automorphy $J_k(\gamma,\tau)=(c\tau+d)^k$.

Let $k\in\M$. For $\gamma=(\BM a&b\\c&d\EM)\in{\rm GL}_2^+(\R)$ and $f:\cH\to\C$ we define $f|_k:\cH\to\C$ by
\[f|_k\gamma:\tau\mapsto \frac{f(\gamma\tau)}{J_k(\gamma,\tau)}\]
then the map $(f,\gamma)\mapsto f|_k\gamma$ defines an action ${\rm Map}(\cH,\C)\curvearrowleft{\rm GL}_2^+(\R)$.

Let $\cO(X)=\{f:X\to\C \,|\, f:{\rm holomorphic}\}$, then the map $(f,\gamma)\mapsto f|_k\gamma$ also defines an action $\cO(\cH)\curvearrowleft{\rm GL}_2^+(\R)$.\\

For a congruence subgroup $\varGamma\subset{\rm SL}_2(\Z)$, we define
\[\cM'(\varGamma)_k=\big\{f\in\cO(\cH) \,\big|\, \forall\gamma\in\varGamma.f|_k\gamma=f\big\}\]
\[\cM(\varGamma)_k=\big\{f\in\cM'(\varGamma)_k \,\big|\, \forall\alpha\in{\rm SL}_2(\Z).f|_k\alpha(\tau)\text{ is bounded as }\Im z\to\infty\big\}\]
\[\cS(\varGamma)_k=\big\{f\in\cM'(\varGamma)_k \,\big|\, \forall\alpha\in{\rm SL}_2(\Z).f|_k\alpha(\tau)\to0\text{ as }\Im z\to\infty\big\}\]

Note $\cM(\varGamma)_0=\C$ and $\cS(\varGamma)_0=\{0\}$.

Since $(\BM 1&N\\0&1\EM)\in\Gamma(N)$, we regard $\cM(\Gamma(N))_k\subset\C[[q^{\frac1N}]]$ via the Fourier expansion, where
\[q^{\frac1N}=e^{\frac{2\pi i\tau}N}\6(\tau\in\cH).\]
Note $\cS(\Gamma(N))_k\subset\C[[q^{\frac1N}]]q^{\frac1N}$.
\\

For $\chi\in{\rm Hom}(\varGamma,\C^\times)$, we define
\[\cM(\varGamma)_{(k,\chi)}=\big\{f\in\cM(\ker\chi)_k \,\big|\, f|_k\gamma=\chi(\gamma)f\text{ for all }\gamma\in\varGamma\big\}\]
then
\[\cM(\varGamma)_{(k,{\tt 1}_\varGamma)}=\cM(\varGamma)_k\]
\[(\BM -1&0\\0&-1\EM)\in\varGamma,\;\chi(\BM -1&0\\0&-1\EM)\neq(-1)^k \Longrightarrow \cM(\varGamma)_{(k,\chi)}=\{0\}\]
\[\sigma_4:\cM(\varGamma)_{(k,\chi)}\simeq\cM(\varGamma)_{(k,\overline{\chi})}\]
\[\cM(\varGamma)_{(k,\chi)}\cM(\varGamma)_{(k',\chi')}\subset\cM(\varGamma)_{(k+k',\chi\chi')}\]\\

We write $\cM(M,N)=\cM(\Gamma_0(M)\cap\Gamma^0(N))$ and $\cM(N)=\cM(N,N)$ (as strings). In particular $\cM(N,1)=\cM(\Gamma_0(N))$ and $\cM(1)=\cM(\Gamma(1))$, $\cM(2)=\cM(\Gamma(2))$.

\newpage

\begin{Prop}Let $\varGamma$ be a congruence subgroup, $k\in\M$ and $\chi\in{\rm Hom}(\varGamma,\C^\times)$.

For $\alpha\in{\rm GL}_2^+(\R)$ if $\varGamma\TL\alpha\subset{\rm SL}_2(\Z)$ then
\[f\mapsto f|_k\alpha\;:\;\cM(\varGamma)_{(k,\chi)}\to\cM(\varGamma\TL\alpha)_{(k,\chi\TL\alpha)}\]
where $(\chi\TL\alpha)(x)=\chi(\alpha x\alpha^{-1})$.
\end{Prop}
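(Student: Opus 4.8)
The plan is to verify directly that $g := f|_k\alpha$ lies in the target space, using the cocycle property of the slash action. First I would recall that $(f,\gamma)\mapsto f|_k\gamma$ is a right action of ${\rm GL}_2^+(\R)$ on $\cO(\cH)$, so for $\beta\in\varGamma\TL\alpha$, writing $\beta = \alpha^{-1}\gamma\alpha$ with $\gamma = \alpha\beta\alpha^{-1}\in\varGamma$, we compute
\[
g|_k\beta = f|_k\alpha|_k(\alpha^{-1}\gamma\alpha) = f|_k(\gamma\alpha) = (f|_k\gamma)|_k\alpha = (\chi(\gamma)f)|_k\alpha = \chi(\gamma)\,g,
\]
where the last line uses $f\in\cM(\varGamma)_{(k,\chi)}$ and linearity of $|_k\alpha$. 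Since $\chi(\gamma) = \chi(\alpha\beta\alpha^{-1}) = (\chi\TL\alpha)(\beta)$ (noting the direction: $(\chi\TL\alpha)(\beta) = \chi(\alpha\beta\alpha^{-1})$, matching the convention $x\TL h = h^{-1}xh$ with $h = \alpha^{-1}$ acting on the right), this is exactly the transformation law $g|_k\beta = (\chi\TL\alpha)(\beta)\,g$ for all $\beta\in\varGamma\TL\alpha$.

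Next I would confirm $g$ is holomorphic on $\cH$ (immediate, since $|_k\alpha$ preserves $\cO(\cH)$ for $\alpha\in{\rm GL}_2^+(\R)$) and that $g$ is holomorphic at every cusp, i.e. $g|_k\alpha'(\tau)$ is bounded as $\Im\tau\to\infty$ for every $\alpha'\in{\rm SL}_2(\Z)$. For this I would use that $g|_k\alpha' = f|_k(\alpha\alpha')$, and $\alpha\alpha'$ ranges over a set of matrices in ${\rm GL}_2^+(\R)$ obtained from ${\rm SL}_2(\Z)$ by right multiplication by the fixed $\alpha$; the cusp condition on $f\in\cM(\varGamma)_k$-type spaces is stated for all $\alpha\in{\rm SL}_2(\Z)$, but the standard fact is that boundedness at cusps for a weakly modular form is preserved under slashing by \emph{any} element of ${\rm GL}_2^+(\R)$ (this is part of why $\cM$ is well-defined independently of which finite-index data one uses). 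I would invoke this, or spell it out: writing $\alpha\alpha' = (\det)^{1/2}\cdot(\text{scaling})\cdot\delta$ for suitable $\delta\in{\rm SL}_2(\Z)$ up to the action on $q$-expansions, $f|_k(\alpha\alpha')$ is a finite $\C$-combination of rescalings $(f|_k\delta)^{\ang{1/h}}$-type series in $q^{1/M}$, each bounded as $\Im\tau\to\infty$ because $f$ is. Hence $g\in\cM(\varGamma\TL\alpha)_{(k,\chi\TL\alpha)}$; by the same argument with ``bounded'' replaced by ``$\to 0$'', if $f$ were a cusp form the image would land in the cusp subspace, but that is not what is asked here.

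The main obstacle I expect is the cusp-boundedness step, not the transformation law (which is a one-line cocycle computation once the $\TL$ bookkeeping is pinned down). The subtlety is that $\alpha\notin{\rm SL}_2(\Z)$ in general, so ``$f|_k\alpha'$ bounded for all $\alpha'\in{\rm SL}_2(\Z)$'' does not verbatim give the needed statement about $g|_k\alpha'$; one must argue that the $q$-expansion of $f|_k(\alpha\alpha')$ — in the appropriate fractional variable determined by the width of the relevant cusp of $\varGamma\TL\alpha$ — has no principal part. This follows from the finiteness of the index $[{\rm SL}_2(\Z):\varGamma]$ and the fact that holomorphy at cusps is a ${\rm GL}_2^+(\R)$-invariant notion for forms invariant under a congruence subgroup; I would cite \cite[Chapter 1]{DS} or reduce it to the coset decomposition ${\rm SL}_2(\Z)\alpha = \bigsqcup \varGamma'\delta_i$ for a suitable congruence subgroup $\varGamma'\subset\varGamma\TL\alpha$. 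I would also double-check the direction of the conjugation in $\chi\TL\alpha$ against the group-action convention $g\TL h = h^{-1}gh$ introduced earlier, since the slash is a right action and a sign/inversion error there would flip the character.
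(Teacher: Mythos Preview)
Your core computation is exactly the paper's one-line proof: the paper simply writes
\[(f|_k\alpha)|_k(\alpha^{-1}\gamma\alpha)=\chi(\gamma)f|_k\alpha\]
for $\gamma\in\varGamma$ and stops there. You go further by treating the cusp-boundedness condition, which the paper leaves entirely implicit; your argument for that step (that holomorphy at cusps is preserved under slashing by any element of ${\rm GL}_2^+(\R)$, via the standard reduction in \cite[Chapter~1]{DS}) is correct and is the right way to fill the gap, though the paper evidently regards it as routine and not worth mentioning.
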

\begin{proof}For $\gamma\in\varGamma$
\[(f|_k\alpha)|_k(\alpha^{-1}\gamma\alpha)=\chi(\gamma)f|_k\alpha\]
\end{proof}

If $f\in\cO(\cH)\cap\C[[q^{\frac1N}]]$ then $f^{\ang h}=f|(\BM h&0\\0&1\EM)$. It is important that
\[\cM(\Gamma(N))_k=\cM(\Gamma_{\ang{N+1}}(N^2))_k^{\ang{\frac1N}}\]
Since $\Gamma_0(N)\TL(\BM h&0\\0&1\EM)$ is generated by $\Gamma_0(hN)$ and $(\BM 1&\frac1h\\0&1\EM)$ we see
\[\cM(N,1)_{(k,\chi)}\,\!^{\ang h}=\cM(hN,1)_{(k,\chi)}\cap\C[[q^h]]\]
for $\chi\in{\rm Hom}((\Z/N\Z)^\times,\C^\times)$.

If $f=\VS{n\in\M}a_nq^{\frac nN}\in\cO(\cH)$ then the twisting $f|(\BM 1&1\\0&1\EM)=\VS{n\in\M}1^{\frac nN}a_nq^{\frac nN}$.\\[0.5cm]

Dimension formula is well-known (\cite[\S3]{DS} or \cite[\S6.3]{St}). For $k\in2\N$
\[\z \dim\cM(\varGamma)_k=(k-1)(g(\varGamma)-1)+\big[\frac k4\big]\varepsilon_2(\varGamma)+\big[\frac k3\big]\varepsilon_3(\varGamma)+\frac k2\varepsilon_\infty(\varGamma)\]
\[\dim\cS(\varGamma)_k=\BC \dim\cM(\varGamma)_k-\varepsilon_\infty(\varGamma) & k\geqq4 \\ g(\varGamma) & k=2 \EC\]
where [ ] is the greatest integer function. When $(\BM -1&0\\0&-1\EM)\notin\varGamma$, for $k\in2\N+1$
\[\z \dim\cM(\varGamma)_k=(k-1)(g(\varGamma)-1)+\big[\frac k3\big]\varepsilon_3(\varGamma)+\frac k2\varepsilon_\infty^{\rm reg}(\varGamma)+\frac{k-1}2\varepsilon_\infty^{\rm irr}(\varGamma)\]
\[\dim\cS(\varGamma)_k=\dim\cM(\varGamma)_k-\varepsilon_\infty^{\rm reg}(\varGamma)\]
As for $k=1$
\[\dim\cM(\varGamma)_1\BC =\frac 12\varepsilon_\infty^{\rm reg}(\varGamma) & \text{if }\varepsilon_\infty^{\rm reg}(\varGamma)>2g(\varGamma)-2 \\[0.1cm] \geq\frac 12\varepsilon_\infty^{\rm reg}(\varGamma) & \text{if }\varepsilon_\infty^{\rm reg}(\varGamma)\leq2g(\varGamma)-2 \EC\]
\[\z\dim\cS(\varGamma)_1=\dim\cM(\varGamma)_1-\frac 12\varepsilon_\infty^{\rm reg}(\varGamma)\]

\newpage

\subsection{Eisenstein series of level 1}

For $k\in2\N$, put
\[\iE_{1,k}(\tau)=1+\z\frac1{\zeta(k)}\VS{M\in\N}\VS{N\in\Z}\frac1{(M\tau+N)^k}.\]
where $\zeta(k)=\VS{N\in\Z}\frac1{N^k}$ is the Rieman zeta function
\[\z\zeta(2)=\frac{\pi^2}6,\;\zeta(4)=\frac{\pi^4}{90},\;\zeta(6)=\frac{\pi^6}{945},\;\zeta(8)=\frac{\pi^8}{9450},\cdots\]
The summations are defined by $\VS{N\in\N}a_N=\displaystyle\lim_{N\to\infty}\VS{n=1}^Na_n$ and $\VS{N\in\Z}a_N=\displaystyle\lim_{N\to\infty}\VS{n=-N}^Na_n$.

We compute the Laurent expansion
\[\iE_{1,k}=1-\frac{2k}{B_k}\VS{n\in\N}\VS{d|n}d^{k-1}q^n\]
where $B_k$ is the $k$-th Bernoulli number which satisfies $2\zeta(k)=-\frac{(2\pi i)^k}{k!}B_k$ for $k\ge2$.

For example
\begin{align*}\iE_{1,2}&=1-24\VS{n\in\N}\VS{d|n}dq^n\\
\iE_{1,4}&=1+240\VS{n\in\N}\VS{d|n}d^3q^n\\
\iE_{1,6}&=1-504\VS{n\in\N}\VS{d|n}d^5q^n\end{align*}

We see $\iE_{1,k}\in\cM(1)_k$ for $k\geq4$. As for $k=2$, for $(\BM a&b\\c&d\EM)\in\Gamma(1)$
\[\iE_{1,2}|_2(\BM a&b\\c&d\EM)(z)=\iE_{1,2}(z)-\frac{6ci}{\pi(cz+d)}.\]
$\iE_{1,2}-\frac3\pi{\rm Im}$ is $|_2$ invariant under $\Gamma(1)$, but it is not holomorphic (\cite[p18]{DS}).\\

The dimension formula states for $k\in\M$
\[\dim\cM(1)_{2k}=\big[\z\frac k6\big]+\BC0&\text{if }k\in6\M+1\\1&\text{otherwise}\EC\]

In particular $\dim\cM(1)_8=1$ and $\iE_{1,8}-\iE_{1,4}^2\in\cM(1)_8\cap\C[[q]]q=\{0\}$. That is $\iE_{1,8}=\iE_{1,4}^2$ and a relation between divisor sums is obtained : for $n\in\N$
\[\VS{d|n}d^7=\VS{d|n}d^3+120\VS{i=1}^{n-1}\Big(\VS{d|i}d^3\VS{d|(n-i)}d^3\Big).\]
Similarly $\iE_{1,10}=\iE_{1,4}\iE_{1,6}$ and $\iE_{1,14}=\iE_{1,4}^2\iE_{1,6}$. We will expand these identities for lager weights.

\newpage

Let $\tau\in\cH$ and $\Lambda_\tau=\tau\Z+\Z$. The Weierstrass $\wp$ function (cf \cite[p31]{DS}) is
\[\wp_\tau(z)=\z\frac1{z^2}+\VS{\omega\in\Lambda_\tau\setminus\{0\}}\big(\frac1{(z+\omega)^2}-\frac1{\omega^2}\big).\]
The sum is arranged so that $\frac1{(z+\omega)^2}-\frac1{\omega^2} = O(w^{-3})$ which makes the sum over $\Lambda_\tau\setminus\{0\}$ absolutely convergent. We see
\[\wp_\tau(z)'=\VS{\omega\in\Lambda_\tau}\frac{-2}{(\tau+\omega)^3}\]
and the field of meromorphic functions on $\C/(\tau\Z+\Z)$ is $\C(\wp_\tau,\wp_\tau')$.

For even $k\geq4$, note
\[\iE_{1,k}(\tau)=\z\frac1{2\zeta(k)}\VS{\omega\in\Lambda_\tau\setminus\{0\}}\frac1{\omega^k}\]
and put $\cE_k=(k-1)\zeta(k)\iE_{1,k}$.

\begin{Lem}The Laurent eapansion of $\wp_\tau$ is
\[\wp_\tau(z)=\z\frac1{z^2}+2\VS{n\in2\N}\cE_{n+2}(\tau)z^n\]
for all $z$ such that $0<|z|<\inf\big\{|\omega| \,\big|\, \omega\in\Lambda_\tau\setminus\{0\}\big\}$.
\end{Lem}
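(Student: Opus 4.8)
The plan is to expand $\wp_\tau(z)$ directly as a power series in $z$ near the origin, using the absolutely convergent defining series, and then recognize the coefficients as the Eisenstein sums $\cE_{n+2}$. First I would fix $\tau$ and set $\rho=\inf\{|\omega|\mid\omega\in\Lambda_\tau\setminus\{0\}\}$, so that for $0<|z|<\rho$ and every nonzero $\omega\in\Lambda_\tau$ we have $|z/\omega|<1$ and the geometric-type expansion
\[\frac1{(z+\omega)^2}=\frac1{\omega^2}\cdot\frac1{(1+z/\omega)^2}=\frac1{\omega^2}\VS{m\in\M}(m+1)(-1)^m\Big(\frac z\omega\Big)^m\]
converges. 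Subtracting the $m=0$ term gives
\[\frac1{(z+\omega)^2}-\frac1{\omega^2}=\VS{m\in\N}(m+1)(-1)^m\frac{z^m}{\omega^{m+2}},\]
and I would sum this over $\omega\in\Lambda_\tau\setminus\{0\}$.

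The second step is to justify interchanging the sum over $\omega$ with the sum over $m$. Here I would invoke absolute convergence: for $|z|\le r<\rho$ the double series $\sum_{\omega}\sum_{m\ge1}(m+1)|z|^m/|\omega|^{m+2}$ is dominated by $\sum_\omega |\omega|^{-2}\sum_{m\ge1}(m+1)(r/|\omega|)^m$, and since $r/|\omega|\le r/\rho<1$ uniformly, the inner sum is bounded by a constant times $|\omega|^{-2}$ (absorbing a factor controlled by $(1-r/\rho)^{-2}$), while $\sum_\omega|\omega|^{-3}$ — or even $\sum_\omega|\omega|^{-2-\varepsilon}$ after pulling out enough powers — converges as already noted in the excerpt for the original $\wp$ series. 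Once Fubini applies, I get
\[\wp_\tau(z)=\frac1{z^2}+\VS{m\in\N}(m+1)(-1)^m\Big(\VS{\omega\in\Lambda_\tau\setminus\{0\}}\frac1{\omega^{m+2}}\Big)z^m.\]

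The third step is to evaluate the inner lattice sum. Because $\Lambda_\tau=-\Lambda_\tau$, the sum $\sum_{\omega\ne0}\omega^{-(m+2)}$ vanishes whenever $m$ is odd (pairing $\omega$ with $-\omega$), so only even $m=n\in2\N$ survive, and for those $(-1)^m=1$; moreover $\sum_{\omega\ne0}\omega^{-(n+2)}=2\zeta(n+2)\iE_{1,n+2}(\tau)$ by the identity for $\iE_{1,k}$ recorded just before the lemma, hence $(n+1)\sum_{\omega\ne0}\omega^{-(n+2)}=2(n+1)\zeta(n+2)\iE_{1,n+2}(\tau)=2\cE_{n+2}(\tau)$ by the definition $\cE_k=(k-1)\zeta(k)\iE_{1,k}$. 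Substituting gives exactly
\[\wp_\tau(z)=\frac1{z^2}+2\VS{n\in2\N}\cE_{n+2}(\tau)z^n,\]
valid on $0<|z|<\rho$, as claimed.

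The main obstacle is the bookkeeping in the convergence/Fubini step: one must be careful that the rearranged double series is genuinely absolutely convergent uniformly on compact subsets of the punctured disc, since the termwise expansion of $(z+\omega)^{-2}-\omega^{-2}$ introduces the extra combinatorial factor $(m+1)$ and a sum over all $m\ge1$, and naive domination by $|\omega|^{-3}$ alone is not quite enough without summing the geometric series in $m$ first. Everything else — the vanishing of odd sums by the symmetry $\omega\mapsto-\omega$, and matching constants to the already-established formula for $\iE_{1,k}$ and the definition of $\cE_k$ — is routine.
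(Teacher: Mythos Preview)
Your proposal is correct and follows the same route as the paper: expand $\frac1{(z+\omega)^2}-\frac1{\omega^2}$ via the geometric series in $z/\omega$, sum over the lattice, and identify the surviving even coefficients with $2\cE_{n+2}$. The paper's own proof is just the one-line hint $\frac1{(z+\omega)^2}-\frac1{\omega^2}=\frac1{\omega^2}\big(\frac1{(1+z/\omega)^2}-1\big)$ (modulo a sign typo there), and you have correctly supplied the remaining details (Fubini, the $\omega\mapsto-\omega$ parity argument, and the matching of constants).
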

\begin{proof}If $|z|<|\omega|$ then
\[\z\frac1{(z+\omega)^2}-\frac1{\omega^2}=\frac1{\omega^2}\big(\frac1{(1-\frac z\omega)^2}-1\big)\]

\end{proof}

\begin{Prop}For $k\in\M$
\[\z\frac{(k+1)(2k+9)}6\cE_{2k+8}=\VS{i=0}^k\cE_{2i+4}\cE_{2k-2i+4}\]
\end{Prop}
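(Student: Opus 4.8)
The plan is to compare Laurent expansions at $z=0$ of a single elliptic function built from $\wp_\tau$, exactly in the spirit of the identities $\iE_{1,8}=\iE_{1,4}^2$ etc. obtained earlier from $\dim\cM(1)_8=1$. The starting point is the previous Lemma, which gives
\[
\wp_\tau(z)=\frac1{z^2}+2\sum_{n\in2\N}\cE_{n+2}(\tau)z^n .
\]
Since $\wp_\tau$ satisfies the differential equation $(\wp_\tau')^2=4\wp_\tau^3-g_2\wp_\tau-g_3$, differentiating yields the classical recursion among the coefficients $\cE_{2i+4}$; but rather than invoking that directly I would run the self-contained argument. First I would record the expansion of $\wp_\tau'$ obtained by differentiating term by term, and of $\wp_\tau''$. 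The key algebraic identity is $\wp_\tau''=6\wp_\tau^2-\tfrac12 g_2$, equivalently (absorbing constants into the $\cE$'s, using $\cE_4=\tfrac32 g_2/(\text{const})$ and $\cE_6=\cdots$) a relation expressing $\wp''$ as a quadratic polynomial in $\wp$. This is where the weight-$8$ philosophy enters: $\wp''-6\wp^2$ has a pole of order at most $2$ at $z=0$ and is holomorphic and even elsewhere on $\C/\Lambda_\tau$, hence is a constant plus a multiple of $\wp$; matching the principal parts pins down the constants.

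Second I would extract the coefficient of $z^{2k+4}$ from both sides of $\wp_\tau''=6\wp_\tau^2+(\text{lower order in }\wp)$. On the left, $\wp''$ has Laurent coefficient $2(2k+4)(2k+3)\cE_{2k+6}$ at $z^{2k+4}$ — wait, I must be careful with the index shift: writing $\wp=\sum_{n\ge-2}c_n z^n$ with $c_{-2}=1$, $c_0=c_{-1}=0$ and $c_{2m}=2\cE_{2m+2}$ for $m\ge1$, the coefficient of $z^{2k+4}$ in $\wp''$ is $(2k+6)(2k+5)c_{2k+6}=2(2k+6)(2k+5)\cE_{2k+8}$, and in $6\wp^2$ it is $6\sum_{a+b=2k+6}c_a c_b$. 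The cross terms involving $c_{-2}=1$ contribute $12c_{2k+8}=24\cE_{2k+10}$ and $12c_0 c_{2k+6}$-type terms vanish, while the genuinely quadratic part is $6\sum_{i+j=k+2,\ i,j\ge1}c_{2i}c_{2j}=24\sum \cE_{2i+2}\cE_{2j+2}$. Re-indexing $i\mapsto i+1$, $j\mapsto j+1$ so that $i+j=k$, $i,j\ge0$, this is $24\sum_{i=0}^{k}\cE_{2i+4}\cE_{2k-2i+4}$. The linear-in-$\wp$ and constant terms on the right-hand side only affect the coefficients of $z^{-2}$, $z^0$, $z^2$ (i.e. the cases $k=-2,-1,0$), and in particular for $k\ge1$ they drop out entirely; one must additionally check the $z^{2k+4}$-coefficient of the $\cE_{2k+10}$-type term carefully, since there is also a contribution $2\cdot c_{-2}\cdot c_{2k+6}$ hiding a shift. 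Collecting everything and solving for $\cE_{2k+8}$ gives, after dividing by the appropriate integer, the stated $\frac{(k+1)(2k+9)}6\cE_{2k+8}=\sum_{i=0}^k\cE_{2i+4}\cE_{2k-2i+4}$; the rational coefficient $\frac{(k+1)(2k+9)}6$ should emerge precisely from the binomial bookkeeping $\big((2k+6)(2k+5)-12\big)/\text{something}$.

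Alternatively, and perhaps more cleanly, I would phrase the whole thing without Laurent surgery: for $k\ge1$ the space $\cM(1)_{2k+8}$ contains both $\cE_{2k+8}$ and every product $\cE_{2i+4}\cE_{2k-2i+4}$, and one checks by the dimension formula that the relevant combination lies in $\cS(1)_{2k+8}\cap(\text{low enough order})$, forcing it to vanish; the constant is then fixed by comparing the $q^0$ (and if necessary $q^1$) Fourier coefficients, using $\cE_{2j+4}=(2j+3)\zeta(2j+4)+O(q)$. However, for general $k$ the dimension of $\cM(1)_{2k+8}$ grows, so this shortcut does not by itself prove the identity — it only works weight by weight — whereas the $\wp$-function argument produces all weights uniformly. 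For this reason I would commit to the $\wp''=6\wp^2+\text{const}\cdot\wp+\text{const}$ route.

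The main obstacle I anticipate is bookkeeping, not conceptual: getting the index shifts and the three ``boundary'' cross-terms (those pairing the $z^{-2}$ head of one factor with the bulk of the other) exactly right, so that the spurious contributions land only in $z^{-2},z^0,z^2$ and the surviving identity has precisely the coefficient $\frac{(k+1)(2k+9)}6$. A secondary point to be careful about is the normalization linking $\cE_k$, $\iE_{1,k}$, $g_2$, $g_3$ and the factor $2$ in the Lemma's expansion; a single misplaced constant there will corrupt the final rational coefficient, so I would verify the formula against the known case — e.g. $k=0$ recovers $\tfrac{3}{2}\cdot$(a relation forcing $\cE_8$ in terms of $\cE_4^2$), consistent with $\iE_{1,8}=\iE_{1,4}^2$ — before trusting the general computation.
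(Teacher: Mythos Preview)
Your approach is exactly the paper's: establish an identity of the form $\wp''=6\wp^2+(\text{lower})$ and read off the coefficient of $z^{2k+4}$. Two corrections to your bookkeeping sketch, though.

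First, the cross-term in $6\wp^2$ pairs $c_{-2}$ with $c_{2k+6}$, not $c_{2k+8}$: it contributes $12c_{2k+6}=24\cE_{2k+8}$, which combines with the $\wp''$-side to give the constant the paper records as $\tfrac{(k+3)(2k+5)}{6}-1=\tfrac{(k+1)(2k+9)}{6}$.

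Second, and more substantively, your claim that a linear-in-$\wp$ term ``only affects $z^{-2},z^0,z^2$'' is wrong: a term $c\wp$ would contribute $2c\,\cE_{2k+6}$ at \emph{every} $z^{2k+4}$ and would destroy the identity. The argument works only because there is \emph{no} linear term. The paper handles this cleanly: rather than invoking the ODE, it matches the Laurent expansions of $\tfrac16\wp''$ and $\wp^2$ through order $z^0$ (both are $\tfrac{1}{z^4}+\tfrac23\cE_4+O(z^2)$) to obtain the exact equation $\tfrac16\wp''=\wp^2-\tfrac{10}{3}\cE_4$, and only then compares coefficients.
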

\begin{proof}By the above Lemma
\[\wp_\tau(z)=\z\frac1{z^2}+2\cE_4(\tau)z^2+O(z^4),\]
\[\z\frac12\wp_\tau'(z)=-\frac1{z^3}+2\cE_4(\tau)z+O(z^3).\]
We get
\[\z\frac16\wp_\tau''(z)=\wp_\tau(z)^2-\frac{10}3\cE_4(\tau)\]
since both sides work out to
\[\z\frac1{z^4}+\frac23\cE_4(\tau)+O(z^2).\]
The assertion follows from computing the coefficient of $z^{2k+4}$ and
\[\z\frac{(k+3)(2k+5)}6-1=\frac{(k+1)(2k+9)}6.\]
\end{proof}

\newpage

\subsection{Eisenstein series of weight 2 without character}

For $N\geq2$ put
\[\iE_{N,2}=\z\frac1{N-1}\big(N\iE_{1,2}^{\ang N}-\iE_{1,2}\big)=1+\frac{24}{N-1}\VS{n\in\N}\VS{d|n,N\nmid d}dq^n\]
then $\iE_{N,2}\in\cM(N,1)_2$ since for $\gamma=(\BM c&b\\c&d\EM)\in\Gamma_0(N)$
\[\iE_{N,2}|\gamma=\z\frac1{N-1}\big(N\iE_{1,2}|(\BM a&Nb\\c/N&d\EM)(\BM N&0\\0&1\EM)-\iE_{1,2}|\gamma\big).\]

Compute
\[\iE_{1,2}^{\ang N}\big|(\BM 0&-1\\1&0\EM)=\iE_{1,2}\big|(\BM 0&-1\\1&0\EM)(\BM 1&0\\0&N\EM)=\z\frac1{N^2}\big(\iE_{1,2}^{\ang{\frac1N}}-\frac{6Ni}{\pi\tau}\big)\]
thus
\[N\iE_{N,2}\big|(\BM 0&-1\\1&0\EM)=\z\frac 1{N-1}\big(\big(\iE_{1,2}^{\ang{\frac1N}}-\frac{6Ni}{\pi\tau}\big)-N(\iE_{1,2}-\frac{6i}{\pi\tau}\big)\big)=-\iE_{N,2}^{\ang{\frac1N}}\]\\

Put $\iE_{2,2}^{\gets}=\iE_{2,2}^{\ang{\frac12}}\big|(\BM 1&1\\0&1\EM)$ then
\[\z\iE_{2,2}=\frac12\big(\iE_{2,2}^{\ang{\frac12}}+\iE_{2,2}^{\gets}\big)\]
since $\VS{d|2n,2\nmid d}d=\VS{d|n,2\nmid d}d$. Note $\iE_{2,2}^\gets|(\BM 0&-1\\1&0\EM)=\iE_{2,2}^\gets$ since
\[(\BM \frac1n&0\\0&1\EM)(\BM 1&1\\0&1\EM)(\BM 0&-1\\1&0\EM)=(\BM 1&0\\n&1\EM)(\BM \frac1n&0\\0&1\EM)(\BM 1&-1\\0&1\EM).\]
We have $\cM(1)_k=\big\{f\in\cM(2)_k \,\big|\, f|_k(\BM 1&1\\0&1\EM)=f|_k(\BM 0&-1\\1&0\EM)=f\big\}$ and
\[\z\iE_{1,4}=\frac13\big(\iE_{2,2}^{\ang{\frac12}}-1^{\frac13}\iE_{2,2}^{\gets}\big)\big(\iE_{2,2}^{\ang{\frac12}}-1^{\frac23}\iE_{2,2}^{\gets}\big)\]
\[\iE_{1,6}=\iE_{2,2}\iE_{2,2}^{\ang{\frac12}}\iE_{2,2}^{\gets}\]

Put $\iE_{3,2}^{\nwarrow}=\iE_{3,2}^{\ang{\frac13}}\big|(\BM 1&1\\0&1\EM)$ and $\iE_{3,2}^{\swarrow}=\iE_{3,2}^{\ang{\frac13}}\big|(\BM 1&-1\\0&1\EM)$ then
\[\z\iE_{3,2}=\frac13\big(\iE_{3,2}^{\ang{\frac13}}+\iE_{3,2}^{\nwarrow}+\iE_{3,2}^{\swarrow}\big)\]
\[\iE_{1,4}^2=\iE_{3,2}\iE_{3,2}^{\ang{\frac13}}\iE_{3,2}^{\nwarrow}\iE_{3,2}^{\swarrow}\]
\[\z\iE_{1,6}=\frac18\big(\iE_{3,2}^{\ang{\frac13}}+\iE_{3,2}^{\nwarrow}\big)\big(\iE_{3,2}^{\ang{\frac13}}+\iE_{3,2}^{\swarrow}\big)\big(\iE_{3,2}^{\nwarrow}+\iE_{3,2}^{\swarrow}\big)\]

Put
\[\iE_{5,2}^{\nwarrow}=\iE_{5,2}^{\ang{\frac15}}\big|(\BM 1&2\\0&1\EM),\6\iE_{5,2}^{\nearrow}=\iE_{5,2}^{\ang{\frac15}}\big|(\BM 1&1\\0&1\EM),\]
\[\iE_{5,2}^{\swarrow}=\iE_{5,2}^{\ang{\frac15}}\big|(\BM 1&-2\\0&1\EM),\6\iE_{5,2}^{\searrow}=\iE_{5,2}^{\ang{\frac15}}\big|(\BM 1&-1\\0&1\EM),\]
then
\[\z\iE_{5,2}=\frac15\big(\iE_{5,2}^{\ang{\frac15}}+\iE_{5,2}^{\nearrow}+\iE_{5,2}^{\nwarrow}+\iE_{5,2}^{\swarrow}+\iE_{5,2}^{\searrow}\big)\]
\[\iE_{1,6}^2=\iE_{5,2}\iE_{5,2}^{\ang{\frac15}}\iE_{5,2}^{\nearrow}\iE_{5,2}^{\nwarrow}\iE_{5,2}^{\swarrow}\iE_{5,2}^{\searrow}\]

Put
\[\iE_{7,2}^{\nwarrow}=\iE_{7,2}^{\ang{\frac17}}\big|(\BM 1&3\\0&1\EM),\6\iE_{7,2}^{\uparrow}=\iE_{7,2}^{\ang{\frac17}}\big|(\BM 1&2\\0&1\EM),\6\iE_{7,2}^{\nearrow}=\iE_{7,2}^{\ang{\frac17}}\big|(\BM 1&1\\0&1\EM),\]
\[\iE_{7,2}^{\swarrow}=\iE_{7,2}^{\ang{\frac17}}\big|(\BM 1&-3\\0&1\EM),\6\iE_{7,2}^{\downarrow}=\iE_{7,2}^{\ang{\frac17}}\big|(\BM 1&-2\\0&1\EM),\6\iE_{7,2}^{\searrow}=\iE_{7,2}^{\ang{\frac17}}\big|(\BM 1&-1\\0&1\EM),\]
then
\[\z\iE_{7,2}=\frac17\big(\iE_{7,2}^{\ang{\frac17}}+\iE_{7,2}^{\nearrow}+\iE_{7,2}^{\uparrow}+\iE_{7,2}^{\nwarrow}+\iE_{7,2}^{\swarrow}+\iE_{7,2}^{\downarrow}+\iE_{7,2}^{\searrow}\big)\]
\[\iE_{1,4}^4=\iE_{7,2}\iE_{7,2}^{\ang{\frac17}}\iE_{7,2}^{\nearrow}\iE_{7,2}^{\uparrow}\iE_{7,2}^{\nwarrow}\iE_{7,2}^{\swarrow}\iE_{7,2}^{\downarrow}\iE_{7,2}^{\searrow}\]

\newpage

\subsection{Eisenstein series of weight 1}

Let $\chi\in{\rm Hom}((\Z/N\Z)^\times,\C^\times)\setminus\{{\tt 1}_N\}$.

We denote the first generalized Bernoulli number by $B_{\chi}$. If the conductor of $\chi$ is $N$ then $B_{\chi}=\frac1N\VS{a=1}^N\chi(a)a$. Note $B_{\chi}=0$ if $\chi(-1)\neq-1$.

When $\chi(-1)=-1$, put the Eisenstein series
\[\iE_\chi=1-\z\frac2{B_{\chi}}\VS{n\in\N}(\chi*{\tt 1}_\N)(n)q^n\]
then $\iE_\chi\in\cM(N,1)_{(1,\chi)}$.

The dimension formula states $\dim\cM(p,1)_2=1$ for $p=3,5,7,13$ thus
\[\iE_{3,2}=\iE_{\chi_3}^2\]
\[\iE_{5,2}=\iE_{\chi_5}\iE_{\overline{\chi_5}}\]
\[\iE_{7,2}=\iE_{\chi_7^3}^2=\iE_{\chi_7}\iE_{\overline{\chi_7}}\]
\[\iE_{13,2}=\iE_{\chi_{13}}\iE_{\overline{\chi_{13}}}=\iE_{\chi_{13}^3}\iE_{\overline{\chi_{13}^3}}=\iE_{\chi_{13}^5}\iE_{\overline{\chi_{13}^5}}\]\\

Moreover let $\psi\in{\rm Hom}((\Z/M\Z)^\times,\C^\times)\setminus\{{\tt 1}_M\}$. When $\psi\chi(-1)=-1$, put
\[\iG_{\psi,\chi}=\VS{n\in\N}(\psi*\chi)(n)q^n\]
then $\iG_{\psi,\chi}\in\cM(MN,1)_{(1,\psi\chi)}$.

See \cite[\S4.8]{DS} or \cite[\S5.3]{St} for further details.\\[0.5cm]

Put $\iE_{\chi_3}^{\nwarrow}=\iE_{\chi_3}^{\ang{\frac13}}\big|(\BM 1&1\\0&1\EM)$ and $\iE_{\chi_3}^{\swarrow}=\iE_{\chi_3}^{\ang{\frac13}}\big|(\BM 1&-1\\0&1\EM)$ then
\[\z\iE_{\chi_3}=\frac13\big(\iE_{\chi_3}^{\ang{\frac13}}+\iE_{\chi_3}^{\nwarrow}+\iE_{\chi_3}^{\swarrow}\big)\]
\[\z\iE_{\chi_3}^{\ang{\frac13}}+1^{\frac13}\iE_{\chi_3}^{\nwarrow}+1^{\frac23}\iE_{\chi_3}^{\swarrow}=3\VS{n\in3\M+2}(\chi_3*{\tt 1}_\N)(n)q^{\frac n3}=0\]
and
\[\iE_{1,4}=\iE_{\chi_3}\iE_{\chi_3}^{\ang{\frac13}}\iE_{\chi_3}^{\nwarrow}\iE_{\chi_3}^{\swarrow}\]

\newpage

\subsection{Half integer weight theories}

Let $k\in2\M+1$.

For $\gamma=(\BM a&b\\c&d\EM)\in{\rm SL}_2(\Z)$ and $\tau\in\cH$ put $J_{\frac k2}(\gamma,\tau)=(\frac cd)\sqrt{J(\gamma,\tau)}^k$ where $\sqrt z$ is the "principal" determination of the square root of $z$ i.e. the one with argument in $(-\frac\pi2,\frac\pi2]$. Note for $f\in\C[[q]]$, in general $\sqrt f(\tau)\neq\sqrt{f(\tau)}$.

For $f:\cH\to\C$ we define $f|_{\frac k2}:\cH\to\C$ by
\[f|_{\frac k2}\gamma:\tau\mapsto \frac{f(\gamma\tau)}{J_{\frac k2}(\gamma,\tau)}\]
The map $\gamma\mapsto f|_{\frac k2}\gamma$ is not an action : for $\gamma=(\BM a&b\\c&d\EM)$ and $\gamma'=(\BM a'&b'\\c'&d'\EM)$
\[\z f|_{\frac12}\gamma\gamma'=(\frac{ca'+db'}{cb'+dd'})(\frac cd)(\frac{c'}{d'})(f|_{\frac12}\gamma)|_{\frac12}\gamma'\]
However, it satisfies $f^2|_1\gamma=(f|_{\frac12}\gamma)^2$. Indeed, no weight $\frac12$ action with this condition exists: $f|_{\frac12}(\BM -1&0\\0&-1 \EM)$ should be $\pm if$ since $f^2|_1(\BM -1&0\\0&-1 \EM)=-f^2$, contradict to $f|_{\frac12}(\BM -1&0\\0&-1 \EM)|_{\frac12}(\BM -1&0\\0&-1 \EM)=f$.\\

For a congruence subgroup $\varGamma\subset{\rm SL}_2(\Z)$ and $\kappa\in\M+\frac12$, we define $\cM(\varGamma)_\kappa$ similarly in \S 2.1. This natural definition may be different from usual classical one.

In addition, for a map $\chi\in{\rm Map}(\varGamma,\C^\times)$, we define $\cM(\varGamma)_{(\kappa,\chi)}$ then
\[(\BM -1&0\\0&-1\EM)\in\varGamma,\;\chi(\BM -1&0\\0&-1\EM)\neq i^{-2\kappa} \Longrightarrow\cM(\varGamma)_{(\kappa,\chi)}=\{0\}\]
\[\sigma_4:\cM(\varGamma)_{(\kappa,\sqrt{\chi_4}\chi)}\simeq\cM(\varGamma)_{(\kappa,\sqrt{\chi_4}\,\overline{\chi})}.\]\\

If $h|c$ then
\[f^{\ang h}|_\kappa(\BM a&b\\c&d\EM)=(\z\frac hd)(f|_\kappa(\BM a&bh\\c/h&d\EM))^{\ang h}\]
since
\begin{align*}f^{\ang h}|_\kappa(\BM a&b\\c&d\EM)(\tau)&=((\z\frac cd)(c\tau+d)^{1/2})^{-2\kappa}f((\BM h&0\\0&1\EM)(\BM a&b\\c&d\EM)\tau)\\
&=(\z\frac hd)((\frac{c/h}d)(c\tau+d)^{1/2})^{-2\kappa}f((\BM a&bh\\c/h&d\EM)(\BM h&0\\0&1\EM)\tau)\\
&=(\z\frac hd)f|_\kappa(\BM a&bh\\c/h&d\EM)(h\tau).\end{align*}
Similarly if $h|b$ then
\[f^{\ang{\frac1h}}|_\kappa(\BM a&b\\c&d\EM)=(\z\frac hd)(f|_\kappa(\BM a&b/h\\ch&d\EM))^{\ang{\frac1h}}.\]

We see
\[\cM(N,1)_{(\kappa,\chi)}\,\!^{\ang h}=\z\cM(hN,1)_{(\kappa,\chi(\frac h\bullet))}\cap\C[[q^h]].\]

\newpage

\subsection{Theta functions of weight 1/2}

Let
\[\theta=\VS{n\in\Z}q^{n^2}=1+2\VS{n\in\N}q^{n^2}\]
then $\theta\in\cM(4,1)_{(\frac12,\overline{\sqrt{\rho_4}})}$.

The dimension formula states $\dim\cM(4,1)_{(1,\chi_4)}=1$ and we get $\iE_{\chi_4}=\theta^2$.

Similarly $\dim\cM(8,1)_{(1,\chi_4\chi_8)}=1$ and $\iE_{\chi_4\chi_8}=\theta\theta^{\ang2}$.

Note
\begin{align*}\iE_{\chi_4}&=1+4\VS{n\in4\M+1}(\chi_4*{\tt 1}_\N)(n)q^n\\
\iE_{\chi_4\chi_8}&=1+2\VS{n\in8\M+\{1,3\}}(\chi_4\chi_8*{\tt 1}_\N)(n)q^n\end{align*}

As a corollary we see the Jacobi's two-square theorem
\[\#\big\{(a,b)\in\Z\times\Z \,\big|\, a^2+b^2=n\big\}=4(\chi_4*{\tt 1}_\N)(n)\]
\[\#\big\{(a,b)\in\Z\times\Z \,\big|\, a^2+2b^2=n\big\}=2(\chi_4\chi_8*{\tt 1}_\N)(n)\]

It is convenient to write $(\frac k2^*,\chi)=(\frac k2,\xi_8^k\chi)$ and $\frac k2^*=(\frac k2^*,{\tt 1})$.

For example $\iE_{\chi_4}\in\cM(4,1)_{1^*}$ and $\theta\in\cM(2,1)_{(\frac12^*,\chi_8)}$.
\\

For $\chi\in{\rm Hom}((\Z/N\Z)^\times,\C^\times)$ such that $\chi(-1)=1$, put
\[\theta_{\chi}=\VS{n\in\N}\chi(n)q^{n^2}\]
then $\theta_{\chi}\in\cM(4N^2,1)_{(\frac12,\overline{\sqrt{\rho_4}}\chi)}$.

For example $\theta_{{\tt 1}_p}=\frac12(\theta-\theta^{\ang{p^2}})$ for a prime $p$.

In their paper, Serre and Stark prove that $\cM(\Gamma_1(N))_{\frac12}$ is spanned by
\[\theta^{\ang{h}} \text{ with } 4h|N,\6 \theta_{\chi}^{\ang h} \text{ with } 4c_\chi^2h|N\]
where $c_\chi$ is the conductor of $\chi$. For example
\[\{\theta,\theta^{\ang4},\theta^{\ang{16}},\theta^{\ang{64}}\}\]
is a basis of $\cM(256,1)_{(\frac12,\overline{\sqrt{\rho_4}})}$ and
\[\{\theta^{\ang2},\theta^{\ang8},\theta^{\ang{32}},\theta_{\chi_8}\}\]
is a basis of $\cM(256,1)_{(\frac12,\overline{\sqrt{\rho_4}}\chi_8)}$.

If $\chi$ is totally-even, that is, those $\chi$ whose prime-power components $\chi_p$ are all even, or $\chi$ is square of some character of conductor $\gcd(c_\chi,2)c_\chi$, then $\theta_{\chi}$ is (closely related to) a twist of $\theta$, and hence one would not expect it to be cuspidal. However, if $\chi$ is even but not totally even, then $\theta_{\chi}$ turns out to be a cusp form.

\newpage

\section{Eta family}

\subsection{Eta function}

Put the Dedekind eta function
\[\eta=q^{\frac1{24}}\VP{n\in\N}(1-q^n).\]

Note the Jacobi's triple product identity
\[\VP{n\in\N}(1-x^{2n})(1+x^{2n-1}y)(1+x^{2n-1}y^{-1})=\VS{n\in\Z}x^{n^2}y^n\]
for complex numbers $x,y$ such that $|x|<1$ and $y\neq0$.

Putting $x=q^{3/2}$ and $y=-q^{1/2}$ yields Euler's pentagonal numbers theorem
\[\VP{n\in\N}(1-q^n)=\VS{n\in\Z}(-1)^nq^{\frac{3n^2-n}2}\]
thus
\[\eta=\VS{n\in\Z}(-1)^nq^{\frac{(6n-1)^2}{24}}=\theta_{\chi_{12}}^{\ang{\frac1{24}}}\]\\
where $\chi_{12}=\chi_4\chi_3=(\frac3\bullet)$.\\

Note the logarithmic derivative
\[\frac d{dz}\log\eta(z)=\frac{\pi i}{12}+2\pi i\VS{n\in\N}\dfrac{nq^n}{1-q^n}=\dfrac{\pi i}{12}\iE_{1,2}(z).\]

$\eta$ satisfies the functional equations
\[\eta|_{\frac12}(\BM 1&1\\0&1 \EM)=1^{\frac1{24}}\eta,\6\eta|_{\frac12}(\BM 0&-1\\1&0 \EM)=1^{\frac78}\eta.\]
Since ${\rm SL}_2(\Z)=\ang{(\BM 1&1\\0&1 \EM),(\BM 0&-1\\1&0 \EM)}$ we get $\eta^{24}\in\cM(1)_{12}$ and
\[\z\frac1{12^3}(\iE_{1,4}^3-\iE_{1,6}^2)=\eta^{24}.\]

Note $\Gamma(1)$ has only one cusp $i\infty$ and $\cM(1)_k\cap\C[[q]]q=\cS(1)_k$, hence
\[\eta^{24}\in\cS(1)_{12}\]

\vspace{0.5cm}

More precisely, Rademacher showed for $(\BM a&b\\c&d \EM)\in\Gamma(1)$
\[\eta|_{\frac12}(\BM a&b\\c&d \EM)=\BC 1^{\frac{ac+bd-cd-acd^2+3d-3}{24}}\eta & \text{if }c\in2\N\\
\z(\frac dc)(\frac cd)1^{\frac{ac+bd+cd-bc^2d-3c}{24}}\eta & \text{if }c\in2\M+1 \EC\]

\begin{Prop}\label{cuspform}$\eta^3\in\cS(2)_{(\frac32^*,c_8b_8)}$ and $\eta\in\cS(6)_{(\frac12^*,c_{24}b_{24})}$.\end{Prop}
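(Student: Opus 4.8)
The plan is to verify, for each of the two claims, the three defining conditions of a cusp form of the relevant half-integer weight with the relevant character: invariance (up to the character) under the stated group $\varGamma$, holomorphy and boundedness of $f|_\kappa\alpha$ at every cusp for all $\alpha\in{\rm SL}_2(\Z)$, and vanishing at every cusp. The starting point is Rademacher's explicit formula for $\eta|_{\frac12}(\BM a&b\\c&d\EM)$ displayed just above, which reduces everything to a bookkeeping of $24$th roots of unity.

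For $\eta^3\in\cS(2)_{(\frac32^*,c_8b_8)}$: first I would recall that $(\frac32^*,c_8b_8)$ unwinds, by the convention $(\frac k2^*,\chi)=(\frac k2,\xi_8^k\chi)$, to weight $\frac32$ with character $\xi_8^3c_8b_8$ on $\Gamma(2)=\Gamma_0(2)\cap\Gamma^0(2)$. For a general $\gamma=(\BM a&2b'\\2c'&d\EM)\in\Gamma(2)$ one has $c=2c'$ even, so Rademacher's first case applies and gives $\eta|_{\frac12}\gamma=1^{e(\gamma)/24}\eta$ with $e(\gamma)=ac+bd-cd-acd^2+3d-3$ (here $b=2b'$). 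Cubing, $\eta^3|_{\frac32}\gamma=1^{3e(\gamma)/24}\eta^3=1^{e(\gamma)/8}\eta^3$. The task is then to check that $\gamma\mapsto 1^{e(\gamma)/8}$ agrees with $\xi_8^3(d)\,c_8(\gamma)\,b_8(\gamma)=\xi_8^3(d)\,1^{-cd/8}1^{bd/8}$ as characters of $\Gamma(2)$; since $\Gamma(2)$ is generated by $(\BM 1&2\\0&1\EM)$, $(\BM 1&0\\2&1\EM)$ and $-I$ (or a similarly small generating set), it suffices to check the identity on those generators, a finite computation with $8$th roots of unity. Holomorphy and boundedness: $\eta$ has no zeros or poles on $\cH$, and $\eta^3|_{\frac32}\alpha$ for any $\alpha\in{\rm SL}_2(\Z)$ is, by Rademacher again, a root of unity times $\eta^3$ evaluated at a translate, hence a constant times $q^{1/8}\prod(1-q^{n})^3$ in some $q^{1/N}$-expansion — this is holomorphic and tends to $0$ as $\Im\tau\to\infty$, which simultaneously settles the cusp conditions and the cuspidality. (One should note $-I\in\Gamma(2)$ and check the sign condition $\chi(-I)=i^{-2\kappa}=i^{-3}$ is consistent, which it is by the same root-of-unity check.)

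For $\eta\in\cS(6)_{(\frac12^*,c_{24}b_{24})}$: here $\cM(6)=\cM(\Gamma_0(6)\cap\Gamma^0(6))$, and I note $(\BM 1&6b'\\6c'&d\EM)\in\Gamma(6)\subset\Gamma_0(6)\cap\Gamma^0(6)$ again has $c=6c'$ even, so Rademacher's even-$c$ branch applies throughout and $\eta|_{\frac12}\gamma=1^{e(\gamma)/24}\eta$ with the same $e(\gamma)$. Now the target character, unwound, is $\xi_8\,c_{24}b_{24}$, i.e. $\gamma\mapsto\xi_8(d)\,1^{-cd/24}1^{bd/24}$, and one must match $1^{e(\gamma)/24}$ with this on a generating set of $\Gamma_0(6)\cap\Gamma^0(6)$ — note it genuinely is $\Gamma_0(6)\cap\Gamma^0(6)$ and not merely $\Gamma(6)$, so the generating set is slightly larger, but Lemmas \ref{b8c8}–\ref{b3c3} already tell us $b_{24}=b_8b_9$-type products are homomorphisms on the relevant intersections, which is exactly what licenses the character to be multiplicative; I would invoke those lemmas to reduce to checking agreement on generators of $\Gamma_0(6)\cap\Gamma^0(6)$ modulo the common kernel. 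The holomorphy, boundedness and vanishing at cusps go through verbatim as above, since $\eta|_{\frac12}\alpha$ for $\alpha\in{\rm SL}_2(\Z)$ is always a constant times $q^{1/24}\prod(1-q^n)$ in a suitable fractional-$q$ expansion.

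The main obstacle is the root-of-unity bookkeeping: getting the exponents $e(\gamma)\bmod 8$ (resp. $\bmod 24$) to match $\xi_8^k c_{\bullet}b_{\bullet}$ on the generators, keeping track of the factor $\xi_8^k$ hidden in the $(\cdot^*)$-notation and the $(\frac dc)(\frac cd)$ Jacobi-symbol factors that appear in Rademacher's odd-$c$ branch (which, happily, never arises here because every relevant $\gamma$ has even lower-left entry — this is the observation that makes the whole proof clean and is worth stating explicitly). A secondary point to get right is that one must exhibit an explicit finite generating set of $\Gamma(2)$ and of $\Gamma_0(6)\cap\Gamma^0(6)$ modulo $\ker$ of the character, and confirm that the purported character is well-defined on it — but this is precisely the content of Lemmas \ref{b8c8} and \ref{b3c3} combined with $\eta|_{\frac12}$ being a genuine (projective) automorphy factor, so no new idea is needed beyond careful accounting.
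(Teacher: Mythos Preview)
Your overall strategy---start from Rademacher's formula, match the resulting multiplier to $\xi_8^k c_\bullet b_\bullet$, and note that $\eta$ (hence any power) vanishes at every cusp---is exactly the paper's. The difference is in how the matching is carried out.

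The paper does \emph{not} check on generators. Instead it simplifies Rademacher's exponent $ac+bd-cd-acd^2+3d-3$ directly and uniformly: since $d$ is odd one has $d^2\equiv 1\pmod 8$, so $ac-acd^2\equiv 0\pmod 8$ and the exponent reduces to $bd-cd+3(d-1)\pmod 8$; then one reads off $1^{bd/8}=b_8$, $1^{-cd/8}=c_8$, and $1^{3(d-1)/8}=\xi_8(d)^3$ termwise. For the second claim $d$ is coprime to $6$, hence $d^2\equiv 1\pmod{24}$, and the same reduction works modulo $24$. This is a one-line computation valid for \emph{every} $\gamma$ in the group.

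Your generator-checking shortcut has a real gap. Neither side of the identity you want to verify is known a priori to be multiplicative in $\gamma$: the half-integer operator $|_{\frac12}$ is explicitly \emph{not} an action (so $\gamma\mapsto \eta^3|_{3/2}\gamma/\eta^3$ has a nontrivial cocycle defect), and $\xi_8$ is defined only as a \emph{map}, not a homomorphism, on $(\Z/8\Z)^\times$. Hence agreeing on generators does not automatically propagate to the whole group; you would have to show separately that the cocycle defects of the two sides coincide, which is essentially as much work as the direct simplification. The paper's approach buys you exactly this: by reducing the exponent pointwise you never need multiplicativity of either side.
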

\begin{proof}Note $\xi_8(n)=1^{\frac{n-1}8}$ for odd $n$. For $\gamma=(\BM a&b\\c&d \EM)\in\Gamma_0(2)\cap\Gamma^0(2)$ we see
\[\frac{\eta^3|\gamma}{\eta^3}=1^{\frac{ac+bd-cd-acd^2+3d-3}{8}}=1^{\frac{bd-cd+3d-3}8}=\xi_8^3c_8b_8(\gamma).\]

For $\gamma=(\BM a&b\\c&d \EM)\in\Gamma_0(6)\cap\Gamma^0(6)$ we see
\[\frac{\eta|\gamma}{\eta}=1^{\frac{bd-cd+3d-3}{24}}=\xi_8c_{24}b_{24}(\gamma).\]\end{proof}

\newpage

\subsection{Eta quotients}

Functions of the form $\VP{d|N}\eta^{\ang dr_d}$ ($N\in\N$ and $r_d\in\Z$ for $d|N$) is called eta-quotients. Note $\eta$ is non-vanishing away from the cusps. The order of vanishing of $\VP{d|N}\eta^{\ang dr_d}$ at the cusp $\frac nm$ is $\frac N{24}\VS{d|N}\frac{(d,m)^2r_d}{(m,N/m)dm}$. It only depends on $m$, and not on $n$.\\[0.5cm]

Note $\eta^{\ang h}\big((\BM 0&-1\\1&0\EM)z\big)=\sqrt{\frac zh}e^{-\frac{\pi i}4}\eta^{\ang{\frac1h}}(z)$ and
\[\eta^{"p"}|_{\frac{p-1}2}(\BM 0&-1\\1&0\EM)=\frac{\eta^{\ang p}\big((\BM 0&-1\\1&0\EM)z\big)^p}{\sqrt z\eta\big((\BM 0&-1\\1&0\EM)z\big)}=\frac{e^{-\frac{(p-1)}4\pi i}}{\sqrt p^p}\eta^{/p/}.\]\\

We write $\flat={/2/}$ and $\sharp="2"$ then $\eta^{\sharp}|_{\frac12}(\BM 0&-1\\1&0\EM)=\frac{1^{\frac78}}2\eta^{\flat}$ and $\eta^{\flat}|_{\frac12}(\BM 0&-1\\1&0\EM)=2\cdot1^{\frac78}\eta^{\sharp}$.

\begin{Prop}\label{lev2}$\eta^{\flat}\in\cM(1,2)_{(\frac12^*,c_8)}$ and $\eta^{\sharp}\in\cM(2,1)_{(\frac12^*,b_8)}$.
\end{Prop}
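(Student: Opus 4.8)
The plan is to obtain both transformation laws directly from Rademacher's formula for $\eta|_{\frac12}\gamma$ (stated above for $\gamma\in\Gamma(1)$), and then to check holomorphy at the cusps via the order-of-vanishing formula for eta-quotients. The two assertions are completely parallel, so I will run the argument for $\eta^{\sharp}\in\cM(2,1)_{(\frac12^*,b_8)}$ and indicate the symmetric treatment of $\eta^{\flat}$ afterwards. First I would unwind $\eta^{\sharp}=(\eta^{\ang2})^2/\eta$, of weight $2\cdot\frac12-\frac12=\frac12$. Since $\eta^{\ang2}$, $\eta$ and $\eta^{\sharp}$ all have weight $\frac12$ they share the automorphy factor $J_{\frac12}(\gamma,\tau)=(\frac cd)\sqrt{c\tau+d}$, so substituting $\eta^{\ang2}(\gamma\tau)=(\eta^{\ang2}|_{\frac12}\gamma)(\tau)\,J_{\frac12}(\gamma,\tau)$ and $\eta(\gamma\tau)=(\eta|_{\frac12}\gamma)(\tau)\,J_{\frac12}(\gamma,\tau)$ into $\eta^{\sharp}(\gamma\tau)=\eta^{\ang2}(\gamma\tau)^2/\eta(\gamma\tau)$ yields, with no sign ambiguity,
\[\eta^{\sharp}|_{\frac12}\gamma=\frac{(\eta^{\ang2}|_{\frac12}\gamma)^2}{\eta|_{\frac12}\gamma}\6(\gamma\in\Gamma_0(2)).\]
For $\gamma=(\BM a&b\\c&d\EM)\in\Gamma_0(2)$, so $2\mid c$, the rescaling identity for $f^{\ang h}$ with $h=2$ gives $\eta^{\ang2}|_{\frac12}\gamma=(\frac2d)\,(\eta|_{\frac12}(\BM a&2b\\c/2&d\EM))^{\ang2}$, and $(\BM a&2b\\c/2&d\EM)\in\Gamma(1)$.

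Next I would feed Rademacher's formula into both pieces. For $\gamma$ itself we are always in the case $c\in2\N$; for $(\BM a&2b\\c/2&d\EM)$ the case splits according as $c\equiv0$ or $2\mod4$ (i.e.\ $c/2$ even or odd), the genuinely new ingredient in the odd sub-case being the reciprocity factor $(\frac d{c/2})(\frac{c/2}d)$, which must be reconciled against the $(\frac2d)$ thrown off by the rescaling. After substituting, the powers of $1^{\frac1{24}}$ collapse, and using $ad-bc=1$ and $d^2\equiv1\mod 8$ one isolates the multiplier in the shape $\xi_8(n)=1^{\frac{n-1}8}$ (for odd $n$); the outcome should be exactly $\eta^{\sharp}|_{\frac12}\gamma=\xi_8(\gamma)b_8(\gamma)\,\eta^{\sharp}$. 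The reduction to $c>0$ (using $-I\in\Gamma_0(2)$) and the case $c=0$, that is $\gamma=\pm(\BM1&b\\0&1\EM)$, are treated separately: from $\eta^{\sharp}\in q^{\frac18}\C[[q]]$ the translation contributes the factor $1^{\frac b8}$, and from $J_{\frac12}(-I,\tau)=i$ one gets $\eta^{\sharp}|_{\frac12}(-I)=-i\,\eta^{\sharp}=\xi_8(-1)\,\eta^{\sharp}$, consistent with $\xi_8 b_8$ there (and with the general vanishing criterion, as $\xi_8 b_8(-I)=-i=i^{-1}$).

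For holomorphy, the order of $\eta^{\sharp}$ at a cusp is controlled by the order-of-vanishing formula for the level-$2$ eta-quotient with exponents $(r_1,r_2)=(-1,2)$: it gives $\frac18$ at $\infty$ (consistent with the $q$-expansion of $\eta^{\sharp}$ lying in $q^{\frac18}\C[[q]]$ with leading coefficient $1$) and $0$ at $0$, both $\ge0$. Hence $\eta^{\sharp}$ is holomorphic on $\cH$ and bounded at every cusp, so $\eta^{\sharp}\in\cM(\ker(\xi_8 b_8))_{\frac12}$, and together with the transformation law this gives $\eta^{\sharp}\in\cM(2,1)_{(\frac12^*,b_8)}$.

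For $\eta^{\flat}=(\eta^{\ang{\frac12}})^2/\eta$ I would rerun the same argument on $\Gamma^0(2)$: the identical $J_{\frac12}$-manipulation gives $\eta^{\flat}|_{\frac12}\gamma=(\eta^{\ang{\frac12}}|_{\frac12}\gamma)^2/(\eta|_{\frac12}\gamma)$, then I invoke the companion rescaling identity for $f^{\ang{\frac1h}}$ with $h=2$ (legitimate since $2\mid b$ on $\Gamma^0(2)$), apply Rademacher to $(\BM a&b/2\\2c&d\EM)$ and to $\gamma$ — the case split now being on the parity of the unconstrained $c$ — and read off $\eta^{\flat}|_{\frac12}\gamma=\xi_8(\gamma)c_8(\gamma)\,\eta^{\flat}$; holomorphy follows in the same manner, with $\eta^{\flat}\in1+\C[[q^{\frac12}]]q^{\frac12}$. (One could instead try to transfer the $\eta^{\sharp}$-law through $(\BM0&-1\\1&0\EM)$, using $\Gamma^0(2)\TL(\BM0&-1\\1&0\EM)=\Gamma_0(2)$ and $\eta^{\sharp}|_{\frac12}(\BM0&-1\\1&0\EM)=\frac{1^{\frac78}}2\eta^{\flat}$; but since $|_{\frac12}$ is not a group action, the naive twist $\chi\mapsto\chi\TL\alpha$ of the integer-weight proposition does not give the correct multiplier — indeed $(\xi_8 b_8)\TL(\BM0&-1\\1&0\EM)\neq\xi_8 c_8$ on $\Gamma^0(2)$ — so the direct computation is the clean route.) The one real obstacle throughout is the bookkeeping of the $24$th, ultimately $8$th, roots of unity: above all, matching the reciprocity symbol from Rademacher's odd case against the Kronecker symbols produced by rescaling, done uniformly across the sign and $c\le0$ cases forced by $-I\in\Gamma_0(2)$.
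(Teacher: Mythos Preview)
Your approach is correct and takes a genuinely different route from the paper's. You run Rademacher's formula directly on the full groups $\Gamma_0(2)$ (for $\eta^{\sharp}$) and $\Gamma^0(2)$ (for $\eta^{\flat}$), which forces the case split on the parity of $c/2$ (resp.\ of $c$) and drags in the reciprocity factor $(\frac d{c'})(\frac{c'}d)$ from Rademacher's odd-$c$ branch. The paper instead first computes only on $\Gamma(2)=\Gamma_0(2)\cap\Gamma^0(2)$, where both $c$ and the rescaled lower-left entry remain even, so only the easy case of Rademacher is needed and the ratio for $\eta^{\flat}$ collapses to $c_8\xi_8$ in one line. It then extends $\eta^{\flat}$ from $\Gamma(2)$ to all of $\Gamma^0(2)$ by handling the single missing generator $(\BM1&0\\-1&1\EM)$ --- not via Rademacher at all, but through the Fricke relation $\eta^{\flat}=2\cdot1^{\frac18}\,\eta^{\sharp}|(\BM0&-1\\1&0\EM)$ together with $\eta^{\sharp}|(\BM1&1\\0&1\EM)=1^{\frac18}\eta^{\sharp}$ read off from $\eta^{\sharp}\in q^{\frac18}\C[[q]]$, and then the coset trick $(\BM a&b\\c&d\EM)=(\BM a+b&b\\c+d&d\EM)(\BM1&0\\-1&1\EM)$. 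Finally, the $\eta^{\sharp}$-law on $\Gamma_0(2)$ is obtained by conjugating through $(\BM0&-1\\1&0\EM)$ and invoking the $\eta^{\flat}$-law just proved. So your parenthetical is only half right: the abstract integer-weight proposition indeed fails, but a hands-on Fricke computation (tracking the cocycle that measures the failure of $|_{\frac12}$ to be an action) does go through, and that is precisely what the paper does. Your route buys conceptual uniformity --- plug into Rademacher everywhere, no tricks --- at the price of the heavier symbol bookkeeping you flag; the paper's route avoids the odd-$c$ reciprocity case entirely and gets the second assertion essentially for free from the first, at the price of the generator-extension and Fricke steps.
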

\begin{proof}For $\gamma=(\BM a&b\\c&d \EM)\in\Gamma_0(2)\cap\Gamma^0(2)$, we see
\[\frac{\eta^{\flat}|\gamma}{\eta^{\flat}}=\frac{1^{\frac{2ac+\frac b2d-2cd-2acd^2+3d-3}{12}}}{1^{\frac{ac+bd-cd-acd^2+3d-3}{24}}}=1^{\frac{ac-cd-acd^2+d-1}8}=c_8\xi_8(\gamma).\]

Since $\eta^{\sharp}\in\C[[q]]q^{\frac18}$ we see $\eta^{\sharp}|(\BM 1&1\\0&1\EM)=1^{\frac18}\eta^{\sharp}$ and
\[\eta^{\flat}|(\BM 1&0\\-1&1\EM)=2\cdot1^{\frac18}\eta^{\sharp}|(\BM 0&-1\\1&0\EM)(\BM 1&0\\-1&1\EM)=2\cdot1^{\frac18}\eta^{\sharp}|(\BM 1&1\\0&1\EM)(\BM 0&-1\\1&0\EM)=1^{\frac18}\eta^{\flat}.\]
Then, $(\BM a&b\\c&d\EM)=(\BM a+b&b\\c+d&d\EM)(\BM 1&0\\-1&1\EM)$ and $(\frac{c+d}d)=(\frac cd)$ leads to the former assertion.

For $(\BM a&b\\c&d\EM)\in\Gamma_0(2)$
\[\eta^{\sharp}|(\BM a&b\\c&d\EM)=\eta^{\sharp}|(\BM 0&-1\\1&0\EM)(\BM d&-c\\-b&a\EM)(\BM 0&1\\-1&0\EM)=1^{-\frac{bd}8}\xi_8(d)\eta^{\sharp}.\]
\end{proof}

\begin{Lem}\label{etanf}$\eta^{\flat}=\VS{n\in\Z}(-1)^nq^{\frac{n^2}2}$ and $\eta^{\sharp}=\VS{n\in\M}q^{\frac{(2n+1)^2}8}=\theta_{{\tt 1}_2}^{\ang{\frac18}}$.\end{Lem}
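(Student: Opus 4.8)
The two identities are "product-to-sum" evaluations of the eta-quotients $\eta^{\flat}=\eta^{\ang{1/2}2}/\eta=\eta(\tau/2)^2/\eta(\tau)$ and $\eta^{\sharp}=\eta^{\ang 2 2}/\eta=\eta(2\tau)^2/\eta(\tau)$, and I would prove them directly from the product expansion of $\eta$, not using any functional equations. Writing $\eta=q^{1/24}\VP{n\in\N}(1-q^n)$, we have
\[\eta^{\sharp}=\frac{q^{2/24}\VP{n\in\N}(1-q^{2n})^2}{q^{1/24}\VP{n\in\N}(1-q^n)}
 =q^{1/24}\VP{n\in\N}\frac{(1-q^{2n})^2}{1-q^n}
 =q^{1/24}\VP{n\in\N}(1-q^{2n})(1+q^n),\]
using $1-q^n=(1-q^{2n})/(1+q^n)$. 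Grouping the product over even and odd $n$, $\VP{n}(1-q^{2n})(1+q^n)=\VP{n}(1+q^{2n})\VP{n}(1+q^{2n-1})$ after the usual telescoping of $(1-q^{2n})(1+q^{2n})=(1-q^{4n})$ against the even part; more cleanly, I would just apply Jacobi's triple product as stated in the paper with $x=q^{1/2}$, $y=q^{1/2}$, which gives $\VP{n}(1-q^n)(1+q^{n})(1+q^{n-1})$ — not quite the shape I want, so the right specialization is $x=q$, $y=q^{0}$-type juggling; the correct move is $x=q^{1/2}$ and $y=1$ after first reducing $\eta^{\sharp}/q^{1/24}$ to $\VP{n}(1-q^{2n})(1+q^{2n-1})(1+q^{2n-1})$, i.e. I must first kill the $(1+q^{2n})$ factors. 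Concretely, $\VP{n}(1+q^n)=\VP{n}\frac{1-q^{2n}}{1-q^n}$, so $\VP{n}(1-q^{2n})(1+q^n)=\VP{n}\frac{(1-q^{2n})^2}{1-q^n}=\VP{n}(1-q^{2n})(1+q^{2n})(1+q^{2n-1})=\VP{n}(1-q^{4n})(1+q^{2n-1})$; then triple product with $x=q^2$, $y=q^{-1}$ after a shift... In any case the clean target is $\sum_{n\in\M}q^{(2n+1)^2/8}$, i.e. $q^{1/8}(1+q+q^3+q^6+\cdots)$, so after factoring $q^{1/8}=q^{3/24}\cdot q^{-1/12}$ — wait, $1/8=3/24$, so $\eta^{\sharp}=q^{1/24}\cdot(\text{product})$ must equal $q^{3/24}\sum_{n\ge0}q^{(4n^2+4n)/8}=q^{1/8}\sum_{n\ge0}q^{(n^2+n)/2}$, and $\sum_{n\ge0}q^{n(n+1)/2}=\VP{n\in\N}\frac{1-q^{2n}}{1-q^{2n-1}}$ is the standard triangular-number identity, which itself follows from triple product with $x=q$, $y=q$. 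So the chain is: reduce $\eta^{\sharp}/q^{1/24}$ to $q^{1/12}\VP{n}\frac{1-q^{2n}}{1-q^{2n-1}}$ by elementary factor manipulation, then invoke the triangular-number case of Jacobi triple product.

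**For $\eta^{\flat}$.** Here $\eta^{\flat}=\eta(\tau/2)^2/\eta(\tau)$, so with $Q=q^{1/2}$,
\[\eta^{\flat}=\frac{Q^{2/24}\VP{n}(1-Q^n)^2}{Q^{2/24}\VP{n}(1-Q^{2n})}=\VP{n\in\N}\frac{(1-Q^n)^2}{1-Q^{2n}}=\VP{n\in\N}\frac{1-Q^n}{1+Q^n},\]
the $q$-powers cancelling exactly. Now apply Jacobi's triple product as printed in the paper with $x=Q=q^{1/2}$ and $y=-1$: the left side is $\VP{n}(1-Q^{2n})(1-Q^{2n-1})^2$, and I must match this to $\VP{n}\frac{1-Q^n}{1+Q^n}$. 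Indeed $\VP{n}\frac{1-Q^n}{1+Q^n}=\VP{n}\frac{(1-Q^n)^2}{1-Q^{2n}}=\VP{n}(1-Q^{2n})(1-Q^{2n-1})^2\cdot\frac{(1-Q^{2n})}{(1-Q^{2n})}$ after splitting $(1-Q^n)^2$ over parity, which is exactly the triple-product left side; so the right side gives $\VS{n\in\Z}Q^{n^2}(-1)^n=\VS{n\in\Z}(-1)^n q^{n^2/2}$, as claimed.

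**Main obstacle and cross-checks.** The only real work is bookkeeping: tracking the rational powers $q^{1/24}$, $q^{1/8}=q^{3/24}$, and the rescalings $\ang h$ and $\ang{1/h}$ correctly, and choosing the right specialization $(x,y)$ in the triple product so that the infinite products line up without leftover factors. I expect the $\eta^{\sharp}$ case to be the fussier one because the $q$-prefactors do not cancel and one lands on the triangular-number theta rather than a symmetric one; I would double-check the first few Fourier coefficients against $\eta^{\sharp}=q^{1/8}(1+q+q^3+q^6+q^{10}+\cdots)$ (exponents $\tfrac{(2n+1)^2}{8}$ for $n=0,1,2,\dots$, i.e. $\tfrac18,\tfrac98,\tfrac{25}8,\dots$) and $\eta^{\flat}=1-2q^{1/2}+2q^2-2q^{9/2}+\cdots$. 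Finally, the identification $\eta^{\sharp}=\theta_{{\tt 1}_2}^{\ang{1/8}}$ is immediate from the Serre–Stark formula $\theta_{{\tt 1}_2}=\tfrac12(\theta-\theta^{\ang4})=\sum_{n\text{ odd}\ge1}q^{n^2}=\sum_{n\in\M}q^{(2n+1)^2}$ recalled in \S2.7, rescaled by $\ang{1/8}$; no further argument is needed.
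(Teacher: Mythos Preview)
Your approach is the paper's: product manipulation followed by a specialization of Jacobi's triple product. For $\eta^{\flat}$ your argument is identical to the paper's ($x=q^{1/2}$, $y=-1$). For $\eta^{\sharp}$ you actually land on the paper's specialization $x=y=q^{1/2}$ early on but dismiss it: the product $\VP{n}(1-q^n)(1+q^n)(1+q^{n-1})$ \emph{is} the right shape, since $\VP{n\ge1}(1+q^{n-1})=2\VP{n\ge1}(1+q^n)$ and $\VS{n\in\Z}q^{(n^2+n)/2}=2\VS{n\ge0}q^{n(n+1)/2}$ by the symmetry $n\leftrightarrow -n-1$, so the factors of $2$ cancel and you are done in one line. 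Your alternate route via Euler's identity $\VP{n}(1+q^n)=\VP{n}(1-q^{2n-1})^{-1}$ and Gauss's triangular-number sum also works and is equivalent.

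One arithmetic slip to fix: $\eta(2\tau)^2$ contributes $q^{4/24}$, not $q^{2/24}$, so $\eta^{\sharp}=q^{3/24}\VP{n}(1-q^{2n})(1+q^n)=q^{1/8}\VP{n}(1-q^{2n})(1+q^n)$; this is why a phantom $q^{1/12}$ kept appearing in your bookkeeping. Also, the specialization for the triangular-number identity should be $x=y=q^{1/2}$ (as in the paper), not $x=y=q$. With these two corrections your write-up matches the paper.
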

\begin{proof}Note $\eta^{\flat}=\VP{n\in\N}\dfrac{(1-q^{\frac n2})^2}{1-q^n}=\VP{n\in\N}(1-q^n)(1-q^{n-\frac12})^2$. On Jacobi's triple product identity, putting $x=q^{\frac12}$ and $y=-1$ yields the former identity.

Note $\eta^{\sharp}=q^{\frac18}\VP{n\in\N}(1-q^n)(1+q^n)^2$. On Jacobi's triple product identity, putting $x=y=q^{\frac12}$ yields
\[\VP{n\in\N}(1-q^n)(1+q^n)(1+q^{n-1})=\VS{n\in\Z}q^{\frac{n^2+n}2}.\]
\end{proof}

\newpage

We write $\bot={/3/}$ and $\top="3"$.

\begin{Lem}\label{lev3}$\eta^{\bot}\in\cM(1,3)_{(1,\chi_3c_3)}$ and $\eta^{\top}\in\cM(3,1)_{(1,\chi_3b_3)}$.
\end{Lem}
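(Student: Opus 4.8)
The plan is to mimic the structure of Proposition \ref{lev2}, which handled the $\flat={/2/}$, $\sharp="2"$ case, and adapt every step from the prime $2$ to the prime $3$. Recall $\eta^{\bot}=\eta^{/3/}=\eta^{\ang{1/3}3}/\eta$ and $\eta^{\top}=\eta^{"3"}=\eta^{\ang33}/\eta$, so as $q$-series we have $\eta^{\bot}=\prod_{n\in\N}(1-q^{n/3})^3/(1-q^n)$ and $\eta^{\top}=\prod_{n\in\N}(1-q^{3n})^3/(1-q^n)$. In particular $\eta^{\bot}\in q^{0}\cdot\C[[q^{1/3}]]$ — more precisely the leading exponent is $\tfrac{3}{24}\cdot\tfrac13-\tfrac1{24}=\tfrac1{36}-\tfrac1{24}<0$; one should double-check the fractional leading term to confirm holomorphicity is a statement about behavior at \emph{all} cusps, not just $i\infty$, exactly as in the eta-quotient order-of-vanishing formula recalled at the start of \S3.2.

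First I would compute the transformation of $\eta^{\bot}$ and $\eta^{\top}$ under $(\BM 0&-1\\1&0\EM)$ using the identity $\eta^{\ang h}\big((\BM 0&-1\\1&0\EM)z\big)=\sqrt{z/h}\,e^{-\pi i/4}\eta^{\ang{1/h}}(z)$ already stated in \S3.2; this is the $p=3$ case of the displayed formula $\eta^{"p"}|_{(p-1)/2}(\BM 0&-1\\1&0\EM)=e^{-(p-1)\pi i/4}\sqrt p^{-p}\,\eta^{/p/}$, giving $\eta^{\top}|_1(\BM 0&-1\\1&0\EM)=\tfrac{1^{1/2}}{3}\eta^{\bot}$ (up to the precise root of unity, which I would pin down) and a companion relation $\eta^{\bot}|_1(\BM 0&-1\\1&0\EM)=3\cdot(\text{root})\,\eta^{\top}$. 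Then, exactly as in Proposition \ref{lev2}, I would (i) for $\eta^{\bot}$: use Rademacher's explicit formula for $\eta|_{1/2}(\BM a&b\\c&d\EM)$ to evaluate $\eta^{\bot}|\gamma/\eta^{\bot}$ directly on $\gamma=(\BM a&b\\c&d\EM)\in\Gamma_0(3)\cap\Gamma^0(3)=\Gamma^0(3)\cap\Gamma_0(3)$, writing $\eta^{\bot}|\gamma=\eta^{\ang{1/3}}|\gamma\cdot\text{stuff}/\eta|\gamma$, reduce the resulting exponent of $1^{1/\bullet}$ modulo the relevant power, and recognize it as $\chi_3 c_3(\gamma)$ (up to the $\xi_8$-type twist already folded into the $*$-notation); (ii) for $\eta^{\top}$: first handle the translation $\eta^{\top}|(\BM 1&1\\0&1\EM)$ (immediate from the $q$-expansion, since $\eta^{\top}\in q^{1/3}\C[[q]]$), then use the $(\BM 0&-1\\1&0\EM)$-relation from the first step to conjugate a general $\gamma\in\Gamma_0(3)$ into $\Gamma^0(3)$ via $\gamma\mapsto(\BM 0&-1\\1&0\EM)\gamma^{-1}(\BM 0&1\\-1&0\EM)$ as in the last display of the proof of Proposition \ref{lev2}, landing the value $1^{-bd/3}\chi_3(d)$ up to the $b_3$-factor, i.e. $\chi_3 b_3(\gamma)$.

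The bookkeeping steps I expect to be routine but delicate are: (a) keeping the roots of unity $1^{1/n}$ consistent — in the level-$2$ case the answer involved $1^{7/8}$, $1^{1/8}$, $1^{1/12}$, $1^{1/24}$ and required reducing exponents mod $8$, $12$, $24$; here the analogous moduli will involve $3$ and $12$ (since $\bot$ rescales by $3$ and Rademacher's formula has a denominator $24$, the natural common modulus after the $\eta^{\ang{1/3}3}/\eta$ cancellation is likely $8$ or $12$), and the character $\chi_3=(\tfrac\bullet3)=(\tfrac{-3}\bullet)$ takes values $\pm1$ so must come out of a $\pm1$-valued exponent; (b) verifying that the $\xi_8$-factor implicit in the $(\frac k2^*,\chi)$ notation is absorbed correctly — here the weight is the \emph{integer} $1$, so there is actually no $\xi_8$-twist and the statement is cleanly $\eta^{\bot}\in\cM(1,3)_{(1,\chi_3c_3)}$, which is slightly simpler than the half-integral level-$2$ situation. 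The genuine obstacle is the boundedness/holomorphy at the cusps: I must confirm $\eta^{\bot}$ and $\eta^{\top}$ are honest elements of $\cM(\,\cdot\,)_1$ and not merely weakly holomorphic. For this I would invoke the order-of-vanishing formula $\frac N{24}\sum_{d|N}\frac{(d,m)^2 r_d}{(m,N/m)dm}$ for eta-quotients (stated at the top of \S3.2) with $N=3$, $(r_1,r_3)=(-1,3)$ for $\eta^{\top}$ and the rescaled analogue for $\eta^{\bot}$, and check the two cusps $m=1,3$ of $\Gamma_0(3)$ (resp. $\Gamma^0(3)$) both give nonnegative order — this is the step where the specific small prime $p=3$ matters and where I would actually sit down and compute the two numbers rather than wave hands.
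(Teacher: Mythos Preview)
Your approach is essentially the paper's, but you are following Proposition \ref{lev2} too literally and this creates a small but real gap. In the level-$2$ proof the restriction to $\Gamma_0(2)\cap\Gamma^0(2)$ was made precisely so that $c$ is even and only the \emph{first} branch of Rademacher's formula is needed; the extension to $\Gamma^0(2)$ then came from the extra generator $(\BM 1&0\\-1&1\EM)$. At level $3$ this maneuver buys nothing: Rademacher's formula branches on the \emph{parity} of $c$, which is completely independent of $3\mid c$, so restricting to $\Gamma_0(3)\cap\Gamma^0(3)$ does not let you avoid the odd-$c$ case. You will need both branches anyway, and you never say how you pass from the intersection back up to the full $\Gamma^0(3)$ --- yet your step (ii) for $\eta^{\top}$ explicitly relies on already knowing the $\eta^{\bot}$ transformation law on all of $\Gamma^0(3)$.

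The paper simply drops the intermediate restriction: for $\gamma=(\BM a&b\\c&d\EM)\in\Gamma^0(3)$ it writes down $\eta^{\bot}|\gamma/\eta^{\bot}$ in each parity case of $c$ separately (two displayed lines), reduces the exponent, and recognizes $1^{-cd/3}\chi_3(d)=\chi_3c_3(\gamma)$ in both. Then $\eta^{\top}|_1(\BM 0&-1\\1&0\EM)=\frac1{\sqrt3^3 i}\eta^{\bot}$ gives the second claim by conjugation, exactly as you propose. So the fix is easy: work directly on $\Gamma^0(3)$ and handle both Rademacher cases. One small arithmetic slip: the leading $q$-exponent of $\eta^{\bot}$ is $3\cdot\frac1{72}-\frac1{24}=0$, not negative, so holomorphy at $i\infty$ is immediate; your instinct to check the other cusp via the eta-quotient order formula is correct and the paper leaves this implicit.
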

\begin{proof}For $(\BM a&b\\c&d \EM)\in\Gamma^0(3)$ we see
\[\frac{(\frac3d)1^{\frac{3}{24}(3ac+\frac b3d-3cd-3acd^2+3d-3)}}{1^{\frac1{24}(ac+bd-cd-acd^2+3d-3)}}=\z(\frac{-1}d)(\frac{-3}d)1^{\frac{(a-d-ad^2)c}3+\frac{d-1}4}=1^{-\frac{cd}3}\chi_3(d),\]
\[\frac{(\frac3d)(\frac d{3c})(\frac{3c}d)1^{\frac3{24}(3ac+\frac b3d+3cd-3bc^2d-9c)}}{(\frac dc)(\frac cd)1^{\frac3{24}(ac+bd+cd-bc^2d-3c)}}=\z(\frac d3)1^{\frac{(a+d-bcd)c}3}=1^{-\frac{cd}3}\chi_3(d),\]
and $\eta^{\top}|_1(\BM 0&-1\\1&0\EM)=\frac1{\sqrt3^3i}\eta^{\bot}$.
\end{proof}

The above forms have representaions as theta function
\[\eta^{\bot\ang3}=\VS{m,n\in\Z}1^{\frac{m-n}3}q^{m^2+mn+n^2},\]
\[3\eta^{\top}=\VS{m,n\in\Z+\frac13}q^{m^2+mn+n^2}.\]
\\

\begin{Lem}$\eta^{/5/}\in\cM(1,5)_{(2,\chi_5^2)}$ and $\eta^{"5"}\in\cM(5,1)_{(2,\chi_5^2)}$.\end{Lem}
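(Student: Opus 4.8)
The plan is to establish $\eta^{"5"}\in\cM(5,1)_{(2,\chi_5^2)}$ first, and then carry it over to $\eta^{/5/}$ by the weight-$2$ slash action of $(\BM 0&-1\\1&0\EM)$, using the identity $\eta^{"p"}|_{\frac{p-1}2}(\BM 0&-1\\1&0\EM)=\frac{e^{-(p-1)\pi i/4}}{(\sqrt p)^p}\,\eta^{/p/}$ recorded just above (with $p=5$). Since $\eta$ does not vanish on $\cH$, neither does $\eta^{"5"}=\eta^{\ang55}/\eta\in\cO(\cH)$; and its Fourier expansion is $\eta^{"5"}=q\VP{n\in\N}\frac{(1-q^{5n})^5}{1-q^n}\in\C[[q]]q$, so $\eta^{"5"}\to0$ as $\Im\tau\to\infty$.

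\emph{Automorphy under $\Gamma_0(5)$.} Take $\gamma=(\BM a&b\\c&d\EM)\in\Gamma_0(5)$ and write $c=5c'$. The half-integral rescaling identity of \S2.5 (case $h\mid c$, with $h=5$) gives $\eta^{\ang5}|_{\frac12}\gamma=(\frac5d)(\eta|_{\frac12}(\BM a&5b\\c'&d\EM))^{\ang5}$, where both $\gamma$ and $(\BM a&5b\\c'&d\EM)$ lie in $\Gamma(1)$ (the latter has determinant $ad-5bc'=1$). Substituting Rademacher's two-case formula for $\eta|_{\frac12}$ into
\[\frac{\eta^{"5"}|_2\gamma}{\eta^{"5"}}=\frac{(\eta^{\ang5}|_{\frac12}\gamma)^5}{\eta|_{\frac12}\gamma}\cdot\frac{\eta}{\eta^{\ang55}}\]
and replacing $c$ by $5c'$ everywhere, the $\frac1{24}$-exponents telescope: the leftover root of unity works out to $1$ both when $c'$ is even and when $c'$ is odd, while the surviving factor is $(\frac5d)^5=(\frac5d)$ when $c'$ is even, and $(\frac5d)^5\cdot\frac{(\frac d{c'})(\frac{c'}d)}{(\frac dc)(\frac cd)}=(\frac d5)$ when $c'$ is odd (using that the Kronecker symbol is multiplicative in $c=5c'$ and squares to $1$). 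Since $ad-5bc'=1$ forces $5\nmid d$, and $5\equiv1\bmod4$, the symbol $(\frac5\bullet)$ coincides with $(\frac\bullet5)=\chi_5^2$; so in either case $\eta^{"5"}|_2\gamma=\chi_5^2(d_5(\gamma))\,\eta^{"5"}$. This is a finite $\bmod 24$ bookkeeping of exactly the kind carried out in the proofs of Proposition \ref{lev2} and Lemma \ref{lev3}; the only point requiring care is tracking the five Rademacher factors contributed by $\eta^{\ang5}$ against the single one from $\eta$.

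\emph{Behaviour at the cusps.} There are exactly two cusps of $\Gamma_0(5)$, represented by the identity and $(\BM 0&-1\\1&0\EM)$. We already have $\eta^{"5"}\to0$ at $i\infty$; at $0$ the displayed identity gives $\eta^{"5"}|_2(\BM 0&-1\\1&0\EM)=-5^{-5/2}\,\eta^{/5/}$, and $\eta^{/5/}=\eta^{\ang{\frac15}5}/\eta=\VP{n\in\N}\frac{(1-q^{n/5})^5}{1-q^n}\in 1+\C[[q^{1/5}]]q^{1/5}$ is bounded. By the standard reduction these two evaluations, together with the transformation law of the previous paragraph, show $\eta^{"5"}|_2\alpha$ is bounded for all $\alpha\in{\rm SL}_2(\Z)$; hence $\eta^{"5"}\in\cM(\Gamma_0(5))_2$, and the transformation law upgrades this to $\eta^{"5"}\in\cM(5,1)_{(2,\chi_5^2)}$. (The nonzero constant term of $\eta^{/5/}$ shows $\eta^{"5"}$ is not cuspidal, as the statement requires.)

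\emph{Passing to $\eta^{/5/}$.} Apply the Proposition $f\mapsto f|_k\alpha:\cM(\varGamma)_{(k,\chi)}\to\cM(\varGamma\TL\alpha)_{(k,\chi\TL\alpha)}$ with $\varGamma=\Gamma_0(5)$, $k=2$, $\chi=\chi_5^2\circ d_5$ and $\alpha=(\BM 0&-1\\1&0\EM)$. Then $\Gamma_0(5)\TL\alpha=\Gamma^0(5)$, and for $(\BM a&b\\c&d\EM)\in\Gamma^0(5)$ one has $(\chi\TL\alpha)(\BM a&b\\c&d\EM)=\chi_5^2(d_5(\BM d&-c\\-b&a\EM))=\chi_5^2(a)=\chi_5^2(d)$, because $ad\equiv1\bmod5$ and $\chi_5^2$ is quadratic; thus $\chi\TL\alpha=\chi_5^2\circ d_5$ on $\Gamma^0(5)$. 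Since $\eta^{"5"}|_2\alpha$ is a nonzero scalar multiple of $\eta^{/5/}$, we conclude $\eta^{/5/}\in\cM(\Gamma^0(5))_{(2,\chi_5^2\circ d_5)}=\cM(1,5)_{(2,\chi_5^2)}$. The main obstacle throughout is the telescoping computation of the second paragraph.
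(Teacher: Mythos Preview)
Your argument is correct and follows essentially the same route as the paper: a direct Rademacher computation of the automorphy factor (split into the even/odd cases of the lower-left entry) together with the identity $\eta^{"5"}|_2(\BM 0&-1\\1&0\EM)=-\frac1{\sqrt5^5}\eta^{/5/}$ to pass between the two forms. The only difference is the order: the paper computes the two Rademacher quotients for $\eta^{/5/}$ on $\Gamma^0(5)$ and then conjugates to obtain $\eta^{"5"}$, whereas you do $\eta^{"5"}$ on $\Gamma_0(5)$ first and conjugate to $\eta^{/5/}$; your added discussion of the cusp conditions is a welcome bit of explicitness that the paper leaves implicit.
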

\begin{proof}\[\frac{(\frac5d)1^{\frac5{24}(5ac+\frac b5d-5cd-5acd^2+3d-3)}}{1^{\frac1{24}(ac+bd-cd-acd^2+3d-3)}}=\z(\frac5d)e^{\pi i(d-1)},\]
\[\frac{(\frac5d)(\frac d{5c})(\frac{5c}d)1^{\frac5{24}(5ac+\frac b5d+5cd-5bc^2d-15c)}}{(\frac dc)(\frac cd)1^{\frac1{24}(ac+bd+cd-bc^2d-3c)}}=\z(\frac d5),\]
and $\eta^{"5"}|_2(\BM 0&-1\\1&0\EM)=-\frac1{\sqrt5^5}\eta^{/5/}$.\end{proof}

\vspace{0.5cm}

\begin{Lem}$\eta^{/7/}\in\cM(1,7)_{(3,\chi_7^3)}$ and $\eta^{"7"}\in\cM(7,1)_{(3,\chi_7^3)}$.\end{Lem}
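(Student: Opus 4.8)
The statement for $\eta^{/7/}$ and $\eta^{"7"}$ is the direct analogue of the preceding Lemmas for $\eta^{/5/}$, $\eta^{"5"}$ and $\eta^{\bot}$, $\eta^{\top}$, and I would prove it by the same three-step strategy. First, recall the general identity from \S3.2: for a prime $p$,
\[\eta^{"p"}|_{\frac{p-1}2}(\BM 0&-1\\1&0\EM)=\frac{e^{-\frac{(p-1)}4\pi i}}{\sqrt p^p}\eta^{/p/}.\]
For $p=7$ this gives $\eta^{"7"}|_3(\BM 0&-1\\1&0\EM)=\frac{e^{-\frac{3}{2}\pi i}}{\sqrt7^7}\eta^{/7/}=\frac{i}{\sqrt7^7}\eta^{/7/}$, so (as in the $p=5$ case) it suffices to establish the transformation law of $\eta^{"7"}$ under $\Gamma_0(7)$, since $\Gamma_0(7)$ together with $(\BM 0&-1\\1&0\EM)$ and translations generates enough of ${\rm SL}_2(\Z)$ to pin down both forms, and the $(\BM 0&-1\\1&0\EM)$-relation swaps the $\eta^{/7/}$ statement (level $\Gamma^0(7)$) with the $\eta^{"7"}$ statement (level $\Gamma_0(7)$).

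**Key computation.** The heart of the argument is to plug $\gamma=(\BM a&b\\c&d\EM)\in\Gamma^0(7)$ (resp. $\in\Gamma_0(7)$) into Rademacher's explicit formula for $\eta|_{\frac12}\gamma$ quoted in \S3.1, applied to $\eta^{\ang7}$ and $\eta$, and to simplify the quotient
\[\frac{(\eta^{\ang7}|_{\frac12}\gamma)^{?}}{\eta|_{\frac12}\gamma}\]
that computes $\eta^{/7/}|_3\gamma / \eta^{/7/}$ (here I must track the weight-$3$ normalization $\frac{p-1}{2}=3$ and the factor-of-automorphy bookkeeping exactly as in the $p=5$ Lemma, where the weight was $2$). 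As in the displayed computations for $p=3,5$, for $c$ even this collapses, via $d^2\equiv1\bmod 8$ and quadratic reciprocity for $(\frac7d)$, to a pure root of unity times a Legendre/Kronecker symbol; since $\chi_7^3=(\frac\bullet7)$ (noted in \S2.1), the answer should be exactly $\chi_7^3(d)$ — i.e.\ the character is real-valued, which is why the $p=7$ case is formally as simple as $p=3$ and simpler than $p=5$ (where $\chi_5^2=(\frac\bullet5)$ also appeared, but $\eta^{/5/}$ carried an extra $e^{\pi i(d-1)}$ sign). For $c$ odd one uses the other branch of Rademacher's formula, the reciprocity $(\frac dc)(\frac cd)$, and $b c^2 d$-type terms reducing mod the relevant modulus; the outcome should again be $\chi_7^3(d)$ with no residual sign. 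I would present these two one-line simplifications in the same compressed style as the $p=5$ Lemma's proof.

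**Main obstacle.** The only real difficulty is the arithmetic of the root-of-unity exponents: with $p=7$ the relevant denominator in $\eta^{\ang7}$'s Rademacher exponent is $24$ scaled by $7$, and one must verify that $7ac+\frac b7 d - 7cd - 7acd^2 + 3d - 3$ divided by $24$, minus the corresponding $\eta$-exponent over $24$, lands on an integer multiple of $\frac13$ (the order of $\chi_7^3$) — and, crucially, that the quadratic-reciprocity sign $(\frac7d)(\frac d7)=(-1)^{\frac{d-1}2\cdot\frac{7-1}{2}}=(-1)^{\frac{d-1}{2}}$ exactly cancels the $\frac{d-1}{4}$-type term left over, so that no spurious factor of $(\frac{-1}d)=\chi_4(d)$ survives (unlike the $\frac12$-weight eta cases, here the half-integrality is gone and the answer must be a genuine Dirichlet character mod $7$, consistent with $\eta^{/7/}\in\cM(1,7)_{(3,\chi_7^3)}$ having integer weight). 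I expect this to work out cleanly because $7\equiv3\bmod4$ makes the reciprocity sign depend only on $d$, mirroring the $p=3$ case; once the $c$ even case is checked, the $c$ odd case follows by the same manipulation of $(\frac d{7c})(\frac{7c}d)$ used in Lemma~\ref{lev3}. Finally, holomorphy at the cusps is automatic since $\eta^{/7/}$ and $\eta^{"7"}$ are eta-quotients and the order-of-vanishing formula from \S3.2 gives non-negative orders at every cusp of $\Gamma(7)$, so boundedness (indeed the $\cM$, not just $\cM'$, condition) holds.
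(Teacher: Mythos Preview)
Your proposal is correct and follows essentially the same approach as the paper: compute the transformation of $\eta^{/7/}$ under $\gamma\in\Gamma^0(7)$ via Rademacher's formula, splitting into the $c$ even and $c$ odd branches, and then invoke the general $(\BM 0&-1\\1&0\EM)$-identity from \S3.2 to obtain the statement for $\eta^{"7"}$. The paper's displayed computations confirm exactly the arithmetic you anticipate, with the $c$ even case collapsing to $(\frac{-1}d)(\frac{-7}d)e^{\frac{\pi i}2(d-1)}=(\frac{-7}d)=(\frac d7)=\chi_7^3(d)$ (so the $(\frac{-1}d)$ factors cancel as you predict) and the $c$ odd case giving $(\frac d7)$ directly.
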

\begin{proof}
\[\frac{(\frac7d)1^{\frac7{24}(7ac+\frac b7d-7cd-7acd^2+3d-3)}}{1^{\frac1{24}(ac+bd-cd-acd^2+3d-3)}}=\z(\frac{-1}d)(\frac{-7}d)e^{\frac{\pi i}2(d-1)},\]
\[\frac{(\frac7d)(\frac d{7c})(\frac{7c}d)e^{\frac7{24}(7ac+\frac b7d+7cd-7bc^2d-21c)}}{(\frac dc)(\frac cd)1^{\frac1{24}(ac+bd+cd-bc^2d-3c)}}=\z(\frac d7),\]
and $\eta^{"7"}|_3(\BM 0&-1\\1&0\EM)=\frac i{\sqrt7^7}\eta^{/7/}$.\end{proof}

\newpage

\subsection{$\natural$ operator}

Let $\natural=\flat\sharp$ i.e. $f^{\natural}=\dfrac{f^5}{f^{\ang{\frac12}2}f^{\ang22}}$, then $f^{\natural}f^{\flat}=f^{\flat\ang22}$, $f^{\natural}f^{\sharp}=f^{\sharp\ang{\frac12}2}$ and $f^{\natural}f^{\flat}f^{\sharp}=f^3$. 
In particular $\eta^{\natural}=\dfrac{\eta^3}{\eta^{\flat}\eta^{\sharp}}\in\cM(2)_{\frac12^*}$.

It is important that $\eta^{\flat}=\VP{n\in\N}\dfrac{1-q^{\frac n2}}{1+q^{\frac n2}}$ and
\[\eta^{\natural}=\VP{n\in\N}\dfrac{1+q^{\frac n2}}{1-q^{\frac n2}}\Big(\dfrac{1-q^n}{1+q^n}\Big)^2=\VP{n\in\N}\dfrac{1-(-q)^{\frac n2}}{1+(-q)^{\frac n2}}=\eta^{\flat}|(\BM 1&1\\0&1\EM).\]

Acting $(\BM1&1\\0&1\\\EM)$ on the former identity of Proposition \ref{etanf} yields $\eta^{\natural}=\theta^{\ang{\frac12}}$.

Remark
\[\eta^{\natural}|_{\frac12}(\BM 0&-1\\1&0\EM)=\frac{\eta\big((\BM 0&-1\\1&0\EM)\tau\big)^5}{\sqrt\tau\eta^{\ang{\frac12}}\big((\BM 0&-1\\1&0\EM)\tau\big)^2\eta^{\ang2}\big((\BM 0&-1\\1&0\EM)\tau\big)^2}=1^{\frac78}\eta^{\natural}.\]\\

\begin{Lem}\label{rel21}$\frac1{16}(\eta^{\natural4}-\eta^{\flat4})=\eta^{\sharp4}$.\end{Lem}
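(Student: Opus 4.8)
I want to prove the identity $\frac{1}{16}(\eta^{\natural 4}-\eta^{\flat 4})=\eta^{\sharp 4}$, equivalently $\eta^{\natural 4}=\eta^{\flat 4}+16\eta^{\sharp 4}$. The first observation is that all three of $\eta^{\natural 4}$, $\eta^{\flat 4}$, $\eta^{\sharp 4}$ are modular forms of weight $2$ on $\Gamma(2)=\Gamma_0(2)\cap\Gamma^0(2)$: indeed by Proposition \ref{lev2} we have $\eta^{\flat}\in\cM(1,2)_{(\frac12^*,c_8)}$ and $\eta^{\sharp}\in\cM(2,1)_{(\frac12^*,b_8)}$, so $\eta^{\flat 4}\in\cM(1,2)_{(2,c_8^4)}=\cM(1,2)_2$ and $\eta^{\sharp 4}\in\cM(2,1)_{(2,b_8^4)}=\cM(2,1)_2$ since $c_8^4=b_8^4={\tt 1}$; and $\eta^{\natural}=\eta^3/(\eta^{\flat}\eta^{\sharp})\in\cM(2)_{\frac12^*}$ was noted in the text, so $\eta^{\natural 4}\in\cM(2)_2$. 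Hence the difference $\eta^{\natural 4}-\eta^{\flat 4}-16\eta^{\sharp 4}$ lies in $\cM(\Gamma(2))_2$.

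The second step is to pin down that this difference is actually a cusp form, so that I only have to check vanishing to first order at one cusp. Because $\eta^{\natural}=\theta^{\ang{1/2}}$ has constant term $1$ and $\eta^{\flat}=\sum_{n\in\Z}(-1)^n q^{n^2/2}$ also has constant term $1$ (Lemma \ref{etanf}), the combination $\eta^{\natural 4}-\eta^{\flat 4}$ has zero constant term at $i\infty$; and $\eta^{\sharp}\in\C[[q]]q^{1/8}$ has no constant term at $i\infty$ either, so the whole expression vanishes at $i\infty$. For the other two cusps of $X(\Gamma(2))$ (namely $0$ and $1$, or representatives thereof), I would use the transformation behaviour already recorded: $\eta^{\sharp}|_{\frac12}(\BM 0&-1\\1&0\EM)=\frac{1^{7/8}}{2}\eta^{\flat}$ and $\eta^{\flat}|_{\frac12}(\BM 0&-1\\1&0\EM)=2\cdot1^{7/8}\eta^{\sharp}$ and $\eta^{\natural}|_{\frac12}(\BM 0&-1\\1&0\EM)=1^{7/8}\eta^{\natural}$. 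Applying $|_2(\BM 0&-1\\1&0\EM)$ to $\eta^{\natural4}-\eta^{\flat4}-16\eta^{\sharp4}$ sends it to $\eta^{\natural4}-16\eta^{\sharp4}-16\cdot\frac1{16}\eta^{\flat4}=\eta^{\natural4}-\eta^{\flat4}-16\eta^{\sharp4}$ — the expression is (anti)invariant, so it vanishes at $0$ as well; similarly conjugating by $(\BM1&1\\0&1\EM)$ handles the cusp $1$. Therefore the difference lies in $\cS(\Gamma(2))_2$.

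Now I invoke the dimension formula. The modular curve $X(\Gamma(2))$ has genus $0$, with $\varepsilon_2=\varepsilon_3=0$ and $\varepsilon_\infty=3$, so $\dim\cS(\Gamma(2))_2=g(\Gamma(2))=0$. Hence $\cS(\Gamma(2))_2=\{0\}$ and the identity follows. Alternatively, and perhaps more self-containedly, I can avoid the cusp analysis at $0$ and $1$ altogether: $\dim\cM(\Gamma(2))_2=(k-1)(g-1)+\frac k2\varepsilon_\infty=(2-1)(0-1)+1\cdot3=2$, so $\cM(\Gamma(2))_2$ is two-dimensional; a direct $q$-expansion check shows $\eta^{\natural4}$, $\eta^{\flat4}$ are linearly independent (their leading terms are $1+\cdots$ but they differ already in the $q^{1/2}$ coefficient), hence form a basis, and then I need only determine the two coefficients $\lambda,\mu$ in $\eta^{\sharp4}=\lambda\eta^{\natural4}+\mu\eta^{\flat4}$ by matching the first two Fourier coefficients.

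**Main obstacle.** The only genuinely delicate point is confirming the precise character/weight bookkeeping so that all three forms really land in the \emph{same} space $\cM(\Gamma(2))_2$ with trivial character — in particular that the fourth powers kill the order-$8$ characters $b_8,c_8,\xi_8$ cleanly and that $\eta^{\natural}$ is genuinely holomorphic at all cusps (non-vanishing of $\eta$ away from cusps makes $\eta^{\natural}$ a bona fide holomorphic eta quotient, but one should check the cusp orders are $\ge0$ using the order formula $\frac N{24}\sum_{d|N}\frac{(d,m)^2 r_d}{(m,N/m)dm}$ recalled in the text). Once membership in $\cM(\Gamma(2))_2$ is secured, the rest is the routine finite computation of comparing $q^{1/2}$-expansions up to $O(q)$, or citing that $\dim\cS(\Gamma(2))_2=0$.
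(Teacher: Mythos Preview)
Your proposal is correct and close in spirit to the paper's argument: both set $f=\eta^{\natural4}-\eta^{\flat4}-16\eta^{\sharp4}\in\cM(\Gamma(2))_2$, note the vanishing at $i\infty$, and then exploit the transformation behaviour under $(\BM1&1\\0&1\EM)$ and $(\BM0&-1\\1&0\EM)$. The execution differs slightly. The paper observes that $f|_2(\BM1&1\\0&1\EM)=f|_2(\BM0&-1\\1&0\EM)=-f$, so $f^2$ is a level-$1$ form vanishing at $i\infty$, forcing $f^2=0$; you instead use those same transformations to check vanishing at the remaining cusps and land in $\cS(\Gamma(2))_2=\{0\}$ (or alternatively match coefficients in the two-dimensional $\cM(\Gamma(2))_2$). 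The paper's squaring trick avoids invoking the genus/cusp-form dimension for $\Gamma(2)$, while your route is more direct.

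One small computational slip: when you apply $|_2(\BM0&-1\\1&0\EM)$ you dropped the factor $(1^{7/8})^4=-1$ common to all three terms, so the image is $-f$ rather than $f$. Your hedged phrase ``(anti)invariant'' and the conclusion that $f$ vanishes at the cusp $0$ are unaffected.
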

\begin{proof}
Put $f=\eta^{\natural4}-\eta^{\flat4}-(2\eta^{\sharp})^4$ then $f\in\cM(2)_4\cap\C[[q]]q$.

We see $f|(\BM 1&1\\0&1\EM)=f|(\BM 0&-1\\1&0\EM)=-f$ thus $f^2\in\cM(1)_8\cap\C[[q]]q^2=\{0\}$.
\end{proof}

\begin{Lem}\label{rel22}$\frac12(\eta^{\natural2}+\eta^{\flat2})=\eta^{\natural\ang22}$ and $\frac18(\eta^{\natural2}-\eta^{\flat2})=\eta^{\sharp\ang22}$.\end{Lem}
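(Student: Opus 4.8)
The plan is to mimic the argument of Lemma \ref{rel21}: build a difference which is forced to lie in a cusp space of level $1$ that is already known to vanish in low weight. Specifically, set $g_+=\eta^{\natural2}+\eta^{\flat2}-2\eta^{\natural\ang22}$ and $g_-=\eta^{\natural2}-\eta^{\flat2}-8\eta^{\sharp\ang22}$. Since $\eta^{\natural}\in\cM(2)_{\frac12^*}$ and $\eta^{\flat}\in\cM(1,2)_{(\frac12^*,c_8)}$, their squares lie in $\cM(2)_{1}$-type spaces; rescaling by $\ang2$ via the Proposition in \S2.1 (together with $\cM(N,1)_{(k,\chi)}^{\ang h}=\cM(hN,1)_{(k,\chi)}\cap\C[[q^h]]$ and its half-integral analogue) keeps everything inside $\cM(\Gamma(4))_1$ or some fixed level-$4$ space. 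So $g_\pm\in\cM(\varGamma)_1$ for a suitable congruence group $\varGamma$ of level dividing $4$, and moreover $g_\pm$ has no constant term.

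The key step is then to check vanishing of the leading Fourier coefficients and of the relevant transformation data so that $g_\pm$ (or a small power of it) lands in a space known to be $\{0\}$. First I would compute enough $q$-expansion: from Lemma \ref{etanf}, $\eta^{\flat}=\sum_{n\in\Z}(-1)^nq^{n^2/2}$ and $\eta^{\sharp}=\sum_{n\in\M}q^{(2n+1)^2/8}$, and $\eta^{\natural}=\eta^{\flat}|(\BM1&1\\0&1\EM)=\theta^{\ang{1/2}}=\sum_{n\in\Z}q^{n^2/2}$ up to the sign twist; squaring these theta series gives $\eta^{\natural2}$, $\eta^{\flat2}$, $\eta^{\sharp2}$ as explicit sums of $q$-powers, and $\eta^{\natural\ang22}$, $\eta^{\sharp\ang22}$ are obtained by the substitution $q^{1/2}\mapsto q$. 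A direct coefficient comparison (only finitely many coefficients are needed once the target space is pinned down) shows $g_\pm$ vanishes to high enough order. Then, as in Lemma \ref{rel21}, I would square: $g_\pm^2\in\cM(1)_2\cap\C[[q]]q^{\,m}$ for some $m\ge1$; but $\cM(1)_2=\{0\}$, so $g_\pm^2=0$, hence $g_\pm=0$. (If the natural level of $g_\pm$ is $2$ rather than $1$, one instead uses that $g_\pm|(\BM1&1\\0&1\EM)$ and $g_\pm|(\BM0&-1\\1&0\EM)$ are scalar multiples of $\pm g_\pm$ so that a suitable product descends to $\cM(1)_k$ with positive $q$-order, exactly the device used in Lemmas \ref{rel21} and \ref{rel22}'s predecessor.)

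The main obstacle I expect is bookkeeping of the automorphy characters: I must verify that $\eta^{\natural2}$, $\eta^{\flat2}$, and the $\ang2$-rescalings $\eta^{\natural\ang22}$, $\eta^{\sharp\ang22}$ all transform with the \emph{same} character under $\Gamma(2)$ (or the chosen level-$4$ group), since otherwise the linear combination $g_\pm$ is not a genuine element of a single space $\cM(\varGamma)_1$ and the ``square it and land in level $1$'' trick does not apply. This is where the explicit formulas $\eta^{\flat}\in\cM(1,2)_{(\frac12^*,c_8)}$, $\eta^{\sharp}\in\cM(2,1)_{(\frac12^*,b_8)}$, $\eta^{\natural}\in\cM(2)_{\frac12^*}$ from Propositions \ref{cuspform} and \ref{lev2} and the $\ang2$-shift identities for $b_8,c_8,b_{32},c_{32}$ (Lemma \ref{b8c8}) do the work; once squared, the half-integral subtleties (the $\xi_8$, $\sqrt{\chi_4}$ factors) disappear and one is left with genuine characters of $(\Z/8\Z)^\times$ whose values on the level-$4$ generators I would tabulate and match. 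After that the computation is routine.
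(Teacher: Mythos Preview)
Your plan has a genuine gap in the step you lean on the most. You assume (primary plan) that $g_\pm^2$ lands in $\cM(1)_2$, or (backup plan) that $g_\pm|(\BM0&-1\\1&0\EM)$ is a scalar multiple of $g_\pm$. Neither holds. The point is that the Fricke involution does not preserve the set $\{\eta^{\natural},\eta^{\flat},\eta^{\sharp\ang2},\eta^{\natural\ang2}\}$ up to scalars: from $\eta^{\sharp}|_{\frac12}(\BM0&-1\\1&0\EM)=\tfrac{1^{7/8}}{2}\eta^{\flat}$ one gets $(\eta^{\sharp\ang2})^2|_1(\BM0&-1\\1&0\EM)=\tfrac{-i}{8}(\eta^{\flat\ang{\frac12}})^2$, and from $\eta^{\flat}|_{\frac12}(\BM0&-1\\1&0\EM)=2\cdot1^{7/8}\eta^{\sharp}$ one gets $(\eta^{\flat})^2|_1(\BM0&-1\\1&0\EM)=-4i(\eta^{\sharp})^2$. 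Thus
\[
g_-\big|_1(\BM0&-1\\1&0\EM)=-i\big(\eta^{\natural2}-4\eta^{\sharp2}-\eta^{\flat\ang{\frac12}2}\big),
\]
which is a \emph{different} linear combination, not $\pm g_-$ or $\pm ig_-$. The orbit under $\langle(\BM1&1\\0&1\EM),(\BM0&-1\\1&0\EM)\rangle$ is therefore larger than in Lemma~\ref{rel21}, and the ``square and descend to level~$1$'' device breaks down. (A Sturm-bound argument on $\cM(\Gamma(4))_1$, which is $3$-dimensional by Theorem~\ref{4}, \emph{would} work, but that is not the argument you actually wrote; your Fourier-coefficient check was only a preamble to the failed squaring step.)

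The paper's proof takes a completely different and much shorter route, avoiding modularity arguments altogether. From the explicit theta expansions of Lemma~\ref{etanf} one has $\eta^{\natural}=\sum_{n\in\Z}q^{n^2/2}$ and $\eta^{\flat}=\sum_{n\in\Z}(-1)^nq^{n^2/2}$, so taking even and odd parts immediately gives
\[
\tfrac12(\eta^{\natural}+\eta^{\flat})=\eta^{\natural\ang4},\qquad \tfrac14(\eta^{\natural}-\eta^{\flat})=\eta^{\sharp\ang4}.
\]
Multiplying these two lines yields the second identity of the lemma (using $f^{\natural}f^{\sharp}=f^{\sharp\ang{\frac12}2}$ rescaled by $\ang4$), and then dividing Lemma~\ref{rel21} by that second identity gives the first. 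No transformation laws, characters, or dimension formulas are needed.
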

\begin{proof}The product of
\[\z \frac12(\eta^{\natural}+\eta^{\flat})=\eta^{\natural\ang4}\6\frac14(\eta^{\natural}-\eta^{\flat})=\eta^{\sharp\ang4}\]
shows the latter identity. 

Dividing Lemma \ref{rel21} by this one leads to the first one.
\end{proof}

\vspace{0.5cm}

We have $\dim\cM(2)_2=2$ and $\iE_{2,2}=\frac12(\eta^{\natural4}+\eta^{\flat4})$.

Acting $(\BM 0&-1\\1&0\EM)$ yields $\iE_{2,2}^{\ang{\frac12}}=\eta^{\natural4}+16\eta^{\sharp4}$.

Since $\iE_{2,2}=2\iE_{1,2}^{\ang2}-\iE_{1,2}$ and $\iE_{4,2}=\frac13(\iE_{1,2}^{\ang4}-\iE_{1,2})$ we see
\[\z\iE_{4,2}^{\ang{\frac12}}=\frac23\iE_{2,2}+\frac13\iE_{2,2}^{\ang{\frac12}}=\eta^{\natural4}.\]

Remark $\iE_{\chi_4}^{\ang{\frac12}}=\eta^{\natural2}$ and $\iE_{\chi_4\chi_8}^{\ang{\frac14}}=\eta^{\natural\ang{\frac12}}\eta^{\natural}$.\\

\begin{Prop}\label{theta8}
\begin{align*}\theta_{\chi_8}^{\ang{\frac1{16}}}&=\sqrt{\eta^{\flat}\eta^{\sharp}}\\
\theta_{\chi_{12}\chi_8}^{\ang{\frac1{48}}}&=\sqrt{\eta^{\natural}\eta}\end{align*}
\end{Prop}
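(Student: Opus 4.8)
The plan is to reduce both identities, by squaring, to identities between honest eta-quotients, and then to prove these by an explicit $q$-expansion using Jacobi's triple product identity and Euler's pentagonal identity. Since $\eta^{\flat}=\eta^{\ang{\frac12}2}/\eta$ and $\eta^{\sharp}=\eta^{\ang22}/\eta$, rescaling the identity $\theta_{\chi_8}=\eta^{\ang8}\eta^{\ang{32}}/\eta^{\ang{16}}$ by $\tfrac1{16}$ gives $\theta_{\chi_8}^{\ang{\frac1{16}}}=\eta^{\ang{\frac12}}\eta^{\ang2}/\eta$, whose square is $\eta^{\flat}\eta^{\sharp}$; as $\theta_{\chi_8}^{\ang{\frac1{16}}}$ has leading term $q^{\frac1{16}}$ it is then the distinguished square root, so the first assertion follows from
\[\theta_{\chi_8}=\frac{\eta^{\ang8}\eta^{\ang{32}}}{\eta^{\ang{16}}}.\]
Likewise, since $\eta^{\natural}=\eta^5/(\eta^{\ang{\frac12}2}\eta^{\ang22})$, rescaling $\theta_{\chi_{12}\chi_8}=(\eta^{\ang{48}})^3/(\eta^{\ang{24}}\eta^{\ang{96}})$ by $\tfrac1{48}$ gives $\theta_{\chi_{12}\chi_8}^{\ang{\frac1{48}}}=\eta^3/(\eta^{\ang{\frac12}}\eta^{\ang2})$, whose square is $\eta^{\natural}\eta$; this too is the distinguished square root (leading term $q^{\frac1{48}}$), so the second assertion follows from
\[\theta_{\chi_{12}\chi_8}=\frac{(\eta^{\ang{48}})^3}{\eta^{\ang{24}}\eta^{\ang{96}}}.\]

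Throughout I use $\chi_8(m)=(-1)^{(m^2-1)/8}$ for odd $m$, immediate from $\chi_8=(\tfrac\bullet2)$. For the first identity, put $m=2r+1$ ($r\in\M$), so $(m^2-1)/8=r(r+1)/2$ and
\[\theta_{\chi_8}=\VS{r\in\M}(-1)^{\frac{r(r+1)}2}q^{8\cdot\frac{r(r+1)}2+1}=q\VS{r\in\M}(-q^8)^{\frac{r(r+1)}2}.\]
Jacobi's triple product identity at $x=y=z^{\frac12}$ (together with the pairing $r\leftrightarrow-r-1$) yields $\VS{r\in\M}z^{\frac{r(r+1)}2}=\VP{n\in\N}(1-z^n)(1+z^n)^2$; taking $z=-q^8$ and $(1-z^n)(1+z^n)^2=(1-z^{2n})(1+z^n)$ turns $\theta_{\chi_8}/q$ into $\VP{n\in\N}(1-q^{16n})(1+(-1)^nq^{8n})$. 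Separating the last product according to the parity of $n$ and using $\VP{n\in\N}(1-q^n)=\VP{n\in\N}(1-q^{2n})\VP{n\in\N}(1-q^{2n-1})$ repeatedly rewrites this as $\VP{n\in\N}(1-q^{8n})\VP{n\in\N}(1-q^{32n})\big/\VP{n\in\N}(1-q^{16n})$, and restoring the factor $q$ gives $\theta_{\chi_8}=\eta^{\ang8}\eta^{\ang{32}}/\eta^{\ang{16}}$.

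For the second identity, every $m$ coprime to $6$ has $m^2\equiv1\bmod24$, hence $\chi_8(m)=(-1)^{(m^2-1)/8}=(-1)^{(m^2-1)/24}$ since $(m^2-1)/8=3\cdot(m^2-1)/24$. Summing over $m\in\N$ coprime to $6$ (the other terms vanish) and writing $m^2=24t_m+1$,
\[\theta_{\chi_{12}\chi_8}=q\,G(-q^{24}),\6\theta_{\chi_{12}}=q\,G(q^{24}),\6 G(u)=\VS{\substack{m\in\N\\\gcd(m,6)=1}}\chi_{12}(m)\,u^{t_m},\]
where $G\in\C[[u]]$ (the $t_m$ are pairwise distinct). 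But $\eta=\theta_{\chi_{12}}^{\ang{\frac1{24}}}$, i.e.\ $\theta_{\chi_{12}}=\eta^{\ang{24}}=q\VP{n\in\N}(1-q^{24n})$, forces $G(u)=\VP{n\in\N}(1-u^n)$, so
\[\theta_{\chi_{12}\chi_8}=q\VP{n\in\N}\big(1-(-q^{24})^n\big)=q\VP{n\in\N}(1-q^{48n})(1+q^{48n-24}),\]
and $\VP{n\in\N}(1+q^{48n-24})=\VP{n\in\N}(1-q^{48n})^2\big/\big(\VP{n\in\N}(1-q^{24n})\VP{n\in\N}(1-q^{96n})\big)$ (again by the even/odd splitting) turns this into $(\eta^{\ang{48}})^3/(\eta^{\ang{24}}\eta^{\ang{96}})$ once the power of $q$ is matched.

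The substance is classical and already present in the paper: Jacobi's triple product identity (whence the Gauss identity above, which is also the content of Lemma~\ref{etanf}) and Euler's pentagonal identity (whence $\theta_{\chi_{12}}=\eta^{\ang{24}}$). The only steps demanding care are bookkeeping ones — the two congruence computations for $\chi_8$, the repeated even/odd splitting $\VP{n\in\N}(1-q^n)=\VP{n\in\N}(1-q^{2n})\VP{n\in\N}(1-q^{2n-1})$ used to pass between product forms and monomials in the $\eta^{\ang d}$, and the leading-term checks that identify the distinguished square roots in the reduction step.
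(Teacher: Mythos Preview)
Your proof is correct, but it takes a different route from the paper's. The paper exploits the twisting action $f\mapsto f|(\BM 1&b\\0&1\EM)$: starting from the already-established identities $\VS{n\in2\M+1}q^{n^2/16}=\sqrt{\eta^{\natural}\eta^{\sharp}}$ (a rescaling of Lemma~\ref{etanf}) and $\VS{n\in\N}\chi_{12}(n)q^{n^2/48}=\sqrt{\eta^{\flat}\eta}$ (a rescaling of $\eta=\theta_{\chi_{12}}^{\ang{1/24}}$), it acts by $(\BM 1&1\\0&1\EM)$ and $(\BM 1&3\\0&1\EM)$ respectively. On the theta side this multiplies the $n$-th term by $1^{n^2/16}=1^{1/16}\chi_8(n)$ (using $\chi_8(n)=1^{(n^2-1)/16}$ for odd $n$), while on the eta side it swaps $\eta^{\natural}\leftrightarrow\eta^{\flat}$ and twists $\eta^{\sharp}$, $\eta$ by a root of unity; the factors $1^{1/16}$ cancel. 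Your approach instead squares, rescales to classical eta-quotient identities $\theta_{\chi_8}=\eta^{\ang8}\eta^{\ang{32}}/\eta^{\ang{16}}$ and $\theta_{\chi_{12}\chi_8}=(\eta^{\ang{48}})^3/(\eta^{\ang{24}}\eta^{\ang{96}})$, and proves these by direct product manipulation from Gauss's and Euler's identities. Both arguments rest on the same classical inputs; yours is more self-contained and computational, while the paper's twisting method is shorter and is the template it reuses for the higher characters $\chi_{16},\chi_{32},\chi_{64}$ in Propositions~\ref{theta16} and~\ref{theta32}.
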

\begin{proof}
Note $\chi_8(n)=1^{\frac{n^2-1}{16}}$ for $n\in2\M+1$.

Acting $(\BM1&1\\0&1\\\EM)$ on $\VS{n\in2\M+1}q^{\frac{n^2}{16}}=\sqrt{\eta^{\natural}\eta^{\sharp}}$ yields the former identity.

Acting $(\BM1&3\\0&1\\\EM)$ on $\VS{n\in\N}\chi_{12}(n)q^{\frac{n^2}{48}}=\sqrt{\eta^{\flat}\eta}$ yields the latter one.
\end{proof}

\newpage

\subsection{$\nwarrow$ and $\swarrow$ operators}

First note $f^{\bot}f^{\top}f^{\bot\top}=f^8$. 

Let $\eta^{\nwarrow}=\eta^{\bot}|(\BM 1&1\\0&1\EM)$ and $\eta^{\swarrow}=\eta^{\bot}|(\BM 1&-1\\0&1\EM)$ then $\eta^{\nwarrow},\eta^{\swarrow}\in\cM(\Gamma(3))_1=\cM(3)_{(1,\chi_3)}$.

We see
\begin{align*}\eta^{\nwarrow}\eta^{\swarrow}&=\VP{n\in\frac13\N\setminus\N}(1-1^{\frac13}q^n)^3(1-1^{\frac23}q^n)^3\VP{n\in\N}(1-q^n)^4\\
&=\frac{\VP{n\in\N\setminus3\N}(1-q^n)^7\VP{n\in3\N}(1-q^n)^4}{\VP{n\in\frac13\N\setminus\N}(1-q^n)^3}\\
&=\eta^{\bot\top}\end{align*}

We also see $\eta^{\nwarrow}|(\BM 0&-1\\1&0\EM)=1^{\frac23}\eta^{\swarrow}$ since
\[(\BM 1&1\\0&1\EM)(\BM 0&-1\\1&0\EM)=(\BM 0&1\\-1&0\EM)(\BM 1&-1\\0&1\EM)(\BM 0&-1\\1&0\EM)(\BM 1&-1\\0&1\EM).\]

\begin{Prop}
\[\eta^{\bot}+1^{\frac13}\eta^{\nwarrow}+1^{\frac23}\eta^{\swarrow}=0\]
\[\z\sqrt3^3i\eta^{\top}+\eta^{\nwarrow}-\eta^{\swarrow}=0\]
\end{Prop}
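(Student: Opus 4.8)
The plan is to compute both sides from the theta representations just displayed. Undoing the rescaling in the first of them, and writing $Q(m,n)=m^2+mn+n^2$, we have
\[\eta^{\bot}=\sum_{m,n\in\Z}1^{\frac{m-n}3}q^{\frac{Q(m,n)}3},\qquad 3\eta^{\top}=\sum_{m,n\in\Z+\frac13}q^{Q(m,n)}.\]
The one arithmetic input used throughout is the congruence $Q(m,n)=(m-n)^2+3mn\equiv(m-n)^2\pmod3$, so that $Q(m,n)\equiv0\pmod3$ if $m\equiv n\pmod3$ and $Q(m,n)\equiv1\pmod3$ otherwise; consequently $1^{Q(m,n)/3}$ equals $1$ in the first case and $1^{1/3}$ in the second, and $1^{-Q(m,n)/3}$ equals $1$, resp. $1^{2/3}$ (everywhere using $1^{a/3}1^{b/3}=1^{(a+b)/3}$ for integers $a,b$).

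For the first identity, recall that $|(\BM 1&1\\0&1\EM)$ multiplies the coefficient of $q^{k/3}$ by $1^{k/3}$ and $|(\BM 1&-1\\0&1\EM)$ by $1^{-k/3}$; hence $\eta^{\nwarrow}$, resp. $\eta^{\swarrow}$, is obtained from the series for $\eta^{\bot}$ by replacing each summand $1^{(m-n)/3}q^{Q(m,n)/3}$ with $1^{(m-n)/3}1^{Q(m,n)/3}q^{Q(m,n)/3}$, resp. $1^{(m-n)/3}1^{-Q(m,n)/3}q^{Q(m,n)/3}$. Thus the $(m,n)$-summand of $\eta^{\bot}+1^{1/3}\eta^{\nwarrow}+1^{2/3}\eta^{\swarrow}$ is $1^{(m-n)/3}\bigl(1+1^{1/3}1^{Q(m,n)/3}+1^{2/3}1^{-Q(m,n)/3}\bigr)q^{Q(m,n)/3}$, and by the congruence above the bracket equals $1+1^{1/3}+1^{2/3}=0$ when $m\equiv n\pmod3$ and $1+1^{2/3}+1^{1/3}=0$ otherwise. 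Summing over $(m,n)$ proves the first identity.

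The same expansion gives $\eta^{\nwarrow}-\eta^{\swarrow}=\sum_{m,n\in\Z}1^{(m-n)/3}\bigl(1^{Q(m,n)/3}-1^{-Q(m,n)/3}\bigr)q^{Q(m,n)/3}$, where the bracket vanishes when $m\equiv n\pmod3$ and equals $1^{1/3}-1^{2/3}=\sqrt3\,i$ when $m\not\equiv n\pmod3$. In the remaining sum split according to $m-n\equiv1$ or $m-n\equiv2\pmod3$, where $1^{(m-n)/3}$ is $1^{1/3}$, resp. $1^{2/3}$; the involution $(m,n)\mapsto(n,m)$ preserves $Q$ and swaps the two classes, so $S:=\sum_{m-n\equiv1\,(3)}q^{Q(m,n)/3}$ and $\sum_{m-n\equiv2\,(3)}q^{Q(m,n)/3}$ coincide, whence $\eta^{\nwarrow}-\eta^{\swarrow}=(1^{1/3}+1^{2/3})\sqrt3\,i\,S=-\sqrt3\,i\,S$.

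It remains to show $\sqrt3^3 i\,\eta^{\top}=\sqrt3\,i\,S$. Writing $m=u/3$, $n=v/3$ with $u\equiv v\equiv1\pmod3$ turns $3\eta^{\top}$ into $\sum_{u\equiv v\equiv1\,(3)}q^{Q(u,v)/9}$, so (using $\sqrt3^3/3=\sqrt3$) $\sqrt3^3 i\,\eta^{\top}=\sqrt3\,i\sum_{u\equiv v\equiv1\,(3)}q^{Q(u,v)/9}$. Now the linear substitution $(u,v)=(m-n,\,m+2n)$, with inverse $m=(2u+v)/3$, $n=(v-u)/3$, restricts to a bijection between $\{(m,n)\in\Z^2:m-n\equiv1\pmod3\}$ and $\{(u,v)\in\Z^2:u\equiv v\equiv1\pmod3\}$ (the inverse is integral precisely when $u\equiv v\equiv1\pmod3$, and then $m-n=u\equiv1\pmod3$; conversely $m-n\equiv1\pmod3$ forces $u\equiv v\equiv1\pmod3$), and a one-line expansion gives $Q(m-n,\,m+2n)=3Q(m,n)$, hence $Q(u,v)/9=Q(m,n)/3$. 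Therefore $\sum_{u\equiv v\equiv1\,(3)}q^{Q(u,v)/9}=S$, so $\sqrt3^3 i\,\eta^{\top}=\sqrt3\,i\,S=-(\eta^{\nwarrow}-\eta^{\swarrow})$, which is the second identity. The only step needing real care is this matching of index sets under $(m,n)\mapsto(m-n,m+2n)$; the factor $3$ in $Q(m-n,m+2n)=3Q(m,n)$ is exactly the ramification of $3$ in $\Z[\sqrt{-3}]$ made explicit, and everything else is bookkeeping with cube roots of unity.
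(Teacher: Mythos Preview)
Your proof is correct and takes a genuinely different route from the paper's. The paper does not touch the theta expansions at all: it sets $F$ and $G$ equal to the two left-hand sides, computes their behaviour under $(\BM 1&1\\0&1\EM)$ and $(\BM 0&-1\\1&0\EM)$, and assembles the product $F\cdot G\cdot G|(\BM 1&1\\0&1\EM)\cdot G|(\BM 1&2\\0&1\EM)$ so that its cube lies in $\cM(1)_{12}\cap\C[[q]]q^3=\{0\}$; vanishing of the product then forces $F=G=0$. Your argument instead feeds the theta representations of $\eta^{\bot}$ and $\eta^{\top}$ (displayed just above the Proposition) directly into the three translates, reduces everything to the congruence $Q(m,n)\equiv(m-n)^2\pmod3$, and matches the two index sets via the substitution $(u,v)=(m-n,m+2n)$ with $Q(u,v)=3Q(m,n)$. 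This is more elementary and self-contained---no modularity or dimension count is needed---at the price of relying on those theta identities, which the paper states but does not prove there. The paper's approach, by contrast, is structural and would transplant to other settings where no explicit $q$-expansion is available.

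One small wording slip: the inverse $m=(2u+v)/3$, $n=(v-u)/3$ is integral whenever $u\equiv v\pmod3$, not only when $u\equiv v\equiv1\pmod3$; but since the forward map from $\{m-n\equiv1\}$ lands in $\{u\equiv v\equiv1\}$ and the inverse from that set lands back, your bijection claim is unaffected.
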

\begin{proof}Put the LHS $F,G$ respectively. Then $F|(\BM 1&1\\0&1\EM)=1^{\frac23}F$ and $F|(\BM 0&-1\\1&0\EM)=-G$.

We see $G|(\BM 1&1\\0&1\EM)=\sqrt3^3i1^{\frac13}\eta^{\top}-\eta^{\bot}+\eta^{\swarrow}$ and $G|(\BM 1&2\\0&1\EM)=\sqrt3^3i1^{\frac23}\eta^{\top}+\eta^{\bot}-\eta^{\nwarrow}$ thus
$G|(\BM 1&1\\0&1\EM)|(\BM 0&-1\\1&0\EM)=1^{\frac13}G|(\BM 1&2\\0&1\EM)$.

Therefore $\big(F\cdot G\cdot G|(\BM 1&1\\0&1\EM)\cdot G|(\BM 1&2\\0&1\EM)\big)^3\in\cM(1)_{12}\cap\C[[q]]q^3=\{0\}$.
\end{proof}

\vspace{0.5cm}

The dimension formula states $\dim\cM(3,1)_1=1$ and we see
\[\z\iE_{\chi_3}=\frac13(\eta^{\bot}+\eta^{\nwarrow}+\eta^{\swarrow})=\eta^{\bot}+3\eta^{\top}.\]

We compute $\cM(1,3)_1\ni\iE_{\chi_3}|(\BM 0&-1\\1&0\EM)=\frac{-i}{\sqrt3}(\eta^{\bot}+9\eta^{\top})$ thus $\iE_{\chi_3}^{\ang{\frac13}}=\eta^{\bot}+9\eta^{\top}$.

Next result is well known (cf. \cite[Theorem 4.11.3]{DS})
\begin{align*}\iE_{\chi_3}&=\VS{m,n\in\Z}\frac13(1+1^{\frac{m^2+mn+n^2}3}+1^{-\frac{m^2+mn+n^2}3})1^{\frac{m-n}3}q^{\frac{m^2+mn+n^2}3}\\
&=\VS{m,n\in\Z}q^{m^2+mn+n^2}\end{align*}
\\

Note $\iE_{3,2}=\frac12(3\iE_{1,2}^{\ang3}-\iE_{1,2})$ and
\begin{align*}\iE_{9,2}^{\ang{\frac13}}&=\z\frac18(9\iE_{1,2}^{\ang3}-\iE_{1,2}^{\ang{\frac13}})=\frac34\iE_{3,2}-\frac14\iE_{3,2}^{\ang{\frac13}}\\
&=\z\frac34\big(\frac{1-1^{\frac13}}3\eta^{\nwarrow}+\frac{1-1^{\frac23}}3\eta^{\swarrow}\big)^2-\frac14\big(\frac{1^{\frac13}-1}{\sqrt3i}\eta^{\nwarrow}+\frac{-1^{\frac23}+1}{\sqrt3i}\eta^{\swarrow}\big)^2\\
&=\eta^{\nwarrow}\eta^{\swarrow}.\end{align*}

This result was discovered using symbolic computation by Borwein and Garvan, and it was used to produce a ninth order iteration that converges to $1/\pi$.

\newpage

\section{Rational weight theories}

\subsection{Definitions}

Modular forms of rational weight might not be very popular. See \cite{I1} or \cite{I2}. 
For any discrete subgroup $\varGamma\subset{\rm SL}_2(\Z)$, a nowhere vanishing holomorphic function $J(\gamma,\tau):\varGamma\times\cH\to\C^\times$ is said to be an automorphy factor of $\varGamma$ if
\[J(\gamma_1\gamma_2,\tau) = J(\gamma_1,\gamma_2\tau)J(\gamma_2,\tau)\]
for any $\gamma_1,\gamma_2\in\varGamma$, $\tau\in\cH$. For any automorphy factor $J$, we define an action of $\varGamma$ on holomorphic functions $f$ on $\cH$ by
\[(f|_J\gamma(\tau)=\frac{f(\gamma\tau)}{J(\gamma,\tau)}\]
If $f|_J\gamma=f$ for all $\gamma\in\varGamma$, and if $f$ is holomorphic also at all the cusps of $\varGamma$, we say that $f$ is a modular form of weight $J$.
\\

For rational weight cases, we chose the standard automorphy factor here, however other ones will be made use of later.

Note $\eta$ does not vanish on the upper half plane, so we may define any real power of $\eta$ as an entire function, fixing a natural branch. For $\alpha\in\Gamma(N)$ define
\[f|_{\frac{12}N}\alpha(\tau)=\frac{\eta^{\frac{24}N}(\tau)}{\eta^{\frac{24}N}(\alpha\tau)}f(\alpha\tau)\]
and $(ff')|_{\kappa+\kappa'}\alpha=f|_\kappa\alpha\cdot f'|_{\kappa'}\alpha$. Proposition \ref{cuspform} shows $\eta^{\frac{24}N}\in\cS(\Gamma(N))_{\frac{12}N}$ for each divisor $N$ of 24 and now this result for genral $N$.

A dimension formulas (cf \cite[Lemma 1.7]{I1}) is
\[\z \dim\cM(\Gamma(N))_{\frac{k(N-3)}{2N}}=\Big(\frac{k(N-3)}{48}-\frac{N-6}{24}\Big)N^2\VP{p|N}(1-\frac1{p^2})\]
for odd $N>3$ and $k>4\frac{N-6}{N-3}$.\\

For odd $N,r$ with $N\geq3$ and $1\leq r\leq N-2$, we write
\[f_{N,r}=\eta^{-\frac 3N}q^{\frac{r^2}{8N}}\VS{p\in\Z}(-1)^pq^{\frac{Np^2+rp}2}\]
then $f_{N,r}\in\cM(\Gamma(N))_{\frac{N-3}{2N}}$.

\newpage

\subsection{Integer/4-power weight}

The weight $\frac34$ operator on $\Gamma(16)$ defined in previous section is extended to $\Gamma_0(2)\cap\Gamma^0(2)$ formally by
\[\sqrt{\eta^3}\in\cS(2)_{(\frac34^*,c_{16}b_{16})}.\]
where $(\frac k{2^n}^*,\chi)=(\frac k{2^n},\xi_8^{k/2^{n-1}}\chi)$ formally, then $\sqrt{\eta}=\frac{\eta^2}{\sqrt{\eta^3}}\in\cS(6)_{(\frac14^*,c_{48}b_{48})}$

\begin{Lem}
\[\sqrt{\eta^{\flat}}\in\cM(2)_{(\frac14^*,\chi_8c_{16})} \6 \sqrt{\eta^{\sharp}}\in\cM(2)_{(\frac14^*,\chi_8b_{16})}\]
\[\sqrt{\eta^{\natural}}\in\cM(2)_{\frac14^*}\]
\end{Lem}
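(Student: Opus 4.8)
The plan is to prove each membership by exhibiting the relevant function as a square root of an eta-quotient already placed in a known space, and then computing its transformation under $\Gamma_0(2)\cap\Gamma^0(2)$ directly via the multiplier. Recall from Proposition \ref{lev2} that $\eta^{\flat}\in\cM(1,2)_{(\frac12^*,c_8)}$ and $\eta^{\sharp}\in\cM(2,1)_{(\frac12^*,b_8)}$, and from the $\natural$ discussion that $\eta^{\natural}\in\cM(2)_{\frac12^*}$. Since $\eta^{\flat},\eta^{\sharp},\eta^{\natural}$ are each of the form $q^{r}(1+\cdots)$ with no zeros on $\cH$ (being eta-quotients), their square roots are well-defined holomorphic functions on $\cH$ with a chosen branch fixed by the leading term, exactly as $\sqrt{\eta^3}$ and $\sqrt{\eta}$ were set up at the start of this subsection. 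So the functions in the statement make sense; what remains is to identify the character and check holomorphy at the cusps.

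First I would fix branches: write $\sqrt{\eta^{\flat}}=q^{0}(1+\tfrac12 q^{1/2}+\cdots)$ using Lemma \ref{etanf} (so $\eta^{\flat}=1-2q^{1/2}+\cdots$, hence $\sqrt{\eta^\flat}\in1+\C[[q^{1/2}]]q^{1/2}$), $\sqrt{\eta^{\sharp}}\in q^{1/16}+\cdots$, and $\sqrt{\eta^{\natural}}\in1+\cdots$ (recall $\eta^{\natural}=\theta^{\ang{1/2}}=1+2q^{1/2}+\cdots$). Then for $\gamma\in\Gamma_0(2)\cap\Gamma^0(2)$ I would use the already-computed quotients $\eta^{\flat}|\gamma/\eta^{\flat}=c_8\xi_8(\gamma)$, $\eta^{\sharp}|\gamma/\eta^{\sharp}=b_8\xi_8(\gamma)$ (from Proposition \ref{lev2}), and $\eta^{\natural}|\gamma/\eta^{\natural}=\xi_8^2(\gamma)=\chi_4(\gamma)$ (from the $\natural$ remark, consistent with weight $\tfrac12^*$). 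Taking square roots of these scalars, the multiplier of $\sqrt{\eta^\flat}$ under $\gamma$ is a square root of $c_8\xi_8(\gamma)$; the claim is that the correct branch is $\chi_8 c_{16}\xi_8^{1/2}(\gamma)$, i.e. that $(\chi_8 c_{16}\xi_8^{1/2})^2=c_8\xi_8$ as characters of $\Gamma_0(2)\cap\Gamma^0(2)$ — here $\xi_8^{1/2}$ is the weight-$\tfrac14^*$ normalization factor $\xi_8^{k/2^{n-1}}$ with $k=1$, $n=2$, and $\chi_8^2=\chi_4=\xi_8^2$ together with $c_{16}^2=c_8$ and $b_{16}^2=b_8$ (valid on the relevant group by the argument in Lemma \ref{b8c8}) give the identity. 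The sign/branch is then pinned down by evaluating both sides on the generator $(\begin{smallmatrix}1&1\\0&1\end{smallmatrix})$ of the translations and on one further generator such as $(\begin{smallmatrix}1&0\\2&1\end{smallmatrix})$, matching against the Fourier expansion: e.g. $\sqrt{\eta^\flat}|(\begin{smallmatrix}1&1\\0&1\end{smallmatrix})=1^{0}\sqrt{\eta^\flat}$ forces the translation-multiplier to be $1$, which the expansion in $\C[[q^{1/2}]]$ with integer powers of $q^{1/2}$ confirms, and similarly $\chi_8 c_{16}\xi_8^{1/2}(\begin{smallmatrix}1&1\\0&1\end{smallmatrix})=1$.

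For $\sqrt{\eta^{\natural}}\in\cM(2)_{\frac14^*}$ the computation is cleaner: $\eta^{\natural}|\gamma/\eta^{\natural}=\chi_4(\gamma)$, the multiplier of the square root is a square root of $\chi_4(\gamma)$, and the weight-$\tfrac14^*$ normalization already supplies the factor $\xi_8^{1/2}$ with $(\xi_8^{1/2})^2=\xi_8=\sqrt{\chi_4}$... — one must be careful here that $\xi_8^2=\chi_4$, so $\xi_8$ itself is a fourth root of $\chi_4^2=1$; the point is that the trivial character on the $\tfrac14^*$ slot means the full multiplier is exactly $\xi_8^{1/2}$ in the notation $(\tfrac14^*,{\tt 1})$, and the consistency check is $(\xi_8^{1/2})^4\eta^{\natural}|\text{(fourth power of }\gamma) $ matching $\chi_4^2=1$; concretely, since $(\sqrt{\eta^\natural})^2=\eta^\natural$ and $\eta^\natural$ transforms with multiplier $\chi_4=\xi_8^2$ of weight $\tfrac12^*=(\tfrac12,\xi_8)$, the square root transforms with weight $\tfrac14^*=(\tfrac14,\xi_8^{1/2})$ and trivial residual character, which is precisely the assertion. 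I would present this by squaring: for any proposed $f\in\cM(2)_{(\tfrac14^*,\psi)}$ one has $f^2\in\cM(2)_{(\tfrac12^*,\psi^2)}$, and $\eta^\natural\in\cM(2)_{\tfrac12^*}$ uniquely determines the pair $(\tfrac14^*,\psi)$ up to a quadratic character; that quadratic ambiguity is killed by one Fourier coefficient.

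The holomorphy at the cusps is the last ingredient: since $\eta$ is nonvanishing on $\cH$ and the order of an eta-quotient at a cusp $\tfrac nm$ is the explicit rational $\tfrac{N}{24}\sum_{d|N}\tfrac{(d,m)^2 r_d}{(m,N/m)dm}$ recalled at the start of Section 3.2, the orders of $\eta^\flat$, $\eta^\sharp$, $\eta^\natural$ at every cusp of $\Gamma_0(2)\cap\Gamma^0(2)$ are already $\ge0$ (this is what Proposition \ref{lev2} and the $\natural$ remark assert), and halving a nonnegative rational keeps it nonnegative, so each square root is holomorphic at the cusps with the appropriate fractional $q$-order. The main obstacle I anticipate is purely bookkeeping: getting the branch of the square root of the multiplier right, i.e. verifying that $\chi_8 c_{16}$ (not $\overline{\chi_8}c_{16}$ or $\chi_8 b_{16}$, etc.) is the correct choice, which requires evaluating the Rademacher-type multiplier formula for $\eta$ carefully on at least two generators of $\Gamma_0(2)\cap\Gamma^0(2)$ and reconciling the $\tfrac18$-powers of roots of unity — the same kind of delicate exponent arithmetic as in the proofs of Proposition \ref{cuspform} and Proposition \ref{lev2}, but now with an extra factor of $\tfrac12$ in the exponents coming from the square root.
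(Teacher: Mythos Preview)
Your approach misses the key structural point of the paper's setup. Just before this lemma, the weight-$\tfrac34$ operator on $\Gamma_0(2)\cap\Gamma^0(2)$ is \emph{defined} by declaring $\sqrt{\eta^3}\in\cS(2)_{(\frac34^*,c_{16}b_{16})}$; the $\tfrac14^*$ action is then determined by compatibility with the already-known $\tfrac12^*$ action. So the content of the lemma is not to compute a multiplier from scratch, but to express $\sqrt{\eta^\flat}$ as a ratio of forms whose transformation is already known. The paper does exactly this in one line: from $\eta^\flat\eta^\natural\eta^\sharp=\eta^3$ and $\eta^{\sharp\ang{\frac12}}=\eta^2/\eta^{\ang{\frac12}}$ one gets $\sqrt{\eta^\flat}=\sqrt{\eta^3}/\eta^{\sharp\ang{\frac12}}$, and then the character is read off by subtracting $(\tfrac12^*,\chi_8 b_{16})$ (for $\eta^{\sharp\ang{\frac12}}$, via Proposition~\ref{lev2} and rescaling) from $(\tfrac34^*,c_{16}b_{16})$. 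No branch-chasing is needed because the $\tfrac34^*$ action is anchored to $\sqrt{\eta^3}$ by fiat.

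Your direct route---squaring to get the known multiplier of $\eta^\flat$ and then resolving the sign ambiguity on generators---is in principle workable but has concrete problems as written. First, $(\begin{smallmatrix}1&1\\0&1\end{smallmatrix})\notin\Gamma_0(2)\cap\Gamma^0(2)=\Gamma(2)$; the translation in this group is $(\begin{smallmatrix}1&2\\0&1\end{smallmatrix})$, so your generator check is on the wrong element. Second, your multiplier bookkeeping is tangled: you write that $\eta^\natural$ ``transforms with multiplier $\chi_4=\xi_8^2$ of weight $\tfrac12^*$'', but $\tfrac12^*$ already encodes the factor $\xi_8$, not $\xi_8^2$, so the subsequent halving is off. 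Third, and more seriously, resolving the quadratic ambiguity on a generating set requires knowing that the resulting function on the group is actually a character (i.e.\ that the signs are consistent on relations), which you do not address; the paper's ratio argument sidesteps this entirely since both numerator and denominator already carry genuine characters.
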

\begin{proof}The first assertion follows from $\sqrt{\eta^{\flat}}=\dfrac{\sqrt{\eta^3}}{\eta^{\sharp\ang{\frac12}}}$ and
\[\sqrt{\eta^3}\in\cS(2)_{(\frac34^*,c_{16}b_{16})},\6\eta^{\sharp\ang{\frac12}}\in\cM(1,2)_{(\frac12^*,\chi_8b_{16})}.\]
\end{proof}

The above Lemma and Proposition \ref{theta8} lead to $\theta_{\chi_8}^{\ang{\frac1{16}}}\in\cM(2)_{(\frac12^*,c_{16}b_{16})}$ and $\theta_{\chi_{12}\chi_8}^{\ang{\frac1{48}}}\in\cM(6)_{(\frac12^*,c_{48}b_{48})}$.

\vspace{0.5cm}

The weight $\frac38$ operator is extended by
\[\sqrt[4]{\eta^3}\in\cS(2)_{(\frac38^*,c_{32}b_{32})}\]
then $\sqrt[8]{\eta}=\frac{\eta}{\sqrt[4]{\eta^3}}\in\cS(6)_{(\frac18^*,c_{96}b_{96})}$.

Moreover the weight $\frac3{16}$ operator is extended by
\[\sqrt[8]{\eta^3}\in\cS(2)_{(\frac3{16}^*,c_{64}b_{64})}\]
then $\sqrt[8]{\eta}=\frac{\sqrt{\eta}}{\sqrt[8]{\eta^3}}\in\cS(6)_{(\frac1{16}^*,c_{192}b_{192})}$.

\newpage

\subsection{Integer/3-power weight}

The weight $\frac43$ operator on $\Gamma(9)$ is also extended to $\Gamma_0(3)\cap\Gamma^0(3)$ by
\[\sqrt[3]{\eta^8}\in\cS(3)_{(\frac43,c_9b_9)}\]
then
\[\sqrt[3]{\eta}=\frac{\eta^3}{\sqrt[3]{\eta^8}}\in\cS(6)_{(\frac16^*,c_{72}b_{72})}\]
\[\sqrt[6]{\eta}=\frac{\sqrt{\eta}}{\sqrt[3]{\eta}}\in\cS(6)_{(\frac1{12}^*,c_{144}b_{144})}\]
where $(\frac16^*,\chi)=(\frac16,\xi_8^3\chi)$ and $(\frac1{12}^*,\chi)=(\frac1{12},\sqrt{\xi_8}^3\chi)$.\\

\begin{Lem}
\[\sqrt[3]{\eta^{\bot}}\in\cM(3)_{(\frac13,\chi_3c_9)} \6 \sqrt[3]{\eta^{\top}}\in\cM(3)_{(\frac13,\chi_3b_9)}\]
\[\sqrt[3]{\eta^{\nwarrow}}\in\cM(3)_{(\frac13,\chi_9)} \6 \sqrt[3]{\eta^{\swarrow}}\in\cM(3)_{(\frac13,\overline{\chi_9})}\]
\end{Lem}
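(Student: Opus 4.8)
My plan is to prove the four assertions in parallel with Lemma~\ref{lev3}, using the theta forms $f_{9,r}$ to settle holomorphy and the $\Gamma(9)$-transformation in one stroke, and then upgrading the transformation law to $\Gamma_0(3)\cap\Gamma^0(3)$ by a Rademacher-type computation with cube roots, exactly as in Lemma~\ref{lev3} and Proposition~\ref{cuspform}.

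First I would fix the branches and reduce to the $\eta^{\bot}$ case. Since $\eta$ is nowhere zero on $\cH$, each of $\sqrt[3]{\eta^{\bot}},\sqrt[3]{\eta^{\top}},\sqrt[3]{\eta^{\nwarrow}},\sqrt[3]{\eta^{\swarrow}}$ is a well-defined holomorphic function on $\cH$; from the pentagonal number theorem (for $\eta^{\bot},\eta^{\top}$) and the $|(\BM 1&\pm1\\0&1\EM)$-twist (for $\eta^{\nwarrow},\eta^{\swarrow}$) all four have $q$-expansions with leading coefficient $1$, so the relevant branch is the one normalised that way. By uniqueness of that branch, $\sqrt[3]{\eta^{\nwarrow}}=\sqrt[3]{\eta^{\bot}}|(\BM 1&1\\0&1\EM)$, $\sqrt[3]{\eta^{\swarrow}}=\sqrt[3]{\eta^{\bot}}|(\BM 1&-1\\0&1\EM)$, and $\sqrt[3]{\eta^{\top}}|_{\frac13}(\BM 0&-1\\1&0\EM)$ is the normalised cube root of $\eta^{\top}|_1(\BM 0&-1\\1&0\EM)=\frac1{\sqrt3^3i}\eta^{\bot}$ (Lemma~\ref{lev3}). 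Since $\Gamma(3)$ is normal in ${\rm SL}_2(\Z)$ and $(\Gamma_0(3)\cap\Gamma^0(3))\TL(\BM 1&\pm1\\0&1\EM)=(\Gamma_0(3)\cap\Gamma^0(3))\TL(\BM 0&-1\\1&0\EM)=\Gamma_0(3)\cap\Gamma^0(3)$, the $\top,\nwarrow,\swarrow$ statements will follow from the $\bot$ statement once the conjugated characters are computed: $\chi_3c_9$ conjugated by $(\BM 0&-1\\1&0\EM)$ (resp.\ by $(\BM 1&\pm1\\0&1\EM)$) equals $\chi_3b_9$ (resp.\ $\chi_9,\overline{\chi_9}$), which is a finite check in terms of $d_9$. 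So it suffices to treat $\sqrt[3]{\eta^{\bot}}$.

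Next, writing $\eta^{\bot}=\eta^{\ang{\frac13}3}/\eta$ gives $\sqrt[3]{\eta^{\bot}}=\eta^{\ang{\frac13}}/\sqrt[3]\eta$; substituting $\eta^{\ang{\frac13}}=\VS{m\in\Z}(-1)^mq^{\frac{(6m-1)^2}{72}}$, splitting the sum according to $m\bmod 3$, and comparing with the definition of $f_{9,r}$ expresses $\sqrt[3]{\eta^{\bot}}$ as an explicit $\C$-linear combination of $f_{9,1},f_{9,5},f_{9,7}$ (likewise $\sqrt[3]{\eta^{\top}}=f_{9,3}$, and $\sqrt[3]{\eta^{\nwarrow}},\sqrt[3]{\eta^{\swarrow}}$ are such combinations with root-of-unity coefficients). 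Since $f_{9,r}\in\cM(\Gamma(9))_{\frac13}$ for odd $r$ by the cited formula, this already gives $\sqrt[3]{\eta^{\bot}}\in\cM(\Gamma(9))_{\frac13}$; in particular $\sqrt[3]{\eta^{\bot}}$ is holomorphic on $\cH$ and at every cusp and $\Gamma(9)$-covariant of weight $\frac13$ with trivial character. What remains is the transformation law under $\Gamma_0(3)\cap\Gamma^0(3)$ with character $\chi_3c_9$. For $\gamma=(\BM a&b\\c&d\EM)\in\Gamma_0(3)\cap\Gamma^0(3)$, cubing gives $\big(\sqrt[3]{\eta^{\bot}}|_{\frac13}\gamma\big)^3=\eta^{\bot}|_1\gamma$, whose multiplier is read off from the proof of Lemma~\ref{lev3}; hence $\sqrt[3]{\eta^{\bot}}|_{\frac13}\gamma=\nu(\gamma)\sqrt[3]{\eta^{\bot}}$ with $\nu(\gamma)$ a root of unity cubing to that multiplier. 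As in Lemma~\ref{lev3} I would compute $\nu(\gamma)$ by inserting Rademacher's formula for $\eta$ at $\gamma$ and at $(\BM a&3b\\c/3&d\EM)$ into $\sqrt[3]{\eta^{\bot}}=\eta^{\ang{\frac13}}/\sqrt[3]\eta$, the new point being that one now extracts a cube root of the resulting $24$th root of unity; the correct cube root is forced by triviality of $\nu$ on $\Gamma(9)$ (from the previous paragraph) together with the normalisation of the weight-$\frac13$ automorphy factor on $\Gamma_0(3)\cap\Gamma^0(3)$ fixed earlier via $\sqrt[3]{\eta^8}\in\cS(3)_{(\frac43,c_9b_9)}$. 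Alternatively one evaluates $\nu$ on coset representatives of $\Gamma(9)$ in $\Gamma_0(3)\cap\Gamma^0(3)$ and matches against $\chi_3c_9$, using $\sqrt[3]{\eta^8}=\sqrt[3]{\eta^{\bot}}\sqrt[3]{\eta^{\top}}\sqrt[3]{\eta^{\nwarrow}}\sqrt[3]{\eta^{\swarrow}}$ and Lemma~\ref{b3c3}.

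The main obstacle is precisely this last verification: the root-of-unity and branch bookkeeping in passing from the known multiplier of $\eta^{\bot}$ to that of $\sqrt[3]{\eta^{\bot}}$, i.e.\ checking that the ``obvious'' cube root $\chi_3c_9$ (relative to the automorphy factor fixed earlier) is the correct one uniformly over $\Gamma_0(3)\cap\Gamma^0(3)$, and not merely up to an a priori possible finite twist; keeping track of the branch conventions in Rademacher's formula is the delicate part. Everything else — the pentagonal expansions, the identification with combinations of the $f_{9,r}$, and the conjugation computations of the characters — is routine.
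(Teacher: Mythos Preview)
Your plan is plausible and would ultimately succeed, but it differs substantially from the paper's proof, which is far shorter. For the first assertion the paper writes the single identity
\[
\sqrt[3]{\eta^{\bot}}=\dfrac{\sqrt[3]{\eta^8}}{\eta^{\top\ang{\frac13}}},
\]
and reads off the weight and character: $\sqrt[3]{\eta^8}\in\cS(3)_{(\frac43,c_9b_9)}$ is the very \emph{definition} of the weight-$\tfrac43$ operator on $\Gamma_0(3)\cap\Gamma^0(3)$, while $\eta^{\top\ang{\frac13}}\in\cM(3)_{(1,\chi_3b_9)}$ follows from Lemma~\ref{lev3} after rescaling. Dividing gives $(\tfrac13,\chi_3c_9)$ with no cube root to extract. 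For $\sqrt[3]{\eta^{\nwarrow}}$ the paper does not conjugate at all: it twists the $q$-expansion $\eta^{\top\ang{\frac13}}=\sqrt[3]{\eta^{\nwarrow}\eta^{\swarrow}\eta^{\top}}$ by $\chi_9^2$ to identify $\iG_{\chi_9^2,\overline{\chi_9}}^{\ang{\frac19}}=\sqrt[3]{\eta^{\bot}\eta^{\nwarrow}\eta^{\top}}$, and then divides by the two already-known factors. The obstacle you correctly flag --- choosing the right cube root of the Rademacher multiplier --- is thereby dissolved entirely, because each cube root appearing is a quotient of the canonical $\sqrt[3]{\eta^8}$ by an integer-weight form.

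One point in your reduction needs care. The claim that $\chi_3c_9$ ``conjugated by $(\BM 1&\pm1\\0&1\EM)$'' yields $\chi_9,\overline{\chi_9}$ is not literally true: for rational weight the action is only defined on $\Gamma_0(3)\cap\Gamma^0(3)$, so conjugating by $(\BM 1&1\\0&1\EM)$ (which lies outside) produces, in addition to $(\chi_3c_9)\TL(\BM 1&1\\0&1\EM)$, a correction $c_9b_9\cdot\big((c_9b_9)\TL(\BM 1&1\\0&1\EM)\big)^{-1}$ coming from the shift in the automorphy factor of $\sqrt[3]{\eta^8}$. This extra factor is what makes the answer come out to $\chi_9$; without it the computation fails (try $\gamma=(\BM 5&3\\3&2\EM)$). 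So your conjugation step is salvageable but is not the ``finite check'' you describe. The paper's Eisenstein route sidesteps this as well.
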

\begin{proof}
The first assertion follows from $\sqrt[3]{\eta^{\bot}}=\dfrac{\sqrt[3]{\eta^8}}{\eta^{\top\ang{\frac13}}}$. Note
\[\iE_{\chi_3}=1+6\VS{n\in3\M+1}(\chi_3*{\tt 1}_\N)(n)q^n,\]
\[\iE_{\chi_9}=1+(1-1^{\frac13})\VS{n\in\N}(\chi_9*{\tt 1}_\N)(n)q^n.\]

We see
\[\VS{n\in3\M+1}(\chi_3*{\tt 1}_\N)(n)q^{\frac n9}=\frac16(\iE_{\chi_3}^{\ang{\frac19}}-\iE_{\chi_3}^{\ang{\frac13}})=\eta^{\top\ang{\frac13}}=\sqrt[3]{\eta^{\nwarrow}\eta^{\swarrow}\eta^{\top}}\]
and $\chi_9^2(n)=1^{-\frac{n-1}9}$ for $n\in3\M+1$, hence
\[\z\iG_{\chi_9^2,\overline{\chi_9}}^{\ang{\frac19}}=\sqrt[3]{\eta^{\bot}\eta^{\nwarrow}\eta^{\top}}\]
in particular $\sqrt[3]{\eta^{\nwarrow}}\in\cM(9)_{(\frac13,\chi_9)}$.

The dimension formula states $\dim\cM(\Gamma(9))_1=18$ and $\dim\cM(9)_{(1,\chi_3)}=6$. Hence we get $\dim\cM(9)_{(1,\chi_9)}=6$ and
\[\iE_{\chi_9}^{\ang{\frac13}}=\sqrt[3]{\eta^{\nwarrow2}\eta^{\swarrow}}\]
\end{proof}

Since $\dim\cM(9)_{\frac13}=4$ we easily see
\[\z f_{9,1}=\frac13(\sqrt[3]{\eta^{\bot}}+\sqrt[3]{\eta^{\nwarrow}}+\sqrt[3]{\eta^{\swarrow}})\]
\[f_{9,3}=\sqrt[3]{\eta^{\top}}\]
\[\z f_{9,5}=-\frac13(\sqrt[3]{\eta^{\bot}}+1^{\frac23}\sqrt[3]{\eta^{\nwarrow}}+1^{\frac13}\sqrt[3]{\eta^{\swarrow}})\]
\[\z f_{9,7}=-\frac13(\sqrt[3]{\eta^{\bot}}+1^{\frac13}\sqrt[3]{\eta^{\nwarrow}}+1^{\frac23}\sqrt[3]{\eta^{\swarrow}})\]\

The weight $\frac49$ operator on $\Gamma(27)$ is also extended to $\Gamma_0(3)\cap\Gamma^0(3)$ by
\[\sqrt[9]{\eta^8}\in\cS(3)_{(\frac49,c_{27}b_{27})}\]

\newpage

\subsection{Integer/5 weight}

Note $\eta^{\frac{24}5}=\sqrt[5]{\eta^{/5/}}\eta^{"5"\ang{\frac15}}$ thus $\sqrt[5]{\eta^{/5/}}\in\cM(\Gamma(5))_{\frac25}$. We extend the weight $\frac25$ operator on $\Gamma(5)$ to $\Gamma_0(5)\cap\Gamma^0(5)$ by
\[\sqrt[5]{\eta^{/5/}}\in\cM(5)_{(\frac25,\chi_5^2)}.\]

We see $\sqrt[5]{\eta^{/5/4}\eta^{"5"}}=\dfrac{\eta^{\ang{\frac15}4}\eta^{\ang5}}{\eta}\in\cM(5)_{(2,,\chi_5^2)}$ since
\[\frac{(\frac5d)e^{\frac{4\pi i}{12}(5ac+\frac b5d-5cd-5acd^2+3d-3)}e^{\frac{\pi i}{12}(\frac{ac}5+5bd-\frac{cd}5-\frac{acd^2}5+3d-3)}}{e^{\frac{\pi i}{12}(ac+bd-cd-acd^2+3d-3)}}=\z(\frac5d)e^{\pi i(d-1)},\]
\[\frac{(\frac5d)(\frac d{c/5})(\frac{c/5}d)e^{\frac{4\pi i}{12}(5ac+\frac b5d+5cd-5bc^2d-15c)e^{\frac{\pi i}{12}(ac+bd-cd-acd^2+3d-3)}}}{(\frac dc)(\frac cd)e^{\frac{\pi i}{12}(ac+bd+cd-bc^2d-3c)}}=\z(\frac d5).\]
In particular
\[\sqrt[5]{\eta^{"5"}}\in\cM(5)_{(\frac25,\chi_5^2)}.\]\\

Now, put
\[\eta_{\chi_5}=\sqrt{\sqrt[5]{\eta^{/5/}}+(1-2i)\sqrt[5]{\eta^{"5"}}}\]
and $\eta_{\overline{\chi_5}}=\eta_{\chi_5}^{\sigma_4}$.

\begin{Lem}$\eta_{\chi_5}\in\cM(5)_{(\frac15,\chi_5)}$.\end{Lem}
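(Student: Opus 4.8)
The plan is to show that $\eta_{\chi_5}$ transforms with character $\chi_5$ under $\Gamma_0(5)\cap\Gamma^0(5)$ and is holomorphic at the cusps. The key observation is that $\eta_{\chi_5}^2=\sqrt[5]{\eta^{/5/}}+(1-2i)\sqrt[5]{\eta^{"5"}}$ is a linear combination of two elements of $\cM(5)_{(\frac25,\chi_5^2)}$, hence $\eta_{\chi_5}^2\in\cM(5)_{(\frac25,\chi_5^2)}$. Therefore for each $\gamma\in\Gamma_0(5)\cap\Gamma^0(5)$ we have $(\eta_{\chi_5}|_{\frac15}\gamma)^2=\chi_5^2(\gamma)\,\eta_{\chi_5}^2$, so $\eta_{\chi_5}|_{\frac15}\gamma=\pm\chi_5(\gamma)\,\eta_{\chi_5}$ (choosing a branch of the square root consistently, using that $\eta_{\chi_5}$ is nowhere vanishing on $\cH$ because $\eta$ is). It remains to pin down the sign to be $+1$ for all $\gamma$, and to check the holomorphy/growth condition at the cusps.

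For the sign, I would argue that $\gamma\mapsto (\eta_{\chi_5}|_{\frac15}\gamma)/(\chi_5(\gamma)\eta_{\chi_5})$ is a continuous (indeed holomorphic-datum) function $\Gamma_0(5)\cap\Gamma^0(5)\to\{\pm1\}$, hence a homomorphism to $\{\pm1\}$ once one checks it is multiplicative — this follows from the cocycle relation for the automorphy factor $\eta^{24/5}$-type weight and the fact that $\chi_5$ is a genuine character. So it suffices to verify the sign is $+1$ on a generating set of $\Gamma_0(5)\cap\Gamma^0(5)$. A convenient generating set is $(\BM 1&5\\0&1\EM)$, $(\BM 1&0\\5&1\EM)$ together with $-I$ (or whatever finite generating set is available); for the translation $(\BM 1&5\\0&1\EM)$ the check is immediate from the $q$-expansion since $\eta_{\chi_5}\in\C[[q^{1/5}]]q^{?}$ with the leading exponent read off from $\eta^{-3/5}q^{\cdots}$, and for $-I$ one compares with $i^{-2\kappa}\chi_5(-1)$ as in the half/rational-weight conventions of §2.5–2.6. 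The remaining generator(s) I would handle via the $\eta$-transformation data already recorded for $\eta^{/5/}$ and $\eta^{"5"}$ in the Lemma stating $\eta^{/5/}\in\cM(1,5)_{(2,\chi_5^2)}$ and $\eta^{"5"}\in\cM(5,1)_{(2,\chi_5^2)}$, in particular the explicit action of $(\BM 0&-1\\1&0\EM)$ on $\eta^{"5"}$, conjugated appropriately to land inside $\Gamma_0(5)\cap\Gamma^0(5)$.

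For holomorphy at the cusps, the point is that $\sqrt[5]{\eta^{/5/}}$ and $\sqrt[5]{\eta^{"5"}}$ are already known to lie in $\cM(5)_{(\frac25,\chi_5^2)}$, so their $q$-expansions (and those of all their $|$-translates) have nonnegative fractional exponents; the specific coefficient $1-2i$ is chosen precisely so that the linear combination under $\eta_{\chi_5}^2$ has leading term a perfect square in $\C[[q^{1/5}]]$ — concretely one wants the constant term to be a nonzero square and the expansion to admit a fifth/square root in $\C[[q^{1/5}]]$, i.e. $\eta_{\chi_5}\in q^{1/5}\cdot(\text{unit})$ or a similar explicit shape matching the expected $\iE_{\chi_5}$-type Eisenstein series. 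I would read off the first few Fourier coefficients of $\sqrt[5]{\eta^{/5/}}$ and $\sqrt[5]{\eta^{"5"}}$ from the eta-product expansions and confirm that $\sqrt[5]{\eta^{/5/}}+(1-2i)\sqrt[5]{\eta^{"5"}}$ has the form $(\text{const}\cdot q^{\text{something}})\cdot(1+O(q^{1/5}))^2$ with the constant a square, which forces the square root to be a genuine element of $\C[[q^{1/5}]]q^{\text{something}}$ with bounded (in fact vanishing-order-controlled) behavior at $i\infty$; the other cusps are handled by applying the finitely many coset representatives and using that each translate of $\eta_{\chi_5}^2$ is again holomorphic at $i\infty$, whence its square root is too.

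The main obstacle I expect is the sign determination: squaring loses exactly the information that distinguishes $\eta_{\chi_5}$ from $\pm\eta_{\chi_5}$, and one must check that the ambiguous sign is $+1$ simultaneously at all generators, which requires carefully tracking the branch of the $12/N$-weight automorphy factor (the $\eta^{24/5}$ normalization) together with the branch of the outer square root through the $S$-transformation. A secondary subtlety is verifying that the $\{\pm1\}$-valued map is actually a homomorphism so that checking it on generators suffices — this is standard given the cocycle identity but deserves an explicit sentence. Once those are in place the holomorphy at the cusps is essentially bookkeeping, since it is inherited from $\sqrt[5]{\eta^{/5/}},\sqrt[5]{\eta^{"5"}}\in\cM(5)_{(\frac25,\chi_5^2)}$ and the choice of the coefficient $1-2i$ making the relevant leading coefficient a square.
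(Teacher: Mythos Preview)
Your first step --- observing that $\eta_{\chi_5}^2=\sqrt[5]{\eta^{/5/}}+(1-2i)\sqrt[5]{\eta^{"5"}}\in\cM(5)_{(\frac25,\chi_5^2)}$ and hence $\eta_{\chi_5}|_{\frac15}\gamma=\pm\chi_5(\gamma)\eta_{\chi_5}$ --- is exactly how the paper begins. From there, however, the paper resolves the $\pm$ ambiguity by a completely different device: it shows that the \emph{fifth} power $\eta_{\chi_5}^5$ lies in the integer-weight space $\cM(5)_{(1,\chi_5)}$, by writing down an explicit identity
\[
\eta_{\chi_5}^5=\tfrac{-3-i}{4}\,\iE_{\chi_5}^{\ang{\frac15}}+\tfrac{7+i}{4}\,\iE_{\chi_5}+(\tfrac52-5i)\,\iG_{\chi_5^2,\overline{\chi_5}}^{\ang{\frac15}}
\]
in terms of weight-$1$ Eisenstein series (verified by comparing finitely many $q$-coefficients, the dimension bound $\dim\cM(5)_{(2,\chi_5^2)}=6$ controlling how many are needed after squaring). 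Since $\gcd(2,5)=1$, knowing both $\eta_{\chi_5}^2$ and $\eta_{\chi_5}^5$ transform correctly forces the sign to be $+1$: from $(\pm\chi_5(\gamma))^5=\chi_5(\gamma)$ and $\chi_5^4=1$ one gets $(\pm1)^5=1$.

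Your route --- pinning down the sign on a generating set of $\Gamma_0(5)\cap\Gamma^0(5)$ --- is in principle sound, but the part you flag as ``the main obstacle'' is real and you have not actually carried it out. To check the sign under, say, $(\BM 1&0\\5&1\EM)$ you would need the transformation of the \emph{square root} of $\sqrt[5]{\eta^{/5/}}+(1-2i)\sqrt[5]{\eta^{"5"}}$, not just of the fifth roots themselves; the data recorded for $\eta^{/5/},\eta^{"5"}$ under $(\BM 0&-1\\1&0\EM)$ does not directly give this without further branch-tracking, which is exactly the ambiguity you are trying to eliminate. The paper's odd-power trick sidesteps all of that bookkeeping: one $q$-expansion identity replaces the generator-by-generator sign analysis, and holomorphy at the cusps comes for free from that of $\eta_{\chi_5}^2$.
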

\begin{proof}First we see $\eta_{\chi_5}^2\in\cM(5)_{(\frac25,\chi_5^2)}$. Note
\[\iE_{\chi_5}=1+(3-i)\VS{n\in\N}\VS{d|n}\chi_5(d)q^n.\]

The dimension formula states $\dim\cM(5)_{(2,\chi_5^2)}=6$ and
\[\eta_{\chi_5}^5=\z\frac{-3-i}4\iE_{\chi_5}^{\ang{\frac15}}+\frac{7+i}4\iE_{\chi_5}+(\frac52-5i)\iG_{\chi_5^2|\overline{\chi_5}}^{\ang{\frac15}}\]
\end{proof}

We also see $\eta_{\chi_5}\cdot\eta_{\chi_5}|(\BM 1&1\\0&1\EM)\cdot\eta_{\chi_5}|(\BM 1&2\\0&1\EM)\cdot\eta_{\chi_5}|(\BM 1&-2\\0&1\EM)\cdot\eta_{\chi_5}|(\BM 1&-1\\0&1\EM)=\iE_{\chi_5}$.

At last, remark
\[f_{5,1}=\z\frac12\big(\eta_{\chi_5}+\eta_{\overline{\chi_5}}\big)=\VP{n\in\N}(1-q^n)^{\frac25}\VP{n\in\M}\dfrac1{(1-q^{5n+1})(1-q^{5n+4})},\]
\[f_{5,3}=\z\frac i2\big(\eta_{\chi_5}-\eta_{\overline{\chi_5}}\big)=q^{\frac15}\VP{n\in\N}(1-q^n)^{\frac25}\VP{n\in\M}\dfrac1{(1-q^{5n+2})(1-q^{5n+3})},\]
and $f_{5,1}f_{5,3}=\sqrt[5]{\eta^{"5"}}$.  These are essentially the famous Rogers-Ramanujan functions.

\newpage

\section{Graded rings}

\subsection{First results}

Let $\varGamma$ be a congruence subgroup.

For a set $X$ (usually a semigroup), let
\[\cM(\varGamma)_X=\VT{\chi\in X}\cM(\varGamma)_x\]
\[\cS(\varGamma)_X=\VT{\chi\in X}\cS(\varGamma)_x\]

Generally, for a commutative semigroup $S$, a subset $X\subset S$ and a $S$-graded ring $R=\VT{s\in S}R_s$ let $R|_X=\VT{x\in X}R_x$. For example $\cM(\varGamma)_{2\M}=\cM(\varGamma)_\M\big|_{2\M}$.

We will study larger ring $\cM(\varGamma)_{\M\times{\rm Hom}(\varGamma,\C^\times)}$ or $\cM(\varGamma)_{\frac12\M\times{\rm Map}(\varGamma,\C^\times)}$. In many cases, these rings are finitely generated and we give explicit structures.\\

When $f$ is a weight $n$ form, the Hibert function
\[\VS{k\in\M}\dim\C[f]_k\cdot t^k=\VS{k\in\M}t^{nk}=\dfrac1{1-t^n}.\]

When $f,g$ are weight $m,n$ forms respectively, the Hibert function
\[\VS{k\in\M}\dim\C[f,g]_k\cdot t^k=\VS{k\in\M}t^{mk}\VS{k\in\M}t^{nk}=\dfrac1{(1-t^m)(1-t^n)}\]

Note in particular
\[\dfrac1{(1-t^m)(1-t^n)}=\VS{k\in\M}\big([\frac kn]+1\big)t^k.\]\\

Next result is well-known (cf. \cite[Theorem 3.5.2]{DS} or \cite[Theorem 2.17]{St}).

\begin{Thm}The ring of modular forms $\cM(\Gamma(1))_\M$ is a polynomial ring in two variables
\[\cM(\Gamma(1))_\M=\C\big[\iE_{1,4},\iE_{1,6}\big].\]
\end{Thm}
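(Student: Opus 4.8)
The plan is to show that $\iE_{1,4}$ and $\iE_{1,6}$ are algebraically independent and that they generate the whole ring, by a dimension count weight by weight. First I would record that $\iE_{1,4}\in\cM(1)_4$ and $\iE_{1,6}\in\cM(1)_6$, so there is a graded ring homomorphism $\varphi:\C[X,Y]\to\cM(\Gamma(1))_\M$ sending $X\mapsto\iE_{1,4}$, $Y\mapsto\iE_{1,6}$, where $X$ has degree $4$ and $Y$ has degree $6$. The target is concentrated in even weights (since $-I\in\Gamma(1)$ forces $\cM(1)_k=\{0\}$ for odd $k$), matching the fact that monomials $X^aY^b$ have even degree $4a+6b$. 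The dimension formula quoted in the excerpt gives, for $k\in\M$,
\[\dim\cM(1)_{2k}=\big[\tfrac k6\big]+\BC0&\text{if }k\in6\M+1\\1&\text{otherwise}\EC,\]
and one checks directly that this equals the number of pairs $(a,b)\in\M^2$ with $2a+3b=k$, i.e. the coefficient of $t^{2k}$ in $1/((1-t^4)(1-t^6))$; both are piecewise-linear in $k$ with the same slope $1/6$ and the same small-$k$ values, so a short case analysis modulo $6$ finishes the comparison.

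Next I would prove injectivity of $\varphi$, equivalently that $\iE_{1,4}$ and $\iE_{1,6}$ are algebraically independent over $\C$. The cleanest route is: if they were dependent, then $\cM(\Gamma(1))_\M$ would have Krull dimension $\le1$, so for large even $k$ the dimensions $\dim\cM(1)_k$ would be bounded, contradicting that $\dim\cM(1)_{2k}\sim k/6\to\infty$. Alternatively, and more elementarily, one uses that $\eta^{24}=\frac1{12^3}(\iE_{1,4}^3-\iE_{1,6}^2)$ is a nonzero cusp form (established earlier in the excerpt via the $\eta$ functional equations), so $\iE_{1,4}^3-\iE_{1,6}^2$ vanishes to exact order $1$ in $q$; combining this with the fact that $\iE_{1,4},\iE_{1,6}$ have constant term $1$ lets one run the classical argument that no nontrivial polynomial relation $\sum c_{a,b}\iE_{1,4}^a\iE_{1,6}^b=0$ can hold (group terms by weight $4a+6b$; within a fixed weight the monomials are distinct and one derives a contradiction by looking at $q$-expansions). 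Either way, $\varphi$ is injective.

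For surjectivity I would induct on the weight. The degree-$0$ part is $\C=\cM(1)_0$. Given $f\in\cM(1)_{2k}$ with $2k\ge4$, choose a monomial $m=\iE_{1,4}^a\iE_{1,6}^b$ of the same weight $2k$ (possible since $2a+3b=k$ is solvable in $\M$ for every $k\ne1$, and $k=1$ is vacuous as $\cM(1)_2=\{0\}$); since $m$ has constant term $1$, the form $f-f(i\infty)\,m$ lies in $\cM(1)_{2k}\cap\C[[q]]q$. Because $\Gamma(1)$ has a single cusp, $\cM(1)_{2k}\cap\C[[q]]q=\cS(1)_{2k}$, and every cusp form of weight $2k\ge12$ is divisible by $\eta^{24}$, giving $f-f(i\infty)m=\eta^{24}g$ with $g\in\cM(1)_{2k-12}$; by induction $g\in\C[\iE_{1,4},\iE_{1,6}]$ and, using $\eta^{24}=\frac1{12^3}(\iE_{1,4}^3-\iE_{1,6}^2)\in\C[\iE_{1,4},\iE_{1,6}]$, so is $f$. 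For $2k\in\{4,6,8,10,14\}$ one has $\dim\cS(1)_{2k}=0$, so $f=f(i\infty)m$ outright. The main obstacle is the divisibility step $\cS(1)_{2k}=\eta^{24}\cM(1)_{2k-12}$: this needs that $\eta^{24}$ is nowhere vanishing on $\cH$ and vanishes only to order $1$ at the cusp, so that dividing a cusp form by it yields a holomorphic modular form holomorphic at the cusp; both facts follow from the $\eta$-product formula and the computation of $\operatorname{ord}_\infty\eta^{24}$ already in the excerpt. Once that is in hand, the induction closes cleanly, and together with injectivity this yields $\cM(\Gamma(1))_\M=\C[\iE_{1,4},\iE_{1,6}]$.
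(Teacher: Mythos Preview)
Your proposal is correct and follows essentially the paper's strategy: establish injectivity of $\C[\iE_{1,4},\iE_{1,6}]\to\cM(1)_\M$ and then match Hilbert series. One caveat: your Krull-dimension argument for injectivity is circular as written, since algebraic dependence of $\iE_{1,4},\iE_{1,6}$ only bounds the Krull dimension of the subring they generate, not of $\cM(\Gamma(1))_\M$ itself (that would require surjectivity, which you have not yet established at that point). Your alternative via $\eta^{24}$ is the right idea, and the paper makes it explicit by changing generators to $\iE_{1,4}$ and $\Delta:=\iE_{1,4}^3-\iE_{1,6}^2=12^3\eta^{24}$: within weight $4k$ the monomials $\iE_{1,4}^a\Delta^b$ with $a+3b=k$ have pairwise distinct $q$-orders $b$, hence are linearly independent, giving injectivity of $R:=\C[\iE_{1,4},\Delta]\to\cM(1)_{4\M}$; then $\C[\iE_{1,4},\iE_{1,6}]=R\oplus R\,\iE_{1,6}$ (the two summands land in disjoint weight classes modulo $4$) handles the remaining weights. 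Once injectivity and the dimension count are both in hand, your inductive argument for surjectivity is redundant, since an injective linear map between finite-dimensional spaces of equal dimension is surjective; the paper accordingly omits it and closes with the Hilbert series computation alone.
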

\begin{proof}Put $R=\C\big[\iE_{1,4},\iE_{1,4}^3-\iE_{1,6}^2\big]$ then the natural map $R\to\cM(1)_{4\M}$ is injective. The natural map ${\rm RHS}=R\oplus R\iE_{1,6}\to{\rm LHS}$ is injective as well. Compute the Hilbert fuction
\begin{align*}\VS{k\in\M}\dim{\rm RHS}_{2k}\cdot t^k
&=\frac1{(1-t^2)(1-t^3)}\\&=\frac1{t^2}\Big(\frac1{(1-t)(1-t^2)}-\frac1{(1-t)(1-t^3)}\Big)\\
&=\VS{k\in\M}\big([\frac k2]-[\frac k3]\big)t^{k-2}\\
&=\VS{k\in\M}\dim\cM(1)_{2k}\cdot t^k\end{align*}
\end{proof}

\newpage

\begin{Thm}\label{23}
\[\cM(\Gamma(2))_\M=\C\big[\iE_{2,2}^{\ang{\frac12}},\iE_{2,2}\big]=\C\big[\eta^{\flat4},\eta^{\sharp4}\big]\]
\[\cM(\Gamma(3))_\M=\C\big[\iE_{\chi_3}^{\ang{\frac13}},\iE_{\chi_3}\big]=\C\big[\eta^{\bot},\eta^{\top}\big]\]
\end{Thm}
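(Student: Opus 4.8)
Each line of Theorem~\ref{23} claims that a graded ring is a free polynomial ring on two homogeneous generators, of weight $2$ for $\varGamma=\Gamma(2)$ and of weight $1$ for $\varGamma=\Gamma(3)$. The plan is to argue exactly as in the preceding theorem: exhibit an injective graded $\C$-algebra map from $\C[X,Y]$ (with $\deg X=\deg Y$ the generator weight) into $\cM(\varGamma)_\M$, and then show the two Hilbert series agree, so that the injection is an isomorphism.

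First I would check that the two presentations on each line describe the \emph{same} subring, a matter of linear algebra inside a single weight. For $\Gamma(2)$: the identities $\iE_{2,2}=\frac12(\eta^{\natural4}+\eta^{\flat4})$ and $\iE_{2,2}^{\ang{\frac12}}=\eta^{\natural4}+16\eta^{\sharp4}$ from the $\natural$-operator discussion, combined with Lemma~\ref{rel21} read as $\eta^{\natural4}=\eta^{\flat4}+16\eta^{\sharp4}$, give $\iE_{2,2}=\eta^{\flat4}+8\eta^{\sharp4}$ and $\iE_{2,2}^{\ang{\frac12}}=\eta^{\flat4}+32\eta^{\sharp4}$; inverting this ($\eta^{\sharp4}=\frac1{24}(\iE_{2,2}^{\ang{\frac12}}-\iE_{2,2})$, $\eta^{\flat4}=\frac43\iE_{2,2}-\frac13\iE_{2,2}^{\ang{\frac12}}$) shows $\C[\iE_{2,2}^{\ang{\frac12}},\iE_{2,2}]=\C[\eta^{\flat4},\eta^{\sharp4}]$ and, since $\iE_{2,2},\iE_{2,2}^{\ang{\frac12}}\in\cM(2)_2=\cM(\Gamma(2))_2$, also that $\eta^{\flat4},\eta^{\sharp4}\in\cM(\Gamma(2))_2$. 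For $\Gamma(3)$: the identities $\iE_{\chi_3}=\eta^{\bot}+3\eta^{\top}$ and $\iE_{\chi_3}^{\ang{\frac13}}=\eta^{\bot}+9\eta^{\top}$ from the $\nwarrow,\swarrow$ discussion are likewise an invertible substitution, so $\C[\iE_{\chi_3}^{\ang{\frac13}},\iE_{\chi_3}]=\C[\eta^{\bot},\eta^{\top}]$; here $\Gamma(3)$ is a \emph{proper} subgroup of $\Gamma_0(3)\cap\Gamma^0(3)$, so membership $\eta^{\bot},\eta^{\top}\in\cM(\Gamma(3))_1$ needs Lemma~\ref{lev3}: on $\Gamma(3)$ one has $3\mid b$, $3\mid c$, $d\equiv1$, and the characters $\chi_3,b_3,c_3$ then restrict to the trivial one.

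Next I would establish injectivity of $\C[X,Y]\to\cM(\varGamma)_\M$, $X\mapsto\eta^{\flat4}$, $Y\mapsto\eta^{\sharp4}$ (resp. $\eta^{\bot},\eta^{\top}$). Since the two generators have equal weight, injectivity is equivalent to the ratio of the generators being a non-constant function, which I read off the $q$-expansions: by Lemma~\ref{etanf}, $\eta^{\flat4}$ has nonzero constant term while $\eta^{\sharp4}\in\C[[q^{\frac12}]]q^{\frac12}$, so $\eta^{\flat4}/\eta^{\sharp4}$ has a pole at $\infty$; and the theta expansions $\eta^{\bot\ang3}=\VS{m,n\in\Z}1^{\frac{m-n}3}q^{m^2+mn+n^2}$, $3\eta^{\top}=\VS{m,n\in\Z+\frac13}q^{m^2+mn+n^2}$ recorded after Lemma~\ref{lev3} show $\eta^{\bot}$ has nonzero constant term while $\eta^{\top}\in\C[[q^{\frac13}]]q^{\frac13}$.

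Finally I would compare Hilbert series and conclude. The polynomial ring $\C[X,Y]$ with $\deg X=\deg Y=w$ has series $(1-t^w)^{-2}$. For the modular side I feed the standard geometry of the modular curves into the dimension formula: $X(\Gamma(2))$ has genus $0$, no elliptic points, $3$ cusps and $d(\Gamma(2))=6$, with $-I\in\Gamma(2)$, so $\dim\cM(\Gamma(2))_k=\frac k2+1$ for even $k$ and $0$ for odd $k$, i.e. series $(1-t^2)^{-2}$; $X(\Gamma(3))$ has genus $0$, no elliptic points, $4$ regular cusps and $d(\Gamma(3))=12$, with $-I\notin\Gamma(3)$, so (using the $k=1$ clause, valid since $\varepsilon_\infty^{\rm reg}=4>2g-2$) $\dim\cM(\Gamma(3))_k=k+1$ for all $k\in\M$, i.e. series $(1-t)^{-2}$. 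An injective graded linear map between graded vector spaces with equal finite dimension in every degree is bijective, so both injections are isomorphisms. There is no single hard step: all the substantive identities — Lemma~\ref{rel21} for level $2$, and the weight-$1$ $\eta$-relations of the $\nwarrow,\swarrow$ discussion for level $3$ — are already in hand, and the only real care is (i) the bookkeeping that the relevant $\eta$-powers descend to forms on $\Gamma(N)$ with trivial character, especially at level $3$ where $\Gamma(3)\subsetneq\Gamma_0(3)\cap\Gamma^0(3)$, and (ii) pinning down $g,\varepsilon_\infty,d$ of $X(\Gamma(2))$ and $X(\Gamma(3))$ correctly so the Hilbert-series match is exact.
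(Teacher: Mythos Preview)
Your proposal is correct and follows essentially the same approach as the paper: an injective graded map from a polynomial ring in two equal-weight variables, followed by a Hilbert-series (dimension) match. The paper's own proof is terser---it observes $\C[\iE_{2,2}^{\ang{\frac12}},\iE_{2,2}]=\C[\iE_{2,2}^{\ang{\frac12}},\iE_{2,2}-\iE_{2,2}^{\ang{\frac12}}]$ to get distinct $q$-valuations and simply cites $\dim\cM(\Gamma(2))_{2k}=k+1$, $\dim\cM(\Gamma(3))_k=k+1$---whereas you additionally verify the equality of the two presentations on each line and rederive the dimensions from the geometric invariants of $X(\Gamma(N))$, but the substance is the same.
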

\begin{proof}For the former assertion, since
\[\C\big[\iE_{2,2}^{\ang{\frac12}},\iE_{2,2}\big]=\C\big[\iE_{2,2}^{\ang{\frac12}},\iE_{2,2}-\iE_{2,2}^{\ang{\frac12}}\big]\]
the natural map ${\rm RHS}\to{\rm LHS}$ is injective. 

The dimension formula states $\dim\cM(\Gamma(2))_{2k}=k+1$ for $k\in\M$.

Also $\dim\cM(\Gamma(3))_k=k+1$ for $k\in\M$.
\end{proof}

\vspace{0.5cm}

\begin{Thm}\label{cuspideal}For $N=1,2,3,4,6,12$ the ideal of cusp forms $\cS(\Gamma(N))_\M$ are principal ideal
\[\cS(\Gamma(N))_\M=\cM(\Gamma(N))_\M\eta^{\frac{24}N}.\]
\end{Thm}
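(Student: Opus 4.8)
The plan is to prove that multiplication by $\eta^{\frac{24}N}$ is a bijection $\cM(\Gamma(N))_\M\bto\cS(\Gamma(N))_\M$, which yields the stated identity at once. The hypothesis $N\in\{1,2,3,4,6,12\}$ enters only in that these are exactly the divisors of $12$, so the weight $\frac{12}N$ is an integer and $\eta^{\frac{24}N}=(\eta^2)^{12/N}$ is a genuine modular form of weight $\frac{12}N$; by Proposition \ref{cuspform} (extended to general $N$ as in \S4.1) it is a cusp form, $\eta^{\frac{24}N}\in\cS(\Gamma(N))_{\frac{12}N}$. Since $\cS(\Gamma(N))_\M$ is an ideal of $\cM(\Gamma(N))_\M$ — a modular form times a cusp form is a cusp form — the inclusion $\cM(\Gamma(N))_\M\eta^{\frac{24}N}\subset\cS(\Gamma(N))_\M$ is immediate, and multiplication by $\eta^{\frac{24}N}\ne0$ is injective because $\cM(\Gamma(N))_\M$ is an integral domain. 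So the whole content is the reverse inclusion: every $f\in\cS(\Gamma(N))_k$ equals $\eta^{\frac{24}N}$ times a holomorphic modular form.

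For that I would set $g=f/\eta^{\frac{24}N}$. As $\eta$ is holomorphic and nowhere zero on $\cH$, $g\in\cO(\cH)$; and since $f|_k\gamma=f$ and $\eta^{\frac{24}N}|_{\frac{12}N}\gamma=\eta^{\frac{24}N}$ for $\gamma\in\Gamma(N)$, we get $g|_{k-\frac{12}N}\gamma=g$, so $g\in\cM'(\Gamma(N))_{k-\frac{12}N}$. It remains to see $g$ is holomorphic at the cusps, i.e.\ that $g|_{k-\frac{12}N}\alpha$ is bounded as $\Im\tau\to\infty$ for each $\alpha\in{\rm SL}_2(\Z)$. Because $\Gamma(N)$ is normal in ${\rm SL}_2(\Z)$ and $(\BM 1&N\\0&1\EM)\in\Gamma(N)$, both $f|_k\alpha$ and $\eta^{\frac{24}N}|_{\frac{12}N}\alpha$ are periodic of period $N$ and lie in $\C[[q^{\frac1N}]]$. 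Rademacher's transformation formula (\S3.1) together with $f^2|_1\gamma=(f|_{\frac12}\gamma)^2$ gives $\eta^2|_1\alpha=(\text{root of unity})\,\eta^2$, hence $\eta^{\frac{24}N}|_{\frac{12}N}\alpha=(\eta^2|_1\alpha)^{12/N}=(\text{root of unity})\,\eta^{\frac{24}N}$; and $\eta^{\frac{24}N}=q^{\frac1N}\VP{n\in\N}(1-q^n)^{\frac{24}N}$ has leading term $q^{\frac1N}$ with a unit of $\C[[q^{\frac1N}]]$ as cofactor, so $\eta^{\frac{24}N}|_{\frac{12}N}\alpha$ is a unit of $\C[[q^{\frac1N}]]$ times $q^{\frac1N}$. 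Since $f$ is a cusp form, $f|_k\alpha\in\C[[q^{\frac1N}]]q^{\frac1N}$. Dividing, $g|_{k-\frac{12}N}\alpha\in\C[[q^{\frac1N}]]$, so $g$ is holomorphic at every cusp; thus $g\in\cM(\Gamma(N))_{k-\frac{12}N}$ and $f=g\,\eta^{\frac{24}N}\in\cM(\Gamma(N))_\M\eta^{\frac{24}N}$. (If $k<\frac{12}N$ then $g$ would be a modular form of negative weight, hence $0$ and $f=0$, so this case is vacuous.)

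The step that really needs care is the claim that $\eta^{\frac{24}N}$ vanishes to order exactly $1$ at \emph{every} cusp of $\Gamma(N)$ — this is what makes $g$ holomorphic rather than merely meromorphic there — and the only non-formal ingredient is Rademacher's formula, used to transport the order computation from $\infty$ (transparent from the product expansion, since $(\BM 1&N\\0&1\EM)$ fixes the width) to an arbitrary cusp $\alpha\infty$; that $\frac{24}N$ is an even integer is what removes any sign ambiguity in the weight-$\frac{12}N$ slash. Equivalently one could argue geometrically: $\eta^{\frac{24}N}$ is a cusp form with no zeros on $\cH$, so ${\rm div}(\eta^{\frac{24}N})$ on $X(\Gamma(N))$ is effective and supported on the cusps, each with multiplicity $\ge1$; its degree equals $\frac{12}N\bigl(g(\Gamma(N))-1+\frac12\varepsilon_\infty(\Gamma(N))\bigr)$, which coincides with $\varepsilon_\infty(\Gamma(N))$ once one substitutes $d(\Gamma(N))=N\,\varepsilon_\infty(\Gamma(N))$ into the genus formula, so every cusp occurs with multiplicity exactly $1$ and ${\rm div}(f)\ge{\rm div}(\eta^{\frac{24}N})$ for any cusp form $f$. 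Finally, a purely bookkeeping alternative is available too: multiplication by $\eta^{\frac{24}N}$ being injective, it suffices to verify $\dim\cM(\Gamma(N))_{k-\frac{12}N}=\dim\cS(\Gamma(N))_k$ for all $k\in\M$, which follows from the dimension formulas of \S2.1 via the identity $12(g(\Gamma(N))-1)=\varepsilon_\infty(\Gamma(N))(N-6)$ (using $\varepsilon_2=\varepsilon_3=0$ and $\varepsilon_\infty^{\rm irr}=0$ for $N\ge3$, and treating $N=1,2$ directly).
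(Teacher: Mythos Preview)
Your argument is correct. The paper's own proof is precisely the ``purely bookkeeping alternative'' you mention in your last paragraph: it simply records that the dimension formulas of \S2.1 give
\[\dim\cS(\Gamma(N))_k=\big(\dim\cM(\Gamma(N))_k-\varepsilon_\infty(\Gamma(N))\big)^+=\dim\cM(\Gamma(N))_{k-\frac{12}N}\]
for each of $N=1,2,3,4,6,12$, and stops there. Your main route --- showing directly that $\eta^{\frac{24}N}$ has a simple zero at every cusp of $\Gamma(N)$ (via Rademacher's formula, or equivalently via the normality of $\Gamma(N)$ in ${\rm SL}_2(\Z)$) and then dividing --- is genuinely different and more conceptual: it explains \emph{why} the ideal is principal (the generator exhausts exactly one order of vanishing at each cusp) rather than merely verifying equality of dimensions. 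The divisor-degree computation you sketch is the geometric version of the same idea. The paper's approach is shorter and needs no analysis of $\eta$ at the cusps, but it leans entirely on the tabulated constants $\varepsilon_\infty(\Gamma(N))$ and gives no insight into what fails for other $N$; your approach makes transparent that the only obstruction is $\frac{12}N\in\Z$.
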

\begin{proof}The dimension formula states for $k\in\M$
\[\dim\cS(\Gamma(1))_k=(\dim\cM(\Gamma(1))_k-1)^+=\dim\cM(\Gamma(1))_{k-12}\]
\[\dim\cS(\Gamma(2))_k=(\dim\cM(\Gamma(2))_k-3)^+=\dim\cM(\Gamma(2))_{k-6}\]
\[\dim\cS(\Gamma(3))_k=(\dim\cM(\Gamma(3))_k-4)^+=\dim\cM(\Gamma(3))_{k-4}\]
\[\dim\cS(\Gamma(4))_k=(\dim\cM(\Gamma(4))_k-6)^+=\dim\cM(\Gamma(4))_{k-3}\]
\[\dim\cS(\Gamma(6))_k=(\dim\cM(\Gamma(6))_k-12)^+=\dim\cM(\Gamma(6))_{k-2}\]
\[\dim\cS(\Gamma(12))_k=(\dim\cM(\Gamma(12))_k-48)^+=\dim\cM(\Gamma(12))_{k-1}\]
where $a^+=\BC a &\text{if }0<a\\0 &\text{oth}\EC$

\end{proof}

\begin{Thm}\label{4}
\[\cM(\Gamma(4))_{\frac12\M}=\C\big[\theta^{\ang{\frac14}},\theta\big]=\C\big[\eta^{\flat\ang{\frac12}},\eta^{\sharp\ang2}\big]\]
\[\cS(\Gamma(4))_{\frac12\M}=\cM(\Gamma(4))_{\frac12\M}\eta^6\]
\end{Thm}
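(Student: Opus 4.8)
The plan is to prove the ring identity first and then deduce the description of the cusp ideal from it.

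\medskip

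\emph{The two presentations agree.} By Lemma~\ref{etanf} and rescaling, $\eta^{\flat\ang{\frac12}}=\sum_{n\in\Z}(-1)^nq^{n^2/4}$ and $\eta^{\sharp\ang2}=\sum_{n\in\M}q^{(2n+1)^2/4}$, while $\theta^{\ang{\frac14}}=\sum_{n\in\Z}q^{n^2/4}$ and $\theta=\sum_{n\in\Z}q^{n^2}$; splitting $\theta^{\ang{\frac14}}$ according to the parity of $n$ gives $\theta^{\ang{\frac14}}=\theta+2\eta^{\sharp\ang2}$ and $\eta^{\flat\ang{\frac12}}=\theta-2\eta^{\sharp\ang2}$, so $\C[\theta^{\ang{\frac14}},\theta]=\C[\eta^{\flat\ang{\frac12}},\eta^{\sharp\ang2}]$ and it remains to identify this ring with $\cM(\Gamma(4))_{\frac12\M}$. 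For the inclusion $\supseteq$ it suffices that both generators lie in $\cM(\Gamma(4))_{\frac12}$: this holds for $\theta$ since $\theta\in\cM(\Gamma_1(4))_{\frac12}\subset\cM(\Gamma(4))_{\frac12}$ by \S2.6 (its character is trivial on $\Gamma_1(4)$), and for $\theta^{\ang{\frac14}}$ it follows from the rescaling law of \S2.5: for $\gamma=(\BM a&b\\c&d\EM)\in\Gamma(4)$ one has $\theta^{\ang{\frac14}}|_{\frac12}\gamma=(\frac4d)\big(\theta|_{\frac12}(\BM a&b/4\\4c&d\EM)\big)^{\ang{\frac14}}=\theta^{\ang{\frac14}}$, because $(\BM a&b/4\\4c&d\EM)\in\Gamma_1(4)$ and $(\frac4d)=1$.

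\medskip

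\emph{Injectivity and a lower bound.} The map $\C[x,y]\to\cM(\Gamma(4))_{\frac12\M}$, $x\mapsto\theta^{\ang{\frac14}}$, $y\mapsto\theta$, is a homomorphism of $\frac12\M$-graded rings. Put $\varphi=\theta^{\ang{\frac14}}-\theta=2\eta^{\sharp\ang2}$, of $q^{\frac14}$-valuation $1$. For fixed $k$ the elements $\theta^{k-l}\varphi^{l}$, $0\le l\le k$, have pairwise distinct valuations $0,1,\dots,k$, hence are linearly independent, and $(\theta^{\ang{\frac14}})^{i}\theta^{k-i}=\sum_{l\le i}\binom{i}{l}\theta^{k-l}\varphi^{l}$ is a unipotent change of basis; therefore the degree-$k$ monomials have linearly independent images, the map is injective, and $\dim\cM(\Gamma(4))_{k/2}\ge k+1$ for every $k\in\M$.

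\medskip

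\emph{The matching upper bound.} For even $k=2m$, the dimension formula of \S2.1 for $\Gamma(4)$ (genus $0$, no elliptic points, $6$ regular cusps) gives $\dim\cM(\Gamma(4))_{m}=2m+1$. For odd $k=2m+1$ there is no half-integral dimension formula to invoke, and this is the hardest point: I would apply the inequality $\dim V\le\frac12(\dim(V\cdot V)+1)$ from \S2.3 to $V=\cM(\Gamma(4))_{(2m+1)/2}\subset\C[[q^{\frac14}]]$; since $V\cdot V\subset\cM(\Gamma(4))_{2m+1}$ and the odd-weight formula gives $\dim\cM(\Gamma(4))_{2m+1}=4m+3$, one gets $\dim V\le 2m+2=k+1$. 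Combined with the previous step, $\dim\cM(\Gamma(4))_{k/2}=k+1$ for all $k$, so the injective graded map is an isomorphism and $\cM(\Gamma(4))_{\frac12\M}=\C[\theta^{\ang{\frac14}},\theta]=\C[\eta^{\flat\ang{\frac12}},\eta^{\sharp\ang2}]$.

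\medskip

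\emph{The cusp ideal.} Here I would use that $\eta^{6}=\eta^{24/4}\in\cS(\Gamma(4))_{3}$ (Proposition~\ref{cuspform}) is nonzero on $\cH$ and, by Rademacher's formula, satisfies $\eta^{6}|_{3}\sigma\in\C^{\times}\eta^{6}$ with $\eta^{6}=q^{\frac14}\prod_{n\in\N}(1-q^{n})^{6}$ for every $\sigma\in{\rm SL}_2(\Z)$, hence vanishes to order exactly $1$ at each of the $6$ cusps of $\Gamma(4)$. The inclusion $\eta^{6}\cM(\Gamma(4))_{\frac12\M}\subset\cS(\Gamma(4))_{\frac12\M}$ is clear, and for the reverse the integer-weight part is Theorem~\ref{cuspideal}. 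For $f\in\cS(\Gamma(4))_{m+\frac12}$, both $f\theta$ and $f\theta^{\ang{\frac14}}$ lie in $\cS(\Gamma(4))_{m+1}=\eta^{6}\cM(\Gamma(4))_{m-2}$, say $f\theta=\eta^{6}g$ and $f\theta^{\ang{\frac14}}=\eta^{6}h$ with $g,h\in\C[\theta^{\ang{\frac14}},\theta]$. In this polynomial ring $\eta^{6}$ is a product of six pairwise non-proportional linear forms attached to the six cusps, and $\theta,\theta^{\ang{\frac14}}$ are two distinct ones among them (each is a weight-$\frac12$ form vanishing on $X(\Gamma(4))$ to total integral order $1$, hence at one cusp, and they are non-proportional); cancelling factors in $f=(\eta^{6}/\theta)g=(\eta^{6}/\theta^{\ang{\frac14}})h$ then forces first $\theta\mid f$ and then $\eta^{6}\mid f$, so $f\in\eta^{6}\cM(\Gamma(4))_{\frac12\M}$. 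The only ingredient beyond Theorem~\ref{cuspideal} is that all six cusp orders of weight-$\frac12$ forms on $\Gamma(4)$ are integral, so that $[\theta^{\ang{\frac14}}:\theta]$ realises $X(\Gamma(4))\cong\mathbb{P}^{1}$ with simple distinct cusps; this is checked from the $q^{\frac14}$-expansions of $\theta$ and $\theta^{\ang{\frac14}}$ at the cusps. The main obstacle overall is the odd-weight dimension bound, where the missing half-integral formula must be replaced by the ``square'' inequality of \S2.3.
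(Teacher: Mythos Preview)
Your argument for the ring identity is correct and close to the paper's: both hinge on the inequality after Lemma~\ref{sumPM} to bound the half-integral dimensions, the only difference being that you apply $\dim V\le\frac12(\dim(V\cdot V)+1)$ to $V=\cM(\Gamma(4))_{(2m+1)/2}$, whereas the paper applies $\dim V+\dim V'-1\le\dim(VV')$ to $V=\cS(\Gamma(4))_{k/2}$ with a two-dimensional $V'\subset\cM(\Gamma(4))_{1/2}$.

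For the cusp ideal your route is genuinely different. The paper does not separate the two assertions: it bounds $\dim\cS(\Gamma(4))_{k/2}\le(k-5)^+$ for all $k$ (integer weight by the formula, half-integer by the inequality above), and then the chain
\[
\C[\eta^{\flat\ang{\frac12}},\eta^{\sharp\ang2}]\eta^6\ \subset\ \cM(\Gamma(4))_{\frac12\M}\eta^6\ \subset\ \cS(\Gamma(4))_{\frac12\M}
\]
forces both inclusions to be equalities by dimension, giving the ring structure and the principal cusp ideal in one stroke. Your approach instead first establishes the ring, then invokes Theorem~\ref{cuspideal} for the integer-weight part and lifts to half-integer weight by a divisibility argument. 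That lift is valid, but you have over-decorated it: you do not need that $\eta^6$ factors into six linear forms nor that $\theta,\theta^{\ang{\frac14}}$ occur among them, and the remark about integrality of cusp orders is irrelevant. From $f\theta=\eta^6 g$ and $f\theta^{\ang{\frac14}}=\eta^6 h$ you get $\theta h=\theta^{\ang{\frac14}}g$ in the UFD $\C[\theta^{\ang{\frac14}},\theta]$; since $\theta$ and $\theta^{\ang{\frac14}}$ are non-proportional linear forms they are coprime, so $\theta\mid g$ and $f=\eta^6(g/\theta)\in\eta^6\cM(\Gamma(4))_{\frac12\M}$ directly. The paper's single chain is shorter; your method has the merit of isolating exactly where Theorem~\ref{cuspideal} enters.
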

\begin{proof}The dimension formula states $\dim\cS(\Gamma(4))_{\frac k2}=(k-5)^+$ only for even $k$. However the formula after Lemma \ref{sumPM} shows
\[\dim\cS(\Gamma(4))_{\frac k2}\leq\dim\cS(\Gamma(4))_{\frac{k+1}2}-1=(k-5)^+\]
for odd $k$. Two $\subset$ in $\C\big[\eta^{\flat\ang{\frac12}},\eta^{\sharp\ang2}\big]\eta^6\subset\cM(\Gamma(4))_{\frac12\M}\eta^6\subset\cS(\Gamma(4))_{\frac12\M}$ are $=$.\end{proof}

\begin{Thm}\label{5}
\[\cM(\Gamma(5))_{\frac15\M}=\C\big[f_{5,1},f_{5,3}\big]=\C\big[\eta_{\chi_5},\eta_{\overline{\chi_5}}\big]\]
\[\cS(\Gamma(5))_{\frac15\M}=\cM(\Gamma(5))_{\frac15\M}\eta^{\frac{24}5}\]
\end{Thm}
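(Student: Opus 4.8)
The plan is to run the two equalities in parallel, in the style of Theorems~\ref{23} and~\ref{4}: first pin down $\dim\cM(\Gamma(5))_{\frac k5}$, then exhibit an injection of the claimed polynomial ring and compare dimensions, and finally handle the ideal by a divisibility argument against $\eta^{\frac{24}5}$.

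\textbf{The ring.} By the results of Sections~4.1 and~5.4 the elements $f_{5,1},f_{5,3},\eta_{\chi_5},\eta_{\overline{\chi_5}}$ all lie in $\cM(\Gamma(5))_{\frac15}$, so each right-hand side is a graded subring of $\cM(\Gamma(5))_{\frac15\M}$ once each generator is given degree $1$. The relations $f_{5,1}=\frac12(\eta_{\chi_5}+\eta_{\overline{\chi_5}})$ and $f_{5,3}=\frac i2(\eta_{\chi_5}-\eta_{\overline{\chi_5}})$ invert to $\eta_{\chi_5}=f_{5,1}-if_{5,3}$, $\eta_{\overline{\chi_5}}=f_{5,1}+if_{5,3}$, so at once $\C[f_{5,1},f_{5,3}]=\C[\eta_{\chi_5},\eta_{\overline{\chi_5}}]$; it therefore suffices to identify $\C[f_{5,1},f_{5,3}]$ with $\cM(\Gamma(5))_{\frac15\M}$. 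Its degree-$k$ part is spanned by the $k+1$ monomials $f_{5,1}^{\,k-j}f_{5,3}^{\,j}$, $0\le j\le k$. From the product expansions at the end of Section~5.4, $f_{5,1}\in1+\C[[q]]q$ is invertible in $\C[[q]]$ while $f_{5,3}/f_{5,1}\in\C[[q^{\frac15}]]q^{\frac15}$ has positive order; hence a relation $\sum_jc_jf_{5,1}^{\,k-j}f_{5,3}^{\,j}=0$ forces all $c_j=0$ (factor out the least-index monomial; the remaining factor has nonzero constant term), so these $k+1$ monomials are linearly independent in $\cM(\Gamma(5))_{\frac k5}$. Thus the natural map $\C[f_{5,1},f_{5,3}]\to\cM(\Gamma(5))_{\frac15\M}$ is injective and $\dim\cM(\Gamma(5))_{\frac k5}\ge k+1$. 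Conversely the rational-weight dimension formula of Section~4.1 applies with $N=5$ (there $\frac{k(N-3)}{2N}=\frac k5$, the hypothesis $k>4\frac{N-6}{N-3}=-2$ is vacuous on $\M$, and $N^2\prod_{p\mid N}(1-p^{-2})=24$), giving $\dim\cM(\Gamma(5))_{\frac k5}=\big(\frac{2k}{48}+\frac1{24}\big)\cdot24=k+1$. Hence the injection is onto in every degree, so it is an isomorphism of graded rings.

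\textbf{The ideal.} By the extension of Proposition~\ref{cuspform}, $\eta^{\frac{24}5}\in\cS(\Gamma(5))_{\frac{12}5}$, and since $\cS(\Gamma(5))_{\frac15\M}$ is an ideal of $\cM(\Gamma(5))_{\frac15\M}$ (a product of a cusp form and a modular form is a cusp form), $\cM(\Gamma(5))_{\frac15\M}\eta^{\frac{24}5}\subset\cS(\Gamma(5))_{\frac15\M}$. For the reverse inclusion, note that $\eta^{\frac{24}5}=q^{\frac15}\VP{n\in\N}(1-q^n)^{\frac{24}5}$ is nowhere zero on $\cH$ and that $(\eta^{\frac{24}5})^5=\eta^{24}\in\cS(\Gamma(1))_{12}$ has a single simple zero, at the unique cusp of $X(\Gamma(1))$. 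Since $\Gamma(5)\TL{\rm SL}_2(\Z)$, all its cusps have width $5$, so $X(\Gamma(5))\to X(\Gamma(1))$ is ramified of index $5$ over $\infty$; pulling back, $\eta^{24}$ has order $5$ at each of the cusps of $X(\Gamma(5))$ and no other zeros, whence $\eta^{\frac{24}5}$ has order exactly $1$ at each cusp. Now take $f\in\cS(\Gamma(5))_{\frac k5}$. By definition $f$ vanishes at every cusp, so ${\rm ord}_c f\ge1={\rm ord}_c\eta^{\frac{24}5}$ there, while $f/\eta^{\frac{24}5}$ is holomorphic on $\cH$; by the multiplicativity of the weight operators on $\Gamma(5)$ it transforms correctly with weight $\frac{k-12}5$, and the order estimate makes it holomorphic at the cusps. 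Thus $f/\eta^{\frac{24}5}\in\cM(\Gamma(5))_{\frac{k-12}5}$ (in particular $f=0$ when $k<12$), i.e.\ $f\in\cM(\Gamma(5))_{\frac15\M}\eta^{\frac{24}5}$.

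The delicate step is the cusp bookkeeping: one must be sure a cusp form of $\Gamma(5)$ has order $\ge1$ in the local uniformizer at each cusp and that $\eta^{\frac{24}5}$ has order exactly $1$ there, the only nontrivial inputs being the level-$1$ fact $\eta^{24}\in\cS(\Gamma(1))_{12}$ and the uniform cusp width $5$ of $\Gamma(5)$. Apart from that — and from checking the cited dimension formula down to $k=0,1$ — the proof is essentially assembly of facts from Sections~4 and~5.
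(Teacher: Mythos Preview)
Your argument is correct, but it follows a genuinely different route from the paper's own proof. The paper does \emph{not} invoke the Ibukiyama dimension formula of \S4.1 directly for all $k$; instead it mimics the proof of Theorem~\ref{4}: it uses only the classical integer-weight dimension formula to get $\dim\cS(\Gamma(5))_{\frac k5}=(k-11)^+$ for $k\in5\M$, and then bootstraps to the remaining residue classes via the inequality after Lemma~\ref{sumPM}, namely $\dim\cS(\Gamma(5))_{\frac k5}\le\dim\cS(\Gamma(5))_{\frac{k+1}5}-1$ (multiply by the two-dimensional space spanned by $f_{5,1},f_{5,3}$). The ring and ideal statements then fall out simultaneously from the sandwich $\C[f_{5,1},f_{5,3}]\eta^{\frac{24}5}\subset\cM(\Gamma(5))_{\frac15\M}\eta^{\frac{24}5}\subset\cS(\Gamma(5))_{\frac15\M}$, exactly as in Theorem~\ref{4}.

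Your approach buys directness: once you accept the rational-weight formula as a black box, the ring part is one line, and your divisibility argument for the ideal (order of $\eta^{\frac{24}5}$ at each cusp is exactly~$1$, hence any cusp form is divisible by it) is clean and conceptual. The paper's approach buys self-containment: it needs only the classical dimension formulas of \S2 and the elementary Lemma~\ref{sumPM}, never the Ibukiyama result, and it establishes the rational-weight dimensions as a \emph{consequence} rather than an input. Your closing caveat about checking the formula at $k=0,1$ is precisely where the two approaches diverge in spirit---the paper's bootstrap sidesteps that issue entirely.
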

\begin{proof}The dimension formuls states $\dim\cS(\Gamma(5))_{\frac k5}=(k-11)^+$ for $k\in5\M$ and
\[\dim\cS(\Gamma(5))_{\frac k5}\leq\dim\cS(\Gamma(5))_{\frac{k+1}5}-1=(k-11)^+\]
for $k\in5\M+4$ and also for $k\in5\M+3$ ...
\end{proof}

\newpage

\subsection{Decompositions}

Generally, the spaces of modular forms may be decomposed by some characters. For example Dirichlet characters decompose the vector space $\cM(\Gamma_1(N))_k$ into a direct sum of subspaces that we can analyze indepecdently.

\begin{Lem}\label{decom}Let $\varGamma$ be a congruence group and $\chi\in{\rm Hom}(\varGamma,\C^\times)$.

Also let $\varPhi\subset{\rm Hom}(\varGamma,\C^\times)$ be a finite subgroup and put $\varGamma'=\cap_{\phi\in\varPhi}\ker\phi$.

Suppose $|\varGamma/\varGamma'|=|\varPhi|=n$.

We decompose
\[\cM(\varGamma')_{(k,\chi)}=\VT{\phi\in\varPhi}\cM(\varGamma)_{(k,\chi\phi)}\]
if there exists a representation $\varDelta\subset\varGamma$ of $\varGamma/\varGamma'$ such that $1\in\varDelta\subset\ker\chi$,  and the orthogonal relations
\[\z \frac1n\VS{\alpha\in\varDelta}\phi(\alpha)=\delta_{\phi,{\tt 1}}\6\text{for }\phi\in\varPhi\]
\[\z \frac1n\VS{\phi\in\varPhi}\phi(\alpha)=\delta_{\alpha,1}\6\text{for }\alpha\in\varDelta\]
are satisfied.
\end{Lem}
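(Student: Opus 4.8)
The plan is to build, for each $\phi\in\varPhi$, a projection of $\cM(\varGamma')_{(k,\chi)}$ onto a copy of $\cM(\varGamma)_{(k,\chi\phi)}$ by averaging the weight-$k$ slash action over $\varDelta$ twisted by $\phi^{-1}$, and then to read off the direct-sum decomposition from the two orthogonality relations.

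First I would dispose of the preliminaries. Each $\ker\phi$ is normal in $\varGamma$, being the kernel of a homomorphism $\varGamma\to\C^\times$; hence $\varGamma'=\cap_{\phi\in\varPhi}\ker\phi$ is normal in $\varGamma$ and $\varDelta$ is an honest system of coset representatives for the abelian quotient $\varGamma/\varGamma'$ of order $n$. For $\gamma\in\varGamma$ and $\alpha\in\varDelta$ there are unique $g_{\alpha,\gamma}\in\varGamma'$ and $\alpha^\gamma\in\varDelta$ with $\alpha\gamma=g_{\alpha,\gamma}\,\alpha^\gamma$, and $\alpha\mapsto\alpha^\gamma$ is a permutation of $\varDelta$. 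Because $\varDelta\subset\ker\chi$ and $\varGamma'\subset\ker\phi$, one gets $\chi(g_{\alpha,\gamma})=\chi(\gamma)$, $\phi(\alpha)=\phi(\alpha^\gamma)\,\phi(\gamma)^{-1}$, and, for $f\in\cM(\varGamma')_{(k,\chi)}$, the identity $f|_k(\alpha\gamma)=\chi(\gamma)\,f|_k\alpha^\gamma$. I would also record the inclusion $\cM(\varGamma)_{(k,\chi\phi)}\subset\cM(\varGamma')_{(k,\chi)}$: the restriction of $\chi\phi$ to $\varGamma'$ equals $\chi$ since $\phi$ is trivial there, and $\ker(\chi\phi)\supset\varGamma'\cap\ker\chi$, so any form transforming by $\chi\phi$ under $\varGamma$ lies in $\cM(\varGamma'\cap\ker\chi)_k$ and transforms by $\chi$ under $\varGamma'$.

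Next, for $f\in\cM(\varGamma')_{(k,\chi)}$ set $f_\phi=\frac1n\VS{\alpha\in\varDelta}\phi(\alpha)^{-1}\,f|_k\alpha$. I claim $f_\phi\in\cM(\varGamma)_{(k,\chi\phi)}$. Holomorphy on $\cH$ is immediate, and boundedness at every cusp follows because for $\beta\in{\rm SL}_2(\Z)$ we have $f_\phi|_k\beta=\frac1n\VS{\alpha\in\varDelta}\phi(\alpha)^{-1}\,f|_k(\alpha\beta)$, a finite sum of functions $f|_k(\alpha\beta)$ with $\alpha\beta\in{\rm SL}_2(\Z)$, each bounded as $\Im\to\infty$ since $f\in\cM(\varGamma'\cap\ker\chi)_k$. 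The transformation law is the crux: for $\gamma\in\varGamma$, substitute $\alpha\gamma=g_{\alpha,\gamma}\alpha^\gamma$, use $f|_k(\alpha\gamma)=\chi(\gamma)\,f|_k\alpha^\gamma$ and $\phi(\alpha)^{-1}=\phi(\alpha^\gamma)^{-1}\phi(\gamma)$, and reindex along the permutation $\alpha\mapsto\alpha^\gamma$; this collapses to $f_\phi|_k\gamma=\chi\phi(\gamma)\,f_\phi$. In particular $f_\phi$ is modular on $\ker(\chi\phi)$, so $f_\phi\in\cM(\ker(\chi\phi))_k$ and hence $f_\phi\in\cM(\varGamma)_{(k,\chi\phi)}$.

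Finally the decomposition itself. For surjectivity of $\VT{\phi\in\varPhi}\cM(\varGamma)_{(k,\chi\phi)}\to\cM(\varGamma')_{(k,\chi)}$, compute $\VS{\phi\in\varPhi}f_\phi=\frac1n\VS{\alpha\in\varDelta}\big(\VS{\phi\in\varPhi}\phi(\alpha)^{-1}\big)f|_k\alpha=f$, using the second orthogonality relation (after reindexing $\phi\mapsto\phi^{-1}$ in $\varPhi$) together with $1\in\varDelta$ and $f|_k1=f$. For directness, if $\VS{\phi\in\varPhi}h_\phi=0$ with $h_\phi\in\cM(\varGamma)_{(k,\chi\phi)}$, applying $|_k\alpha$ for $\alpha\in\varDelta\subset\ker\chi$ gives $\VS{\phi\in\varPhi}\phi(\alpha)\,h_\phi=0$, and pairing this against $\psi(\alpha)^{-1}$ and summing over $\alpha\in\varDelta$ yields $n\,h_\psi=0$ by the first orthogonality relation applied to $\psi^{-1}\phi\in\varPhi$. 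The only step that needs genuine care is the transformation law for $f_\phi$, where one must track the scalars $\chi(g_{\alpha,\gamma})$ and $\phi(\alpha)/\phi(\alpha^\gamma)$ and the fact that $\alpha\mapsto\alpha^\gamma$ merely permutes $\varDelta$; it is there, together with the surjectivity computation, that the hypotheses $1\in\varDelta\subset\ker\chi$, the relation $\varGamma'\subset\ker\phi$, and the normality of $\varGamma'$ in $\varGamma$ are consumed, while everything else is routine bookkeeping.
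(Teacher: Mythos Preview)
Your proof is correct and follows essentially the same approach as the paper: both define the projection $f\mapsto\frac1n\sum_{\alpha\in\varDelta}\phi(\alpha)^{-1}f|_k\alpha$ and deduce the decomposition from the two orthogonality relations. Your argument for the transformation law, via the permutation $\alpha\mapsto\alpha^\gamma$ of $\varDelta$ and the identities $\chi(g_{\alpha,\gamma})=\chi(\gamma)$, $\phi(\alpha)^{-1}=\phi(\gamma)\phi(\alpha^\gamma)^{-1}$, is a cleaner variant of the paper's computation (which instead fixes $\beta\in\varDelta$ with $\gamma\beta^{-1}\in\varGamma'$ and reindexes), and you supply the inclusion $\cM(\varGamma)_{(k,\chi\phi)}\subset\cM(\varGamma')_{(k,\chi)}$ and the cusp-boundedness check that the paper leaves implicit.
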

\begin{proof}The natural decomposition is given by
\[\z f\mapsto(\pi_\phi(f))_{\phi\in\varPhi}\6\text{where }\pi_\phi(f)=\frac1n\VS{\alpha\in\varDelta}\overline{\phi(\alpha)}f|\alpha\]

This map is bijective by the orthogonal relations. Indeed, injectivity is followed from
\[\VS{\phi\in\varPhi}\pi_\phi(f)=\VS{\alpha\in\varDelta}\frac1n\VS{\phi\in\varPhi}\overline{\phi(\alpha)}f|\alpha=f.\]
For surjectivity, let $(g_\phi)_{\phi\in\varPhi}\in{\rm RHS}$ and put $f=\VS{\psi\in\varPhi}g_\psi$ then $f\in{\rm LHS}$ and
\[\z \pi_\phi(f)=\VS{\psi\in\varPhi}\frac1n\VS{\alpha\in\varDelta}\overline{\phi(\alpha)}g_\psi|\alpha=g_\phi\]

For $\gamma\in\varGamma$, taking $\beta\in\varDelta$ sach that $\phi(\gamma)=\phi(\beta)$ we get
\begin{align*}\pi_\phi(f)|\gamma&=\z\frac1n\VS{\alpha\in\varDelta}\overline{\phi(\alpha)}f|\alpha\gamma\beta^{-1}\alpha^{-1}\alpha\beta\\
&=\z\frac1n\VS{\alpha\in\varDelta}\chi(\alpha\gamma\beta^{-1}\alpha^{-1})\overline{\phi(\alpha)}f|\alpha\beta\\
&=\z\frac1n\VS{\alpha\in\varDelta}\chi(\gamma)\overline{\phi(\alpha\beta^{-1})}f|\alpha\\
&=\chi\phi(\gamma)\pi_\phi(f)\end{align*}

\end{proof}

Now, for $N\in\N$, put $\varGamma=\Gamma_0(N)$, $\varPhi={\rm Hom}((\Z/N\Z)^\times,\C^\times)$ and $\chi={\tt 1}$ in Lemma \ref{decom}. Then $\varGamma'=\Gamma_1(N)$ and we can chose $\varDelta\subset\varGamma$ such that $d_N:\varDelta\to(\Z/N\Z)^\times$ is bijective, thus the Nebentype decomposition
\[\cM(\Gamma_1(N))_k=\VT{\phi\in{\rm Hom}((\Z/N\Z)^\times,\C^\times)}\cM(\Gamma_0(N))_{(k,\phi)}.\]
is obtained (cf. \cite[\S4.3]{DS} or \cite[Proposition 9.2]{St}). More generally

\begin{Prop}\label{dec1}For $k\in\M$ and a subgroup $G\subset(\Z/N\Z)^\times$, we have
\[\cM(\Gamma_G(N))_k=\VT{\phi\in\varPhi}\cM(\Gamma_0(N))_{(k,\phi)}\]
where $\varPhi=\big\{f\in{\rm Hom}((\Z/N\Z)^\times,\C^\times) \,\big|\, f(G)=\{1\}\big\}$.
\end{Prop}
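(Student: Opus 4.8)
The plan is to obtain this as a direct application of Lemma \ref{decom}. I would take $\varGamma=\Gamma_0(N)$, $\varGamma'=\Gamma_G(N)$, $\chi={\tt 1}_{\Gamma_0(N)}$, and $\varPhi$ the subgroup of ${\rm Hom}(\Gamma_0(N),\C^\times)$ in the statement; here I use that $d_N\in{\rm Hom}(\Gamma_0(N),(\Z/N\Z)^\times)$ is surjective, so pullback along $d_N$ embeds ${\rm Hom}((\Z/N\Z)^\times,\C^\times)$ as a subgroup of ${\rm Hom}(\Gamma_0(N),\C^\times)$, and the characters trivial on $G$ form a subgroup $\varPhi$ of it. Since $\chi$ is trivial we have $\ker\chi=\varGamma$, $\cM(\varGamma')_{(k,\chi)}=\cM(\Gamma_G(N))_k$ and $\cM(\varGamma)_{(k,\chi\phi)}=\cM(\Gamma_0(N))_{(k,\phi)}$, so the conclusion of Lemma \ref{decom} is precisely the assertion to be proved.

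It then remains to verify the hypotheses of Lemma \ref{decom}, all of which come from duality for the finite abelian group $Q=(\Z/N\Z)^\times/G$. A character $\phi\in{\rm Hom}((\Z/N\Z)^\times,\C^\times)$ lies in $\varPhi$ precisely when it factors through $Q$, so $\varPhi$ is canonically ${\rm Hom}(Q,\C^\times)$; hence $|\varPhi|=|Q|=|(\Z/N\Z)^\times/G|$, and the common kernel of all $\phi\in\varPhi$ inside $(\Z/N\Z)^\times$ equals $G$ (a subgroup of a finite abelian group is the intersection of the kernels of the characters trivial on it). Transporting these facts along $d_N:\Gamma_0(N)/\Gamma_1(N)\bto(\Z/N\Z)^\times$ — under which $\Gamma_G(N)/\Gamma_1(N)$ corresponds to $G$ — gives $\cap_{\phi\in\varPhi}\ker\phi=\Gamma_G(N)=\varGamma'$ and $|\varGamma/\varGamma'|=|Q|=|\varPhi|=:n$. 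For the transversal $\varDelta$: since $\ker\chi=\varGamma$, the only requirement is that $\varDelta$ be a set of coset representatives for $\varGamma/\varGamma'$ containing $1$, which plainly exists. Finally, every $\phi\in\varPhi$ is trivial on $\varGamma'$, so $\phi$ descends to $\varGamma/\varGamma'\cong Q$ and the sums $\sum_{\alpha\in\varDelta}\phi(\alpha)$, $\sum_{\phi\in\varPhi}\phi(\alpha)$ are the standard orthogonality sums for $Q$ and its dual; the two orthogonal relations required in Lemma \ref{decom} are then the orthogonal relations recorded in \S1.1, valid for any finite abelian group, applied to $Q$ in place of $(\Z/N\Z)^\times$.

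The only step with any real content — as opposed to invoking Lemma \ref{decom} — is the identification $\varGamma'=\Gamma_G(N)$, i.e. that $\Gamma_G(N)$ is exactly the subgroup of $\Gamma_0(N)$ on which all nebentype characters trivial on $G$ vanish. This is immediate from $d_N:\Gamma_G(N)/\Gamma_1(N)\bto G$ together with finite-abelian-group duality, but it is where the bookkeeping actually happens; everything else in the argument is formal.
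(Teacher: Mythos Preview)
Your proposal is correct and takes the same approach as the paper: the paper states Proposition \ref{dec1} immediately after working out the special case $G=\{1\}$ from Lemma \ref{decom} and simply writes ``More generally'', so the intended argument is precisely the application of Lemma \ref{decom} with $\varGamma=\Gamma_0(N)$, $\chi={\tt 1}$, and $\varPhi$ the characters trivial on $G$. You have supplied the details that the paper leaves implicit, including the identification $\varGamma'=\Gamma_G(N)$ via finite-abelian duality and the verification of the orthogonality relations for the quotient $Q=(\Z/N\Z)^\times/G$.
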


\newpage

It is important that
\[\cM(\Gamma(N))_k=\VT{\chi\in{\rm Hom}((\Z/N\Z)^\times,\C^\times)}\cM(N)_{(k,\chi)}.\]

This result has a more sophiscated representation
\[\cM(\Gamma(N))_\M=\cM(N)_{\M\times{\rm Hom}((\Z/N\Z)^\times,\C^\times)}.\]\\

We naturally regard $\M\subset\M\times{\rm Hom}(\varGamma,\C^\times)$ by $k=(k,{\tt 1}_\varGamma)$.

Also regard ${\rm Hom}(\varGamma,\C^\times)\subset\M\times{\rm Hom}(\varGamma,\C^\times)$ by $\chi=(0,\chi)$.

The subgroup generated by $x_1,\cdots,x_n$ is also denoted $\ang{x_1,\cdots,x_n}$. For example $\M=\ang1$ (remark $\Z=\ang1$ when we consider on groups) and
\[\cM(\Gamma(4))_\M=\cM(4)_{\M\times{\rm Hom}((\Z/4\Z)^\times,\C^\times)}=\cM(4)_{\ang{1,\chi_4}}.\]
Moreover, since $\cM(4)_{(k,\chi_4^\ell)}=\{0\}$ if $k+\ell$ is odd, we can representate
\[\cM(\Gamma(4))_\M=\cM(4)_{\ang{1^*}}.\]
Note $1^*=(1,\chi_4)$. Similarly we see
\[\cM(\Gamma(8))_\M=\cM(8)_{\ang{1^*,\chi_8}}.\]\\

Since the operators of half-integer weight are not actions, Lemma \ref{decom} can't be applied simply. However, as for $\kappa\in\M+\frac12$ we also have the Nebentype decomposition
\[\cM(\Gamma_1(N))_\kappa=\VT{\chi\in{\rm Hom}(\Z/N^\times,\C^\times)}\cM(\Gamma_0(N))_{(\kappa,\sqrt{\chi_4}\chi)}.\]

\begin{Prop}\label{dec2}For $\kappa\in\M+\frac12$ and a subgroup $G\subset\Z/N^\times$, if $\chi_4(G)=\{1\}$ then $\Gamma_G(N)\subset\Gamma_1(4)$ and
\[\cM(\Gamma_G(N))_{\kappa}=\VT{\phi\in\varPhi}\cM(\Gamma_0(N))_{(\kappa,\sqrt{\chi_4}\phi)}\]
where $\varPhi=\big\{f\in{\rm Hom}(\Z/N^\times,\C^\times) \,\big|\, f(G)=\{1\}\big\}$.
\end{Prop}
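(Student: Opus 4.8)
The plan is to reduce the statement to the Nebentype decomposition for $\Gamma_1(N)$ recalled just above, and then to single out inside it the subspace invariant under the larger group $\Gamma_G(N)$. First I would settle the inclusion $\Gamma_G(N)\subset\Gamma_1(4)$: since $\chi_4$ has conductor $4$, the hypothesis already presupposes $4\mid N$, so any $\gamma=(\BM a&b\\c&d\EM)\in\Gamma_G(N)\subset\Gamma_0(N)$ has $N\mid c$, hence $4\mid c$; and $d_N(\gamma)\in G$ together with $\chi_4(G)=\{1\}$ and $\ker\chi_4=\{1\}$ in $(\Z/4\Z)^\times$ forces $d\equiv1\bmod 4$, so $\gamma\in\Gamma_1(4)$. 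The same computation shows that $\sqrt{\chi_4}\circ d_N$ is identically $1$ on $\Gamma_G(N)$, and this is the only place where the hypothesis enters.

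Next I would note that $\Gamma_1(N)=\Gamma_{\{1\}}(N)\subset\Gamma_G(N)\subset\Gamma_0(N)$ and that the cusp conditions in the definitions of $\cM(\Gamma_1(N))_\kappa$ and $\cM(\Gamma_G(N))_\kappa$ coincide, so that
\[\cM(\Gamma_G(N))_\kappa=\big\{f\in\cM(\Gamma_1(N))_\kappa \,\big|\, f|_\kappa\gamma=f\ \text{for all }\gamma\in\Gamma_G(N)\big\}.\]
Given $f\in\cM(\Gamma_1(N))_\kappa$, the recalled decomposition writes $f=\VS{\chi}f_\chi$ with $f_\chi\in\cM(\Gamma_0(N))_{(\kappa,\sqrt{\chi_4}\chi)}$, the sum over $\chi\in{\rm Hom}((\Z/N\Z)^\times,\C^\times)$. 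For $\gamma\in\Gamma_G(N)$, the defining property of these spaces together with the triviality of $\sqrt{\chi_4}\circ d_N$ on $\Gamma_G(N)$ gives $f_\chi|_\kappa\gamma=(\sqrt{\chi_4}\chi)(\gamma)f_\chi=\chi(d_N(\gamma))f_\chi$; by linearity of $g\mapsto g|_\kappa\gamma$ this means $f|_\kappa\gamma=\VS{\chi}\chi(d_N(\gamma))f_\chi$, which summand by summand is exactly the decomposition of $f|_\kappa\gamma$. Because the sum is direct, $f|_\kappa\gamma=f$ for all $\gamma\in\Gamma_G(N)$ is equivalent to $\chi(d_N(\gamma))f_\chi=f_\chi$ for every $\chi$ and every such $\gamma$; since $d_N$ maps $\Gamma_G(N)$ onto $G$, this is in turn equivalent to $f_\chi=0$ for all $\chi\notin\varPhi$. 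Conversely each $\cM(\Gamma_0(N))_{(\kappa,\sqrt{\chi_4}\phi)}$ with $\phi\in\varPhi$ is contained in $\cM(\Gamma_G(N))_\kappa$ by the same computation (its behaviour at the cusps being inherited), and the asserted equality follows.

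The one subtlety to keep in view is that in half-integer weight the slash operator is not a genuine group action on $\Gamma_0(N)$ (see \S2.5), so Lemma \ref{decom} cannot be quoted verbatim the way it was for Proposition \ref{dec1}. The hypothesis $\chi_4(G)=\{1\}$ — equivalently $\Gamma_G(N)\subset\Gamma_1(4)$ — is precisely what makes the $\sqrt{\chi_4}$-factor, which measures that failure, restrict trivially to $\Gamma_G(N)$; once this is observed the rest is the linear bookkeeping above, requiring no further analytic input. The main thing to get right is therefore the identity $(\sqrt{\chi_4}\chi)(\gamma)=\chi(d_N(\gamma))$ for $\gamma\in\Gamma_G(N)$ and its consequence that $f_\chi$ survives iff $\chi\in\varPhi$.
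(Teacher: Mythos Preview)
The paper states Proposition~\ref{dec2} without proof, so there is nothing to compare against directly. Your argument is correct and supplies exactly the missing justification: you use the Nebentype decomposition for $\Gamma_1(N)$ at half-integer weight (which the paper states, also without proof, just before the Proposition) and then identify the $\Gamma_G(N)$-invariant piece by observing that the obstruction factor $\sqrt{\chi_4}\circ d_N$ vanishes on $\Gamma_G(N)$ precisely under the hypothesis $\chi_4(G)=\{1\}$. The remaining linear bookkeeping---matching components across the direct sum and using the surjectivity $d_N:\Gamma_G(N)/\Gamma_1(N)\bto G$---is straightforward and handled cleanly. Your final paragraph correctly isolates why Lemma~\ref{decom} cannot be invoked verbatim and why the hypothesis is exactly what is needed to circumvent this.
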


\vspace{0.5cm}

For example, for even $N$
\[\cM(\Gamma(N))_{\M+\frac12}=\cM(N)_{(\M+\frac12)\times\sqrt{\rho_4}{\rm Hom}((\Z/N\Z)^\times,\C^\times)}.\]

Let $w_2=(\frac12^*,\chi_8)=(\frac12,\chi_4\chi_8)$ then we can representate
\[\cM(\Gamma(4))_{\frac12\M}=\cM(4)_{\ang{w_2}}.\]
This decomposition is obtained also from structure Theorem \ref{4}.

Similarly Theorem \ref{5} induces
\[\cM(\Gamma(5))_{\frac15\M}=\cM(5)_{\ang{\frac15,\chi_5}}.\]

\newpage

We give other type decompositions.

Put $\varGamma=\Gamma_0(2)\cap\Gamma^0(2)$ and $\varPhi=\ang{c_4,b_4}$. Note $\ell_4:\varGamma\to\{\pm1\}$ are characters by Lemma \ref{b8c8} and $\varGamma'=\Gamma_0(4)\cap\Gamma^0(4)$. We can chose $\varDelta=\{1,(\BM 1&0\\2&1 \EM),(\BM 1&2\\0&1 \EM),(\BM 1&2\\2&5 \EM)\}$ thus
\[\cM(4)_{(k,\chi_4^k)}=\VT{\phi\in\varPhi}\cM(2)_{(k,\chi_4^k\phi)}\]
\[=\cM(2)_{(k,\chi_4^k)}\oplus\cM(2)_{(k,\chi_4^kc_4)}\oplus\cM(2)_{(k,\chi_4^kb_4)}\oplus\cM(2)_{(k,\chi_4^kc_4b_4)}.\]

This result has a more sophiscated representation
\[\cM(4)_{\ang{1^*}}=\cM(2)_{\ang{1^*,\ell_4}}\]
where the sequence $\ell_n=(c_n,b_n)$.

Change $\varPhi=\ang{c_8,b_8}$ then we see
\[\cM(8)_{\ang{1^*}}=\cM(2)_{\ang{1^*,\ell_8}}.\]

Change $\varGamma=\Gamma_0(4)\cap\Gamma^0(4)$ then we see
\[\cM(8)_{\ang{1^*,\chi_8}}=\cM(4)_{\ang{1^*,\chi_8,\ell_8}}\]\\

Next Lemma will be used many times.

\begin{Lem}\label{EX}
Let $S$ be a commutative semigroup and $T\subset S$ be a sub-semigroup. Suppose $2\leq n\in\N$ and $s\in S$ satisfies $S=\VT{i=0,1,2,\cdots,n-1}(T+is)$ and $ns\in T$.

Also let $R$ be a $S$-graded ring and $x\in R_s$. If there is an element $y\in R|_T$ and a homogeneous ideal $I\subset R$ such that $xy\in I$ and $R|_T\cap I=(R|_T)x^ny$ then
\[R=\VT{i=0,1,2,\cdots,n-1}(R|_T)x^i\]
\[I=Rxy\]
\end{Lem}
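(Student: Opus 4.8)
The plan is to prove both assertions together, drawing everything from a single multiplication device. I would first dispose of the combinatorial part: since $ns\in T$ we have $x^n\in R_{ns}\subset R|_T$, and each $(R|_T)x^i$ is a graded $R|_T$-submodule of $R$ whose homogeneous degrees lie in $T+is$. Because the sets $T+is$ ($0\le i<n$) partition $S$, these supports are pairwise disjoint, so the sum $\bigoplus_{i=0}^{n-1}(R|_T)x^i$ is automatically \emph{direct}; only its surjectivity onto $R$ needs the ideal hypothesis. Throughout I will use that $R$ is an integral domain with $x,y\neq0$ (so $x$, $y$ and $x^ny$ are non-zero-divisors) — the setting of all the intended applications, where $R$ is a graded ring of modular forms and $y$ a fixed nonzero homogeneous form.

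For surjectivity, take $z\in R$ homogeneous of degree $u$ and write $u=t+js$ with $t\in T$ and $0\le j<n$, uniquely. If $j=0$ then $z\in R|_T$ and there is nothing to do. If $j\ge1$, the decisive step is to multiply $z$ by $x^{n-j}$ \emph{and} by $y$: on one hand $\deg(zx^{n-j}y)=t+ns+\deg y$ is a sum of elements of $T$, so $zx^{n-j}y\in R|_T$; on the other hand $zx^{n-j}y=(zx^{n-j-1})(xy)\in I$, since $xy\in I$ and $I$ is an ideal. Hence $zx^{n-j}y\in R|_T\cap I=(R|_T)x^ny$, say $zx^{n-j}y=wx^ny$ with $w\in R|_T$. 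Rearranging,
\[(z-wx^j)\,x^{n-j}y=0,\]
and cancelling the non-zero-divisor $x^{n-j}y$ gives $z=wx^j\in(R|_T)x^j$. This yields $R=\bigoplus_{i=0}^{n-1}(R|_T)x^i$.

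For the ideal, $Rxy\subset I$ is immediate from $xy\in I$. Conversely, let $z\in I$ be homogeneous with $\deg z=t+js$ as above. If $j=0$, then $z\in R|_T\cap I=(R|_T)x^ny$, so $z=wx^ny=(wx^{n-1})(xy)\in Rxy$. If $j\ge1$, then $zx^{n-j}\in I$ with $\deg(zx^{n-j})=t+ns\in T$, so $zx^{n-j}\in R|_T\cap I=(R|_T)x^ny$; writing $zx^{n-j}=wx^ny$ and cancelling $x^{n-j}$ gives $z=wx^jy=(wx^{j-1})(xy)\in Rxy$. Hence $I=Rxy$.

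The only step that genuinely requires an idea is the surjectivity argument: multiplying $z$ by $x^{n-j}$ alone places it in $R|_T$ but not visibly in $I$, so one must introduce the extra factor $y$ — precisely the element $y$ whose product with $x$ is known to lie in $I$ — so that the product lands in $R|_T\cap I$ and the hypothesis $R|_T\cap I=(R|_T)x^ny$ becomes applicable. After that, both conclusions fall out by routine bookkeeping with the grading and with cancellation in $R$.
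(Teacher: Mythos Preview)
Your argument is correct and is exactly the paper's approach: multiply a homogeneous element of degree $t+js$ by $x^{n-j}y$ (respectively $x^{n-j}$, for the ideal statement) to land in $R|_T\cap I=(R|_T)x^ny$, then cancel. The paper records this tersely as $R_{t+js}\subset\frac1{x^{n-j}y}(R|_T\cap I)\subset(R|_T)x^j$ and $I_{t+js}\subset\frac1{x^{n-j}}(R|_T\cap I)\subset(R|_T)x^jy$, leaving the non-zero-divisor hypothesis implicit in the fraction notation; you have simply made that and the directness of the sum explicit.
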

\begin{proof}For $t\in T$
\[R_{t+s}\subset \frac1{x^{n-1}y}(R|_T\cap I)\subset(R|_T)x\]
\[R_{t+2s}\subset \frac1{x^{n-2}y}(R|_T\cap I)\subset(R|_T)x^2\]
etc and
\[I_{t+s}\subset \frac1{x^{n-1}}(R|_T\cap I)\subset(R|_T)xy\]
etc.
\end{proof}

\newpage

\subsection{Second results}

For $A\subset\M$ we denote $f^A=\{f^a \,|\, a\in A\}$.

Start with Lemma \ref{4}
\[\cM(4)_{\ang{w_2}}=\C\big[\eta^{\natural\ang{\frac12}},\eta^{\natural\ang2}\big]=\C\big[\eta^{\flat\ang{\frac12}},\eta^{\sharp\ang2}\big]\]
\[\cS(4)_{\ang{w_2}}=\cM(4)_{\ang{w_2}}\eta^6\]

\begin{Thm}\label{8}
\[\cM(\Gamma(8))_{\frac12\M}=\cM(8)_{\ang{\frac12^*,\chi_8}}=\bigoplus\cM(4)_{\ang{w_2}}\eta^{\natural\{0,1\}}\eta^{\flat\{0,1\}}\eta^{\sharp\{0,1\}}\]
\[\cS(\Gamma(8))_{\frac12\M}=\cM(\Gamma(8))_{\frac12\M}\eta^3\]
\end{Thm}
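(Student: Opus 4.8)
The plan is to apply Lemma \ref{EX} with $S=\frac12\M\times\ang{\frac12^*,\chi_8}$ — or rather the relevant semigroup cut out so that half-integer and integer weights carry the right characters — taking $R=\cM(8)_{\ang{\frac12^*,\chi_8}}$, the subring $T$ corresponding to the already-known ring $\cM(4)_{\ang{w_2}}$, and suitable generators among the eta-quotients $\eta^{\natural},\eta^{\flat},\eta^{\sharp}$. First I would fix the tower: the three operators $\flat,\sharp,\natural$ together with the relation $\eta^{\natural}\eta^{\flat}\eta^{\sharp}=\eta^3$ (from the $\natural$-operator subsection) and $\eta^{\natural}=\theta^{\ang{\frac12}}$ suggest that $\cM(8)_{\ang{\frac12^*,\chi_8}}$ is a free $\cM(4)_{\ang{w_2}}$-module on the eight "square-free" monomials $\eta^{\natural\{0,1\}}\eta^{\flat\{0,1\}}\eta^{\sharp\{0,1\}}$; the squares $\eta^{\natural2},\eta^{\flat2},\eta^{\sharp2}$ must be re-expressed inside $\cM(4)_{\ang{w_2}}$, and Lemma \ref{rel22} ($\frac12(\eta^{\natural2}+\eta^{\flat2})=\eta^{\natural\ang22}$ and $\frac18(\eta^{\natural2}-\eta^{\flat2})=\eta^{\sharp\ang22}$) does exactly this for $\eta^{\natural2}$ and $\eta^{\flat2}$, while Lemma \ref{rel21} handles $\eta^{\sharp4}$; I would check these give all the needed reductions so that the eight monomials indeed span over $\cM(4)_{\ang{w_2}}$.

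Next I would verify the Lemma \ref{EX} hypotheses: pick $x$ to be (a product of) the new generators so that $R=\bigoplus_i (R|_T)x^i$ over the $8$ cosets, check $x^{(\cdot)}\in T$, and take $y=\eta^{\natural}$ — noting $\eta^{\natural}$ has weight $\frac12$ but $\eta^{\natural4}=\iE_{4,2}^{\ang{\frac12}}\in\cM(4)_{\ang{w_2}}$ from the $\natural$-operator subsection, so $\eta^{\natural}\cdot(\text{generators})$ lands in the ideal — and confirm $xy\in I$ where $I=\cS(\Gamma(8))_{\frac12\M}=\cS(8)_{\ang{\frac12^*,\chi_8}}$. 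The crucial input is the intersection identity $R|_T\cap I=(R|_T)x^n y$; concretely this says a cusp form in the sub-ring $\cM(4)_{\ang{w_2}}$ is divisible by $\eta^6$, which is precisely the already-established $\cS(4)_{\ang{w_2}}=\cM(4)_{\ang{w_2}}\eta^6$, and $x^n y$ should work out to (a unit times) $\eta^3$ since $\eta^{\natural}\eta^{\flat}\eta^{\sharp}=\eta^3$. Then Lemma \ref{EX} delivers both the module decomposition and $\cS(\Gamma(8))_{\frac12\M}=\cM(\Gamma(8))_{\frac12\M}\eta^3$ at once.

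The one genuinely non-formal step is dimension bookkeeping — one must know $\dim\cM(\Gamma(8))_{\frac k2}$ (and the cusp-form dimensions) to certify that the eight monomials over $\cM(4)_{\ang{w_2}}$ exhaust $\cM(8)_{\ang{\frac12^*,\chi_8}}$ rather than merely inject into it. For integer (even) $k$ this is the usual dimension formula applied to $\Gamma(8)$; for odd $k$, as in the proofs of Theorems \ref{4} and \ref{5}, the classical formula only covers even weights, so I would instead bound $\dim\cM(\Gamma(8))_{\frac k2}$ using Lemma \ref{sumPM} and the displayed inequality $\dim V+\dim V'-1\le\dim(VV')$, squeezing the odd-weight dimension between the ring-theoretic lower bound (number of monomials of that weight in the proposed presentation) and the upper bound coming from multiplication by $\eta^{\natural}$ into the next weight. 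Once the two bounds coincide the injection from Lemma \ref{EX} is forced to be an isomorphism. I expect the main obstacle to be exactly this: carefully matching the Hilbert series $\sum_k \dim(\text{RHS})_{k/2}t^k = \dfrac{(1+t)^3}{(1-t)(1-t^2)}$ (the $\cM(4)_{\ang{w_2}}$-series times $(1+t)^3$ for $\eta^{\flat},\eta^{\sharp},\eta^{\natural}$, with $\eta^{\flat}$ of weight $\tfrac12$, etc.) against the modular dimension formula uniformly in the parity of $k$, and confirming along the way that the relations \ref{rel21}, \ref{rel22} really do suffice to kill every higher monomial.
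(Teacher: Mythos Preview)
Your plan has the right shape but misses the key structural ingredient that makes Lemma \ref{EX} applicable. In the semigroup $S=\ang{\frac12^*,\chi_8}$ the subsemigroup $T=\ang{w_2}$ has index only $2$, not $8$: the three elements $\eta^{\natural},\eta^{\flat},\eta^{\sharp}$ all sit in the \emph{same} coset $\frac12^*+T$ (they all have trivial $\chi_8$-component), so there is no grading argument that separates the eight monomials $\eta^{\natural i}\eta^{\flat j}\eta^{\sharp k}$ into distinct pieces. Lemma \ref{EX} as stated adjoins a single $x$ at a time and needs the cosets $T,T+s,\dots,T+(n-1)s$ to be disjoint; with your $S$ and $T$ you can adjoin at most one of the three forms this way.

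The paper fixes this by working not on $\Gamma(8)$ but on $\Gamma(4)$ with the \emph{extra} character grading $\ell_8=(c_8,b_8)$ supplied by Lemma \ref{b8c8}. Since $\eta^{\natural},\eta^{\flat},\eta^{\sharp}$ carry characters ${\tt 1},c_8,b_8$ respectively (Proposition \ref{lev2}), the enlarged semigroup $\ang{\frac12^*,\chi_8,c_8,b_8}$ has $\ang{w_2}$ of index exactly $8$, and Lemma \ref{EX} is applied \emph{three times in succession} (first $\eta^{\natural}$ over $\cM(4)_{\ang{w_2}}$, then $\eta^{\flat}$, then $\eta^{\sharp}$), each step simultaneously propagating the cusp ideal via the factorisation $\eta^6=\eta^{\natural2}\eta^{\flat2}\eta^{\sharp2}$. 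Only after obtaining $\cM(4)_{\ang{\frac12^*,\chi_8,\ell_8}}$ does one identify this with $\cM(8)_{\ang{\frac12^*,\chi_8}}=\cM(\Gamma(8))_{\frac12\M}$ via Lemma \ref{decom}. Your Lemmas \ref{rel21}--\ref{rel22} are not needed: Lemma \ref{EX} never asks that $x^n$ lie in $R|_T$, only that $R|_T\cap I=(R|_T)x^ny$, which here is the known $\cS(4)_{\ang{w_2}}=\cM(4)_{\ang{w_2}}\eta^6$.

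For the half-integer weights (odd $k$) the paper does not use Lemma \ref{sumPM}. Since Lemma \ref{decom} is only available for integer weights, the paper instead multiplies by the cusp form $\eta^3$ of weight $\tfrac32$ to push $\cM(\Gamma(8))_{\frac k2}$ into $\cS(\Gamma(8))_{\frac{k+3}2}$, which is integer weight and already identified with $\cM(4)_{\frac k2^*+\ang{\chi_8,\ell_8}}\eta^3$; dividing back by $\eta^3$ gives the claim. This is cleaner than a sumPM-type squeeze and avoids any Hilbert-series matching. (Incidentally your Hilbert series $\frac{(1+t)^3}{(1-t)(1-t^2)}$ is off: both generators of $\cM(4)_{\ang{w_2}}$ have weight $\tfrac12$, so the correct series is $\frac{(1+t^{1/2})^3}{(1-t^{1/2})^2}$, as the paper records.)
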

\begin{proof}Note $\eta^6=\eta^{\natural2}\eta^{\flat2}\eta^{\sharp2}$. Lemma \ref{EX} shows
\[\cM(4)_{\ang{\frac12^*,\chi_8}}=\bigoplus\cM(4)_{\ang{w_2}}\eta^{\natural\{0,1\}}\]
\[\cS(4)_{\ang{\frac12^*,\chi_8}}=\cM(4)_{\ang{\frac12^*,\chi_8}}\eta^{\natural}\eta^{\flat2}\eta^{\sharp2}\]

Again Lemma \ref{EX} shows
\[\cM(4)_{\ang{\frac12^*,\chi_8,c_8}}=\bigoplus\cM(4)_{\ang{\frac12^*,\chi_8}}\eta^{\flat\{0,1\}}\]
\[\cS(4)_{\ang{\frac12^*,\chi_8,c_8}}=\cM(4)_{\ang{\frac12^*,\chi_8,c_8}}\eta^{\natural}\eta^{\flat}\eta^{\sharp2}\]

Once more Lemma \ref{EX} shows
\[\cM(4)_{\ang{\frac12^*,\chi_8,\ell_8}}=\bigoplus\cM(4)_{\ang{\frac12^*,\chi_8,c_8}}\eta^{\sharp\{0,1\}}\]
\[\cS(4)_{\ang{\frac12^*,\chi_8,\ell_8}}=\cM(4)_{\ang{\frac12^*,\chi_8,\ell_8}}\eta^3\]

We have seen $\cM(\Gamma(8))_{\frac k2}=\cM(4)_{\frac k2^*+\ang{\chi_8,\ell_8}}$ after Lemma \ref{decom} for even $k$.

This assertion stays hold for odd $k$ since
\begin{align*}\cM(\Gamma(8))_{\frac k2}\eta^3&\subset\cS(\Gamma(8))_{\frac{k+3}2}\\
&=\cS(4)_{\frac{k+3}2^*+\ang{\chi_8,\ell_8}}=\cM(4)_{\frac k2^*+\ang{\chi_8,\ell_8}}\eta^3\end{align*}

\end{proof}

The above Theorem induces a new dimension formula
\[\VS{k\in\M}\dim\cM(\Gamma(8))_{\frac k2}\cdot t^{\frac k2}=\dfrac{(1+t^{\frac12})^3}{(1-t^{\frac12})^2}=\VS{k\in\M}\big(8k-4+\delta_{k,0}+\delta_{k,1}\big)t^{\frac k2}\]

\begin{Lem}\label{16a}
\[\cM(16)_{\ang{\frac14^*,\chi_8}}=\bigoplus\cM(8)_{\ang{\frac12^*,\chi_8}}\sqrt{\eta^{\natural}}\,\!^{\{0,1\}}\sqrt{\eta^{\flat}}\,\!^{\{0,1\}}\sqrt{\eta^{\sharp}}\,\!^{\{0,1\}}\]
\[\cS(16)_{\ang{\frac14^*,\chi_8}}=\cM(16)_{\ang{\frac14^*,\chi_8}}\sqrt{\eta}^3\]
\end{Lem}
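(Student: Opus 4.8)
The plan is to repeat the three-step argument of Theorem~\ref{8} one level higher in the square-root tower: Theorem~\ref{8} now plays the role that Theorem~\ref{4} played there, and Lemma~\ref{EX} is applied three times, adjoining in turn $\sqrt{\eta^{\natural}}$, $\sqrt{\eta^{\flat}}$, $\sqrt{\eta^{\sharp}}$. The ingredients are the weight $\frac14$ forms of the preceding Lemma, $\sqrt{\eta^{\natural}}\in\cM(2)_{\frac14^*}$, $\sqrt{\eta^{\flat}}\in\cM(2)_{(\frac14^*,\chi_8c_{16})}$ and $\sqrt{\eta^{\sharp}}\in\cM(2)_{(\frac14^*,\chi_8b_{16})}$, the cusp form $\sqrt{\eta}^3=\sqrt{\eta^3}\in\cS(2)_{(\frac34^*,c_{16}b_{16})}$, and the single identity obtained by taking square roots in $\eta^{\natural}\eta^{\flat}\eta^{\sharp}=\eta^3$, namely $\sqrt{\eta^{\natural}}\,\sqrt{\eta^{\flat}}\,\sqrt{\eta^{\sharp}}=\sqrt{\eta}^3$; in particular any two of the three roots multiply to $\sqrt{\eta}^3$ divided by the third.

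Each application of Lemma~\ref{EX} uses $n=2$ and takes $I$ to be the ideal of cusp forms in the ring at hand. Step one: $x=\sqrt{\eta^{\natural}}$ and $y=\eta^{\flat}\eta^{\sharp}\in\cM(8)_{\ang{\frac12^*,\chi_8}}$ (by Theorem~\ref{8}); then $x^2y=\eta^{\natural}\eta^{\flat}\eta^{\sharp}=\eta^3$ generates $\cS(8)_{\ang{\frac12^*,\chi_8}}$ over $\cM(8)_{\ang{\frac12^*,\chi_8}}$ by Theorem~\ref{8}, while $xy=\sqrt{\eta^{\natural}}\,\eta^{\flat}\eta^{\sharp}=\sqrt{\eta}^3\sqrt{\eta^{\flat}}\sqrt{\eta^{\sharp}}$ is a cusp form, being a cusp form times a holomorphic form; Lemma~\ref{EX} yields $\bigoplus\cM(8)_{\ang{\frac12^*,\chi_8}}\sqrt{\eta^{\natural}}^{\{0,1\}}$ with cusp ideal generated by $\sqrt{\eta}^3\sqrt{\eta^{\flat}}\sqrt{\eta^{\sharp}}$. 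Step two: $x=\sqrt{\eta^{\flat}}$ and $y=\sqrt{\eta^{\natural}}\,\eta^{\sharp}$ (in the previous ring), so $x^2y=\eta^{\flat}\sqrt{\eta^{\natural}}\,\eta^{\sharp}$ is the previous cusp generator and $xy=\sqrt{\eta^{\natural}}\,\sqrt{\eta^{\flat}}\,\eta^{\sharp}=\sqrt{\eta}^3\sqrt{\eta^{\sharp}}$ is again a cusp form; the ring doubles, with new cusp generator $\sqrt{\eta}^3\sqrt{\eta^{\sharp}}$. Step three: $x=\sqrt{\eta^{\sharp}}$ and $y=\sqrt{\eta^{\natural}}\,\sqrt{\eta^{\flat}}$ (in the previous ring), so $x^2y=\eta^{\sharp}\sqrt{\eta^{\natural}}\,\sqrt{\eta^{\flat}}=\sqrt{\eta}^3\sqrt{\eta^{\sharp}}$ matches and $xy=\sqrt{\eta^{\natural}}\,\sqrt{\eta^{\flat}}\,\sqrt{\eta^{\sharp}}=\sqrt{\eta}^3$. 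The third application of Lemma~\ref{EX} delivers simultaneously the asserted direct sum $\cM(16)_{\ang{\frac14^*,\chi_8}}=\bigoplus\cM(8)_{\ang{\frac12^*,\chi_8}}\sqrt{\eta^{\natural}}^{\{0,1\}}\sqrt{\eta^{\flat}}^{\{0,1\}}\sqrt{\eta^{\sharp}}^{\{0,1\}}$ and the principality $\cS(16)_{\ang{\frac14^*,\chi_8}}=\cM(16)_{\ang{\frac14^*,\chi_8}}\sqrt{\eta}^3$.

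What remains is to verify the two hypotheses of Lemma~\ref{EX} at each stage. The condition $R|_T\cap I=(R|_T)x^2y$ is, at each stage, exactly the principality statement proved at the previous one, and at stage zero it is the cusp-ideal half of Theorem~\ref{8}; one uses here that a form in the smaller ring which is cuspidal for the larger ambient group is already cuspidal there, since each cusp of the larger-level group lies over a cusp of the smaller. The semigroup hypothesis $S=\bigsqcup_{i=0}^{1}(T+is)$ with $2s\in T$ reduces to bookkeeping: since $\sqrt{\eta^{\bullet}}^2=\eta^{\bullet}$, the element $2s$ is successively the grading element of $\eta^{\natural}$, then of $\eta^{\flat}$, then of $\eta^{\sharp}$, all of which lie in $\cM(8)_{\ang{\frac12^*,\chi_8}}$ (the characters $c_8,b_8$ being absorbed into that ring once it is realized over the ambient group $\Gamma_0(4)\cap\Gamma^0(4)$, as in the decompositions of $\S6.2$), whereas $s$ itself lies outside $T$ because of its weight $\frac14$ and, from step two on, because $c_{16},b_{16}$ are genuinely of level $16$; thus the two cosets stay disjoint. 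As in Theorem~\ref{8}, one also checks that an odd-over-$4$ weight forces the $\sqrt{\xi_8}$-factor in the character, which is what makes the ambient ring equal to $\cM(16)_{\ang{\frac14^*,\chi_8}}$ and nothing larger, the odd-weight graded pieces being captured via $\cM(16)_{\kappa}\sqrt{\eta}^3\subset\cS(16)_{\kappa+\frac34}$ once the integer- and half-integer-weight pieces are known. The main obstacle is precisely this bookkeeping — tracking which of $\chi_8,c_{16},b_{16}$ (and their restrictions to $\Gamma_0(4)\cap\Gamma^0(4)$) sit in the grading of each intermediate ring, and confirming the coset decomposition and the ambient group come out as claimed; by contrast the cuspidality of each $xy$ (always $\sqrt{\eta}^3$ times a holomorphic eta product) and the elementary algebra among the three $\sqrt{\eta^{\bullet}}$ are routine.
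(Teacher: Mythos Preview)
Your proposal is correct and follows essentially the same route as the paper: three applications of Lemma~\ref{EX} adjoining $\sqrt{\eta^{\natural}},\sqrt{\eta^{\flat}},\sqrt{\eta^{\sharp}}$ in turn, starting from Theorem~\ref{8}, followed by the bootstrap $\cM(16)_\kappa\sqrt{\eta}^3\subset\cS(16)_{\kappa+\frac34}$ to reach the quarter-integer weights. The paper is only slightly more explicit about one point you label ``bookkeeping'': it first names the intermediate ring as $\cM(8)_{\ang{\frac14^*,\chi_8,\ell_{16}}}$ (working over the smaller group where $c_{16},b_{16}$ are nontrivial characters, so the three square roots have genuinely distinct gradings and Lemma~\ref{EX} applies cleanly), and only afterwards invokes Lemma~\ref{b8c8} and Lemma~\ref{decom} to identify this with $\cM(16)_{\ang{\frac14^*,\chi_8}}$ at integer weight, then bootstraps.
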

\begin{proof}First we see $\cM(8)_{\ang{\frac14^*,\chi_8,\ell_{16}}}={\rm RHS}$ and
\[\cS(8)_{\ang{\frac14^*,\chi_8,\ell_{16}}}=\cM(8)_{\ang{\frac14^*,\chi_8,\ell_{16}}}\sqrt{\eta}^3.\]

Lemma \ref{b8c8} and Lemma \ref{decom} lead to $\cM(16)_{\frac k4^*+\ang{\chi_8}}=\cM(4)_{\frac 4k^*+\ang{\chi_8,\ell_{16}}}$ for $k\in4\N$.

This assertion stays hold for $k\in4\M+1$ since
\begin{align*}\cM(16)_{\frac k4^*+\ang{\chi_8}}\sqrt{\eta}^3&\subset\cS(16)_{\frac{k+3}4^*+\ang{\chi_8}}\\
&=\cS(4)_{\frac{k+3}4^*+\ang{\chi_8,\ell_{16}}}\\
&=\cM(4)_{\frac k4^*+\ang{\chi_8,\ell_{16}}}\sqrt{\eta}^3\end{align*}

It stays also hold for $k\in4\M+2$ and then for $k\in4\M+3$ finally.
\end{proof}

The above Theorem induces a new dimension formula
\[\VS{k\in\M}\dim\cM(16)_{\frac k4^*+\ang{\chi_8}}t^{\frac k4}=\dfrac{(1+t^{\frac12})^3(1+t^{\frac14})^3}{(1-t^{\frac12})^2}=\dfrac{(1+t^{\frac12})^3(1+t^{\frac14})}{(1-t^{\frac14})^2}\]\\

\begin{Lem}
\[\cM(3)_{\ang{\frac13,\chi_9}}=\C\big[\sqrt[3]{\eta^{\nwarrow}},\sqrt[3]{\eta^{\swarrow}}\big]\]
\[\cS(3)_{\ang{\frac13,\chi_9}}=\cM(3)_{\ang{\frac13,\chi_9}}\eta^{\bot}\eta^{\top}\sqrt[3]{\eta^{\nwarrow}\eta^{\swarrow}}\]
\end{Lem}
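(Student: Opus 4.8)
The plan is to deduce the statement from Theorem~\ref{23} and Theorem~\ref{cuspideal} for $\Gamma(3)$ by adjoining the two cube roots $\sqrt[3]{\eta^{\nwarrow}},\sqrt[3]{\eta^{\swarrow}}$ through two applications of Lemma~\ref{EX}, each with $n=3$. First I would rewrite the base case. The two relations $\eta^{\bot}+1^{\frac13}\eta^{\nwarrow}+1^{\frac23}\eta^{\swarrow}=0$ and $\sqrt3^{3}i\,\eta^{\top}+\eta^{\nwarrow}-\eta^{\swarrow}=0$ established earlier express $\eta^{\bot},\eta^{\top}$ as an invertible linear combination of $\eta^{\nwarrow},\eta^{\swarrow}$ on the $2$-dimensional space $\cM(\Gamma(3))_1$ (the determinant is $1^{\frac13}+1^{\frac23}=-1$), so Theorem~\ref{23} gives the polynomial ring $\cM(\Gamma(3))_\M=\C[\eta^{\bot},\eta^{\top}]=\C[\eta^{\nwarrow},\eta^{\swarrow}]$, and Theorem~\ref{cuspideal} gives $\cS(\Gamma(3))_\M=\cM(\Gamma(3))_\M\eta^{8}$ with $\eta^{8}=\eta^{\bot}\eta^{\top}\eta^{\bot\top}=\eta^{\bot}\eta^{\top}\eta^{\nwarrow}\eta^{\swarrow}$ (using $\eta^{\nwarrow}\eta^{\swarrow}=\eta^{\bot\top}$). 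Viewed inside $\cM(3)_{\ang{\frac13,\chi_9}}$, the ring $\cM(\Gamma(3))_\M$ is the part supported on weights in $\M$ and characters in $\ang{\chi_3}=\ang{\chi_9^{3}}$.

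Next I would apply Lemma~\ref{EX} to $R=\cM(3)_{S_1}$, where $S_1=\{(\frac m3,\chi_9^{m})\mid m\in\M\}$, with $R|_T=\cM(\Gamma(3))_\M$, $x=\sqrt[3]{\eta^{\nwarrow}}\in\cM(3)_{(\frac13,\chi_9)}$ (from the earlier Lemma on $\sqrt[3]{\eta^{\bot}}$), $n=3$, $y=\eta^{\bot}\eta^{\top}\eta^{\swarrow}\in R|_T$, and $I$ the ideal of cusp forms. Here $3(\frac13,\chi_9)=(1,\chi_3)$ lies in the index monoid $T$ of $R|_T$, one has $S_1=\bigoplus_{i=0}^{2}(T+i(\frac13,\chi_9))$, the product $xy=\sqrt[3]{\eta^{\nwarrow}}\,\eta^{\bot}\eta^{\top}\eta^{\swarrow}=\eta^{8}\bigl(\sqrt[3]{\eta^{\nwarrow}}\bigr)^{-2}$ is a cusp form (its order at every cusp is positive, by the eta-quotient order formula), and $x^{3}y=\eta^{\nwarrow}\eta^{\bot}\eta^{\top}\eta^{\swarrow}=\eta^{8}$ generates $R|_T\cap I=\cS(\Gamma(3))_\M$. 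Lemma~\ref{EX} then gives $\cM(3)_{S_1}=\bigoplus_{i=0}^{2}\cM(\Gamma(3))_\M\bigl(\sqrt[3]{\eta^{\nwarrow}}\bigr)^{i}$ with cusp ideal $\cM(3)_{S_1}\,\sqrt[3]{\eta^{\nwarrow}}\,\eta^{\bot}\eta^{\top}\eta^{\swarrow}$.

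Then I would apply Lemma~\ref{EX} once more to $R=\cM(3)_{S_2}$, where $S_2$ is the monoid generated by $(\frac13,\chi_9)$ and $(\frac13,\overline{\chi_9})$, taking $R|_T=\cM(3)_{S_1}$, $x'=\sqrt[3]{\eta^{\swarrow}}\in\cM(3)_{(\frac13,\overline{\chi_9})}$, $n=3$, $y'=\sqrt[3]{\eta^{\nwarrow}}\,\eta^{\bot}\eta^{\top}\in R|_T$, and $I$ the cusp forms. Then $(x')^{3}y'=\eta^{\swarrow}\sqrt[3]{\eta^{\nwarrow}}\,\eta^{\bot}\eta^{\top}$ is exactly the cusp-ideal generator found above, so $R|_T\cap I=(R|_T)(x')^{3}y'$, while $x'y'=\eta^{\bot}\eta^{\top}\sqrt[3]{\eta^{\nwarrow}\eta^{\swarrow}}$. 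Hence
\[\cM(3)_{S_2}=\bigoplus_{i,i'=0}^{2}\cM(\Gamma(3))_\M\bigl(\sqrt[3]{\eta^{\nwarrow}}\bigr)^{i}\bigl(\sqrt[3]{\eta^{\swarrow}}\bigr)^{i'}=\C\big[\sqrt[3]{\eta^{\nwarrow}},\sqrt[3]{\eta^{\swarrow}}\big],\]
the last equality because $\cM(\Gamma(3))_\M=\C\big[(\sqrt[3]{\eta^{\nwarrow}})^{3},(\sqrt[3]{\eta^{\swarrow}})^{3}\big]$ is a polynomial ring, and its cusp ideal is $\cM(3)_{S_2}\,\eta^{\bot}\eta^{\top}\sqrt[3]{\eta^{\nwarrow}\eta^{\swarrow}}$.

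Finally I must identify $\cM(3)_{S_2}$ with $\cM(3)_{\ang{\frac13,\chi_9}}$, i.e. check $\cM(3)_{(\frac p3,\chi_9^{q})}=0$ for every $(\frac p3,\chi_9^{q})\in\ang{\frac13,\chi_9}\setminus S_2$, and --- this is the main obstacle --- verify the hypothesis $R|_T\cap I=(R|_T)x^{n}y$ of Lemma~\ref{EX} at each step, i.e. that the cube-root extension creates no spurious cusp forms and that the graded dimensions on the common support match. The parity obstruction disposes of most unwanted pieces: if $f\in\cM(3)_{(\frac p3,\chi_9^{q})}$ then $f^{3}\in\cM(3)_{(p,\chi_3^{q})}$, and slashing $(\BM-1&0\\0&-1\EM)$ in integer weight $p$ forces $(-1)^{p}=\chi_3(-1)^{q}=(-1)^{q}$, so the piece vanishes unless $q\equiv p\pmod 2$. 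The finitely many small-weight pieces remaining are handled by hand --- $\cM(3)_{(0,\chi_9^{q})}=0$ for $q\neq 0$; $\cM(\Gamma(3))_1$ is $2$-dimensional; and $\dim\cM(9)_{\frac13}=4$ with basis $\sqrt[3]{\eta^{\bot}},\sqrt[3]{\eta^{\top}},\sqrt[3]{\eta^{\nwarrow}},\sqrt[3]{\eta^{\swarrow}}$ of pairwise distinct characters $\chi_3c_9,\chi_3b_9,\chi_9,\overline{\chi_9}$, whence in particular $\cM(3)_{(\frac13,{\tt 1})}=\cM(3)_{(\frac13,\chi_3)}=0$. For the Lemma~\ref{EX} hypotheses and the dimension matching I would argue exactly as in the proofs of Theorem~\ref{8} and Lemma~\ref{16a}: combine the rational-weight dimension formula for $\Gamma(9)$ ($\dim\cM(\Gamma(9))_{k/3}=9(k-1)$ for $k\ge 3$) with the $b_9,c_9$-decomposition provided by Lemma~\ref{b3c3} and Lemma~\ref{decom}, and then propagate the identity from weights in $\M$ down to the remaining rational weights via the inclusion $\cM(3)_{\kappa}\,\eta^{\bot}\eta^{\top}\sqrt[3]{\eta^{\nwarrow}\eta^{\swarrow}}\subset\cS(3)_{\kappa+\frac83}$ and the dimension inequality of Lemma~\ref{sumPM}.
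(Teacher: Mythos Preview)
Your approach via two applications of Lemma~\ref{EX} is sound and reaches the conclusion, but it is a genuinely different route from the paper's. The paper argues exactly as in Theorems~\ref{4} and~\ref{5}: it quotes the integer-weight dimension formula $\dim\cS(3)_{\frac{k}{3}+\ang{\chi_9}}=(k-7)^+$ for $k\in 3\M$, then uses the Lemma~\ref{sumPM} inequality to push this bound down to $k\in 3\M+2$ and then to $k\in 3\M+1$; since the polynomial ring $\C[\sqrt[3]{\eta^{\nwarrow}},\sqrt[3]{\eta^{\swarrow}}]$ times the weight-$\tfrac83$ cusp generator already contributes $(k-7)^+$ dimensions in weight $\tfrac{k}{3}$, the chain $\C[\ldots]\cdot(\text{gen})\subset\cM(3)_{\ang{\frac13,\chi_9}}\cdot(\text{gen})\subset\cS(3)_{\ang{\frac13,\chi_9}}$ collapses to equalities. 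This is considerably shorter and sidesteps both the identification $\cM(3)_{S_2}=\cM(3)_{\ang{\frac13,\chi_9}}$ and the separate vanishing check $\cM(3)_{(\frac13,\chi_3)}=0$. Your route, modeled on Theorem~\ref{8}, trades that brevity for an explicit free-module description over $\cM(\Gamma(3))_\M$.

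One correction to your final paragraph: you overstate the remaining work. The Lemma~\ref{EX} hypothesis $R|_T\cap I=(R|_T)x^{n}y$ is not an obstacle at either step --- at the first step it is literally Theorem~\ref{cuspideal} for $N=3$ (since $x^3y=\eta^8$), and at the second step it is precisely the ideal statement $I=Rxy$ you just obtained from the first application (since $(x')^3y'=xy$). No $\Gamma(9)$ dimension formula, $b_9,c_9$-decomposition, or further ``matching'' is needed there; the only genuine loose end in your argument is the small-weight vanishing $\cM(3)_{(\frac13,\chi_3)}=0$, which you handle correctly via $\dim\cM(\Gamma(9))_{\frac13}=4$.
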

\begin{proof}The dimension formuls states $\dim\cS(3)_{\frac k3+\ang{\chi_9}}=(k-7)^+$ for $k\in3\M$ and
\[\dim\cS(3)_{\frac k3+\ang{\chi_9}}\leq\dim\cS(3)_{\frac{k+1}3+\ang{\chi_9}}-1=(k-7)^+\]
for $k\in3\M+2$ and also for $k\in3\M+1$.
\end{proof}

\begin{Thm}
\[\cM(\Gamma(9))_{\frac13\M}=\cM(9)_{\ang{\frac13,\chi_9}}=\bigoplus\cM(3)_{\ang{\frac13,\chi_9}}\sqrt[3]{\eta^{\bot}}^{\{0,1,2\}}\sqrt[3]{\eta^{\top}}^{\{0,1,2\}}\]
\[\cS(\Gamma(9))_{\frac13\M}=\cM(\Gamma(9))_{\frac13\M}\sqrt[3]{\eta^8}\]
\end{Thm}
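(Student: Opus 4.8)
The plan is to mirror the proofs of Theorem~\ref{8} and Lemma~\ref{16a}: first build the ring $\cM(3)_{\ang{\frac13,\chi_9,c_9,b_9}}$ out of the preceding Lemma by two applications of Lemma~\ref{EX}, then descend to level~$9$ by Lemma~\ref{decom} on the integer-weight graded pieces, and finally reach the genuinely rational weights by a cusp-form bootstrap.

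First I would fix the branch bookkeeping. From $\eta^8=\eta^{\bot}\eta^{\top}\eta^{\nwarrow}\eta^{\swarrow}$, choosing cube roots compatibly, $\sqrt[3]{\eta^8}=\sqrt[3]{\eta^{\bot}}\,\sqrt[3]{\eta^{\top}}\,\sqrt[3]{\eta^{\nwarrow}\eta^{\swarrow}}$, so the cusp generator $\eta^{\bot}\eta^{\top}\sqrt[3]{\eta^{\nwarrow}\eta^{\swarrow}}$ of the preceding Lemma equals $\sqrt[3]{\eta^8}\,\sqrt[3]{\eta^{\bot}}^{2}\sqrt[3]{\eta^{\top}}^{2}$. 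Recall that on $\cM(3)$ the characters $c_3,b_3$ are trivial, so $\eta^{\bot},\eta^{\top}\in\cM(3)_{(1,\chi_3)}$, $\sqrt[3]{\eta^{\bot}}\in\cM(3)_{(\frac13,\chi_3c_9)}$, $\sqrt[3]{\eta^{\top}}\in\cM(3)_{(\frac13,\chi_3b_9)}$ and $\sqrt[3]{\eta^{\nwarrow}\eta^{\swarrow}}\in\cM(3)_{\frac23}$. I would then apply Lemma~\ref{EX} with $T=\ang{\frac13,\chi_9}$, $n=3$, $x=\sqrt[3]{\eta^{\bot}}$ (whose triple weight-character $(1,\chi_3)$ lies in $T$ because $\chi_3=\chi_9^3$), $y=\eta^{\top}\sqrt[3]{\eta^{\nwarrow}\eta^{\swarrow}}\in\cM(3)|_{T}$ and $I=\cS(3)$: the coset hypothesis $\ang{\frac13,\chi_9,c_9}=\bigsqcup_{i=0}^{2}\big(T+i(\frac13,\chi_3c_9)\big)$ is routine (weight-$0$ graded pieces with nontrivial character vanish), $\cM(3)|_{T}\cap\cS(3)=\cM(3)|_{T}\,x^3y$ is precisely the cusp statement of the preceding Lemma, and $xy=\sqrt[3]{\eta^8}\,\sqrt[3]{\eta^{\top}}^{2}$ is cuspidal, being a cusp form times a modular form. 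This gives
\[\cM(3)_{\ang{\frac13,\chi_9,c_9}}=\bigoplus_{i=0}^{2}\cM(3)_{\ang{\frac13,\chi_9}}\sqrt[3]{\eta^{\bot}}^{i}\]
\[\cS(3)_{\ang{\frac13,\chi_9,c_9}}=\cM(3)_{\ang{\frac13,\chi_9,c_9}}\sqrt[3]{\eta^8}\,\sqrt[3]{\eta^{\top}}^{2}\]
A second application, with $T=\ang{\frac13,\chi_9,c_9}$, $n=3$, $x=\sqrt[3]{\eta^{\top}}$, $y=\sqrt[3]{\eta^{\bot}}\,\sqrt[3]{\eta^{\nwarrow}\eta^{\swarrow}}$ (so that $xy=\sqrt[3]{\eta^8}$ is cuspidal) and $I=\cS(3)$, then yields
\[\cM(3)_{\ang{\frac13,\chi_9,c_9,b_9}}=\bigoplus_{0\le i,j\le2}\cM(3)_{\ang{\frac13,\chi_9}}\sqrt[3]{\eta^{\bot}}^{i}\sqrt[3]{\eta^{\top}}^{j}\]
\[\cS(3)_{\ang{\frac13,\chi_9,c_9,b_9}}=\cM(3)_{\ang{\frac13,\chi_9,c_9,b_9}}\sqrt[3]{\eta^8}\]

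Finally I would transfer this to level~$9$. By Lemma~\ref{b3c3}, $\ang{c_9,b_9}$ is a group of order $9$ in ${\rm Hom}(\Gamma_0(3)\cap\Gamma^0(3),\C^\times)$ with common kernel $\Gamma_0(9)\cap\Gamma^0(9)$, which has index $9$ in $\Gamma_0(3)\cap\Gamma^0(3)$, and a transversal lies inside $\ker\chi_9$ (entries $a,d\equiv1\pmod9$, $b,c\equiv0\pmod3$); so Lemma~\ref{decom} applies exactly as for $\cM(4)_{\ang{1^*}}=\cM(2)_{\ang{1^*,\ell_4}}$ and $\cM(8)_{\ang{1^*,\chi_8}}=\cM(4)_{\ang{1^*,\chi_8,\ell_8}}$, giving $\cM(9)_{(k,\chi)}=\bigoplus_{\phi\in\ang{c_9,b_9}}\cM(3)_{(k,\chi\phi)}$ for integer weight $k$. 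Summing over $\chi\in{\rm Hom}((\Z/9\Z)^\times,\C^\times)=\ang{\chi_9}$ and using $\cM(\Gamma(9))_k=\bigoplus_{\chi}\cM(9)_{(k,\chi)}$ and its cusp-form version identifies $\cM(\Gamma(9))_k$, $\cM(9)_{k+\ang{\chi_9}}$ and $\cM(3)_{k+\ang{\chi_9,c_9,b_9}}$ at every integer weight, that is, on the residue $k\equiv0\pmod3$ of the $\frac13\M$-grading. For the residue $k\equiv2$ I would multiply by the weight-$\frac43$ cusp form $\eta^{24/9}=\sqrt[3]{\eta^8}$: then $(k+4)/3$ is an integer, so $\cM(\Gamma(9))_{k/3}\,\eta^{24/9}\subseteq\cS(\Gamma(9))_{(k+4)/3}=\cS(3)_{\frac{k+4}{3}+\ang{\chi_9,c_9,b_9}}=\cM(3)_{\frac k3+\ang{\chi_9,c_9,b_9}}\,\eta^{24/9}$ by the settled case together with the cusp identities above, and cancelling $\eta^{24/9}$ (the reverse inclusion being clear because $\sqrt[3]{\eta^{\bot}},\sqrt[3]{\eta^{\top}},\sqrt[3]{\eta^{\nwarrow}},\sqrt[3]{\eta^{\swarrow}}\in\cM(\Gamma(9))_{1/3}$) settles it. The residue $k\equiv1$ then follows identically, its target weight lying in the residue $k\equiv2$ just handled, and the same argument delivers $\cS(\Gamma(9))_{\frac13\M}=\cM(\Gamma(9))_{\frac13\M}\,\eta^{24/9}$.

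I expect no substantive obstacle: the work is bookkeeping — making the cube-root branches consistent so that $\sqrt[3]{\eta^8}=\sqrt[3]{\eta^{\bot}}\,\sqrt[3]{\eta^{\top}}\,\sqrt[3]{\eta^{\nwarrow}\eta^{\swarrow}}$, checking that every product $xy$ handed to Lemma~\ref{EX} is genuinely a cusp form, and processing the residues modulo $3$ in the order $0,2,1$ so that Lemma~\ref{decom}, which needs the weight operators to be genuine actions, is only ever invoked at integer weight.
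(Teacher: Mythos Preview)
Your proposal is correct and follows essentially the same route as the paper: two passes of Lemma~\ref{EX} to build $\cM(3)_{\ang{\frac13,\chi_9,\ell_9}}$ (the paper writes $\ell_9=(c_9,b_9)$) from the preceding Lemma, then Lemma~\ref{b3c3} with Lemma~\ref{decom} for the integer-weight identification, and finally the cusp-form bootstrap through residues $0,2,1$ via multiplication by $\sqrt[3]{\eta^8}$. You have simply spelled out in full the choices of $x,y,I$ and the transversal that the paper leaves implicit.
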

\begin{proof}Lemma \ref{EX} shows
\[\cM(3)_{\ang{\frac13,\chi_9,\ell_9}}=\bigoplus\cM(3)_{\ang{\frac13,\chi_9}}\sqrt[3]{\eta^{\bot}}^{\{0,1,2\}}\sqrt[3]{\eta^{\top}}^{\{0,1,2\}}\]
\[\cS(3)_{\ang{\frac13,\chi_9,\ell_9}}=\cM(3)_{\ang{\frac13,\chi_9,\ell_9}}\sqrt[3]{\eta^8}\]

Lemma \ref{b3c3} and Lemma \ref{decom} lead to $\cM(\Gamma(9))_{\frac k3}=\cM(3)_{\frac k3+\ang{\chi_9,\ell_9}}$ for $k\in3\N$.

This assertion stays hold for $k\in3\M+2$ since
\begin{align*}\cM(\Gamma(9))_{\frac k3}\sqrt[3]{\eta^8}&\subset\cS(\Gamma(9))_{\frac{k+4}3}\\
&=\cS(3)_{\frac{k+4}3+\ang{\chi_9,\ell_9}}\\
&=\cM(3)_{\frac k3+\ang{\chi_9,\ell_9}}\sqrt[3]{\eta^8}\end{align*}
It stays hold also for $k\in3\M+1$.
\end{proof}

The above Theorem induces a new dimension formula
\[\VS{k\in\M}\dim\cM(\Gamma(9))_{\frac k3}t^{\frac k3}=\dfrac{(1+t^{\frac13}+t^{\frac23})^2}{(1-t^{\frac13})^2}=\VS{k\in\M}(9k-9+\delta_{k,0}+4\delta_{k,1}+\delta_{k,2})t^{\frac k3}.\]

\newpage

\section{level 2-power cases}

\subsection{$\eta$-$\frac12,\frac14$ form}

For odd $c$, let $[\eta,\frac c2]=\eta^{\natural\ang{\frac12}}|(\BM 1&c\\0&1\EM)$. Concretely
\[\z [\eta,\frac12]=\eta^{\natural\ang2}+2i\eta^{\sharp\ang2}=\VP{n\in\N}\dfrac{1-(-iq)^{\frac n4}}{1+(-iq)^{\frac n4}}\]
\[\z [\eta,\frac32]=\eta^{\natural\ang2}-2i\eta^{\sharp\ang2}=\VP{n\in\N}\dfrac{1-(iq)^{\frac n4}}{1+(iq)^{\frac n4}}\]
thus
\[\z \sqrt{[\eta,\frac12][\eta,\frac32]}=\eta^{\natural}\]

\begin{Prop}\label{theta16}
\begin{align*}\theta_{\chi_{16}}^{\ang{\frac1{32}}}&=\z\sqrt[4]{\eta^{\flat}\eta^{\sharp}}\sqrt{[\eta,\frac32]}\\
\theta_{\chi_{12}\chi_{16}}^{\ang{\frac1{96}}}&=\z\sqrt[4]{\eta^{\natural}\eta}\sqrt{[\eta,\frac32]}\end{align*}
\end{Prop}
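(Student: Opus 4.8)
The plan is to establish both identities as identities of $q$-expansions, following the template of Proposition \ref{theta8}: first prove a ``pure'' identity expressing an eta-product as a theta series carrying no Dirichlet character, then apply a twist $|(\BM 1&c\\0&1\EM)$ to produce the character $\chi_{16}$ (resp.\ $\chi_{12}\chi_{16}$). Throughout one uses that every series in sight has leading coefficient $1$, so all radicals $\sqrt{\ }$, $\sqrt[4]{\ }$ are determined; and one records that $\chi_{16}(n)=1^{-\frac{n^2-1}{32}}$ for $n\in2\M+1$ (so that $\chi_{16}(5)=1^{-\frac34}=i$ and $\chi_{16}(-1)=1$).

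For the first identity I would proceed as follows. From $\eta^{\natural}\eta^{\sharp}=\eta^{\sharp\ang{\frac12}2}$, rescaling by $\ang2$ gives $\eta^{\sharp2}=\eta^{\natural\ang2}\eta^{\sharp\ang2}$; rescaling by $\ang{\frac14}$ and using $\eta^{\sharp\ang{\frac12}}=\sqrt{\eta^{\natural}\eta^{\sharp}}$, then taking square roots, gives the pure identity
\[\eta^{\sharp\ang{\frac14}}=\sqrt[4]{\eta^{\natural}\eta^{\sharp}}\;\sqrt{\eta^{\natural\ang{\frac12}}}\qquad\big(=q^{\frac1{32}}+\cdots\big).\]
Now apply the ring homomorphism $|(\BM 1&-1\\0&1\EM)$ on $q$-expansions: by the pentagonal expansion $\eta^{\flat}=\VS{n\in\Z}(-1)^nq^{\frac{n^2}2}$ the twist $|(\BM 1&1\\0&1\EM)$ is an involution taking $\eta^{\flat}$ to $\eta^{\natural}$, so $\eta^{\natural}|(\BM 1&-1\\0&1\EM)=\eta^{\flat}$; likewise $\eta^{\sharp}|(\BM 1&-1\\0&1\EM)=1^{-\frac18}\eta^{\sharp}$ and $\eta^{\natural\ang{\frac12}}|(\BM 1&-1\\0&1\EM)=[\eta,\frac32]$ (the case $c=-1\equiv3\pmod4$ of $[\eta,\frac c2]=\eta^{\natural\ang{\frac12}}|(\BM 1&c\\0&1\EM)$); while on the left $\eta^{\sharp\ang{\frac14}}=\VS{n\in2\M+1}q^{\frac{n^2}{32}}$ goes to $\VS{n\in2\M+1}1^{-\frac{n^2}{32}}q^{\frac{n^2}{32}}=1^{-\frac1{32}}\theta_{\chi_{16}}^{\ang{\frac1{32}}}$. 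The $1^{-\frac18}$ inside the fourth root contributes exactly the $1^{-\frac1{32}}$ needed to cancel the one on the left, yielding $\theta_{\chi_{16}}^{\ang{\frac1{32}}}=\sqrt[4]{\eta^{\flat}\eta^{\sharp}}\sqrt{[\eta,\frac32]}$.

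For the second identity I would run the same argument with $\eta^{\sharp}$ replaced by $\eta$, starting from $\sqrt{\eta^{\flat}\eta}=\VS{n\in\N}\chi_{12}(n)q^{\frac{n^2}{48}}$ (Proposition \ref{theta8}) and twisting by $|(\BM 1&3\\0&1\EM)$, using $\eta|(\BM 1&3\\0&1\EM)=1^{\frac18}\eta$ and $\eta=\theta_{\chi_{12}}^{\ang{\frac1{24}}}$. Equivalently, dividing the second target by the first and invoking $\eta^{\natural}\eta^{\flat}\eta^{\sharp}=\eta^3$ together with $\eta^{\natural}\eta=(\theta_{\chi_{12}\chi_8}^{\ang{\frac1{48}}})^2$ reduces the second identity to $\theta_{\chi_{12}\chi_{16}}^{\ang{\frac1{96}}}\cdot\theta_{\chi_{12}}^{\ang{\frac1{24}}}=\theta_{\chi_{16}}^{\ang{\frac1{32}}}\cdot\theta_{\chi_{12}\chi_8}^{\ang{\frac1{48}}}$, a relation between products of two weight-$\frac12$ theta series that one checks by comparing finitely many Fourier coefficients, the difference lying in a space of weight-$1$ forms (described by the Serre--Stark basis) that vanishes to high order at $i\infty$.

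The main obstacle is entirely the bookkeeping of roots of unity: because $\sqrt{\ }$ and $\sqrt[4]{\ }$ are defined only up to a root of unity and $|(\BM 1&c\\0&1\EM)$ does not commute with extracting roots, one must track every prefactor $1^{\frac1N}$ and every leading coefficient and verify that they cancel. In particular the twist must be by $(\BM 1&-1\\0&1\EM)$ and not $(\BM 1&1\\0&1\EM)$, precisely because $\eta^{\natural\ang{\frac12}}|(\BM 1&1\\0&1\EM)=[\eta,\frac12]=\eta^{\natural\ang2}+2i\eta^{\sharp\ang2}$ carries $+2i$ whereas the statement involves $[\eta,\frac32]=\eta^{\natural\ang2}-2i\eta^{\sharp\ang2}$ with $-2i$. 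Once these signs and scalars are pinned down, what remains — identifying the pure eta-product as the relevant theta series and the final comparison — is routine.
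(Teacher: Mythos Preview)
Your argument for the first identity is essentially the paper's. The paper states the same pure identity $\sum_{n\in2\M+1}q^{n^2/32}=\sqrt[4]{\eta^{\natural}\eta^{\sharp}}\sqrt{\eta^{\natural\ang{1/2}}}$ and twists by $(\BM 1&3\\0&1\EM)$ rather than your $(\BM 1&-1\\0&1\EM)$; since every factor has period $4$ in the twist parameter and $-1\equiv3\pmod4$, the two computations coincide. Your bookkeeping of the $1^{-1/32}$ is correct.

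For the second identity there is a genuine gap. The literal substitution ``$\eta^{\sharp}\mapsto\eta$'' in your pure identity would give $\eta^{\ang{1/4}}=\sqrt[4]{\eta^{\natural}\eta}\sqrt{\eta^{\natural\ang{1/2}}}$, and this is \emph{false}: comparing the coefficient of $q^{1/96+1/4}$, the left side has $-1$ (from the Euler product) while the right side has $+1$ (from the $2q^{1/4}$ in $\eta^{\natural\ang{1/2}}=\theta^{\ang{1/4}}$). The correct pure identity, which is what the paper uses, is
\[
\eta^{\ang{1/4}}=\sqrt[4]{\eta^{\flat}\eta}\;\sqrt{\eta^{\flat\ang{1/2}}},
\]
obtained from $\eta^{\flat}\eta=\eta^{\ang{1/2}\,2}$ by the same rescale-and-root manoeuvre you carried out for the first identity. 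Furthermore the twist must be by some $c\equiv1\pmod4$ (the paper takes $c=9$), not by $3$: since $\eta^{\flat\ang{1/2}}=\eta^{\natural\ang2}-2\eta^{\sharp\ang2}$, applying $(\BM1&3\\0&1\EM)$ sends the $-2$ to $+2i$ and lands on $[\eta,\tfrac12]$, whereas applying $(\BM1&9\\0&1\EM)$ sends it to $-2i$ and lands on $[\eta,\tfrac32]$ as required. With these two corrections the argument runs exactly parallel to your first one, and your roundabout alternative via the product identity $\theta_{\chi_{12}\chi_{16}}^{\ang{1/96}}\theta_{\chi_{12}}^{\ang{1/24}}=\theta_{\chi_{16}}^{\ang{1/32}}\theta_{\chi_{12}\chi_8}^{\ang{1/48}}$ becomes unnecessary.
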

\begin{proof}Note $\chi_{16}(n)=1^{\frac{3(n^2-1)}{32}}$ for $n\in2\M+1$.

Acting $(\BM1&3\\0&1\\\EM)$ on $\VS{n\in2\M+1}q^{\frac{n^2}{32}}=\sqrt[4]{\eta^{\natural}\eta^{\sharp}}\sqrt{\eta^{\natural\ang{\frac12}}}$ yields the former identity.

Acting $(\BM1&9\\0&1\\\EM)$ on $\VS{n\in\N}q^{\frac{n^2}{96}}=\sqrt[4]{\eta^{\flat}\eta}\sqrt{\eta^{\flat\ang{\frac12}}}$ yields the latter identity.
\end{proof}

\vspace{0.5cm}

Next, for odd $c$, let
\[\z [\eta,\frac c4]=\eta^{\natural\ang{\frac14}}|(\BM 1&c\\0&1\EM)=\eta^{\flat}+2\cdot1^{\frac c8}\eta^{\sharp}=\VP{n\in\N}\dfrac{1-(-1^{\frac c8}q)^{\frac n4}}{1+(-1^{\frac c8}q)^{\frac n4}}\]
We see $\sqrt{[\eta,\frac c4][\eta,\frac c4+1]}=[\eta,\frac c2+1]$

\begin{Prop}\label{theta32}
\begin{align*}
\theta_{\chi_{32}}^{\ang{\frac1{64}}}&=\z\sqrt[8]{\eta^{\flat}\eta^{\sharp}}\sqrt[4]{[\eta,\frac32]}\sqrt{[\eta,\frac34]}\\
\theta_{\chi_{12}\chi_{32}}^{\ang{\frac1{192}}}&=\z\sqrt[8]{\eta^{\natural}\eta}\sqrt[4]{[\eta,\frac32]}\sqrt{[\eta,\frac54]}\end{align*}
\end{Prop}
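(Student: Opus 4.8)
The plan is to run the argument of Propositions \ref{theta8} and \ref{theta16} one level higher: I would compute $\chi_{32}$ on odd integers, promote the level-$16$ and level-$32$ theta identities used there to level $64$, and then apply a twisting operator $f\mapsto f|(\BM 1&c\\0&1\EM)$. For the first point, $\chi_{32}(n)=1^{\frac{3(n^2-1)}{64}}$ for odd $n$: both sides are homomorphisms on $(\Z/32\Z)^\times=\ang{-1,5}$ --- for the right-hand map, well-definedness and multiplicativity follow from $\frac{3(n^2-1)(m^2-1)}{64}\in\Z$, which holds since $8\mid n^2-1$ and $8\mid m^2-1$ --- and they agree on the generators, $1^{\frac{3(5^2-1)}{64}}=1^{\frac98}=1^{\frac18}=\chi_{32}(5)$ and $1^0=1=\chi_{32}(-1)$.

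Next I would establish the two base identities
\[\VS{n\in2\M+1}q^{\frac{n^2}{64}}=\sqrt[8]{\eta^{\natural}\eta^{\sharp}}\sqrt[4]{\eta^{\natural\ang{\frac12}}}\sqrt{\eta^{\natural\ang{\frac14}}},\6 \VS{n\in\N}\chi_{12}(n)q^{\frac{n^2}{192}}=\sqrt[8]{\eta^{\flat}\eta}\sqrt[4]{\eta^{\flat\ang{\frac12}}}\sqrt{\eta^{\flat\ang{\frac14}}}.\]
For the first, $\big(\VS{n\in2\M+1}q^{\frac{n^2}{64}}\big)^2=\big(\VS{n\in2\M+1}q^{\frac{n^2}{32}}\big)\eta^{\natural\ang{\frac14}}$, which is a representation identity for $x^2+y^2$ that, once spelled out, reduces to the elementary fact that $m$ and $2m$ have equally many representations as a sum of two squares; substituting the level-$32$ identity $\VS{n\in2\M+1}q^{\frac{n^2}{32}}=\sqrt[4]{\eta^{\natural}\eta^{\sharp}}\sqrt{\eta^{\natural\ang{\frac12}}}$ from the proof of Proposition \ref{theta16} and taking the square root --- the branch fixed by leading $q$-exponents, all leading coefficients being $1$ --- then gives it. The second identity follows the same way from the level-$96$ identity in the proof of Proposition \ref{theta16}, or directly from Jacobi's triple product as in Lemma \ref{etanf}.

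Then I would apply $f\mapsto f|(\BM 1&3\\0&1\EM)$ to the first base identity and $f\mapsto f|(\BM 1&9\\0&1\EM)$ to the second. On the left, $q^{\frac{n^2}{64}}$ becomes $1^{\frac{3n^2}{64}}q^{\frac{n^2}{64}}=1^{\frac3{64}}\chi_{32}(n)q^{\frac{n^2}{64}}$ and $q^{\frac{n^2}{192}}$ becomes $1^{\frac{9n^2}{192}}q^{\frac{n^2}{192}}=1^{\frac3{64}}\chi_{32}(n)q^{\frac{n^2}{192}}$, so the two left sides turn into $1^{\frac3{64}}\theta_{\chi_{32}}^{\ang{\frac1{64}}}$ and $1^{\frac3{64}}\theta_{\chi_{12}\chi_{32}}^{\ang{\frac1{192}}}$. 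On the right I would transform factor by factor. For the first identity $\eta^{\natural}|(\BM 1&3\\0&1\EM)=\eta^{\flat}$ and $\eta^{\sharp}|(\BM 1&3\\0&1\EM)=1^{\frac38}\eta^{\sharp}$, hence $(\sqrt[8]{\eta^{\natural}\eta^{\sharp}})|(\BM 1&3\\0&1\EM)=1^{\frac3{64}}\sqrt[8]{\eta^{\flat}\eta^{\sharp}}$, while $\eta^{\natural\ang{\frac12}}|(\BM 1&3\\0&1\EM)=[\eta,\frac32]$ and $\eta^{\natural\ang{\frac14}}|(\BM 1&3\\0&1\EM)=[\eta,\frac34]$ by the definitions of $[\eta,\frac c2]$, $[\eta,\frac c4]$. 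For the second identity $\eta^{\flat}|(\BM 1&9\\0&1\EM)=\eta^{\natural}$ and $\eta|(\BM 1&9\\0&1\EM)=1^{\frac38}\eta$, and since $\eta^{\flat\ang{\frac12}}=\eta^{\natural\ang{\frac12}}|(\BM 1&2\\0&1\EM)$ and $\eta^{\flat\ang{\frac14}}=\eta^{\natural\ang{\frac14}}|(\BM 1&4\\0&1\EM)$, composing the substitutions --- on $\eta^{\natural\ang{\frac12}}\in\C[[q^{\frac14}]]$ only $2+9\equiv3\pmod4$ matters, on $\eta^{\natural\ang{\frac14}}\in\C[[q^{\frac18}]]$ only $4+9\equiv5\pmod8$ --- gives $\eta^{\flat\ang{\frac12}}|(\BM 1&9\\0&1\EM)=[\eta,\frac32]$ and $\eta^{\flat\ang{\frac14}}|(\BM 1&9\\0&1\EM)=[\eta,\frac54]$. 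Dividing each resulting identity by the common factor $1^{\frac3{64}}$ yields the two assertions.

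The part I expect to be most delicate is the bookkeeping of roots of unity: each radical $\sqrt[m]{\cdot}$ acquires an $m$-th-root-of-unity ambiguity under $q^{\frac1N}\mapsto 1^{\frac cN}q^{\frac1N}$, so one must verify that on each side these prefactors collapse to the single scalar $1^{\frac3{64}}$ and hence cancel; I would settle this uniformly by tracking leading $q$-exponents, every function here having real positive leading coefficient. The one structural subtlety worth flagging is that $\eta^{\flat\ang{\frac14}}$ is the $q^{\frac18}$-twist $\eta^{\natural\ang{\frac14}}|(\BM 1&4\\0&1\EM)$, so composing it with $(\BM 1&9\\0&1\EM)$ shifts the exponent to $4+9\equiv5\pmod8$ --- which is precisely why the second identity carries $[\eta,\frac54]$ rather than $[\eta,\frac34]$. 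The only genuinely new input is the pair of base identities, and they are classical theta-duplication formulas, reducible (once the underlying quadratic-form counts are spelled out) to the sum-of-two-squares fact above, or provable directly from Jacobi's triple product.
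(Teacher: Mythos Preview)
Your proposal is correct and follows essentially the same approach as the paper: compute $\chi_{32}(n)=1^{\frac{3(n^2-1)}{64}}$ for odd $n$, establish the two base identities $\VS{n\in2\M+1}q^{\frac{n^2}{64}}=\sqrt[8]{\eta^{\natural}\eta^{\sharp}}\sqrt[4]{\eta^{\natural\ang{\frac12}}}\sqrt{\eta^{\natural\ang{\frac14}}}$ and $\VS{n\in\N}\chi_{12}(n)q^{\frac{n^2}{192}}=\sqrt[8]{\eta^{\flat}\eta}\sqrt[4]{\eta^{\flat\ang{\frac12}}}\sqrt{\eta^{\flat\ang{\frac14}}}$, and then act by $(\BM 1&3\\0&1\EM)$ and $(\BM 1&9\\0&1\EM)$ respectively. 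Your extra detail on the factor-by-factor transformation and the root-of-unity bookkeeping is sound; note incidentally that the first base identity reduces, after squaring and invoking the level-$32$ base identity from Proposition~\ref{theta16}, to the eta-quotient relation $(\eta^{\sharp\ang{\frac12}})^2=\eta^{\sharp}\eta^{\natural}$, which is immediate from the definitions of $\sharp$ and $\natural$---so no genuinely new theta input is needed.
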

\begin{proof}
Note $\chi_{32}(n)=1^{\frac{3(n^2-1)}{64}}$ for $n\in2\M+1$.

Acting $(\BM1&3\\0&1\\\EM)$ on $\VS{n\in2\M+1}q^{\frac{n^2}{64}}=\sqrt[8]{\eta^{\natural}\eta^{\sharp}}\sqrt[4]{\eta^{\natural\ang{\frac12}}}\sqrt{\eta^{\natural\ang{\frac14}}}$ yields the former identity.

Acting $(\BM1&9\\0&1\\\EM)$ on $\VS{n\in2\M+1}q^{\frac{n^2}{192}}=\sqrt[8]{\eta^{\flat}\eta}\sqrt[4]{\eta^{\flat\ang{\frac12}}}\sqrt{\eta^{\flat\ang{\frac14}}}$ yields the latter identity.
\end{proof}

Acting $\sigma_8:1^{\frac18}\mapsto1^{\frac38}$ yields
\[\z\theta_{\chi_{32}^3}^{\ang{\frac1{64}}}=\sqrt[8]{\eta^{\flat}\eta^{\sharp}}\sqrt[4]{[\eta,\frac12]}\sqrt{[\eta,\frac14]}\]
\[\z\theta_{\chi_{12}\chi_{32}^3}^{\ang{\frac1{192}}}=\sqrt[8]{\eta^{\natural}\eta}\sqrt[4]{[\eta,\frac12]}\sqrt{[\eta,\frac74]}\]

\newpage

\subsection{o-form, d-form, u-form}

Put
\[\z [d,\frac k4]=\sqrt{\eta^{\natural\ang2}}+1^{\frac k8}\sqrt{2\eta^{\sharp\ang2}}\]
\[\z [u,\frac k4]=\frac1{1+1^{\frac k8}}\Big(\sqrt{\eta^{\natural\ang{\frac12}}}+1^{\frac k8}\sqrt{\eta^{\flat\ang{\frac12}}}\Big)\6[u,1]=\frac12\Big(\sqrt{\eta^{\natural\ang{\frac12}}}-\sqrt{\eta^{\flat\ang{\frac12}}}\Big)\]
and $[o,\frac k4]=[u,\frac k4]|(\BM 1&1\\0&1\EM)$.

\begin{Lem}\label{oud}
\[\z [o,\frac34][o,\frac74]=[d,\frac12][d,\frac32]=\eta^{\natural\ang{\frac12}}\]
\[\z [o,\frac14][o,\frac54]=[d,0][d,1]=\eta^{\flat\ang{\frac12}}\]
\[\z [o,\frac12][o,\frac32]=[u,\frac12][u,\frac32]=\eta^{\natural\ang2}\]
\[[o,0][o,1]=[u,0][u,1]=\eta^{\sharp\ang2}\]
\[\z [d,\frac14][d,\frac54]=[u,\frac34][u,\frac74]=[\eta,\frac32]\]
\[\z [d,\frac34][d,\frac74]=[u,\frac14][u,\frac54]=[\eta,\frac12]\]
\end{Lem}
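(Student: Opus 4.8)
The plan is to verify the six identities directly from the definitions, the one observation that drives everything being that the shift $k\mapsto k+4$ replaces $1^{k/8}$ by $-1^{k/8}$, so every product in sight is a difference of two squares. For the $d$-forms this gives immediately
\[[d,\tfrac k4]\,[d,\tfrac{k+4}4]=\bigl(\sqrt{\eta^{\natural\ang2}}+1^{k/8}\sqrt{2\eta^{\sharp\ang2}}\bigr)\bigl(\sqrt{\eta^{\natural\ang2}}-1^{k/8}\sqrt{2\eta^{\sharp\ang2}}\bigr)=\eta^{\natural\ang2}-2\cdot1^{k/4}\eta^{\sharp\ang2},\]
and specializing $k\in\{2,0,1,3\}$ yields $\eta^{\natural\ang2}\mp2\eta^{\sharp\ang2}$ and $\eta^{\natural\ang2}\mp2i\eta^{\sharp\ang2}$. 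For the $u$-forms, $(1+1^{k/8})(1-1^{k/8})=1-1^{k/4}$ gives
\[[u,\tfrac k4]\,[u,\tfrac{k+4}4]=\frac{\eta^{\natural\ang{1/2}}-1^{k/4}\eta^{\flat\ang{1/2}}}{1-1^{k/4}},\]
the cases $k\equiv4\pmod8$ being covered by the special value of $[u,1]$, whose product with $[u,0]$ collapses to $\tfrac14(\eta^{\natural\ang{1/2}}-\eta^{\flat\ang{1/2}})$.

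Next I would convert both expressions into the stated right-hand sides. The bridge is the pair $\eta^{\natural\ang{1/2}}=\eta^{\natural\ang2}+2\eta^{\sharp\ang2}$ and $\eta^{\flat\ang{1/2}}=\eta^{\natural\ang2}-2\eta^{\sharp\ang2}$, obtained by applying the rescaling $\ang{1/2}$ to the $\natural$-operator identities $\tfrac12(\eta^\natural+\eta^\flat)=\eta^{\natural\ang4}$ and $\tfrac14(\eta^\natural-\eta^\flat)=\eta^{\sharp\ang4}$ (using $\ang4\ang{1/2}=\ang2$). Together with the concrete expansions $[\eta,\tfrac12]=\eta^{\natural\ang2}+2i\eta^{\sharp\ang2}$ and $[\eta,\tfrac32]=\eta^{\natural\ang2}-2i\eta^{\sharp\ang2}$ recorded earlier, a one-line regrouping disposes of each $d$- and $u$-product (for instance, in $[u,\tfrac34][u,\tfrac74]$ one clears $1-1^{3/4}=1+i$ and substitutes the bridge formulas to land on $[\eta,\tfrac32]$).

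For the $o$-form products I would use $[o,\tfrac k4]=[u,\tfrac k4]\bigl|(\begin{smallmatrix}1&1\\0&1\end{smallmatrix})$ together with the fact that the twist $\bigl|(\begin{smallmatrix}1&1\\0&1\end{smallmatrix})$ is a ring endomorphism of Fourier expansions, so that $[o,a][o,b]=\bigl([u,a][u,b]\bigr)\bigl|(\begin{smallmatrix}1&1\\0&1\end{smallmatrix})$, and then apply that twist to the eta-products just obtained. Here $\eta^{\natural\ang2}=\theta$ is twist-invariant, $\eta^{\natural\ang{1/2}}\bigl|(\begin{smallmatrix}1&1\\0&1\end{smallmatrix})=[\eta,\tfrac12]$ by the very definition of $[\eta,\tfrac12]$, and $\eta^{\flat\ang{1/2}}=\eta^{\natural\ang{1/2}}\bigl|(\begin{smallmatrix}1&2\\0&1\end{smallmatrix})$ (again from the bridge formulas), so that each $o$-identity falls out of the corresponding $u$-identity by a single further twist.

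The work is almost entirely bookkeeping; the place where care is genuinely needed is the tracking of the roots of unity $1^{k/8},1^{k/4}$ alongside a fixed branch for each of $\sqrt{\eta^{\natural\ang2}},\sqrt{2\eta^{\sharp\ang2}},\sqrt{\eta^{\natural\ang{1/2}}},\sqrt{\eta^{\flat\ang{1/2}}}$, and — above all — the fact that the rescalings $\ang2,\ang{1/2}$ do not commute with the twist $\bigl|(\begin{smallmatrix}1&1\\0&1\end{smallmatrix})$; identifying which twisted theta series is which (e.g.\ that $\eta^{\flat\ang{1/2}}\bigl|(\begin{smallmatrix}1&1\\0&1\end{smallmatrix})$ is $[\eta,\tfrac32]$ and not $\eta^{\natural\ang{1/2}}$) is the one step where a sign or phase is easiest to mislay.
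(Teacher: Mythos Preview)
Your approach is essentially the same as the paper's: both are direct verifications from the definitions via difference-of-squares, the paper merely sketching two cases ($[o,\tfrac34][o,\tfrac74]$ and $[o,\tfrac12][o,\tfrac32]$) by expanding them as linear combinations of $[\eta,\tfrac12]$ and $[\eta,\tfrac32]$, while you organise the computation more systematically by doing the $d$- and $u$-products first and deriving the $o$-products by the twist $\bigl|(\begin{smallmatrix}1&1\\0&1\end{smallmatrix})$.

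Two small corrections. First, the sign in your specialisation of $[d,\tfrac k4][d,\tfrac{k+4}4]=\eta^{\natural\ang2}-2\cdot1^{k/4}\eta^{\sharp\ang2}$ should read $\pm$, not $\mp$: for $k=2$ one has $1^{1/2}=-1$ and hence $+2\eta^{\sharp\ang2}$, giving $\eta^{\natural\ang{1/2}}$ as required. Second, your twist argument for the $o$-products needs a genuine check rather than a blanket claim: $\eta^{\natural\ang2}=\theta\in\C[[q]]$ and $\eta^{\natural\ang{1/2}},\eta^{\flat\ang{1/2}}$ behave well (the latter two map to $[\eta,\tfrac12]$ and $[\eta,\tfrac32]$), but $\eta^{\sharp\ang2}\in q^{1/4}\C[[q]]$ picks up a factor $i$ under $\bigl|(\begin{smallmatrix}1&1\\0&1\end{smallmatrix})$, so twisting $[u,0][u,1]=\eta^{\sharp\ang2}$ produces $i\eta^{\sharp\ang2}$ rather than the stated $\eta^{\sharp\ang2}$. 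This phase discrepancy is an imprecision in the paper's statement of the fourth identity, not a defect of your method, but it is exactly the kind of mislaid phase you warn about in your final paragraph and is worth flagging explicitly rather than asserting that ``each $o$-identity falls out'' uniformly.
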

\begin{proof}LHS of the first identity is $\frac1{1+i}[\eta,\frac12]+\frac1{1-i}[\eta,\frac32]$.

LHS of the third identity is $\frac12[\eta,\frac12]+\frac12[\eta,\frac32]$.\end{proof}

\begin{Lem}\label{ud}
\[\z \sqrt{[o,0][o,\frac12]}=[o,0]^{\ang{\frac12}} \6 \sqrt{[o,\frac74][o,\frac14]}=[o,\frac14]^{\ang2}\]
\[\z \sqrt{[d,\frac74][d,\frac14]}=[o,\frac12]^{\ang{\frac12}} \6 \sqrt{[u,\frac34][u,\frac54]}=[o,\frac34]^{\ang2}\]
\[\z \sqrt{[o,1][o,\frac32]}=[o,1]^{\ang{\frac12}} \6 \sqrt{[o,\frac34][o,\frac54]}=[o,\frac54]^{\ang2}\]
\[\z \sqrt{[d,\frac34][d,\frac54]}=[o,\frac32]^{\ang{\frac12}} \6 \sqrt{[u,\frac74][u,\frac14]}=[o,\frac74]^{\ang2}\]
and
\[\z \sqrt{[o,0][o,\frac32]}=[u,0]^{\ang{\frac12}} \6 \sqrt{[o,\frac14][o,\frac34]}=[d,0]^{\ang2}\]
\end{Lem}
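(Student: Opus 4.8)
The plan is to reduce each claimed equality $\sqrt{PQ}=R^{\ang h}$, with $P,Q,R$ among $[o,\bullet],[d,\bullet],[u,\bullet]$ and $h\in\{\frac12,2\}$, to the squared identity $PQ=(R^2)^{\ang h}$ (using $(R^{\ang h})^2=(R^2)^{\ang h}$). Since each of $[o,\bullet],[d,\bullet],[u,\bullet]$ has Fourier expansion $1+\cdots$, so do both sides of the original equality, so the branch of the square root is forced and the two statements are equivalent. It therefore suffices to verify the finitely many squared identities.

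First I would compute the squares $[d,\frac k4]^2$, $[u,\frac k4]^2$, $[u,1]^2$, and hence $[o,\frac k4]^2=([u,\frac k4]^2)\big|(\BM1&1\\0&1\EM)$ (the twist $|(\BM1&1\\0&1\EM)$ is a ring map on Fourier expansions). Expanding the binomials in the definitions, each of these is a $\C$-linear combination of $\eta^{\natural\ang\bullet},\eta^{\flat\ang\bullet},\eta^{\sharp\ang\bullet}$ plus exactly one ``mixed'' cross term of shape $\sqrt{\eta^{\natural\ang a}\eta^{\flat\ang a}}$, $\sqrt{\eta^{\natural\ang a}\eta^{\sharp\ang a}}$ or $\sqrt{\eta^{\flat\ang a}\eta^{\sharp\ang a}}$. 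Each cross term is replaced by an honest power series using $\sqrt{\eta^{\natural}\eta^{\flat}}=\eta^{\flat\ang2}$ (immediate from the infinite products of \S4.4), $\sqrt{\eta^{\natural}\eta^{\sharp}}=\VS{n\in2\M+1}q^{\frac{n^2}{16}}$ and $\sqrt{\eta^{\flat}\eta^{\sharp}}=\theta_{\chi_8}^{\ang{1/16}}$ (Proposition \ref{theta8} and its proof), together with $f^{\ang a\ang b}=f^{\ang{ab}}$ and $(fg)^{\ang a}=f^{\ang a}g^{\ang a}$; in particular one gets the clean forms $\sqrt{\eta^{\natural\ang2}\eta^{\sharp\ang2}}=\eta^{\sharp}$ and $\sqrt{\eta^{\natural\ang{1/2}}\eta^{\flat\ang{1/2}}}=\eta^{\flat}$. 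After this step each square is an explicit linear combination of the basic theta series $\eta^{\natural\ang a},\eta^{\flat\ang a},\eta^{\sharp\ang a}$.

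Next, for each line of the Lemma I would expand the left-hand product $PQ$ the same way — the roots of unity $1^{k/8}$ in the two factors now combine, and the cross terms either reinforce or cancel — and match it with $(R^2)^{\ang h}$. Everything now takes place in the ring generated by $\{\eta^{\natural\ang\bullet},\eta^{\flat\ang\bullet},\eta^{\sharp\ang\bullet}\}$, and the only relations needed beyond the obvious ones are the multiplicative relations of Lemma \ref{oud} together with the doubling identities of Lemma \ref{rel22} and the relations $\frac12(\eta^{\natural}+\eta^{\flat})=\eta^{\natural\ang4}$, $\frac14(\eta^{\natural}-\eta^{\flat})=\eta^{\sharp\ang4}$ used in its proof — whence e.g. $\eta^{\natural\ang{1/2}}=\eta^{\natural\ang2}+2\eta^{\sharp\ang2}$, which is exactly what makes a $d$-line such as $\sqrt{[d,\frac74][d,\frac14]}=[o,\frac12]^{\ang{\frac12}}$ come out. (Alternatively, since by Theorem \ref{8} and Lemma \ref{16a} all the forms in play lie in an explicitly described finite-dimensional space, one may instead match finitely many Fourier coefficients.) The list also has a built-in symmetry: the Fricke involution $(\BM0&-1\\1&0\EM)$, via $\eta^{\natural}|_{\frac12}(\BM0&-1\\1&0\EM)=1^{\frac78}\eta^{\natural}$, $\eta^{\flat}|_{\frac12}(\BM0&-1\\1&0\EM)=2\cdot1^{\frac78}\eta^{\sharp}$, $\eta^{\sharp}|_{\frac12}(\BM0&-1\\1&0\EM)=\frac{1^{\frac78}}2\eta^{\flat}$, interchanges $\eta^{\natural\ang2}\leftrightarrow\eta^{\natural\ang{1/2}}$ and $\eta^{\sharp\ang2}\leftrightarrow\eta^{\flat\ang{1/2}}$ up to constants, hence $[d,\bullet]\leftrightarrow[u,\bullet]$ and $[o,\bullet]\leftrightarrow[o,\bullet]$ up to a constant and a shift of parameter, and $\ang2\leftrightarrow\ang{1/2}$; recording these transformation formulas once carries the right column onto the left column, so one is left only with the left column and the single identity $\sqrt{[o,0][o,\frac32]}=[u,0]^{\ang{\frac12}}$.

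The main obstacle is purely bookkeeping: tracking the phases $1^{k/8}$, the normalizing prefactors $\frac1{1+1^{k/8}}$ in the definition of $[u,\frac k4]$ (with the separate convention for $[u,1]$), the phase changes produced by the twist $(\BM1&1\\0&1\EM)$ when passing between $[u,\bullet]$ and $[o,\bullet]$, and the constants in the Fricke formulas, so that the phases appearing in $PQ$ match those in $(R^2)^{\ang h}$ exactly. No individual computation is hard, but this is where an error is most likely to slip in, and the natural sanity check throughout is consistency with the already-proven Lemma \ref{oud}.
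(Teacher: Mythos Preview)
Your strategy---square the claimed identity, expand both sides as explicit $\C$-linear combinations of $\eta^{\natural\ang\bullet},\eta^{\flat\ang\bullet},\eta^{\sharp\ang\bullet}$, and match using the doubling relations of Lemma~\ref{rel22} (and its proof) together with Lemma~\ref{oud}---is exactly what the paper does. The paper's proof only writes out the first identity: it rewrites $[o,0][o,\tfrac12]$ as $\tfrac12\big(\eta^{\natural\ang2}+2\eta^{\sharp\ang2}+\eta^{\natural}\big)$ and $([o,0]^{\ang{1/2}})^2$ as $\tfrac12\big(\eta^{\natural\ang2}+\eta^{\natural}\big)^{\ang{1/2}}=\tfrac12\big(\eta^{\natural}+\eta^{\natural\ang{1/2}}\big)$, and these agree via $\eta^{\natural\ang{1/2}}=\eta^{\natural\ang2}+2\eta^{\sharp\ang2}$. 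Your Fricke-involution observation, pairing the two columns, is a genuine shortcut the paper does not make explicit.

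One small correction: it is not true that every $[o,\bullet],[u,\bullet]$ has constant term $1$. From $[u,0][u,1]=\eta^{\sharp\ang2}$ (Lemma~\ref{oud}) and $[u,0]=1+\cdots$ one sees $[u,1]=q^{1/4}+\cdots$, and likewise $[o,1]$ begins at $q^{1/4}$. This does not break your argument, because the paper's square-root convention is for series in $x^r+\C[[x]]x^{r+1}$ with leading coefficient $1$ at any $r$, not just $r=0$; so the branch is still forced once you check that each product $PQ$ and each $R^{\ang h}$ has leading coefficient $1$ in its lowest-order term. Just replace ``$1+\cdots$'' in your first paragraph by ``leading coefficient $1$'' and the reduction to the squared identities stands.
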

\begin{proof}On the first identity, LHS is
\[\z\sqrt{\frac1{2(1+i)}\big([\eta,\frac12]+(1+i)\eta^{\natural}+i[\eta,\frac32]\big)}=\sqrt{\frac12(\eta^{\natural\ang2}+2\eta^{\sharp\ang2}+\eta^{\natural})},\]
and RHS is
\[\z \frac12\sqrt{[\eta,\frac12]+2\eta^{\natural}+[\eta,\frac32]}^{\ang{\frac12}}=\sqrt{\frac12(\eta^{\natural\ang2}+\eta^{\natural})}^{\ang{\frac12}}.\]
\end{proof}

We also see
\[\z \sqrt{[u,0][u,\frac12]}=\sqrt{[o,0][o,\frac32]}\big|(\BM 1&1\\0&1\EM)=\frac12\Big(\sqrt{[\eta,\frac14]}+\sqrt{[\eta,\frac54]}\Big)\]
\[\z \sqrt{[u,1][u,\frac32]}=\sqrt{[o,1][o,\frac12]}\big|(\BM 1&1\\0&1\EM)=\frac{1^{\frac78}}2\Big(\sqrt{[\eta,\frac14]}-\sqrt{[\eta,\frac54]}\Big)\]
\[\z \sqrt{[d,0][d,\frac32]}=\sqrt{[d,\frac74][d,\frac14]}\big|(\BM 1&1\\0&1\EM)=\frac1{1+i}\sqrt{[\eta,\frac14]}+\frac1{1-i}\sqrt{[\eta,\frac54]}\]

\newpage

\begin{Prop}
\[\z \theta_{\chi_{64}}^{\ang{\frac1{128}}}=\sqrt[16]{\eta^{\flat}\eta^{\sharp}}\sqrt[8]{[\eta,\frac32]}\sqrt[4]{[\eta,\frac74]}\cdot[o,\frac12]^{\ang{\frac14}}|(\BM1&3\\0&1\\\EM)\]
\[\z \theta_{\chi_{12}\chi_{64}}^{\ang{\frac1{392}}}=\sqrt[16]{\eta^{\natural}\eta}\sqrt[8]{[\eta,\frac32]}\sqrt[4]{[\eta,\frac14]}\cdot[o,\frac12]^{\ang{\frac14}}|(\BM1&9\\0&1\\\EM)\]\end{Prop}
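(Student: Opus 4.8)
The plan is to imitate the proof scheme of Propositions \ref{theta8}, \ref{theta16} and \ref{theta32}: prove a ``base'' product identity for an honest $q$-series, then apply a single translation $(\BM 1&c\\0&1\EM)$ to graft on the Dirichlet character. The new feature at level $64$ is that $\chi_{64}$ is not quadratic, so translating $\VS{n\in2\M+1}q^{n^2/128}$ alone cannot give $\theta_{\chi_{64}}^{\ang{1/128}}$ --- and that is exactly what forces the extra factor $[o,\frac12]^{\ang{\frac14}}$. From $\chi_{2^{n+1}}^2=\chi_{2^n}$ one has $\chi_{64}(m)^2=\chi_{32}(m)=1^{3(m^2-1)/64}$ for odd $m$, hence $\chi_{64}(m)=\epsilon(m)\,1^{3(m^2-1)/128}$ with $\epsilon(m)=\pm1$; evaluating on the generator $5$ (and $\chi_{64}(-1)=1$) identifies $\epsilon$ as the function that is $+1$ on $m\equiv\pm1,\pm3\pmod{16}$ and $-1$ on $m\equiv\pm5,\pm7\pmod{16}$, and likewise $(\chi_{12}\chi_{64})(m)=\chi_{12}(m)\epsilon(m)\,1^{3(m^2-1)/128}$.

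First I would establish the two base identities
\[\VS{n\in2\M+1}\epsilon(n)q^{n^2/128}=\sqrt[16]{\eta^{\natural}\eta^{\sharp}}\,\sqrt[8]{\eta^{\natural\ang{\frac12}}}\,\sqrt[4]{\eta^{\flat\ang{\frac14}}}\cdot[o,\frac12]^{\ang{\frac14}},\]
\[\VS{n\in2\M+1}\chi_{12}(n)\epsilon(n)q^{n^2/384}=\sqrt[16]{\eta^{\flat}\eta}\,\sqrt[8]{\eta^{\flat\ang{\frac12}}}\,\sqrt[4]{\eta^{\natural\ang{\frac14}}}\cdot[o,\frac12]^{\ang{\frac14}}.\]
On each right-hand side the first three factors are what iterating Jacobi's triple product produces, exactly as for the bracketed products of Proposition \ref{theta32}; the factor $[o,\frac12]^{\ang{\frac14}}$ carries the sign $\epsilon$ and is obtained from Lemma \ref{ud}: the relation $\sqrt{[d,\frac74][d,\frac14]}=[o,\frac12]^{\ang{\frac12}}$ lets one apply one more $\ang{\frac12}$-rescaling (equivalently, one more square root), and Lemma \ref{oud} puts the result back inside the $\eta$-family, after which expanding the product of the four factors and matching $q$-exponents and signs modulo powers of $2$ reproduces the $\epsilon$-twisted theta series.

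Next I would apply $(\BM 1&3\\0&1\EM)$ to the first base identity. On the left, $\epsilon(n)q^{n^2/128}\mapsto\epsilon(n)1^{3n^2/128}q^{n^2/128}=1^{3/128}\chi_{64}(n)q^{n^2/128}$ by the formula above, so the left side becomes $1^{3/128}\theta_{\chi_{64}}^{\ang{1/128}}$. On the right, $\eta^{\natural}|(\BM 1&3\\0&1\EM)=\eta^{\flat}$ and $\eta^{\sharp}|(\BM 1&3\\0&1\EM)=1^{3/8}\eta^{\sharp}$ give $\sqrt[16]{\eta^{\natural}\eta^{\sharp}}\mapsto1^{3/128}\sqrt[16]{\eta^{\flat}\eta^{\sharp}}$, while the other factors translate without a scalar into the bracketed forms of the statement --- $\sqrt[8]{\eta^{\natural\ang{\frac12}}}\mapsto\sqrt[8]{[\eta,\frac32]}$, $\sqrt[4]{\eta^{\flat\ang{\frac14}}}\mapsto\sqrt[4]{[\eta,\frac74]}$ (since $[\eta,\frac74]=\eta^{\flat\ang{\frac14}}|(\BM 1&3\\0&1\EM)$), $[o,\frac12]^{\ang{\frac14}}\mapsto[o,\frac12]^{\ang{\frac14}}|(\BM 1&3\\0&1\EM)$ --- each of these being $1+\cdots$ with a known $2^k$-th power. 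The two $1^{3/128}$ cancel and the first asserted identity drops out. The $\chi_{12}\chi_{64}$ identity follows in the same way from the second base identity under $(\BM 1&9\\0&1\EM)$ (using $\eta^{\flat}|(\BM 1&9\\0&1\EM)=\eta^{\natural}$, $\eta|(\BM 1&9\\0&1\EM)=1^{3/8}\eta$, $\eta^{\flat\ang{\frac12}}|(\BM 1&9\\0&1\EM)=[\eta,\frac32]$, $\eta^{\natural\ang{\frac14}}|(\BM 1&9\\0&1\EM)=[\eta,\frac14]$), the cancelling constant being $1^{9/384}=1^{3/128}$, which fixes the matching rescaling exponent as $\frac1{384}$.

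The main obstacle is the second step --- establishing the base identities with the precise $[o,\frac12]^{\ang{\frac14}}$ factor. The $q$-expansion bookkeeping, matching the half-integral exponents and in particular the sign $\epsilon(n)$ (which depends on $n\bmod16$, not on $n^2$ alone), is elementary but delicate, and one must check that the repeated square roots and $\ang{\frac12}$-rescalings of Lemma \ref{ud} introduce no stray root of unity, so that the product is genuinely monic of leading order $q^{1/128}$ and coincides with $\VS{n\in2\M+1}\epsilon(n)q^{n^2/128}$ coefficient by coefficient. With the base identities in hand, the translation step is the routine argument of the preceding propositions.
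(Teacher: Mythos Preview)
Your reduction to a translated ``base'' identity is sound, and the translation calculus you record --- $\eta^{\natural}\mapsto\eta^{\flat}$, $\eta^{\natural\ang{\frac12}}\mapsto[\eta,\frac32]$, $\eta^{\flat\ang{\frac14}}\mapsto[\eta,\frac74]$ under $(\BM1&3\\0&1\EM)$, with the matching $1^{3/128}$ constants --- is correct. Where you and the paper diverge is in how to establish the base identity itself.

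You propose to prove $\VS{n\in2\M+1}\epsilon(n)q^{n^2/128}=\sqrt[16]{\eta^{\natural}\eta^{\sharp}}\sqrt[8]{\eta^{\natural\ang{\frac12}}}\sqrt[4]{\eta^{\flat\ang{\frac14}}}\cdot[o,\frac12]^{\ang{\frac14}}$ in one stroke from Jacobi's triple product and Lemma~\ref{ud}, conceding that matching the sign $\epsilon(n)$ (which depends on $n\bmod16$) is ``delicate''. The paper avoids exactly this delicacy by bootstrapping from the previous level. From Proposition~\ref{theta16} and its complex conjugate one has
\[\theta_{\chi_{16}}^{\ang{\frac1{32}}}=\sqrt[4]{\eta^{\flat}\eta^{\sharp}}\sqrt{[\eta,\z\frac32]},\6\theta_{\overline{\chi_{16}}}^{\ang{\frac1{32}}}=\sqrt[4]{\eta^{\flat}\eta^{\sharp}}\sqrt{[\eta,\z\frac12]}\,;\]
taking half-sum and half-difference separates the odd theta into the two residue-class pieces $\VS{n\in16\M+\{1,15\}}-\VS{n\in16\M+\{7,9\}}$ and $\VS{n\in16\M+\{5,11\}}-\VS{n\in16\M+\{3,13\}}$, each already equal to $\frac12\sqrt[4]{\eta^{\flat}\eta^{\sharp}}\big(\sqrt{[\eta,\frac12]}\pm\sqrt{[\eta,\frac32]}\big)$. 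Rescaling each piece by $\ang{\frac14}$ and translating by $(\BM1&11\\0&1\EM)$ turns the left sides into the $n\equiv\pm1,\pm7$ and $n\equiv\pm3,\pm5\pmod8$ parts of $\theta_{\chi_{64}}^{\ang{1/128}}$; adding them recovers the full theta, and on the right the factor $[o,\frac12]^{\ang{\frac14}}|(\BM1&3\\0&1\EM)$ emerges as the concrete combination $\frac{1+i}2\sqrt{[\eta,\frac{15}8]}+\frac{1-i}2\sqrt{[\eta,\frac78]}$ of the rescaled $[\eta,\frac c8]=\eta^{\natural\ang{\frac18}}|(\BM1&c\\0&1\EM)$, rather than having to be identified a~priori via Lemma~\ref{ud}. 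The $\chi_{12}\chi_{64}$ case is handled the same way starting from the second line of Proposition~\ref{theta16}.

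In short: your approach would work, but the paper's real/imaginary-part split of the $\chi_{16}$ identity is what supplies the missing mechanism for producing your sign $\epsilon$ and the $[o,\frac12]$-factor simultaneously, without any ad~hoc $q$-series bookkeeping.
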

\begin{proof}Compute
\[\VS{n\in16\M+\{1,15\}}q^{\frac{n^2}{32}}-\VS{n\in16\M+\{7,9\}}q^{\frac{n^2}{32}}=\frac12\sqrt[4]{\eta^{\flat}\eta^{\sharp}}\big(\sqrt{[\eta,\frac12]}+\sqrt{[\eta,\frac32]}\big)\]
\[-\VS{n\in16\M+\{3,13\}}q^{\frac{n^2}{32}}+\VS{n\in16\M+\{5,11\}}q^{\frac{n^2}{32}}=\frac i2\sqrt[4]{\eta^{\flat}\eta^{\sharp}}\big(\sqrt{[\eta,\frac12]}-\sqrt{[\eta,\frac32]}\big)\]
Acting $\ang{\frac14}$ and $(\BM1&11\\0&1\\\EM)$ yields
\[\VS{n\in8\M+\{1,7\}}\chi_{64}(n)q^{\frac{n^2}{128}}=\frac12\sqrt[16]{\eta^{\flat}\eta^{\sharp}}\sqrt[8]{[\eta,\frac32]}\sqrt[4]{[\eta,\frac74]}\big(\sqrt{[\eta,\frac{15}8]}+\sqrt{[\eta,\frac78]}\big)\]
\[\VS{n\in8\M+\{3,5\}}\chi_{64}(n)q^{\frac{n^2}{128}}=\frac i2\sqrt[16]{\eta^{\flat}\eta^{\sharp}}\sqrt[8]{[\eta,\frac32]}\sqrt[4]{[\eta,\frac74]}\big(\sqrt{[\eta,\frac{15}8]}-\sqrt{[\eta,\frac78]}\big)\]
where $[\eta,\frac c8]=\eta^{\natural\ang{\frac18}}|(\BM1& c\\0&1\\\EM)$ for odd $c$.
\end{proof}

Acting $\sigma_{16}:1^{\frac1{16}}\mapsto1^{\frac3{16}}$ on the last identity yields
\[\z \theta_{\chi_{64}^3}^{\ang{\frac1{128}}}=\sqrt[16]{\eta^{\flat}\eta^{\sharp}}\sqrt[8]{[\eta,\frac12]}\sqrt[4]{[\eta,\frac54]}\cdot[o,\frac12]^{\ang{\frac14}}|(\BM1&9\\0&1\\\EM)\]

Similarly
\[\z \theta_{\chi_{64}^5}^{\ang{\frac1{128}}}=\sqrt[16]{\eta^{\flat}\eta^{\sharp}}\sqrt[8]{[\eta,\frac32]}\sqrt[4]{[\eta,\frac34]}\cdot[o,\frac12]^{\ang{\frac14}}|(\BM1&-1\\0&1\\\EM)\]
\[\z \theta_{\chi_{64}^7}^{\ang{\frac1{128}}}=\sqrt[16]{\eta^{\flat}\eta^{\sharp}}\sqrt[8]{[\eta,\frac12]}\sqrt[4]{[\eta,\frac14]}\cdot[o,\frac12]^{\ang{\frac14}}|(\BM1&5\\0&1\\\EM)\]\\

I conjecture
\[\z[o,0]=\VP{n\in\N}(1+q^{2n})\sqrt{\dfrac{1-q^{2n}}{(1-q^{\frac n2})^{\chi_8(n)}}}\]
\[\z[o,1]=q^{\frac14}\VP{n\in\N}(1+q^{2n})\sqrt{(1-q^{2n})(1-q^{\frac n2})^{\chi_8(n)}}\]
hence
\[{\z [o,\frac12]}=\sqrt{\eta^{\natural\ang2}}\frac{[o,0]^{\ang{\frac12}}}{[o,0]^{\ang{\frac12}}\big|(\BM 1&2\\0&1\EM)}=\VP{n\in\N}\sqrt{\dfrac{1-(-q)^n}{1+(-q)^n}\dfrac{1+\chi_8(n)q^{\frac n4}}{1-\chi_8(n)q^{\frac n4}}}\]
\[{\z [o,\frac32]}=\sqrt{\eta^{\natural\ang2}}\frac{[o,0]^{\ang{\frac12}}\big|(\BM 1&2\\0&1\EM)}{[o,0]^{\ang{\frac12}}}=\VP{n\in\N}\sqrt{\dfrac{1-(-q)^n}{1+(-q)^n}\dfrac{1-\chi_8(n)q^{\frac n4}}{1+\chi_8(n)q^{\frac n4}}}\]

\[\z [o,\frac14][o,\frac34]=\eta^{\flat}\VP{n\in\N}\dfrac{1+\chi_8(n)(1^{\frac78}q^{\frac14})^n}{1-\chi_8(n)(1^{\frac78}q^{\frac14})^n}\cdot\dfrac{1+\chi_8(n)(1^{\frac18}q^{\frac14})^n}{1-\chi_8(n)(1^{\frac18}q^{\frac14})^n}\]

\newpage

\subsection{Formal weights a,A}

For a formal symbol $x$ we may make a group
\[(\Q/\Z)x=\{ax \,|\, a\in\Q/\Z\}.\]

For $h=\frac{h_2}{h_1}$ such that $h_1,h_2\in\N$ and $\gcd(h_1,h_2)=1$, it is convenient to make new operators of weight in $\Q+(\Q/\Z)\ang h$ by
\[\z \sqrt[N]{\eta^{\ang h}}^{24}\in\cS(\Gamma(Nh_1h_2))_{\frac{24}N+\frac{24}N\ang h}.\]\\

Note $\eta^{\ang{\frac12}3}\in\cS(2)_{(\frac32^*,\chi_8c_4b_{16})}$.

Write $\ia=\ang{\frac12}$ and $\iA=\ang2$. Weight $\frac{\ia}2$ and $\frac{\iA}2$ operators on $\Gamma(32)$ defined above are exanpded to $\Gamma_0(2)\cap\Gamma^0(2)$ by
\[\sqrt{\eta^{\ang{\frac12}3}}\in\cS(2)_{(\frac34^*+\frac\ia2,\overline{{\chi_{16}}}c_8b_{32})}\6\sqrt{\eta^{\ang23}}\in\cS(2)_{(\frac34^*+\frac\iA2,\chi_{16}c_{32}b_8)}\]
where  $(\frac k{2^n}^*+a,\chi)=(\frac k{2^n}+a,\xi_8^{k/2^{n-1}}\chi)$ formally.

Then $\sqrt{\eta^{\ang{\frac12}}}=\frac{\eta^{\ang{\frac12}2}}{\sqrt{\eta^{\ang{\frac12}3}}}\in\cS(6)_{(\frac14^*+\frac\ia2,\chi_{16}c_{24}b_{96})}$ and $\sqrt{\eta^{\ang2}}\in\cS(6)_{(\frac14^*+\frac\iA2,\overline{\chi_{16}}c_{96}b_{24})}$.\\

\begin{Lem}
\[\sqrt{\eta^{\flat\ang{\frac12}}}\in\cM(2,4)_{(\frac14^*+\frac\ia2,\overline{\chi_{16}}c_8)}\z\6\sqrt{\eta^{\sharp\ang2}}\in\cM(4,2)_{(\frac14^*+\frac\iA2,\chi_{16}b_8)}\]
\[\sqrt{\eta^{\natural\ang{\frac12}}}\in\cM(2,4)_{(\frac14^*+\frac\ia2,\chi_{16})}\6\sqrt{\eta^{\natural\ang2}}\in\cM(4,2)_{(\frac14^*+\frac\iA2,\overline{\chi_{16}})}\]
\end{Lem}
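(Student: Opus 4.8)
The plan is to obtain each of the four forms by rescaling the weight $\frac14$ forms $\sqrt{\eta^{\flat}},\sqrt{\eta^{\sharp}},\sqrt{\eta^{\natural}}$ on $\Gamma_0(2)\cap\Gamma^0(2)$ produced by the earlier Lemma ($\sqrt{\eta^{\flat}}\in\cM(2)_{(\frac14^*,\chi_8c_{16})}$, $\sqrt{\eta^{\sharp}}\in\cM(2)_{(\frac14^*,\chi_8b_{16})}$, $\sqrt{\eta^{\natural}}\in\cM(2)_{\frac14^*}$). First I would record the formal identities
\[\sqrt{\eta^{\flat\ang{\frac12}}}=(\sqrt{\eta^{\flat}})^{\ang{\frac12}}\6\sqrt{\eta^{\natural\ang{\frac12}}}=(\sqrt{\eta^{\natural}})^{\ang{\frac12}}\6\sqrt{\eta^{\sharp\ang2}}=(\sqrt{\eta^{\sharp}})^{\ang2}\6\sqrt{\eta^{\natural\ang2}}=(\sqrt{\eta^{\natural}})^{\ang2},\]
each holding because rescaling commutes with the principal root: both sides are power series with constant term $1$ whose squares agree (with $(\eta^{\flat})^{\ang{\frac12}}$, $(\eta^{\natural})^{\ang{\frac12}}$, $(\eta^{\sharp})^{\ang2}$, $(\eta^{\natural})^{\ang2}$ respectively).

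Next I would run the transformation law. For $\gamma=(\BM a&b\\c&d\EM)\in\Gamma_0(2)\cap\Gamma^0(4)$ one has $4|b$, so the half-integer rescaling rule of \S2.6 with $h=2$ gives
\[(\sqrt{\eta^{\flat}})^{\ang{\frac12}}|_{\frac14}\gamma=(\z\frac2d)\big(\sqrt{\eta^{\flat}}|_{\frac14}(\BM a&b/2\\2c&d\EM)\big)^{\ang{\frac12}},\]
and the auxiliary matrix $(\BM a&b/2\\2c&d\EM)$ again lies in $\Gamma_0(2)\cap\Gamma^0(2)$ (its lower-left entry $2c$ is even, and $b/2$ is even since $4|b$), so the inner $|_{\frac14}$ factor is the character of $\sqrt{\eta^{\flat}}$ at that matrix. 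The dual rule ($h=2$, $2|c$) on $\Gamma_0(4)\cap\Gamma^0(2)$, where $(\BM a&2b\\c/2&d\EM)\in\Gamma_0(2)\cap\Gamma^0(2)$, handles the two $\ang2$ cases; $\sqrt{\eta^{\natural}}$ enters with trivial character, $\sqrt{\eta^{\flat}},\sqrt{\eta^{\sharp}}$ with the characters from the earlier Lemma. Holomorphy at the cusps is inherited from these eta quotients.

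What is left, and where the care lies, is the character arithmetic. One checks that $\chi_8$ and $\xi_8$ (hence $\sqrt{\xi_8}$) are functions of $d$ alone and so are unchanged on the auxiliary matrix, whereas $c_{16}(\BM a&b/2\\2c&d\EM)=1^{-cd/8}=c_8(\gamma)$ and $b_{16}(\BM a&2b\\c/2&d\EM)=1^{bd/8}=b_8(\gamma)$; combined with $(\z\frac2d)=\chi_8(d)$, $\chi_8^2={\tt 1}$, $\chi_{16}^2=\chi_8$ and $\chi_{16}^3=\overline{\chi_{16}}$ from \S1.1, the full transformation factor collapses to $\sqrt{\xi_8}\,c_8$ for $\sqrt{\eta^{\flat\ang{\frac12}}}$, to $\sqrt{\xi_8}\,\chi_8$ for $\sqrt{\eta^{\natural\ang{\frac12}}}$, to $\sqrt{\xi_8}\,b_8$ for $\sqrt{\eta^{\sharp\ang2}}$, and to $\sqrt{\xi_8}\,\chi_8$ for $\sqrt{\eta^{\natural\ang2}}$. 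Reading $\sqrt{\xi_8}$ as the character attached to $\frac14^*$ and attaching $\chi_{16}$ to the formal symbol $\frac\ia2$ and $\overline{\chi_{16}}$ to $\frac\iA2$ (the assignment forced by the already-established $\sqrt{\eta^{\ang{\frac12}3}}$ and $\sqrt{\eta^{\ang23}}$), these are exactly the four labels $(\frac14^*+\frac\ia2,\overline{\chi_{16}}c_8)$, $(\frac14^*+\frac\ia2,\chi_{16})$, $(\frac14^*+\frac\iA2,\chi_{16}b_8)$, $(\frac14^*+\frac\iA2,\overline{\chi_{16}})$. The main obstacle is bookkeeping, not analysis: tracking which power of $\chi_{16}$ and which of $c_{16},b_{16},c_8,b_8$ survives, and noting that the collapsed factor is genuinely multiplicative only after one restricts from $\Gamma^0(4)$ (resp.\ $\Gamma^0(2)$) down to $\Gamma_0(2)\cap\Gamma^0(4)$ (resp.\ $\Gamma_0(4)\cap\Gamma^0(2)$), which is precisely why the statement is placed on $\cM(2,4)$ and $\cM(4,2)$.
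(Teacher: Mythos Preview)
Your route is genuinely different from the paper's. The paper does not rescale $\sqrt{\eta^{\flat}}$ at all; it writes the single eta-quotient identity
\[
\sqrt{\eta^{\flat\ang{\frac12}}}=\dfrac{\sqrt{\eta^{\ang{\frac12}3}}}{\eta^{\sharp\ang{\frac14}}}
\]
and reads off the label by subtracting: the numerator is exactly the \emph{defining} form for the formal weight $\frac\ia2$ (it sits in $\cS(2)_{(\frac34^*+\frac\ia2,\overline{\chi_{16}}c_8b_{32})}$ by decree), and the denominator is a half-integer form whose rescaling is covered by \S2.6. The other three statements follow symmetrically. This is a one-line argument precisely because it never leaves the defining form for $\frac\ia2$.

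Your argument has a gap at two linked places. First, the rescaling identity of \S2.6 is proved only for half-integer $\kappa$, where $J_\kappa$ is built from $(\frac cd)\sqrt{c\tau+d}$; the quarter-integer action $|_{\frac14}$ in this paper is defined through $\sqrt{\eta^3}$, and you have not shown that the same $(\frac hd)$ rule survives for that automorphy factor. Second, and more seriously, the target space carries weight $\frac14+\frac\ia2$, whose automorphy factor is \emph{defined} via $\sqrt{\eta^{\ang{\frac12}3}}$, not via $\sqrt{\eta^3}$. Your computed multiplier $\sqrt{\xi_8}\,c_8$ is relative to the wrong automorphy; the closing move of ``attaching $\chi_{16}$ to $\frac\ia2$'' is precisely the statement that the two automorphy factors differ by $\chi_{16}$, and that is what needs proof, not assertion. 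The paper's quotient trick sidesteps both issues, because dividing by the defining form for $\frac\ia2$ cancels that automorphy exactly and leaves only a genuine half-integer piece where \S2.6 applies.
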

\begin{proof}$\sqrt{\eta^{\flat\ang{\frac12}}}=\dfrac{\sqrt{\eta^{\ang{\frac12}3}}}{\eta^{\sharp\ang{\frac14}}}$.
\end{proof}

Note
\[\sqrt{\eta^{\flat\ang{\frac12}}}\big|_{\frac14+\frac{\ia+\iA}2}(\BM 1&2\\0&1\EM)=\dfrac{\sqrt{\eta^{\ang{\frac12}3}}\big|_{\frac34+\frac{\ia+\iA}2}(\BM 1&2\\0&1\EM)}{\eta^{\sharp\ang{\frac14}}\big|_{\frac12}(\BM 1&2\\0&1\EM)}=\frac{1^{\frac1{16}}\sqrt{\eta^{\ang{\frac12}3}}}{1^{\frac1{16}}\sqrt{\eta^{\flat\ang{\frac12}}\eta^{\sharp\ang{\frac12}}}}=\sqrt{\eta^{\natural\ang{\frac12}}}\]\\

\begin{Lem}
\[\sqrt[4]{\eta^{\flat}}\in\cM(2)_{(\frac18^*+\frac\ia2,\chi_{16}c_{32})}\6\sqrt[4]{\eta^{\sharp}}\in\cM(2)_{(\frac18^*+\frac\iA2,\overline{\chi_{16}}b_{32})}\]
\[\sqrt[4]{\eta^{\natural}}\in\cM(2)_{\frac18^*-\frac{\ia+\iA}2}\]
\end{Lem}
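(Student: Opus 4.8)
The plan is to write each of the three fourth-roots as an explicit quotient of forms already constructed, and then read off weight, character and cusp behaviour, just as in the preceding lemmas of this subsection.

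First I would record the relevant eta-quotient identities. By definition $\eta^{\flat}=\eta^{\ang{\frac12}2}/\eta$ and $\eta^{\sharp}=\eta^{\ang22}/\eta$, so $\eta^{\ang{\frac12}6}=\eta^{\flat3}\eta^3$ and $\eta^{\ang26}=\eta^{\sharp3}\eta^3$; together with $\eta^{\flat}\eta^{\sharp}\eta^{\natural}=\eta^3$ (the relation $f^{\natural}f^{\flat}f^{\sharp}=f^3$ of the $\natural$-operator subsection), taking fourth roots and regrouping gives
\[\sqrt[4]{\eta^{\flat}}=\frac{\eta^{\flat}\sqrt[4]{\eta^3}}{\sqrt{\eta^{\ang{\frac12}3}}},\6\sqrt[4]{\eta^{\sharp}}=\frac{\eta^{\sharp}\sqrt[4]{\eta^3}}{\sqrt{\eta^{\ang23}}},\6\sqrt[4]{\eta^{\flat}}\,\sqrt[4]{\eta^{\sharp}}\,\sqrt[4]{\eta^{\natural}}=\sqrt[4]{\eta^3}.\]
The first two are checked by raising both sides to the fourth power and comparing leading Fourier coefficients (all equal to $1$); the third is immediate from $\eta^{\flat}\eta^{\sharp}\eta^{\natural}=\eta^3$, and it fixes $\sqrt[4]{\eta^{\natural}}$ once the first two are in hand.

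The memberships are then bookkeeping on the known data $\eta^{\flat}\in\cM(1,2)_{(\frac12^*,c_8)}$ and $\eta^{\sharp}\in\cM(2,1)_{(\frac12^*,b_8)}$ (Proposition \ref{lev2}), $\sqrt[4]{\eta^3}\in\cS(2)_{(\frac38^*,c_{32}b_{32})}$, $\sqrt{\eta^{\ang{\frac12}3}}\in\cS(2)_{(\frac34^*+\frac\ia2,\overline{\chi_{16}}c_8b_{32})}$ and $\sqrt{\eta^{\ang23}}\in\cS(2)_{(\frac34^*+\frac\iA2,\chi_{16}c_{32}b_8)}$. For $\sqrt[4]{\eta^{\flat}}$ the genuine weight is $\frac12+\frac38-\frac34=\frac18$ with $\xi_8$-exponent $1+\frac34-\frac32=\frac14$, i.e. $\frac18^*$; the formal part is $-\frac\ia2\equiv\frac\ia2$ because it lies in $(\Q/\Z)\ia$; and the character is $c_8\cdot c_{32}b_{32}\cdot(\overline{\chi_{16}}c_8b_{32})^{-1}=\chi_{16}c_{32}$. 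The computation for $\sqrt[4]{\eta^{\sharp}}$ is identical with $(\ia,c_8,\overline{\chi_{16}})$ replaced by $(\iA,b_8,\chi_{16})$, giving character $\overline{\chi_{16}}b_{32}$. For $\sqrt[4]{\eta^{\natural}}=\sqrt[4]{\eta^3}/(\sqrt[4]{\eta^{\flat}}\sqrt[4]{\eta^{\sharp}})$ the $\chi_{16}$-factors and the formal parts cancel in pairs, leaving weight $\frac38-\frac18-\frac18=\frac18$ with $\xi_8$-exponent $\frac14$, trivial character, and formal part $-\frac{\ia+\iA}2$ — exactly $\cM(2)_{\frac18^*-\frac{\ia+\iA}2}$.

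Holomorphy and non-vanishing on $\cH$ are automatic since $\eta$ is zero-free there, and holomorphy at the cusps is handled as in the eta-quotient subsection: $\sqrt[4]{\eta^{\flat}}$, $\sqrt[4]{\eta^{\sharp}}$, $\sqrt[4]{\eta^{\natural}}$ are rational-exponent eta-quotients whose order at any cusp is one quarter of that of $\eta^{\flat}$, $\eta^{\sharp}$, $\eta^{\natural}$ respectively, hence $\ge0$, since $\eta^{\flat},\eta^{\sharp}\in\cM$ and $\eta^{\natural}\in\cM(2)_{\frac12^*}$ are holomorphic everywhere. The only step that really needs care is this bookkeeping of the starred weights and the reduction of the formal $\ia$- and $\iA$-exponents modulo $\Z$; spelling out the three quotient identities first is what makes it routine.
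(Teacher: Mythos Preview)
Your argument is correct and is essentially the paper's own proof: the paper first sets $\sqrt{\eta^{\sharp\ang{\frac12}}}=\sqrt{\eta^{\ang{\frac12}3}}/\eta^{\flat}$ and then writes $\sqrt[4]{\eta^{\flat}}=\sqrt[4]{\eta^3}/\sqrt{\eta^{\sharp\ang{\frac12}}}$, which is exactly your quotient $\eta^{\flat}\sqrt[4]{\eta^3}/\sqrt{\eta^{\ang{\frac12}3}}$ with the intermediate step made explicit. Your weight/character bookkeeping (including the reduction $-\tfrac\ia2\equiv\tfrac\ia2$ in $(\Q/\Z)\ia$) and your handling of $\sqrt[4]{\eta^{\natural}}$ via $\eta^{\flat}\eta^{\sharp}\eta^{\natural}=\eta^3$ are fine and match what the paper leaves implicit.
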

\begin{proof}First $\sqrt{\eta^{\sharp\ang{\frac12}}}=\dfrac{\sqrt{\eta^{\ang{\frac12}3}}}{\eta^{\flat}}\in\cM(2)_{(\frac14^*+\frac\ia2,\overline{\chi_{16}}b_{32})}$ and\\
$\sqrt[4]{\eta^{\flat}}=\dfrac{\sqrt[4]{\eta^3}}{\sqrt{\eta^{\sharp\ang{\frac12}}}}\in\cM(2)_{(\frac18^*+\frac\ia2,\chi_{16}c_{32})}$.
\end{proof}

\newpage

\begin{Lem}
\[\z \sqrt{[\eta,\frac12]}\in\cM(4)_{(\frac14^*+\frac{\ia+\iA}2,\overline{\chi_{16}})}\6\sqrt{[\eta,\frac32]}\in\cM(4)_{(\frac14^*+\frac{\ia+\iA}2,\chi_{16})}\]\end{Lem}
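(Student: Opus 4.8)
The plan is to deduce both assertions from the already-established fact $\sqrt{\eta^{\natural\ang{\frac12}}}\in\cM(2,4)_{(\frac14^*+\frac\ia2,\chi_{16})}$ by translating and applying the base-change Proposition (the one carrying $f\mapsto f|\alpha$ from $\cM(\varGamma)_{(\kappa,\chi)}$ into $\cM(\varGamma\TL\alpha)_{(\kappa,\chi\TL\alpha)}$). By definition $[\eta,\frac c2]=\eta^{\natural\ang{\frac12}}|(\BM1&c\\0&1\EM)=\eta^{\natural\ang{\frac12}}(\tau+c)$ as power series, and the translate $\tau\mapsto\tau+c$ fixes the constant term $1$, so the normalised square roots satisfy $\sqrt{[\eta,\frac c2]}(\tau)=\sqrt{\eta^{\natural\ang{\frac12}}}(\tau+c)$ for $c=\pm1$. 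Moreover the two claims are exchanged by $\sigma_4$: since $\eta^{\natural}=\theta^{\ang{\frac12}}$ and $\eta^{\sharp}$ have rational $q$-coefficients, $([\eta,\frac12])^{\sigma_4}=\eta^{\natural\ang2}-2i\eta^{\sharp\ang2}=[\eta,\frac32]$, so $(\sqrt{[\eta,\frac12]})^{\sigma_4}=\sqrt{[\eta,\frac32]}$ while $\sigma_4$ carries $\cM(4)_{(\frac14^*+\frac{\ia+\iA}2,\overline{\chi_{16}})}$ onto $\cM(4)_{(\frac14^*+\frac{\ia+\iA}2,\chi_{16})}$. It therefore suffices to treat $\sqrt{[\eta,\frac32]}$.

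The group $\Gamma_0(4)\cap\Gamma^0(4)$ is normal in ${\rm SL}_2(\Z)$ and contained in $\Gamma_0(2)\cap\Gamma^0(4)$, so the base-change Proposition applies with this $\varGamma$ and $\alpha=(\BM1&-1\\0&1\EM)$: restricting $\sqrt{\eta^{\natural\ang{\frac12}}}$ and slashing yields
\[\sqrt{\eta^{\natural\ang{\frac12}}}\,|_{\frac14^*+\frac\ia2}(\BM1&-1\\0&1\EM)\ \in\ \cM(4)_{\left((\frac14^*+\frac\ia2)\TL(\BM1&-1\\0&1\EM),\ \chi_{16}\TL(\BM1&-1\\0&1\EM)\right)},\]
holomorphic at every cusp, where $\chi_{16}\TL(\BM1&-1\\0&1\EM)$ sends $(\BM a&b\\c&d\EM)$ to $\chi_{16}(c+d)$. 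This form differs from $\sqrt{[\eta,\frac32]}$ only by the factor $\eta^{\ang{\frac12}}(\tau)/\eta^{\ang{\frac12}}(\tau-1)$ (and a root of unity) — the discrepancy between the weight-$0$ translate $\eta^{\natural\ang{\frac12}}(\tau-1)$ used above and the weight-$(\frac14^*+\frac\ia2)$ slash; the displayed product expansion of $[\eta,\frac32]$ reconfirms holomorphy and non-vanishing at $i\infty$. So it remains only to match the automorphy factor and character of $\sqrt{[\eta,\frac32]}$ with $\frac14^*+\frac{\ia+\iA}2$ and $\chi_{16}$.

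This identification is the computational heart of the argument. By Euler's identities (those underlying Propositions \ref{theta8} and \ref{theta16}), $\eta^{\ang{\frac12}}(\tau-1)=1^{-\frac1{48}}\,\eta^3/(\eta^{\ang{\frac12}}\eta^{\ang2})$, so the correction factor is $1^{\frac1{48}}(\eta^{\ang{\frac12}})^2\eta^{\ang2}/\eta^3=1^{\frac1{48}}\eta^{\flat}\eta^{\ang2}/\eta^2$; this exhibits it as an explicit eta-quotient whose $\eta^{\ang{\frac12}}$-, $\eta^{\ang2}$- and $\eta$-parts recombine the translated automorphy factor $(\frac14^*+\frac\ia2)\TL(\BM1&-1\\0&1\EM)$ into the standard form $\frac14^*+\frac{\ia+\iA}2$, at the cost of one $\eta$-multiplier on $\Gamma_0(4)\cap\Gamma^0(4)$. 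Using Rademacher's formula for that multiplier together with the definitions of $b_\bullet$ and $c_\bullet$, one verifies that the multiplier is exactly $\chi_{16}(d)/\chi_{16}(c+d)$ on $(\BM a&b\\c&d\EM)\in\Gamma_0(4)\cap\Gamma^0(4)$, so the characters collapse to $\chi_{16}$. This gives $\sqrt{[\eta,\frac32]}\in\cM(4)_{(\frac14^*+\frac{\ia+\iA}2,\chi_{16})}$, and the first assertion follows by $\sigma_4$ (or symmetrically with $\alpha=(\BM1&1\\0&1\EM)$, producing $\overline{\chi_{16}}$).

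The step I expect to be the main obstacle is precisely this last bookkeeping: pinning down $\eta^{\ang{\frac12}}(\tau-1)$ as an eta-quotient with the correct root of unity in front, and checking that Rademacher's $\eta$-multiplier on $\Gamma_0(4)\cap\Gamma^0(4)$ absorbs the twist $(\BM a&b\\c&d\EM)\mapsto\chi_{16}(c+d)$ back to $\chi_{16}(d)$; everything else is formal use of the base-change Proposition. As a fallback one can instead start from Proposition \ref{theta16}, $\sqrt{[\eta,\frac32]}=\theta_{\chi_{16}}^{\ang{\frac1{32}}}/\sqrt[4]{\eta^{\flat}\eta^{\sharp}}$, and combine it with $\sqrt[4]{\eta^{\flat}}\in\cM(2)_{(\frac18^*+\frac\ia2,\chi_{16}c_{32})}$ and $\sqrt[4]{\eta^{\sharp}}\in\cM(2)_{(\frac18^*+\frac\iA2,\overline{\chi_{16}}b_{32})}$ to reduce the statement to $\theta_{\chi_{16}}^{\ang{\frac1{32}}}\in\cM(4)_{(\frac12^*+\ia+\iA,\chi_{16}c_{32}b_{32})}$; but showing that the Serre--Stark theta $\theta_{\chi_{16}}$, a priori modular only on a deep principal congruence subgroup, descends to $\Gamma_0(4)\cap\Gamma^0(4)$ once more reduces to the same multiplier computation.
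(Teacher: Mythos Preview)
Your argument is incomplete, and its route differs from the paper's. The paper actually takes your ``fallback'': it sets $f=\sqrt[4]{\eta^{\flat}\eta^{\sharp}}\sqrt{[\eta,\frac32]}=\theta_{\chi_{16}}^{\ang{\frac1{32}}}$ via Proposition~\ref{theta16}, a genuine weight-$\frac12$ form with no formal $\ia,\iA$ component, so the goal becomes $f\in\cM(4)_{(\frac12^*,\chi_{16}c_{32}b_{32})}$. But the descent from level $32$ to $\Gamma_0(4)\cap\Gamma^0(4)$ is \emph{not} done by a Rademacher multiplier computation. The paper instead uses a reflection trick: from the $q$-expansion (only exponents $n^2/32$ with $n$ odd) one reads off $f|(\BM1&4\\0&1\EM)=1^{\frac18}f$; then the identity $-\frac1\tau=-\bar\tau$ on $|\tau|=1$ gives $(f|(\BM0&-1\\1&0\EM))(\tau)=\tau^{-1/2}\,\overline{f^{\sigma_4}(\tau)}$ there, whence by the same $q$-expansion symmetry $(f|(\BM0&-1\\1&0\EM))|(\BM1&-4\\0&1\EM)=1^{\frac18}\,f|(\BM0&-1\\1&0\EM)$. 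Writing $(\BM1&0\\4&1\EM)=(\BM0&-1\\1&0\EM)(\BM1&-4\\0&1\EM)(\BM0&1\\-1&0\EM)$ then yields $f|(\BM1&0\\4&1\EM)=1^{\frac18}f$, which together with the upper-triangular relation and the known level-$32$ modularity pins down $f$ on all of $\Gamma_0(4)\cap\Gamma^0(4)$. Dividing by $\sqrt[4]{\eta^{\flat}\eta^{\sharp}}$ (whose weight and character are already recorded) gives the Lemma.

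Your primary approach via base-change has a structural problem beyond the unfinished bookkeeping you flag. The Proposition you invoke (carrying $\cM(\varGamma)_{(k,\chi)}$ to $\cM(\varGamma\TL\alpha)_{(k,\chi\TL\alpha)}$ under $f\mapsto f|_k\alpha$) is stated for integer weights, where the slash is a global ${\rm GL}_2^+(\R)$-action; for the rational/formal weights here, $|_{\frac14^*+\frac\ia2}$ is defined only through the chosen automorphy factor on the congruence group, and applying it to $(\BM1&-1\\0&1\EM)\notin\Gamma_0(2)\cap\Gamma^0(4)$ has no a~priori meaning. What you really do is substitute $\tau\mapsto\tau-1$ and compare automorphy factors of $\gamma$ and $\gamma'=(\BM1&-1\\0&1\EM)\gamma(\BM1&1\\0&1\EM)$, which is legitimate but then requires exactly the multiplier identity $\chi_{16}(c+d)\,J_{\frac14^*+\frac\ia2}(\gamma',\tau-1)=\chi_{16}(d)\,J_{\frac14^*+\frac{\ia+\iA}2}(\gamma,\tau)$ that you leave as ``the main obstacle''. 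The paper's reflection argument sidesteps this entirely: it works with a true half-integral-weight object $f$ and extracts the lower-triangular transformation from the upper-triangular one by the $|\tau|=1$ symmetry, so no eta-multiplier calculus is needed.
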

\begin{proof}Put $f=\sqrt[4]{\eta^{\flat}\eta^{\sharp}}\sqrt{[\eta,\frac32]}$. Then $f\in\cM(32)_{(\frac12^*,\chi_{16})}$ by Proposition \ref{theta16}.

In addtion $f|(\BM 1&4\\0&1\EM)=1^{\frac18}f$.

Note if $\tau\in\cH$ and $|\tau|=1$ then $-\frac1{\tau}=-\overline\tau$ and
\[\z\big(f|(\BM 0&-1\\1&0\EM)\big)(\tau)=\frac1{\sqrt{\tau}}f(-\frac1{\tau})=\frac1{\sqrt{\tau}}\overline{f^{\sigma_4}(\tau)}\]
and
\[\z \big(f|(\BM 0&-1\\1&0\EM)\big)|(\BM 1&-4\\0&1\EM)=1^{\frac18} f|(\BM 0&-1\\1&0\EM).\]

We see $(\BM 1&0\\4&1\EM)=(\BM 0&-1\\1&0\EM)(\BM 1&-4\\0&1\EM)(\BM 0&1\\-1&0\EM)$ and
\[f|(\BM 1&0\\4&1\EM)=-\big((f|(\BM 0&-1\\1&0\EM)\big)\big|(\BM 1&-4\\0&1\EM)\big)\big|(\BM 0&1\\-1&0\EM)=1^{\frac18}f\]

In particular $f\in\cM(4)_{(\frac12^*,\chi_{16}c_{32}b_{32})}$.
\end{proof}

Note \[\iE_{\chi_4\chi_{16}}=1+(1-i)\VS{n\in\N}(\chi_4\chi_{16}*{\tt 1}_\N)(n)q^n\]

The dimension formula states $\dim\cM(4)_{(1^*,\chi_{16})}=1$ and we see
\[\z \iE_{\chi_4\chi_{16}}^{\ang{\frac14}}=\sqrt{\eta^{\natural\ang{\frac12}}\eta^{\natural}\eta^{\natural\ang2}[\eta,\frac32]}\]\\

\begin{Prop}\label{aA}We have an identity in $\cM(4)_{\frac12^*+\frac{\ia+\iA}2}$
\[\z\sqrt{[\eta,\frac74][\eta,\frac14]}=\sqrt{\eta^{\natural\ang{\frac12}}\eta^{\natural\ang2}}+\sqrt2\sqrt{\eta^{\flat\ang{\frac12}}\eta^{\sharp\ang2}}\]
\end{Prop}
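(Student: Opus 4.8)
The plan is to prove the equality of $q$-expansions by squaring both sides; after clearing the radicals it collapses to an algebraic identity among the weight-$\frac12$ forms of Theorem~\ref{4}, with no genuinely new modular input.

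First I would note that the right-hand side already lies in $\cM(4)_{\frac12^*+\frac{\ia+\iA}2}$ by the two Lemmas preceding the statement, so proving the asserted identity will in particular exhibit the left-hand side as the same form. As Puiseux series in $q$ both sides have constant term $1$: we have $[\eta,\frac c4]=\eta^{\flat}+2\cdot1^{\frac c8}\eta^{\sharp}=1+2\cdot1^{\frac c8}q^{\frac18}+\cdots$, while $\eta^{\natural\ang{\frac12}},\eta^{\natural\ang2}$ have constant term $1$ and $\eta^{\flat\ang{\frac12}}\eta^{\sharp\ang2}\in\C[[q^{\frac14}]]q^{\frac14}$. Since the sum of the two sides is then a unit in the Puiseux ring, the desired equality follows once we know the squares agree.

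Squaring the left side, using $1^{\frac18}+1^{\frac78}=\sqrt2$ and $1^{\frac18}\cdot1^{\frac78}=1$, gives
\[[\eta,\tfrac74][\eta,\tfrac14]=(\eta^{\flat})^2+2\sqrt2\,\eta^{\flat}\eta^{\sharp}+4(\eta^{\sharp})^2.\]
For the square of the right side the only non-trivial term is the cross term $2\sqrt2\,\sqrt{\eta^{\natural\ang{\frac12}}\eta^{\natural\ang2}\eta^{\flat\ang{\frac12}}\eta^{\sharp\ang2}}$. Rescaling the $\natural$-relations $\eta^{\natural}\eta^{\flat}=(\eta^{\flat\ang2})^2$ and $\eta^{\natural}\eta^{\sharp}=(\eta^{\sharp\ang{\frac12}})^2$ by $\ang{\frac12}$ and by $\ang2$ respectively yields
\[\eta^{\natural\ang{\frac12}}\eta^{\flat\ang{\frac12}}=(\eta^{\flat})^2,\qquad \eta^{\natural\ang2}\eta^{\sharp\ang2}=(\eta^{\sharp})^2,\]
so the radicand equals $(\eta^{\flat}\eta^{\sharp})^2$ and, its leading coefficient being $+1$, its square root is $\eta^{\flat}\eta^{\sharp}$. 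Hence the square of the right side is $\eta^{\natural\ang{\frac12}}\eta^{\natural\ang2}+2\sqrt2\,\eta^{\flat}\eta^{\sharp}+2\eta^{\flat\ang{\frac12}}\eta^{\sharp\ang2}$, and cancelling the common term $2\sqrt2\,\eta^{\flat}\eta^{\sharp}$ reduces the proposition to
\[(\eta^{\flat})^2+4(\eta^{\sharp})^2=\eta^{\natural\ang{\frac12}}\eta^{\natural\ang2}+2\eta^{\flat\ang{\frac12}}\eta^{\sharp\ang2}.\]

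Finally I would settle this last identity by pure algebra: substitute $(\eta^{\flat})^2=\eta^{\natural\ang{\frac12}}\eta^{\flat\ang{\frac12}}$ and $(\eta^{\sharp})^2=\eta^{\natural\ang2}\eta^{\sharp\ang2}$ on the left, and use $\eta^{\natural\ang{\frac12}}=\eta^{\natural\ang2}+2\eta^{\sharp\ang2}$ and $\eta^{\flat\ang{\frac12}}=\eta^{\natural\ang2}-2\eta^{\sharp\ang2}$ (obtained by rescaling the relations $\tfrac12(\eta^{\natural}+\eta^{\flat})=\eta^{\natural\ang4}$ and $\tfrac14(\eta^{\natural}-\eta^{\flat})=\eta^{\sharp\ang4}$ from the proof of Lemma~\ref{rel22} by $\ang{\frac12}$); both sides then reduce to $(\eta^{\natural\ang2})^2+4\eta^{\natural\ang2}\eta^{\sharp\ang2}-4(\eta^{\sharp\ang2})^2$. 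The one point demanding attention throughout is the bookkeeping of square-root branches, so that equality of squares genuinely upgrades to equality of the forms; I do not expect a dimension count or a $q$-expansion estimate to be needed, since all the modular content is already packaged in the $\natural$-relations and in Lemma~\ref{rel22}.
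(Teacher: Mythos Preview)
Your proof is correct and complete; the branch bookkeeping via the constant terms is handled cleanly, and the reduction to $(\eta^{\flat})^2+4(\eta^{\sharp})^2=\eta^{\natural\ang{\frac12}}\eta^{\natural\ang2}+2\eta^{\flat\ang{\frac12}}\eta^{\sharp\ang2}$ followed by the substitution $\eta^{\natural\ang{\frac12}}=\eta^{\natural\ang2}+2\eta^{\sharp\ang2}$, $\eta^{\flat\ang{\frac12}}=\eta^{\natural\ang2}-2\eta^{\sharp\ang2}$ checks out exactly.

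Your route is genuinely different from the paper's. The paper does not square: instead it works inside the $o$-form calculus of \S6.2, writing $\sqrt{\eta^{\natural\ang{\frac14}}[\eta,\frac32]^{\ang{\frac12}}}$ as a product of two linear combinations of $\sqrt{[o,\cdot][o,\cdot]}$ via Lemmas~\ref{oud} and~\ref{ud}, expanding, and then applying the twist $(\BM1&1\\0&1\EM)$ to reach the stated identity. That approach is structurally aligned with the rest of \S6 (the same $o/d/u$ machinery is reused for Propositions~\ref{Ab}, \ref{now}, \ref{bB}), so each such identity becomes a routine unpacking rather than an ad hoc verification. Your approach, by contrast, is more elementary and self-contained: it needs only the $\natural$-relations and the linear identities from the proof of Lemma~\ref{rel22}, and avoids the $o$-form apparatus entirely. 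The trade-off is that your method verifies this one identity efficiently but does not immediately generalise to the companion identities later in \S6, whereas the paper's method sets up a reusable template.
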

\begin{proof}Act $(\BM 1&1\\0&1\EM)$ on
\begin{align*}\z \sqrt{\eta^{\natural\ang{\frac14}}[\eta,\frac32]^{\ang{\frac12}}}&\z =\big(\sqrt{[o,\frac32][o,0]}+\sqrt{[o,\frac12][o,1]}\big)\big(\sqrt{[o,0][o,\frac12]}-i\sqrt{[o,\frac32][o,1]}\big)\\
&\z =\big([o,0]-i[o,1]\big)\sqrt{\eta^{\natural\ang2}}+\big([o,\frac12]-i[o,\frac32]\big)\sqrt{\eta^{\sharp\ang2}}\\
&\z =\sqrt{[\eta,\frac32]\eta^{\natural\ang2}}+(1-i)\sqrt{[\eta,\frac12]\eta^{\sharp\ang2}}\end{align*}
\end{proof}

\newpage

\subsection{Structure theorems with $\chi_{16}$}

Let $w_4=(\frac14^*+\frac{\ia+\iA}2,\chi_{16})$.

\begin{Lem}\label{K4}
\[\z \cM(4|\chi_8)_{\ang{w_4}}=\bigoplus\C\Big[\sqrt{[\eta,\frac12]},\sqrt{[\eta,\frac32]}\Big]\]
\[\cS(4|\chi_8)_{\ang{w_4}}=\cM(4|\chi_8)_{\ang{w_4}}\eta^{\natural}\eta^{\flat2}\eta^{\sharp2}\]
\end{Lem}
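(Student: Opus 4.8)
The plan is to deduce Lemma \ref{K4} from the structure of the rings $\cM(4)_{\ang{w_2}}$ and $\cM(4)_{\ang{\frac12^*,\chi_8}}$, already determined in Theorems \ref{4} and \ref{8}, by adjoining the square roots $\sqrt{[\eta,\frac12]}$ and $\sqrt{[\eta,\frac32]}$ and closing with a dimension count.

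First I would collect the multiplicative relations. We have already seen that $\sqrt{[\eta,\frac12]}$ and $\sqrt{[\eta,\frac32]}$ lie in $\cM(4|\chi_8)_{\ang{w_4}}$, that their squares are $[\eta,\frac12]$ and $[\eta,\frac32]$, and that $\sqrt{[\eta,\frac12]}\,\sqrt{[\eta,\frac32]}=\eta^{\natural}$; since $\eta^{6}=\eta^{\natural2}\eta^{\flat2}\eta^{\sharp2}$ we also get $\eta^{\natural}\eta^{\flat2}\eta^{\sharp2}=\eta^{6}/\eta^{\natural}$. By Theorem \ref{8}, $\cM(4)_{\ang{\frac12^*,\chi_8}}=\cM(4)_{\ang{w_2}}\oplus\cM(4)_{\ang{w_2}}\eta^{\natural}$ with $\cM(4)_{\ang{w_2}}=\C\big[[\eta,\frac12],[\eta,\frac32]\big]$ (the latter because $[\eta,\frac12],[\eta,\frac32]$ span the same degree-$w_2$ space as $\eta^{\natural\ang2},\eta^{\sharp\ang2}$); hence the subring of $\cM(4|\chi_8)_{\ang{w_4}}$ generated by $\sqrt{[\eta,\frac12]},\sqrt{[\eta,\frac32]}$ has $\cM(4)_{\ang{\frac12^*,\chi_8}}$ as its even-degree part and, as a $\C$-vector space, is the span of the monomials in $\sqrt{[\eta,\frac12]},\sqrt{[\eta,\frac32]}$. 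Because $\overline{\chi_{16}}\equiv\chi_{16}$ modulo $\ang{\chi_8}$, these two square roots are homogeneous of the same degree $w_4$, so the natural map $\C\big[\sqrt{[\eta,\frac12]},\sqrt{[\eta,\frac32]}\big]\to\cM(4|\chi_8)_{\ang{w_4}}$ is a homomorphism of $\M$-graded rings; it is injective by the algebraic independence of $[\eta,\frac12]$ and $[\eta,\frac32]$ in $\C[[q^{1/N}]]$ (hence of their square roots in $\C[[q^{1/N'}]]$).

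For equality it suffices to prove $\dim\cM(4|\chi_8)_{m w_4}=m+1$ for every $m\in\M$, this being the dimension of the degree-$m$ piece of the two-variable polynomial ring. Where the classical dimension formula of \S2.1 applies — on the relevant congruence group, after unwinding the normalizations encoded in $\frac14^{*}+\frac{\ia+\iA}2$ and in the symbol $4|\chi_8$ — this is a direct computation, as in the proofs of Theorems \ref{8} and \ref{16a}. For the remaining weights I would use the inequality $\dim\cS(\varGamma)_\kappa\le\dim\cS(\varGamma)_{\kappa'}-1$ furnished by Lemma \ref{sumPM} together with the inclusion $\cM(4|\chi_8)_{m w_4}\,\eta^{\natural}\eta^{\flat2}\eta^{\sharp2}\subset\cS(4|\chi_8)_{(m+10)w_4}$ to transport the bound through the residue classes, exactly as was done for $\Gamma(4),\Gamma(5),\Gamma(9)$. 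I expect this bookkeeping — aligning the formal weight and the $4|\chi_8$ normalization with the hypotheses of the dimension formula and threading the inequality through all residue classes — to be the main obstacle; once it is settled the first displayed identity follows.

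Finally, for the cusp ideal, $\eta^{\natural}\eta^{\flat2}\eta^{\sharp2}$ is a cusp form of degree $10 w_4$ (it is the generator of $\cS(4)_{\ang{\frac12^*,\chi_8}}$ in Theorem \ref{8}), so $\cM(4|\chi_8)_{\ang{w_4}}\,\eta^{\natural}\eta^{\flat2}\eta^{\sharp2}\subset\cS(4|\chi_8)_{\ang{w_4}}$. The dimension count above, combined with $\dim\cS(\varGamma)_\kappa=\dim\cM(\varGamma)_\kappa-\varepsilon_\infty(\varGamma)$ at large weight (here $\varepsilon_\infty=10$), gives $\dim\cS(4|\chi_8)_{m w_4}=\dim\cM(4|\chi_8)_{(m-10)w_4}$, which equals the dimension of the degree-$m w_4$ part of $\cM(4|\chi_8)_{\ang{w_4}}\,\eta^{\natural}\eta^{\flat2}\eta^{\sharp2}$ since multiplication by the nonzero form $\eta^{\natural}\eta^{\flat2}\eta^{\sharp2}$ is injective; hence the inclusion is an equality degree by degree, as in Theorem \ref{4}. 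Alternatively, the last two paragraphs can be packaged into a single application of Lemma \ref{EX} to the quadratic extension of the even part by $\sqrt{[\eta,\frac32]}$.
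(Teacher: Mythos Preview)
Your approach is essentially the paper's: start from Theorem \ref{8}'s structure $\cM(4|\chi_8)_{\ang{\frac12^*}}=\bigoplus\cM(4)_{\ang{w_2}}\eta^{\natural\{0,1\}}$ and the resulting cusp dimension $\dim\cS(4|\chi_8)_{\frac k2^*}=(2k-9)^+$, then use the Lemma \ref{sumPM} inequality to propagate to $\dim\cS(4|\chi_8)_{\ang{w_4}}|_{\frac k4}\le(k-9)^+$, which together with the evident inclusion of the polynomial ring forces both displayed equalities. The paper's three-line proof records exactly these two dimension statements and leaves the rest implicit; your version simply spells out the injectivity and the cusp-ideal step.
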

\begin{proof}Start with $\cM(4|\chi_8)_{\ang{\frac12^*}}=\bigoplus\cM(4)_{\ang{w_2}}\eta^{\natural\{0,1\}}$ and
\[\dim\cS(4|\chi_8)_{\frac k2^*}=(2k-9)^+\]
by Theorem \ref{8}. It induces
\[\z\dim\cS(4|\chi_8)_{\ang{w_4}}|_{\frac k4}\leq(k-9)^+\]
\end{proof}

Let the sequence
\[\z \alpha_2=(\frac\ia2,\frac\iA2)\]

We see some expansions
\[\z \cM(4|\chi_8)_{\ang{w_4,\alpha_2}}=\bigoplus\cM(4|\chi_8)_{\ang{w_4}}\sqrt{\eta^{\natural\ang{\frac12}}}^{\{0,1\}}\sqrt{\eta^{\natural\ang2}}^{\{0,1\}}\]
\[\cS(4|\chi_8)_{\ang{w_4,\alpha_2}}=\cM(4|\chi_8)_{\ang{w_4,\alpha_2}}\eta^{\natural}\sqrt{\eta^{\natural\ang{\frac12}}\eta^{\natural\ang2}}\eta^{\flat\ang{\frac12}}\eta^{\sharp\ang2}\]
and
\[\cM(4|\chi_8)_{\ang{w_4,\alpha_2,\ell_8}}=\bigoplus\cM(4|\chi_8)_{\ang{w_4,\alpha_2}}\sqrt{\eta^{\flat\ang{\frac12}}}^{\{0,1\}}\sqrt{\eta^{\sharp\ang2}}^{\{0,1\}}\]
\[\cS(4|\chi_8)_{\ang{w_4,\alpha_2,\ell_8}}=\cM(4|\chi_8)_{\ang{w_4,\alpha_2,\ell_8}}\eta^3\]
and
\[\cM(4|\chi_8)_{\ang{\frac14^*,\alpha_2,\chi_{16},\ell_{16}}}=\bigoplus\cM(4|\chi_8)_{\ang{w_4,\alpha_2,\ell_8}}\sqrt{\eta^{\natural}}\,\!^{\{0,1\}}\sqrt{\eta^{\flat}}\,\!^{\{0,1\}}\sqrt{\eta^{\sharp}}\,\!^{\{0,1\}}\]
\[\cS(4|\chi_8)_{\ang{\frac14^*,\alpha_2,\chi_{16},\ell_{16}}}=\cM(4|\chi_8)_{\ang{\frac14^*,\alpha_2,\chi_{16},\ell_{16}}}\sqrt{\eta}^3\]

Note $\cM(4)_{\ang{\frac14^*,\alpha_2,\chi_{16},\ell_{16}}}=\cM(4|\chi_8)_{\ang{\frac14^*,\alpha_2,\chi_{16},\ell_{16}}}$. Indeed, $\subset$ follows from \[\Gamma_0(4)\cap\Gamma^0(4)\supset\Gamma_0(4)\cap\Gamma^0(4)\cap\ker(\chi_8)\]
and $\supset$ does from the above structure assertion.
\\

\begin{Thm}
\[\cM(\Gamma(16))_{\frac14\M}=\cM(16)_{\ang{\frac14^*,\chi_{16}}}=\cM(4)_{\ang{\frac14^*,\chi_{16},\ell_{16}}}\]
\[\cS(\Gamma(16))_{\frac14\M}=\cM(\Gamma(16))_{\frac14\M}\sqrt{\eta}^3\]
\end{Thm}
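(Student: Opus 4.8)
\emph{Strategy.} The plan is to imitate the proof of Lemma \ref{16a}: prove both equalities for weights in $\M$ by combining the Nebentype decomposition with a level $16\to 4$ reduction, import the ring and ideal structure of the right--hand side from the previous subsection, and then bootstrap along multiplication by $\sqrt{\eta}^3$ to reach all weights in $\frac14\M$. So let $k\in4\M$ first. The Nebentype decomposition $\cM(\Gamma(16))_{\frac k4}=\bigoplus_{\chi}\cM(16)_{(\frac k4,\chi)}$ over $\chi\in{\rm Hom}((\Z/16\Z)^\times,\C^\times)=\ang{\chi_4,\chi_{16}}$ collapses to $\cM(16)_{\frac k4^*+\ang{\chi_{16}}}$, since the $\chi_4$-component of $\chi$ is forced by the parity of the weight $\frac k4$ together with the $\xi_8$-twist built into $*$. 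To descend from level $16$ to level $4$ I apply Lemma \ref{decom} with $\varGamma=\Gamma_0(4)\cap\Gamma^0(4)$, $\chi=\chi_{16}^m$ (a character of $\varGamma$, since $d_{16}\in{\rm Hom}(\varGamma,(\Z/16\Z)^\times)$), and $\varPhi=\ang{\ell_{16}}=\ang{c_{16},b_{16}}$; here $c_{16},b_{16}$ are homomorphisms of $\varGamma$ (as already used in the previous subsection, by the computation of Lemma \ref{b8c8}), $\varGamma'=\bigcap_{\phi\in\varPhi}\ker\phi=\Gamma_0(16)\cap\Gamma^0(16)$, one checks $|\varGamma/\varGamma'|=16=|\varPhi|$, and a representation $\varDelta$ of $\varGamma/\varGamma'$ with $1\in\varDelta\subset\ker\chi_{16}$ satisfying the two orthogonality relations is provided by the sixteen matrices $(\BM1&4j\\0&1\EM)(\BM1&0\\4i&1\EM)$ with $0\le i,j<4$. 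This yields $\cM(16)_{\frac k4^*+\ang{\chi_{16}}}=\cM(4)_{\frac k4^*+\ang{\chi_{16},\ell_{16}}}$, and the same computation on cusp forms gives the identity with $\cS$ in place of $\cM$.

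\emph{The right--hand ring.} The rings $\cM(4)_{\ang{\frac14^*,\chi_{16},\ell_{16}}}$ and $\cS(4)_{\ang{\frac14^*,\chi_{16},\ell_{16}}}$ are precisely the $\alpha_2$-homogeneous parts of degree $0$ of $\cM(4)_{\ang{\frac14^*,\alpha_2,\chi_{16},\ell_{16}}}$ and $\cS(4)_{\ang{\frac14^*,\alpha_2,\chi_{16},\ell_{16}}}$, whose structures were obtained in the previous subsection, the latter being $\cM(4)_{\ang{\frac14^*,\alpha_2,\chi_{16},\ell_{16}}}\sqrt{\eta}^3$. Since $\sqrt{\eta}^3$ is itself $\alpha_2$-homogeneous of degree $0$, extracting the degree $0$ component commutes with multiplication by it, so
\[\cS(4)_{\ang{\frac14^*,\chi_{16},\ell_{16}}}=\cM(4)_{\ang{\frac14^*,\chi_{16},\ell_{16}}}\sqrt{\eta}^3\]
holds at every weight in $\frac14\M$; in particular the cusp--ideal identity is available at integer weight, which is what is used below.

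\emph{Bootstrap.} For $k\in4\M+1$ the form $\sqrt{\eta}^3\in\cS(2)_{(\frac34^*,c_{16}b_{16})}$ restricts to $\cS(16)_{\frac34^*}$ (the characters $c_{16},b_{16}$ being trivial on $\Gamma_0(16)\cap\Gamma^0(16)$), hence
\[\cM(16)_{\frac k4^*+\ang{\chi_{16}}}\sqrt{\eta}^3\subset\cS(16)_{\frac{k+3}4^*+\ang{\chi_{16}}}=\cS(4)_{\frac{k+3}4^*+\ang{\chi_{16},\ell_{16}}}=\cM(4)_{\frac k4^*+\ang{\chi_{16},\ell_{16}}}\sqrt{\eta}^3,\]
the middle equality being the integer--weight level reduction (valid since $k+3\in4\M$) and the last the cusp--ideal identity above; dividing by $\sqrt{\eta}^3$, and using the trivial inclusion $\cM(4)_{\frac k4^*+\ang{\chi_{16},\ell_{16}}}\subset\cM(16)_{\frac k4^*+\ang{\chi_{16}}}$ (forms carrying $\ell_{16}$-characters restrict to level $16$), gives $\cM(16)_{\frac k4^*+\ang{\chi_{16}}}=\cM(4)_{\frac k4^*+\ang{\chi_{16},\ell_{16}}}$, and likewise for $\cS$. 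Rerunning this for $k\in4\M+2$ and then $k\in4\M+3$, each step invoking a residue already settled, covers all of $\frac14\M$; an entirely parallel bootstrap on $\Gamma(16)$ (from $\cM(\Gamma(16))_{\frac k4}\sqrt{\eta}^3\subset\cS(\Gamma(16))_{\frac{k+3}4}$ together with the already established decomposition of $\cS(\Gamma(16))_{\frac{k+3}4}$) extends $\cM(\Gamma(16))_{\frac k4}=\cM(16)_{\frac k4^*+\ang{\chi_{16}}}$ to all weights, and the identity $\cS(\Gamma(16))_{\frac14\M}=\cM(\Gamma(16))_{\frac14\M}\sqrt{\eta}^3$ then follows by transporting the cusp--ideal identity through these isomorphisms.

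\emph{Main obstacle.} The delicate part is the bookkeeping in the level $16\to 4$ reduction: one must verify that $c_{16},b_{16}$ are honest homomorphisms of $\Gamma_0(4)\cap\Gamma^0(4)$ whose joint kernel is exactly $\Gamma_0(16)\cap\Gamma^0(16)$, that $|\varGamma/\varGamma'|=16=|\varPhi|$, and that the chosen $\varDelta$ satisfies both orthogonality relations of Lemma \ref{decom}; a secondary point is checking that extracting $\alpha_2$-homogeneous degree $0$ really returns $\cM(4)_{\ang{\frac14^*,\chi_{16},\ell_{16}}}$ and its cusp ideal with nothing lost.
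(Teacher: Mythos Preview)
Your plan is correct and follows essentially the same route as the paper. The paper's proof is terse: it observes that both $\supset$ in the first line are trivial, then says that restricting the preceding structure result $\cS(4|\chi_8)_{\ang{\frac14^*,\alpha_2,\chi_{16},\ell_{16}}}=\cM(4|\chi_8)_{\ang{\frac14^*,\alpha_2,\chi_{16},\ell_{16}}}\sqrt{\eta}^3$ (together with the remark just before the theorem that the $|\chi_8$ may be dropped) gives $\cS(4)_{\ang{\frac14^*,\chi_{16},\ell_{16}}}=\cM(4)_{\ang{\frac14^*,\chi_{16},\ell_{16}}}\sqrt{\eta}^3$, and finally invokes ``similarly as Lemma \ref{16a}'' for the bootstrap $\cM(\Gamma(16))_{\frac k4}=\cM(4)_{\frac k4^*+\ang{\chi_{16},\ell_{16}}}$. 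Your proposal unpacks exactly these steps: the Nebentype decomposition for integer weight, the level $16\to4$ reduction via Lemma \ref{decom} with $\varPhi=\ang{c_{16},b_{16}}$, the extraction of the $\alpha_2$-degree-$0$ part to recover the cusp ideal, and the $\sqrt{\eta}^3$-bootstrap through the residues modulo $4$. Your explicit choice $\varDelta=\{(\BM 1&4j\\0&1\EM)(\BM 1&0\\4i&1\EM):0\le i,j<4\}$ is fine, since each such matrix has $d=1\in\ker\chi_{16}$ and the orthogonality relations reduce to geometric sums over fourth roots of unity; the fact that $c_{16}=c_{32}^2$ and $b_{16}=b_{32}^2$ are homomorphisms on $\Gamma_0(4)\cap\Gamma^0(4)$ is indeed a consequence of Lemma \ref{b8c8}.
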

\begin{proof}On the first assertion, both $\supset$ are trivial.

Restriction of the above result lead to
\[\cS(4)_{\ang{\frac14^*,\chi_{16},\ell_{16}}}=\cM(4)_{\ang{\frac14^*,\chi_{16},\ell_{16}}}\sqrt{\eta}^3\]
Similarly as Lemma \ref{16a} we see $\cM(\Gamma(16))_{\frac k4}=\cM(4)_{\frac k4^*+\ang{\chi_{16},\ell_{16}}}$.
\end{proof}

\newpage

The Hilbert function of $\cM(4)_{\ang{\frac14^*,\alpha_2,\chi_{16},\ell_{16}}}$ is
\[\frac{(1+t^{\frac14+\frac{\ia+\iA}2})^2(1+t^{\frac14+\frac\ia2})(1+t^{\frac14+\frac\iA2})(1+t^{\frac14})^3}{(1-t^{\frac14+\frac\ia2})(1-t^{\frac14+\frac\iA2})}=\frac{(1+t^{\frac14+\frac{\ia+\iA}2})^2h_{41}(1+t^{\frac14})}{(1-t^{\frac14})^2}\]
where
\begin{align*}h_{41}&=(1+t^{\frac14+\frac\ia2})^2(1+t^{\frac14+\frac\iA2})^2\\
&=(1+t^{\frac12})^2+2(t^{\frac14+\frac\ia2}+t^{\frac14+\frac\iA2})(1+t^{\frac12})+4t^{\frac12+\frac{\ia+\iA}2}\end{align*}
and a new dimension formula is
\[\VS{k\in\M}\dim\cM(\Gamma(16))_{\frac k4}\cdot t^{\frac k4}=\dfrac{((1+t^{\frac12})^3+8t^{\frac34})(1+t^{\frac14})}{(1-t^{\frac14})^2}\]\\

A more expansion is
\[\cM(4)_{\ang{\frac18^*,\alpha_2,\chi_{16},\ell_{32}}}=\bigoplus\cM(4)_{\ang{\frac14^*,\alpha_2,\chi_{16},\ell_{16}}}\sqrt[4]{\eta^{\natural}}\,\!^{\{0,1\}}\sqrt[4]{\eta^{\flat}}\,\!^{\{0,1\}}\sqrt[4]{\eta^{\sharp}}\,\!^{\{0,1\}}\]
\[\cS(4)_{\ang{\frac18^*,\alpha_2,\chi_{16},\ell_{32}}}=\cM(4)_{\ang{\frac18^*,\alpha_2,\chi_{16},\ell_{32}}}\sqrt[4]{\eta}^3\]
\begin{Lem}
\[\cM(32)_{\ang{\frac18^*,\chi_{16}}}=\cM(4)_{\ang{\frac18^*,\chi_{16},\ell_{32}}}\]
\end{Lem}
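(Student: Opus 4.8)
The statement is the level-$32$ analogue of Lemma \ref{16a} and of the identity $\cM(\Gamma(16))_{\frac k4}=\cM(4)_{\frac k4^*+\ang{\chi_{16},\ell_{16}}}$ proved inside the preceding theorem, so the plan is to transplant those arguments one level up. First I would restrict the two displayed expansions above, of $\cM(4)_{\ang{\frac18^*,\alpha_2,\chi_{16},\ell_{32}}}$ and of $\cS(4)_{\ang{\frac18^*,\alpha_2,\chi_{16},\ell_{32}}}$, to the subsemigroup $\ang{\frac18^*,\chi_{16},\ell_{32}}$, which discards the two formal generators $\frac\ia2,\frac\iA2$ of $\alpha_2$; since every $\cM(4)$-summand occurring in weight below $\frac38$ vanishes, the cusp identity survives the restriction as
\[\cS(4)_{\ang{\frac18^*,\chi_{16},\ell_{32}}}=\cM(4)_{\ang{\frac18^*,\chi_{16},\ell_{32}}}\sqrt[4]{\eta}^3.\]
The inclusion $\supseteq$ of the Lemma is then immediate: $\Gamma_0(32)\cap\Gamma^0(32)\subset\Gamma_0(4)\cap\Gamma^0(4)$, so every form on the larger group restricts to one on the smaller, and there $\ell_{32}=(c_{32},b_{32})$ is trivial because $32\mid b$ and $32\mid c$.

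It remains to prove $\cM(32)_{\frac k8^*+\ang{\chi_{16}}}\subset\cM(4)_{\frac k8^*+\ang{\chi_{16},\ell_{32}}}$ weight by weight. For $k\in8\N$ the weight $\frac k8$ is integral, $|_{k/8}$ is an action, and by Lemma \ref{b8c8} the maps $c_{32},b_{32}$ are characters of $\Gamma_0(4)\cap\Gamma^0(4)$; the homomorphism $\gamma\mapsto(c_{32}(\gamma),b_{32}(\gamma))$ is onto a group of order $64$ with kernel exactly $\Gamma_0(32)\cap\Gamma^0(32)$, and $\varDelta=\{(\BM 1&4b'\\4c'&1+16b'c'\EM):0\le b',c'\le 7\}\subset\ker\chi_{16}$ is a transversal, so Lemma \ref{decom} applies with $\varPhi=\ang{c_{32},b_{32}}$ and yields $\cM(32)_{\frac k8^*+\ang{\chi_{16}}}=\cM(4)_{\frac k8^*+\ang{\chi_{16},\ell_{32}}}$ for $k\in8\N$, together with the analogue for cusp forms, exactly as in Lemma \ref{16a}. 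For the remaining residues I would run the induction of the $\Gamma(16)$ theorem using the weight-$\frac38$ cusp form $\sqrt[4]{\eta}^3\in\cS(2)_{(\frac38^*,c_{32}b_{32})}$: for $k\in8\M+5$,
\[\cM(32)_{\frac k8^*+\ang{\chi_{16}}}\,\sqrt[4]{\eta}^3\subset\cS(32)_{\frac{k+3}8^*+\ang{\chi_{16}}}=\cS(4)_{\frac{k+3}8^*+\ang{\chi_{16},\ell_{32}}}=\cM(4)_{\frac k8^*+\ang{\chi_{16},\ell_{32}}}\,\sqrt[4]{\eta}^3,\]
the middle equality being the already-treated weight $\frac{k+3}8$ and the last the restricted cusp identity; cancelling $\sqrt[4]{\eta}^3$ and adjoining $\supseteq$ gives equality at weight $\frac k8$. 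As $\gcd(3,8)=1$, iterating the step runs through $8\M+5,8\M+2,8\M+7,8\M+4,8\M+1,8\M+6,8\M+3$ and exhausts all residues, and feeding the result back into the decomposition gives the Lemma.

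The main obstacle is the bookkeeping of the integral-weight base case — confirming $[\Gamma_0(4)\cap\Gamma^0(4):\Gamma_0(32)\cap\Gamma^0(32)]=64$, that $\varDelta$ lies in $\ker\chi_{16}$, and the two orthogonality relations for $\ang{c_{32},b_{32}}$ — together with keeping the $\xi_8$-twists hidden inside the $*$-notation consistent throughout; conceptually nothing beyond Lemma \ref{16a} and the $\Gamma(16)$ theorem is needed, in particular the propagation of the decomposition through the non-integral residues is carried out there in the same manner.
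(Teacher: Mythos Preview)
Your proposal is correct and follows essentially the same route as the paper: the base case $k\in8\N$ via Lemma \ref{b8c8} and Lemma \ref{decom}, then the bootstrap through the residues $8\M+5,\,8\M+2,\,8\M+7,\dots$ by multiplying by $\sqrt[4]{\eta}^3$ and invoking the restricted cusp identity. You spell out the transversal $\varDelta$ and the restriction of the cusp identity more explicitly than the paper does, but the argument is the same.
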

\begin{proof}Lemma \ref{b8c8} and Lemma \ref{decom} lead to $\cM(32)_{\frac k8^*+\ang{\chi_{16}}}=\cM(4)_{\frac k8^*+\ang{\chi_{16},\ell_{32}}}$ for $k\in8\N$. This assertion stays hold for $k\in8\M+5$ since
\begin{align*}\cM(32)_{\frac k8^*+\ang{\chi_{16}}}\sqrt[4]{\eta}^3&\subset\cS(32)_{\frac{k+3}8^*+\ang{\chi_{16}}}\\
&=\cS(4)_{\frac{k+3}8^*+\ang{\chi_{16},\ell_{32}}}\\
&=\cM(4)_{\frac k8^*+\ang{\chi_{16},\ell_{32}}}\sqrt{\eta}^3\end{align*}
It stays hold also for $k\in8\M+2$ and then for $k\in8\M+7$, ....
\end{proof}

\vspace{0.5cm}

Next, to study finelier put
\[\z W_4=((\frac14^*+\frac\ia2,\chi_{16}),(\frac14^*+\frac\iA2,\chi_{16}))\]
\begin{Lem}\label{K41}
\[\cM(4|\chi_8)_{\ang{W_4}}=\bigoplus\C\Big[\sqrt{\eta^{\natural\ang{\frac12}}},\sqrt{\eta^{\natural\ang2}}\Big]\eta^{\natural\{0,1\}}\]
\[\cS(4|\chi_8)_{\ang{W_4}}=\cM(4|\chi_8)_{\ang{W_4}}\eta^{\natural}\sqrt{\eta^{\natural\ang{\frac12}}\eta^{\natural\ang2}}\eta^{\flat\ang{\frac12}}\eta^{\sharp\ang2}\]
\end{Lem}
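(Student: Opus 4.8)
\emph{Plan.} This is the $W_4$-analogue of Lemma \ref{K4}, and I would prove it along the lines of Lemma \ref{K4}, Lemma \ref{16a} and Theorem \ref{8}: produce the right-hand side as an explicit free module, verify the obvious map into $\cM(4|\chi_8)_{\ang{W_4}}$ is injective, force it to be onto by a dimension count, and then read off the cusp-ideal statement from the standard sandwich. First, $\C[\sqrt{\eta^{\natural\ang{\frac12}}},\sqrt{\eta^{\natural\ang2}}]$ is honestly a polynomial ring, since the two generators are power series in $q^{\frac1{16}}$ resp.\ $q^{\frac12}$ whose leading terms are multiplicatively independent; and $\eta^{\natural}$ is not in the subring they generate (a $q$-expansion comparison), so $\bigoplus\C[\sqrt{\eta^{\natural\ang{\frac12}}},\sqrt{\eta^{\natural\ang2}}]\eta^{\natural\{0,1\}}$ is a free module of rank $2$ over that ring. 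The functions $\sqrt{\eta^{\natural\ang{\frac12}}}$, $\sqrt{\eta^{\natural\ang2}}$, $\eta^{\natural}$ all lie in $\cM(4|\chi_8)_{\ang{W_4}}$ — their weights $a_1$, $a_2$ (modulo $\chi_8$) and $\frac12^*$ all belong to $\ang{W_4}$ — so the module maps into $\cM(4|\chi_8)_{\ang{W_4}}$ and, by the above, injectively; thus $\supset$ holds and only $\subset$ needs work.

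For $\subset$ I would use Lemma \ref{K4} together with the expansion
\[\cM(4|\chi_8)_{\ang{w_4,\alpha_2}}=\bigoplus\cM(4|\chi_8)_{\ang{w_4}}\sqrt{\eta^{\natural\ang{\frac12}}}^{\{0,1\}}\sqrt{\eta^{\natural\ang2}}^{\{0,1\}}\]
recorded just after Lemma \ref{K4}, noting that $a_1$, $a_2$, $w_4$ have the same weight and characters and differ only in the (torsion) formal decorations $\frac\ia2,\frac\iA2$, so $\ang{W_4}\subset\ang{w_4,\alpha_2}$. A form in $\cM(4|\chi_8)_{\ang{W_4}}$ is thus a $\cM(4|\chi_8)_{\ang{w_4}}$-combination of $1,\sqrt{\eta^{\natural\ang{\frac12}}},\sqrt{\eta^{\natural\ang2}},\sqrt{\eta^{\natural\ang{\frac12}}}\sqrt{\eta^{\natural\ang2}}$ whose coefficients, after comparing formal decorations, must lie in the even-degree part of $\cM(4|\chi_8)_{\ang{w_4}}=\C[\sqrt{[\eta,\frac12]},\sqrt{[\eta,\frac32]}]$; that even part equals $\C[[\eta,\frac12],[\eta,\frac32]]\oplus\C[[\eta,\frac12],[\eta,\frac32]]\eta^{\natural}$ since $[\eta,\frac12][\eta,\frac32]=\eta^{\natural2}$ and $\sqrt{[\eta,\frac12]}\sqrt{[\eta,\frac32]}=\eta^{\natural}$, and Lemma \ref{oud} and Lemma \ref{ud} rewrite $[\eta,\frac12],[\eta,\frac32]$ as polynomials in $\sqrt{\eta^{\natural\ang{\frac12}}},\sqrt{\eta^{\natural\ang2}}$. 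Hence every such form is a polynomial in $\sqrt{\eta^{\natural\ang{\frac12}}},\sqrt{\eta^{\natural\ang2}}$ times $1$ or $\eta^{\natural}$, which gives $\subset$; alternatively this whole bookkeeping can be packaged as a single application of Lemma \ref{EX} with $n=2$, $x=\eta^{\natural}$. The cusp-ideal identity then follows from Lemma \ref{K4} via the sandwich
\[\cM(4|\chi_8)_{\ang{W_4}}\,\eta^{\natural}\eta^{\flat2}\eta^{\sharp2}\ \subset\ \cS(4|\chi_8)_{\ang{W_4}}\ \subset\ \cS(4|\chi_8)_{\ang{w_4,\alpha_2}}\cap\cM(4|\chi_8)_{\ang{W_4}},\]
whose two outer terms coincide by the expansion and the $w_4$-cusp statement.

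The genuinely delicate part is the middle step: reconciling the coarse grading by $\ang{w_4,\alpha_2}$ with the finer, torsion-laden grading by $\ang{W_4}$, i.e.\ tracking exactly which monomials in $\sqrt{[\eta,\frac12]},\sqrt{[\eta,\frac32]},\sqrt{\eta^{\natural\ang{\frac12}}},\sqrt{\eta^{\natural\ang2}},\eta^{\natural}$ carry which decoration $\delta_1\frac\ia2+\delta_2\frac\iA2$, so that the counts reproduce the claimed direct sum. In dimension terms this is the sharp bound $\dim\cS(4|\chi_8)_{\ang{W_4}}|_{\frac k4}\le(k-c)^+$ (with the right constant $c$), obtained from Lemma \ref{K4} by the inequality $\dim V\le\frac12(\dim(V\cdot V)+1)$ that follows from Lemma \ref{sumPM}; once this matches the Hilbert series of the explicit module, surjectivity and the principal-ideal statement are forced. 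Everything else is the routine algebra the paper has been carrying out throughout.
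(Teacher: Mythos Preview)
Your approach is essentially correct and is the same computation as the paper's, just organized differently; the one substantive slip is the attribution in your step~5. The identity you need --- that $[\eta,\tfrac12]$ and $[\eta,\tfrac32]$ lie in $\C[\eta^{\natural\ang{\frac12}},\eta^{\natural\ang2}]$ --- does \emph{not} come from Lemma~\ref{oud} or Lemma~\ref{ud}: those express $[\eta,\cdot]$ through the $[d,\cdot]$-forms, which involve $\sqrt{\eta^{\sharp\ang2}}$, not $\sqrt{\eta^{\natural\ang{\frac12}}}$. The correct source is the proof of Lemma~\ref{rel22}: rescaling $\tfrac12(\eta^{\natural}+\eta^{\flat})=\eta^{\natural\ang4}$ and $\tfrac14(\eta^{\natural}-\eta^{\flat})=\eta^{\sharp\ang4}$ by $\ang{\tfrac12}$ gives $\eta^{\sharp\ang2}=\tfrac12(\eta^{\natural\ang{\frac12}}-\eta^{\natural\ang2})$, whence
\[[\eta,\tfrac12]=i\,\eta^{\natural\ang{\frac12}}+(1-i)\,\eta^{\natural\ang2},\qquad [\eta,\tfrac32]=-i\,\eta^{\natural\ang{\frac12}}+(1+i)\,\eta^{\natural\ang2}.\]
With this, your even-degree reduction goes through, and the side remark about Lemma~\ref{EX} with $x=\eta^{\natural}$ and the dimension bound via Lemma~\ref{sumPM} become unnecessary.

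The paper packages exactly this by first swapping the roles of polynomial generators and module basis in the expansion after Lemma~\ref{K4}, recording the ``another representation''
\[\cM(4|\chi_8)_{\ang{w_4,\alpha_2}}=\bigoplus\C\Big[\sqrt{\eta^{\natural\ang{\frac12}}},\sqrt{\eta^{\natural\ang2}}\Big]\sqrt{[\eta,\tfrac12]}^{\{0,1\}}\sqrt{[\eta,\tfrac32]}^{\{0,1\}}\]
(the printed $\eta^{\natural\ang{\frac12}}$ in place of $\sqrt{\eta^{\natural\ang{\frac12}}}$ is evidently a typo: without the square root the Hilbert series is wrong and $\sqrt{\eta^{\natural\ang{\frac12}}}$ would be missing from the ring). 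This swap is legitimate precisely because $\C\big[[\eta,\tfrac12],[\eta,\tfrac32]\big]=\C\big[\eta^{\natural\ang{\frac12}},\eta^{\natural\ang2}\big]$, i.e.\ the identity above. Since $\sqrt{[\eta,\tfrac12]},\sqrt{[\eta,\tfrac32]}$ carry the decoration $w_4\notin\ang{W_4}$ while $1$ and $\eta^{\natural}=\sqrt{[\eta,\tfrac12][\eta,\tfrac32]}$ lie in $\ang{W_4}$, restricting the display to $\ang{W_4}$ immediately gives both assertions of the lemma. Your argument and the paper's are therefore the same proof, phrased in opposite orders.
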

\begin{proof}The asserion easily follows from another representation
\[\z \cM(4|\chi_8)_{\ang{w_4,\alpha_2}}=\bigoplus\C\Big[\eta^{\natural\ang{\frac12}},\sqrt{\eta^{\natural\ang2}}\Big]\sqrt{[\eta,\frac12]}^{\{0,1\}}\sqrt{[\eta,\frac32]}^{\{0,1\}}\]

\end{proof}

\newpage

Let the sequence
\[\z \alpha_2'=((\frac\ia2,\chi_{16}),(\frac\iA2,\chi_{16}))\]

We see some expansions
\[\cM(4|\chi_8)_{\ang{W_4,\ell_8}}=\bigoplus\cM(4|\chi_8)_{\ang{W_4}}\sqrt{\eta^{\flat\ang{\frac12}}}^{\{0,1\}}\sqrt{\eta^{\sharp\ang2}}^{\{0,1\}}\]
\[\cS(4|\chi_8)_{\ang{W_4,\ell_8}}=\cM(4|\chi_8)_{\ang{W_4,\ell_8}}\eta^3\]
and
\[\cM(4|\chi_8)_{\ang{\frac14^*,\alpha_2',\ell_{16}}}=\bigoplus\cM(4|\chi_8)_{\ang{W_4,\ell_8}}\sqrt{\eta^{\natural}}\,\!^{\{0,1\}}\sqrt{\eta^{\flat}}\,\!^{\{0,1\}}\sqrt{\eta^{\sharp}}\,\!^{\{0,1\}}\]
\[\cS(4|\chi_8)_{\ang{\frac14^*,\alpha_2',\ell_{16}}}=\cM(4|\chi_8)_{\ang{\frac14^*,\alpha_2',\ell_{16}}}\sqrt{\eta}^3\]
and
\[\cM(4|\chi_8)_{\ang{\frac18^*,\alpha_2',\ell_{32}}}=\bigoplus\cM(4|\chi_8)_{\ang{\frac14^*,\alpha_2',\ell_{16}}}\sqrt[4]{\eta^{\natural}}\,\!^{\{0,1\}}\sqrt[4]{\eta^{\flat}}\,\!^{\{0,1\}}\sqrt[4]{\eta^{\sharp}}\,\!^{\{0,1\}}\]

Its Hilbert function is
\[\frac{(1+t^{\frac12})(1+t^{\frac14+\frac\ia2})(1+t^{\frac14+\frac\iA2})(1+t^{\frac14})^3h_{42}}{(1-t^{\frac14+\frac\ia2})(1-t^{\frac14+\frac\iA2})}=\frac{h_{41}h_{42}(1+t^{\frac14})(1+t^{\frac12})}{(1-t^{\frac14})^2}\]
where
\begin{align*}h_{42}&=(1+t^{\frac18+\frac{\ia+\iA}2})(1+t^{\frac18+\frac\ia2})(1+t^{\frac18+\frac\iA2})\\
&=1+t^{\frac38}+(t^{\frac18}+t^{\frac14})(t^{\frac\ia2}+t^{\frac\iA2}+t^{\frac{\ia+\iA}2})\end{align*}

We compute a new dimension formula
\[\VS{k\in\M}\dim\cM(32)_{\frac k8^*+\ang{\chi_8}}\cdot t^{\frac k4}\]
\[=\frac{(1-2t^{\frac18}+3t^{\frac14}+t^{\frac38}+t^{\frac12}+t^{\frac58}+t^{\frac34}+3t^{\frac78}-2t+t^{\frac98})(1+t^{\frac14})(1+t^{\frac12})}{(1-t^{\frac18})^2}\]\\
\[\VS{k\in\M}\dim\cM(32)_{\frac k8^*+\ang{\chi_{16}}}\cdot t^{\frac k4}=\VS{k\in\M}\dim\cM(32)_{\frac k8^*+\ang{\chi_8}}\cdot t^{\frac k4}\,+\]
\[\frac{(t^{\frac18}-t^{\frac14}+5t^{\frac38}-t^{\frac12}-t^{\frac58}+5t^{\frac34}-t^{\frac78}+t)(1+t^{\frac14})\cdot2t^{\frac14}}{(1-t^{\frac18})^2}\]

\newpage

\subsection{Formal weights b,B}

Write $\ib=\ang{\frac14}$ and $\iB=\ang4$.

Weight $\frac{\ib}2$ and $\frac{\iB}2$ operators are expanded by
\[\sqrt{\eta^{\ang{\frac14}3}}\in\cS(2,4)_{(\frac34^*+\frac{\ib}2,c_4b_{64})}\6\sqrt{\eta^{\ang43}}\in\cS(4,2)_{(\frac34^*+\frac{\iB}2,c_{64}b_4)}\]
then
\[\sqrt{\eta^{\ang{\frac14}}}\in\cS(6,12)_{(\frac14^*+\frac\ib2,c_{12}b_{192})}\6\sqrt{\eta^{\ang4}}\in\cS(12,6)_{(\frac14^*+\frac\iB2,c_{192}b_{12})}\]

\begin{Lem}
\[\sqrt{\eta^{\natural\ang{\frac14}}}\in\cM(2,8)_{\frac14^*+\frac\ib2} \6 \sqrt{\eta^{\natural\ang4}}\in\cM(8,2)_{\frac14^*+\frac\iB2}\]
\[\sqrt{\eta^{\flat\ang{\frac14}}}\in\cM(2,8)_{(\frac14^*+\frac\ib2,\chi_8c_4)} \6 \sqrt{\eta^{\sharp\ang4}}\in\cM(8,2)_{(\frac14^*+\frac\iB2,\chi_8b_4)}\]
\end{Lem}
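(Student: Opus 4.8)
The plan is to imitate the two-line proof of the analogous Lemma for the formal weights $\ia,\iA$: write each radicand as an honest ratio of forms already placed in known spaces, then read off the level, the character and the cusp behaviour.

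First I would reduce the four containments to the two $\ang{\frac14}$-statements. Slashing with $(\BM 0&-1\\1&0\EM)$ conjugates $\Gamma_0(2)\cap\Gamma^0(8)$ to $\Gamma_0(8)\cap\Gamma^0(2)$, interchanges $b_\bullet$ with $c_\bullet$ while fixing $\chi_8$, and sends $\eta^{\flat\ang{\frac14}}$ to a root of unity times $\eta^{\sharp\ang4}$ and $\eta^{\natural\ang{\frac14}}$ to a root of unity times $\eta^{\natural\ang4}$; here one combines $\eta^{\flat}|_{\frac12}(\BM 0&-1\\1&0\EM)=2\cdot1^{\frac78}\eta^{\sharp}$ and $\eta^{\natural}|_{\frac12}(\BM 0&-1\\1&0\EM)=1^{\frac78}\eta^{\natural}$ with $\eta^{\ang h}\big((\BM 0&-1\\1&0\EM)z\big)=\sqrt{\frac zh}e^{-\frac{\pi i}4}\eta^{\ang{\frac1h}}(z)$. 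So $\sqrt{\eta^{\natural\ang4}}$ and $\sqrt{\eta^{\sharp\ang4}}$ are, up to a unit, the $(\BM 0&-1\\1&0\EM)$-images of $\sqrt{\eta^{\natural\ang{\frac14}}}$ and $\sqrt{\eta^{\flat\ang{\frac14}}}$, and only the latter two need be treated.

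Next I would produce the two identities. Rescaling $\eta^{\natural}\eta^{\sharp}=\eta^{\sharp\ang{\frac12}2}$ and $\eta^{\natural}\eta^{\flat}\eta^{\sharp}=\eta^3$ (\S3.3) by $\ang{\frac14}$ and dividing gives $\eta^{\flat\ang{\frac14}}=\eta^{\ang{\frac14}3}\big/(\eta^{\sharp\ang{\frac18}})^2$, while rescaling $\eta^{\natural}\eta^{\flat}=\eta^{\flat\ang22}$ by $\ang{\frac14}$ gives $\eta^{\natural\ang{\frac14}}=(\eta^{\flat\ang{\frac12}})^2\big/\eta^{\flat\ang{\frac14}}$; taking the fixed branches of the roots,
\[\sqrt{\eta^{\flat\ang{\frac14}}}=\frac{\sqrt{\eta^{\ang{\frac14}3}}}{\eta^{\sharp\ang{\frac18}}},\6\sqrt{\eta^{\natural\ang{\frac14}}}=\frac{\eta^{\flat\ang{\frac12}}}{\sqrt{\eta^{\flat\ang{\frac14}}}}.\]
Here $\sqrt{\eta^{\ang{\frac14}3}}\in\cS(2,4)_{(\frac34^*+\frac\ib2,c_4b_{64})}$ is the display just before the Lemma; $\eta^{\sharp\ang{\frac18}}\in\cM(1,8)$ at weight $\frac12^*$ with character $\chi_8 b_{64}$ follows from $\eta^{\sharp}\in\cM(2,1)_{(\frac12^*,b_8)}$ (Proposition \ref{lev2}) and the $\ang{\frac18}$-rescaling rule for half-integral weight (\S2.5), which contributes the Kronecker factor $(\frac8\bullet)=\chi_8$ and replaces $b_8$ by $b_{64}$; likewise $\eta^{\flat\ang{\frac12}}\in\cM(1,4)$ at weight $\frac12^*$ with character $\chi_8 c_4$ from $\eta^{\flat}\in\cM(1,2)_{(\frac12^*,c_8)}$. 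Since $\Gamma^0(8)\subset\Gamma^0(4)$, the first quotient lies on $\Gamma_0(2)\cap\Gamma^0(8)$, i.e.\ in $\cM(2,8)$, with character $\chi_8 c_4$; in the second quotient the $\chi_8 c_4$ of $\eta^{\flat\ang{\frac12}}$ cancels that of $\sqrt{\eta^{\flat\ang{\frac14}}}$, leaving the trivial character, again on $\cM(2,8)$ (and $-\frac\ib2=\frac\ib2$ in $(\Q/\Z)\ib$ fixes the weight).

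Finally, holomorphy at the cusps. Because $\eta$ is non-vanishing on $\cH$, both functions are holomorphic on $\cH$, and at every cusp their expansions are the fixed-branch square roots of the expansions of the eta-quotients $\eta^{\flat\ang{\frac14}}$, resp.\ $\eta^{\natural\ang{\frac14}}$; substituting $\tau=8z$ turns these into $\eta(z)^2/\eta(2z)$ and $\eta(2z)^5/(\eta(z)^2\eta(4z)^2)$, whose orders of vanishing at every cusp are $\ge0$ by the eta-quotient order formula (\S3.2)---the values come out $0,\tfrac18$ and $0,\tfrac14$ respectively---so the leading exponents of the roots are $\ge0$ and the functions are modular forms. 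I expect the only genuine friction to be the last step of the character computation, namely checking that the formal half-powers of $\chi_4$ produced by the fractional rescalings cancel precisely against those hidden in the notations $\frac34^*$, $\frac12^*$, $\frac14^*$, rather than anything structural.
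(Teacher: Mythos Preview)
Your argument is correct and is exactly the paper's approach: the paper's entire proof is the single identity $\sqrt{\eta^{\flat\ang{\frac14}}}=\sqrt{\eta^{\ang{\frac14}3}}\big/\eta^{\sharp\ang{\frac18}}$, and the remaining three memberships, the character bookkeeping, and the cusp check are left to the reader in the same spirit as the analogous $\ia,\iA$ lemma. Your reduction via $(\BM 0&-1\\1&0\EM)$ and the second identity $\sqrt{\eta^{\natural\ang{\frac14}}}=\eta^{\flat\ang{\frac12}}\big/\sqrt{\eta^{\flat\ang{\frac14}}}$ are the natural way to fill in those details.
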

\begin{proof}
$\sqrt{\eta^{\flat\ang{\frac14}}}=\dfrac{\sqrt{\eta^{\ang{\frac14}3}}}{\eta^{\sharp\ang{\frac18}}}$.
\end{proof}

\vspace{0.5cm}

The weight $\frac{\ia}4$ and $\frac{\iA}4$ operators are exanpded by
\[\sqrt[4]{\eta^{\ang{\frac12}3}}\in\cS(2)_{(\frac38^*-\frac{\ia}4,{\chi_{32}}^3c_{16}b_{64})}\6\sqrt[4]{\eta^{\ang23}}\in\cS(2)_{(\frac38^*-\frac{\iA}4,\overline{\chi_{32}}^3c_{64}b_{16})}\]

\begin{Lem}
\[\sqrt[8]{\eta^{\flat}}\in\cM(2)_{(\frac1{16}^*+\frac\ia4,\chi_{16}c_{64})}\6\sqrt[8]{\eta^{\sharp}}\in\cM(2)_{(\frac1{16}^*+\frac\iA4,\overline{\chi_{16}}b_{64})}\]
\[\sqrt[8]{\eta^{\natural}}\in\cM(2)_{\frac1{16}^*-\frac{\ia+\iA}4}\]
\end{Lem}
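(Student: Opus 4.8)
The plan is to realize each of $\sqrt[8]{\eta^{\flat}}$, $\sqrt[8]{\eta^{\sharp}}$, $\sqrt[8]{\eta^{\natural}}$ as a quotient of fractional/formal weight forms already produced above, exactly as in the proof of the preceding Lemma for $\sqrt[4]{\eta^{\flat}}$, and then to read off weight, character and the formal symbols $\ia=\ang{\frac12}$, $\iA=\ang2$ by dividing the corresponding data. First I would introduce the auxiliary weight $\frac18$ forms
\[\sqrt[4]{\eta^{\sharp\ang{\frac12}}}=\frac{\sqrt[4]{\eta^{\ang{\frac12}3}}}{\sqrt{\eta^{\flat}}},\6\sqrt[4]{\eta^{\flat\ang2}}=\frac{\sqrt[4]{\eta^{\ang23}}}{\sqrt{\eta^{\sharp}}},\]
both identities holding as functions on $\cH$ since $\eta^{\sharp\ang{\frac12}}=\eta^2/\eta^{\ang{\frac12}}$ and $\eta^{\flat\ang2}=\eta^2/\eta^{\ang2}$; here the numerators are the two forms produced just above this Lemma and the denominators are the $\sqrt{\eta^{\flat}},\sqrt{\eta^{\sharp}}$ established earlier. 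Then I would set
\[\sqrt[8]{\eta^{\flat}}=\frac{\sqrt[8]{\eta^3}}{\sqrt[4]{\eta^{\sharp\ang{\frac12}}}},\6\sqrt[8]{\eta^{\sharp}}=\frac{\sqrt[8]{\eta^3}}{\sqrt[4]{\eta^{\flat\ang2}}},\6\sqrt[8]{\eta^{\natural}}=\frac{\sqrt[8]{\eta^3}}{\sqrt[8]{\eta^{\flat}}\sqrt[8]{\eta^{\sharp}}},\]
using $\sqrt[8]{\eta^3}\in\cS(2)_{(\frac3{16}^*,c_{64}b_{64})}$ and, for the last identity, the relation $\eta^{\natural}\eta^{\flat}\eta^{\sharp}=\eta^3$. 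A one-line exponent count confirms that each right-hand side really is the claimed power of $\eta$. Equivalently, each of these three functions is a square root of the corresponding $\sqrt[4]{\cdot}$ form from the preceding Lemma, so only the two-fold ambiguity in the square root has to be pinned down, and the explicit $\eta$-quotient (with the fixed branch of $\eta$) does this.

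Holomorphy on $\cH$ is automatic: every factor is a nonvanishing real power of $\eta$ or of a rescaling $\eta^{\ang h}$, so the chosen branch of $\eta$ fixes a single holomorphic function with no ambiguity. For holomorphy at the cusps I would apply the order-of-vanishing formula for eta-quotients recalled earlier, extended in the obvious way to rational rescalings and rational exponents, and check that the order of each of these quotients at every cusp $\frac nm$ is a nonnegative rational; equivalently one may observe that the eighth power --- $\eta^{\flat}$, resp. $\eta^{\sharp}$, $\eta^{\natural}$ --- already lies in $\cM(2)$, so the $q$-order at each cusp is nonnegative and the exponents lie in the appropriate $\frac1M\Z$. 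Since a quotient of modular forms need not be modular, this order computation is a step one genuinely has to carry out rather than skip.

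The bulk of the work, and the step I expect to be the real obstacle, is the multiplier system. For the transformation law one divides the automorphy data of the building blocks: the weight and character of $\sqrt[4]{\eta^{\sharp\ang{\frac12}}}$ come from those of $\sqrt[4]{\eta^{\ang{\frac12}3}}$ and $\sqrt{\eta^{\flat}}$, then those of $\sqrt[8]{\eta^{\flat}}$ from $\sqrt[8]{\eta^3}$ and $\sqrt[4]{\eta^{\sharp\ang{\frac12}}}$, symmetrically for $\sqrt[8]{\eta^{\sharp}}$, while $\sqrt[8]{\eta^{\natural}}$ inherits the trivial multiplier because the $b$- and $c$-factors cancel against those of $\sqrt[8]{\eta^3}$. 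The delicate bookkeeping is: unwinding the $*$-convention $\xi_8$-powers (recall $(\frac k{2^n}^*+a,\chi)=(\frac k{2^n}+a,\xi_8^{k/2^{n-1}}\chi)$), using $\chi_{2^{n+1}}^2=\chi_{2^n}$ and $\overline{\chi_8}=\chi_8$ to collapse the $2$-power Dirichlet components, combining the various $b_N$ and $c_N$, and tracking the coefficients of the formal symbols $\ia,\iA$ modulo $1$ so that they come out as $\frac\ia4$, $\frac\iA4$ and $-\frac{\ia+\iA}4$. Everything else is the same routine as in the two preceding lemmas.
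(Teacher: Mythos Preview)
Your approach is correct and is precisely the one the paper intends. The paper in fact gives no proof for this lemma, relying on the pattern established in the immediately preceding lemma, whose proof reads: first $\sqrt{\eta^{\sharp\ang{\frac12}}}=\sqrt{\eta^{\ang{\frac12}3}}/\eta^{\flat}$, then $\sqrt[4]{\eta^{\flat}}=\sqrt[4]{\eta^3}/\sqrt{\eta^{\sharp\ang{\frac12}}}$. Your two-step quotient
\[\sqrt[4]{\eta^{\sharp\ang{\frac12}}}=\frac{\sqrt[4]{\eta^{\ang{\frac12}3}}}{\sqrt{\eta^{\flat}}},\qquad \sqrt[8]{\eta^{\flat}}=\frac{\sqrt[8]{\eta^3}}{\sqrt[4]{\eta^{\sharp\ang{\frac12}}}}\]
is exactly the next iteration of that scheme, using the building blocks $\sqrt[4]{\eta^{\ang{\frac12}3}}$, $\sqrt[4]{\eta^{\ang23}}$ and $\sqrt[8]{\eta^3}$ the paper has just introduced; the $\sharp$ and $\natural$ cases follow symmetrically, as you say. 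Your extra remarks on holomorphy at the cusps and on the $\xi_8$/$\chi_{2^n}$ bookkeeping are more explicit than anything the paper writes down, but they are the correct checks to make.
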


\begin{Lem}
\[\sqrt[4]{\eta^{\flat\ang{\frac12}}}\in\cM(2,4)_{(\frac18^*-\frac\ia4+\frac\ib2,\overline{\chi_{32}}c_{16})}\6\sqrt[4]{\eta^{\sharp\ang2}}\in\cM(4,2)_{(\frac18^*-\frac\iA4+\frac\iB2,\chi_{32}b_{16})}\]
\[\sqrt[4]{\eta^{\natural\ang{\frac12}}}\in\cM(2,4)_{(\frac18^*+\frac\ia4+\frac\ib2,\overline{\chi_{32}}^3)}\6\sqrt[4]{\eta^{\natural\ang2}}\in\cM(4,2)_{(\frac18^*+\frac\iA4+\frac\iB2,\chi_{32}^3)}\]
\end{Lem}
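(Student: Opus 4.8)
The plan is to handle all four statements by the device already used in the two preceding subsections on formal weights: rewrite each fourth-root form as an explicit quotient of eta-operator forms whose $(\kappa,\chi)$-membership is known, and then read off the formal weight and character by additivity of weights and multiplicativity of characters. The algebra needed is the family $f^{\natural}f^{\flat}f^{\sharp}=f^3$, $f^{\natural}f^{\flat}=f^{\flat\ang22}$, $f^{\natural}f^{\sharp}=f^{\sharp\ang{\frac12}2}$ and their rescalings. Rescaling the first relation by $\ang{\frac12}$ and combining with the $\ang{\frac12}$-rescaling of the third (using $\ang{\frac12}\ang{\frac12}=\ang{\frac14}$) gives $\eta^{\ang{\frac12}3}=\eta^{\flat\ang{\frac12}}\cdot(\eta^{\sharp\ang{\frac14}})^2$, hence
\[\sqrt[4]{\eta^{\flat\ang{\frac12}}}=\frac{\sqrt[4]{\eta^{\ang{\frac12}3}}}{\sqrt{\eta^{\sharp\ang{\frac14}}}},\6\sqrt[4]{\eta^{\natural\ang{\frac12}}}=\frac{\sqrt{\eta^{\flat}}}{\sqrt[4]{\eta^{\flat\ang{\frac12}}}}=\frac{\sqrt{\eta^{\flat}}\,\sqrt{\eta^{\sharp\ang{\frac14}}}}{\sqrt[4]{\eta^{\ang{\frac12}3}}},\]
the second equality being the $\ang{\frac12}$-rescaling of $f^{\natural}f^{\flat}=f^{\flat\ang22}$; and the $\ang{\frac14}$-rescaling of $f^{\natural}f^{\sharp}=f^{\sharp\ang{\frac12}2}$ gives $\sqrt{\eta^{\sharp\ang{\frac14}}}=\eta^{\sharp\ang{\frac18}}/\sqrt{\eta^{\natural\ang{\frac14}}}$.

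Each ingredient here has been placed in an explicit holomorphic space earlier: $\sqrt[4]{\eta^{\ang{\frac12}3}}\in\cS(2)_{(\frac38^*-\frac\ia4,\chi_{32}^3c_{16}b_{64})}$, $\sqrt{\eta^{\flat}}\in\cM(2)_{(\frac14^*,\chi_8c_{16})}$, $\sqrt{\eta^{\natural\ang{\frac14}}}\in\cM(2,8)_{\frac14^*+\frac\ib2}$, and $\eta^{\sharp\ang{\frac18}}$ is the integer-weight form from the $\ib,\iB$-lemma above (equal to $\sqrt{\eta^{\ang{\frac14}3}}/\sqrt{\eta^{\flat\ang{\frac14}}}$). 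First I would substitute these into the two displayed quotients, add the formal weight labels and multiply the characters, taking account of the fact that each $\ang h$-rescaling conjugates the $b_N$, $c_N$ and Kronecker factors and introduces a factor $(\frac h\bullet)$, as in \S2.5; this should yield the label $\frac18^*-\frac\ia4+\frac\ib2$ with character $\overline{\chi_{32}}c_{16}$ for $\sqrt[4]{\eta^{\flat\ang{\frac12}}}$, and $\frac18^*+\frac\ia4+\frac\ib2$ with character $\overline{\chi_{32}}^3$ for $\sqrt[4]{\eta^{\natural\ang{\frac12}}}$. Next I would verify holomorphy at the cusps: since $\eta$ is non-vanishing on $\cH$ every quotient above is an eta-quotient, and the order-of-vanishing formula of \S3.2 shows each cusp order is $\ge0$ (equivalently the $q^{1/N}$-expansion of the quotient has no negative exponents), so each form lies in the holomorphic space; the level $\cM(2,4)$ is then forced by the levels of the ingredients.

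The remaining two assertions, $\sqrt[4]{\eta^{\sharp\ang2}}\in\cM(4,2)_{(\frac18^*-\frac\iA4+\frac\iB2,\chi_{32}b_{16})}$ and $\sqrt[4]{\eta^{\natural\ang2}}\in\cM(4,2)_{(\frac18^*+\frac\iA4+\frac\iB2,\chi_{32}^3)}$, then follow by applying $|(\BM 0&-1\\1&0\EM)$, which interchanges $\Gamma_0(\cdot)\leftrightarrow\Gamma^0(\cdot)$, $\flat\leftrightarrow\sharp$, $\ang{\frac12}\leftrightarrow\ang2$, $\ia\leftrightarrow\iA$, $\ib\leftrightarrow\iB$, $c_N\leftrightarrow b_N$ and $\chi_{2^n}\leftrightarrow\overline{\chi_{2^n}}$ (so $\overline{\chi_{32}}c_{16}\mapsto\chi_{32}b_{16}$ and $\overline{\chi_{32}}^3\mapsto\chi_{32}^3$), exactly as the mirror pairs in the $\ia,\iA$- and $\ib,\iB$-lemmas were obtained; alternatively one repeats the first two paragraphs with $\ang2$ in place of $\ang{\frac12}$ everywhere.

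The only real obstacle I foresee is determining the character exactly — telling $\chi_{32}$, $\overline{\chi_{32}}$ and their cubes apart. This is pure bookkeeping but delicate: one must absorb the $\xi_8$-twists hidden in the starred weights, track how each $\ang h$-rescaling conjugates $b_N$, $c_N$ and the Kronecker symbols and multiplies in $(\frac h\bullet)$, and keep every fourth- and square-root on the canonical branch fixed by $\sqrt[n]\phi\in x^{r/n}+\C[[x^{1/n}]]x^{(r+1)/n}$, since an inconsistent branch would change the answer by a power of $i$. Once the branches are fixed the verification reduces to a finite check on generators of $\Gamma_0(2)\cap\Gamma^0(4)$ (resp. $\Gamma_0(4)\cap\Gamma^0(2)$), entirely parallel to the $\ib,\iB$-lemma just proved.
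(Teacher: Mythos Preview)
Your approach is correct and is essentially the paper's: the paper's entire proof is the single line
\[\sqrt{\eta^{\sharp\ang{\frac14}}}=\dfrac{\sqrt{\eta^{\ang{\frac14}3}}}{\eta^{\flat\ang{\frac12}}}\in\cM(2,4)_{(\frac14^*+\frac\ib2,\chi_8b_{64})},\qquad \sqrt[4]{\eta^{\flat\ang{\frac12}}}=\dfrac{\sqrt[4]{\eta^{\ang{\frac12}3}}}{\sqrt{\eta^{\sharp\ang{\frac14}}}},\]
which is exactly your key quotient. The only cosmetic difference is that for the intermediate form $\sqrt{\eta^{\sharp\ang{\frac14}}}$ the paper uses $\sqrt{\eta^{\ang{\frac14}3}}/\eta^{\flat\ang{\frac12}}$ (from $f^3=f^{\flat\ang22}f^{\sharp}$ at $\ang{\frac14}$) while you use $\eta^{\sharp\ang{\frac18}}/\sqrt{\eta^{\natural\ang{\frac14}}}$ (from $f^{\sharp\ang{\frac12}2}=f^{\natural}f^{\sharp}$ at $\ang{\frac14}$); both are valid and lead to the same label after the bookkeeping you describe.
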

\begin{proof}First $\sqrt{\eta^{\sharp\ang{\frac14}}}=\dfrac{\sqrt{\eta^{\ang{\frac14}3}}}{\eta^{\flat\ang{\frac12}}}\in\cM(2,4)_{(\frac14^*+\frac\ib2,\chi_8b_{64})}$ and\\
$\sqrt[4]{\eta^{\flat\ang{\frac12}}}=\dfrac{\sqrt[4]{\eta^{\ang{\frac12}3}}}{\sqrt{\eta^{\sharp\ang{\frac14}}}}\in\cM(2,4)_{(\frac18^*+\frac\ia4+\frac\ib2,\overline{\chi_{32}}^3)}$
\end{proof}

\newpage

\begin{Lem}
\[\z \sqrt{[\eta,\frac32]^{\ang{\frac12}}}\in\cM(4,8)_{\frac14^*+\frac{\ia+\ib}2} \6 \sqrt{[\eta,\frac12]^{\ang2}}\in\cM(8,4)_{\frac14^*+\frac{\iA+\iB}2}\]
\[\z \sqrt{[\eta,\frac12]^{\ang{\frac12}}}\in\cM(4,8)_{(\frac14^*+\frac{\ia+\ib}2,\chi_8)} \6 \sqrt{[\eta,\frac32]^{\ang2}}\in\cM(8,4)_{(\frac14^*+\frac{\iA+\iB}2,\chi_8)}\]
\end{Lem}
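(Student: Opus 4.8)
The plan is to realise each of the four square roots as a $q$-translate of the form $\sqrt{\eta^{\natural\ang{\frac14}}}$, whose membership is already known, and then transport its modularity along that translate.

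First, the two right-hand statements would follow from the two left-hand ones by the Fricke involution $f\mapsto f|(\BM0&-1\\1&0\EM)$: it exchanges $\cM(4,8)\leftrightarrow\cM(8,4)$, interchanges the formal weights $\ia\leftrightarrow\iA$ and $\ib\leftrightarrow\iB$ (since $\ang{\frac12}\leftrightarrow\ang2$ and $\ang{\frac14}\leftrightarrow\ang4$ under it), and — using the behaviour of $\eta^{\natural}$ under $(\BM0&-1\\1&0\EM)$, the matrix identity $(\BM1&c\\0&1\EM)(\BM0&-1\\1&0\EM)=(\BM0&-1\\1&0\EM)(\BM1&0\\-c&1\EM)$, and the reduction device from the proof of Proposition \ref{lev2} — it sends $[\eta,\frac32]^{\ang{\frac12}}$ to a unit times $[\eta,\frac12]^{\ang2}$ and $[\eta,\frac12]^{\ang{\frac12}}$ to a unit times $[\eta,\frac32]^{\ang2}$, the Dirichlet parts matching. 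So it suffices to prove $\sqrt{[\eta,\frac32]^{\ang{\frac12}}}\in\cM(4,8)_{\frac14^*+\frac{\ia+\ib}2}$ and $\sqrt{[\eta,\frac12]^{\ang{\frac12}}}\in\cM(4,8)_{(\frac14^*+\frac{\ia+\ib}2,\chi_8)}$.

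Since $[\eta,\frac c2]=\eta^{\natural\ang{\frac12}}|(\BM1&c\\0&1\EM)$, the identities $\big(f|(\BM1&c\\0&1\EM)\big)^{\ang{\frac12}}=f^{\ang{\frac12}}|(\BM1&2c\\0&1\EM)$ and $\big(\eta^{\natural\ang{\frac12}}\big)^{\ang{\frac12}}=\eta^{\natural\ang{\frac14}}$ give $[\eta,\frac32]^{\ang{\frac12}}=\eta^{\natural\ang{\frac14}}|(\BM1&6\\0&1\EM)$ and $[\eta,\frac12]^{\ang{\frac12}}=\eta^{\natural\ang{\frac14}}|(\BM1&2\\0&1\EM)$; taking square roots (each branch has leading coefficient $1$, so $\sqrt{g|(\BM1&c\\0&1\EM)}=(\sqrt g)|(\BM1&c\\0&1\EM)$) yields
\[\z\sqrt{[\eta,\frac32]^{\ang{\frac12}}}=\sqrt{\eta^{\natural\ang{\frac14}}}\,\big|(\BM1&6\\0&1\EM)\6\text{and}\6\sqrt{[\eta,\frac12]^{\ang{\frac12}}}=\sqrt{\eta^{\natural\ang{\frac14}}}\,\big|(\BM1&2\\0&1\EM),\]
and $\sqrt{\eta^{\natural\ang{\frac14}}}\in\cM(2,8)_{\frac14^*+\frac\ib2}\subset\cM(4,8)_{\frac14^*+\frac\ib2}$ by the preceding Lemma.

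The structural point I would use next is that $(\BM1&2\\0&1\EM)$ — hence also $(\BM1&6\\0&1\EM)=(\BM1&2\\0&1\EM)(\BM1&4\\0&1\EM)$ — normalizes $\Gamma_0(4)\cap\Gamma^0(8)$: if $(\BM a&b\\c&d\EM)$ lies in it then $4\mid c$ and $8\mid b$, so $ad\equiv1\pmod4$, whence $a\equiv d\pmod4$, and the upper-right entry $b+2(a-d)-4c$ of $(\BM1&-2\\0&1\EM)(\BM a&b\\c&d\EM)(\BM1&2\\0&1\EM)$ is $\equiv0\pmod8$. Hence $f\mapsto f|(\BM1&6\\0&1\EM)$ and $f\mapsto f|(\BM1&2\\0&1\EM)$ are automorphisms of the graded space of forms over $\Gamma_0(4)\cap\Gamma^0(8)$, so both square roots already lie in $\cM(4,8)$. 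It then remains to read off their weights and Dirichlet characters, and this is where I expect the real work to lie: the multiplier system attached to weight $\frac14^*+\frac\ib2$ carries an $\eta^{\ang{\frac14}}$-component (the source of the $\frac\ib2$), and translating by $(\BM1&2\\0&1\EM)$ does not fix it — $\eta^{\ang{\frac14}}(\tau+2)=\eta(\tau/4+1/2)$ is not a scalar multiple of $\eta(\tau/4)$ — so the discrepancy must be computed by hand, conveniently by re-expanding $\sqrt{\eta^{\natural\ang{\frac14}}}=(\sqrt\theta)^{\ang{\frac18}}$ as a theta power and using $\eta|(\BM1&1\\0&1\EM)=1^{\frac1{24}}\eta$. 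The outcome to be checked is that this discrepancy is precisely a multiplier of the kind carried by $\sqrt{\eta^{\ang{\frac12}}}$, which upgrades $\frac\ib2$ to $\frac{\ia+\ib}2$, and that the extra $(\BM1&4\\0&1\EM)$ relating the two roots multiplies the $q^{n/8}$-coefficients by $(-1)^n$, i.e.\ effects the $\chi_8$-twist; comparing leading Fourier coefficients then singles out $\sqrt{[\eta,\frac32]^{\ang{\frac12}}}$ as the one with trivial Dirichlet part.
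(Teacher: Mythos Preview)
Your route is genuinely different from the paper's, and the hard step you flag is exactly the one the paper avoids. The paper does not track multiplier systems under conjugation by $(\BM1&2\\0&1\EM)$ at all; instead it reads the first assertion directly off the identity displayed in the proof of Proposition~\ref{aA},
\[
\sqrt{\eta^{\natural\ang{\frac14}}}\cdot\sqrt{[\eta,\tfrac32]^{\ang{\frac12}}}
=\sqrt{[\eta,\tfrac32]\,\eta^{\natural\ang2}}+(1-i)\sqrt{[\eta,\tfrac12]\,\eta^{\sharp\ang2}}.
\]
Every factor on the right already has its weight and character pinned down by the lemmas preceding Proposition~\ref{aA}, so the right side lies in $\cM(4)_{\frac12^*+\frac\ia2}$; dividing by the nowhere-vanishing $\sqrt{\eta^{\natural\ang{\frac14}}}\in\cM(2,8)_{\frac14^*+\frac\ib2}$ gives $\sqrt{[\eta,\frac32]^{\ang{\frac12}}}\in\cM(4,8)_{\frac14^*+\frac{\ia+\ib}2}$ at once (remember $-\frac\ib2=\frac\ib2$). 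No conjugation bookkeeping is needed.

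In your argument, the observation that $(\BM1&2\\0&1\EM)$ normalises $\Gamma_0(4)\cap\Gamma^0(8)$ is correct and does show the square roots are modular for that group with \emph{some} multiplier. But the step you label ``the real work'' --- showing that conjugation by $(\BM1&2\\0&1\EM)$ turns the $\frac\ib2$-multiplier of $\sqrt{\eta^{\natural\ang{\frac14}}}$ into the $\frac{\ia+\ib}2$-multiplier --- is not carried out, and it is not a routine check: $\frac\ib2$ is \emph{defined} as the automorphy factor of $\sqrt{\eta^{\ang{\frac14}3}}$, and $\eta^{\ang{\frac14}}|(\BM1&2\\0&1\EM)$ is not a scalar multiple of any $\eta^{\ang h}$, so you would have to unwind the definition and recombine the result as a product of the specific $\eta$-powers that define $\ia$ and $\ib$. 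That is doable in principle, but it is the whole content of the lemma, and as written it is only a hope. The paper's device of multiplying by $\sqrt{\eta^{\natural\ang{\frac14}}}$ and invoking an identity already proved sidesteps this entirely.
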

\begin{proof}The first assertion follows from the identity in the proof of Proposition \ref{aA}.\end{proof}

\begin{Prop}\label{Ab} We have an identity in $\cM(8)_{(\frac12^*+\frac{\iA+\ib}2,\overline{\chi_{16}})}$
\[\z\sqrt{[\eta,\frac32]^{\ang{\frac12}}[\eta,\frac12]}=\sqrt{\eta^{\natural\ang{\frac14}}\eta^{\natural\ang2}}-(1+i)\sqrt{\eta^{\flat\ang{\frac14}}\eta^{\sharp\ang2}}\]
\end{Prop}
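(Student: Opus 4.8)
The proof has two parts, the membership claim and the Fourier identity, the latter carrying the content. For the membership I would first observe $\sqrt{[\eta,\frac32]^{\ang{\frac12}}[\eta,\frac12]}=\sqrt{[\eta,\frac32]^{\ang{\frac12}}}\cdot\sqrt{[\eta,\frac12]}$: both sides have constant term $1$ and square to the same series, so they agree under the normalization of $\sqrt{\,\cdot\,}$ fixed in \S2.3. By the two preceding lemmas the factors lie in $\cM(4,8)_{\frac14^*+\frac{\ia+\ib}2}$ and in $\cM(4)_{(\frac14^*+\frac{\ia+\iA}2,\overline{\chi_{16}})}$, so their product has weight $\frac12^*+\frac{\iA+\ib}2$ (the two $\frac\ia2$-contributions add to $\ia\equiv0$) and character $\overline{\chi_{16}}$; the one genuine point is upgrading the level from $\Gamma_0(4)\cap\Gamma^0(8)$ to $\Gamma_0(8)\cap\Gamma^0(8)$, which I would do by conjugating with $(\BM 0&-1\\1&0\EM)$ exactly as in the lemma establishing $\sqrt{[\eta,\frac12]}\in\cM(4)_{(\ldots)}$.

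For the identity, since every object here is determined by its $q$-expansion it suffices to prove the displayed equality of power series, and for that I would square both sides. On the left, applying $\ang{\frac12}$ to the stated $[\eta,\frac32]=\eta^{\natural\ang2}-2i\eta^{\sharp\ang2}$ gives $[\eta,\frac32]^{\ang{\frac12}}=\eta^{\natural}-2i\eta^{\sharp}$, so $\text{LHS}^2=(\eta^{\natural}-2i\eta^{\sharp})(\eta^{\natural\ang2}+2i\eta^{\sharp\ang2})$. On the right one needs $\sqrt{\eta^{\natural\ang{\frac14}}\eta^{\flat\ang{\frac14}}}$ and $\sqrt{\eta^{\natural\ang2}\eta^{\sharp\ang2}}$: the product expansions in the $\natural$-operator subsection give $\eta^{\natural}\eta^{\flat}=\VP{n\in\N}\big(\frac{1-q^n}{1+q^n}\big)^2=(\eta^{\flat\ang2})^2$, hence $\sqrt{\eta^{\natural\ang{\frac14}}\eta^{\flat\ang{\frac14}}}=\eta^{\flat\ang{\frac12}}$, while $\sqrt{\eta^{\natural}\eta^{\sharp}}=\VS{n\in2\M+1}q^{n^2/16}$ (Proposition \ref{theta8}) together with Lemma \ref{etanf} gives $\sqrt{\eta^{\natural\ang2}\eta^{\sharp\ang2}}=\eta^{\sharp}$. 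Using $(1+i)^2=2i$ this yields
\[\text{RHS}^2=\eta^{\natural\ang{\frac14}}\eta^{\natural\ang2}+2i\,\eta^{\flat\ang{\frac14}}\eta^{\sharp\ang2}-2(1+i)\,\eta^{\flat\ang{\frac12}}\eta^{\sharp}.\]
Finally, rescaling the two auxiliary identities $\frac12(\eta^{\natural}+\eta^{\flat})=\eta^{\natural\ang4}$ and $\frac14(\eta^{\natural}-\eta^{\flat})=\eta^{\sharp\ang4}$ from the proof of Lemma \ref{rel22} (by $\ang{\frac14}$ and by $\ang{\frac12}$) gives $\eta^{\natural\ang{\frac14}}=\eta^{\natural}+2\eta^{\sharp}$, $\eta^{\flat\ang{\frac14}}=\eta^{\natural}-2\eta^{\sharp}$ and $\eta^{\flat\ang{\frac12}}=\eta^{\natural\ang2}-2\eta^{\sharp\ang2}$; substituting, $\text{RHS}^2$ collapses to $\eta^{\natural}\eta^{\natural\ang2}+2i\,\eta^{\natural}\eta^{\sharp\ang2}-2i\,\eta^{\sharp}\eta^{\natural\ang2}+4\,\eta^{\sharp}\eta^{\sharp\ang2}$, which is precisely $\text{LHS}^2$.

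It then remains only to fix the sign: both sides of the asserted identity have constant term $1$ — on the right $\eta^{\sharp\ang2}$ vanishes at $\infty$, so the second summand contributes nothing to the constant term — and hence the two normalized square roots coincide. I expect the main obstacle to be the branch/normalization bookkeeping for the nested radicals and the $\ang h$-rescalings (tracking which roots of unity occur, and checking that rescaling commutes with $\sqrt{\,\cdot\,}$ and with the products used above), together with the small level refinement in the membership step. An alternative route, closer in spirit to the proof of Proposition \ref{aA}, would be to act by $(\BM 1&1\\0&1\EM)$ on a product of two square-root sums of the $o$-, $d$-, $u$-forms of Lemmas \ref{oud} and \ref{ud}, but the squaring argument above looks shorter.
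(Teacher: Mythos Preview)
Your squaring argument is correct and complete for the power-series identity; the computation $\text{LHS}^2=\text{RHS}^2$ checks out line by line, and the constant-term check fixes the sign. The paper takes a different route: it does not square. Instead it starts from the auxiliary identity
\[\sqrt{\eta^{\natural\ang{\frac14}}[\eta,\tfrac12]^{\ang{\frac12}}}=\sqrt{[\eta,\tfrac12]\eta^{\natural\ang2}}+(1+i)\sqrt{[\eta,\tfrac32]\eta^{\sharp\ang2}},\]
which is the $\sigma_4$-conjugate of the intermediate step in the proof of Proposition~\ref{aA}, multiplies through by $\sqrt{[\eta,\tfrac12]}$, rewrites the result as $\sqrt{\eta^{\natural\ang{\frac14}}}\big(\sqrt{\eta^{\flat\ang{\frac14}}\eta^{\natural\ang2}}+(1+i)\sqrt{\eta^{\natural\ang{\frac14}}\eta^{\sharp\ang2}}\big)$, and then applies $(\BM1&4\\0&1\EM)$, which swaps $\eta^{\natural\ang{\frac14}}\leftrightarrow\eta^{\flat\ang{\frac14}}$, sends $[\eta,\tfrac12]^{\ang{\frac12}}\mapsto[\eta,\tfrac32]^{\ang{\frac12}}$, and produces the sign on the second term. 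So the paper's proof leans on Proposition~\ref{aA} and a twist, while yours is self-contained; in exchange, the paper gets the membership claim essentially for free from the matrix action, whereas your level upgrade $\Gamma_0(4)\cap\Gamma^0(8)\to\Gamma_0(8)\cap\Gamma^0(8)$ is only sketched. One small simplification: your identities $\sqrt{\eta^{\natural\ang{\frac14}}\eta^{\flat\ang{\frac14}}}=\eta^{\flat\ang{\frac12}}$ and $\sqrt{\eta^{\natural\ang2}\eta^{\sharp\ang2}}=\eta^{\sharp}$ follow immediately from $f^{\natural}f^{\flat}=f^{\flat\ang22}$ and $f^{\natural}f^{\sharp}=f^{\sharp\ang{\frac12}2}$ in \S3.3, without the detour through theta series.
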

\begin{proof}By acting $(\BM1&4\\0&1\\\EM)$ on
\begin{align*}\z \sqrt{\eta^{\natural\ang{\frac14}}}\sqrt{[\eta,\frac12]^{\ang{\frac12}}[\eta,\frac12]}&\z =\big(\sqrt{[\eta,\frac12]\eta^{\natural\ang2}}+(1+i)\sqrt{[\eta,\frac32]\eta^{\sharp\ang2}}\big)\sqrt{[\eta,\frac12]}\\
&=\big(\eta^{\flat\ang{\frac12}}+2(1+i)\eta^{\sharp\ang2}\big)\sqrt{\eta^{\natural\ang2}}+(1+i)\big(\eta^{\natural{\ang{\frac14}}}-2\eta^{\sharp}\big)\sqrt{\eta^{\sharp\ang2}}\\
&\z =\sqrt{\eta^{\natural\ang{\frac14}}}\big(\sqrt{\eta^{\flat\ang{\frac14}}\eta^{\natural\ang2}}+(1+i)\sqrt{\eta^{\natural\ang{\frac14}}\eta^{\sharp\ang2}}\big)\end{align*}
\end{proof}

\vspace{0.5cm}

For $\psi:\Gamma_0(N)\cap\Gamma^0(N)\to\C^\times$ write $\cM(N|\psi)=\cM(\ker(\psi))$.

It is important that
\[\z [u,0]^{\ang{\frac12}}\in\cM(4|\chi_8)_{\frac14^*+\frac\ib2}\]
\[\z [o,0]^{\ang{\frac12}}\in\cM(4|\chi_8)_{\frac14^*+\frac{\ia+\ib}2}\]\\

Make formal symbols $\downarrow$, $\uparrow$ and let $\frac{\downarrow}2=\frac{\uparrow}2=\frac{\ia+\iA}2$.

Then, make operators of weight $\frac\downarrow4$ and $\frac\uparrow4$ by
\[\z \sqrt{[o,\frac74]}\in\cM(4|\chi_8)_{(\frac18^*+\frac{\downarrow}4,\chi_{32})} \6 \sqrt{[o,\frac12]}\in\cM(4|\chi_8)_{(\frac18^*+\frac{\uparrow}4,\overline{\chi_{32}})}\]
then Lemma \ref{oud} leads to
\[\z \sqrt{[o,\frac34]}\in\cM(4|\chi_8)_{(\frac18^*+\frac\iA2+\frac\downarrow4,\chi_{32})} \6 \sqrt{[o,\frac32]}\in\cM(4|\chi_8)_{(\frac18^*+\frac\ia2+\frac\uparrow4,\overline{\chi_{32}})}\]
In addition, lemma \ref{ud} leads to
\[\z \sqrt{[o,\frac14]}\in\cM(4|\chi_8)_{(\frac18^*+\frac{\ia+\iB}2+\frac\downarrow4,\overline{\chi_{32}})} \6 \sqrt{[o,0]}\in\cM(4|\chi_8)_{(\frac18^*+\frac{\iA+\ib}2+\frac\uparrow4,\chi_{32})}\]
\[\z \sqrt{[o,\frac54]}\in\cM(4|\chi_8)_{(\frac18^*+\frac\iB2-\frac{\downarrow}4,\overline{\chi_{32}}c_8)} \6 \sqrt{[o,1]}\in\cM(4|\chi_8)_{(\frac18^*+\frac\ib2-\frac{\uparrow}4,\chi_{32}b_8)}\]\\

\newpage

\subsection{s-form}

Put
\[\z[s,0]=\sqrt{\eta^{\natural\ang{\frac12}}\eta^{\natural{\ang2}}}+(1^{\frac1{16}}+1^{\frac{15}{16}})^2\sqrt{\eta^{\flat\ang{\frac12}}\eta^{\sharp\ang2}}\]
\[\z[-s,0]=[s,0]^{\sigma_8}=\sqrt{\eta^{\natural\ang{\frac12}}\eta^{\natural{\ang2}}}-(1^{\frac1{16}}-1^{\frac{15}{16}})^2\sqrt{\eta^{\flat\ang{\frac12}}\eta^{\sharp\ang2}}\]
and
\[\z [s,\frac k4]=[s,0]\big|(\BM 1&k\\0&1\EM)\]
\[\z [-s,\frac k4]=[-s,0]\big|(\BM 1&4k\\0&1\EM)\]
then $[s,\frac k4+2]=[s,\frac k4]$ and $[-s,\frac k4+2]=[-s,\frac k4]$. For example
\[\z[s,1]=\sqrt{\eta^{\natural\ang{\frac12}}\eta^{\natural{\ang2}}}-(1^{\frac1{16}}+1^{\frac{15}{16}})^2\sqrt{\eta^{\flat\ang{\frac12}}\eta^{\sharp\ang2}}\]
\[\z[-s,1]=\sqrt{\eta^{\natural\ang{\frac12}}\eta^{\natural{\ang2}}}+(1^{\frac1{16}}-1^{\frac{15}{16}})^2\sqrt{\eta^{\flat\ang{\frac12}}\eta^{\sharp\ang2}}\]
\[\z [s,\frac12]=\sqrt{\eta^{\flat\ang{\frac12}}\eta^{\natural{\ang2}}}+(1^{\frac1{16}}+1^{\frac3{16}})^2\sqrt{\eta^{\natural\ang{\frac12}}\eta^{\sharp\ang2}}\]
\[\z [s,\frac14]=\sqrt{[\eta,\frac12]\eta^{\natural\ang2}}+(1+1^{\frac18})^2\sqrt{[\eta,\frac32]\eta^{\sharp\ang2}}\]\\

\begin{Lem}\label{uds}
\[\z \sqrt{[d,0][u,0]}=\frac12\Big(\sqrt{[s,0]}+\sqrt{[-s,1]}\Big)\]
\[\z \sqrt{[d,1][u,1]}=\frac12\Big(\sqrt{[s,0]}-\sqrt{[-s,1]}\Big)\]
\[\z \sqrt{[d,\frac32][u,\frac32]}=\frac{1-1^{\frac18}}2\sqrt{[s,0]}+\frac{1+1^{\frac18}}2\sqrt{[-s,1]}\]
and
\[\z \sqrt{[o,\frac14][u,\frac74]}=(1-\frac1{\sqrt2})\sqrt{[s,\frac14]^{\ang2}}+\frac1{\sqrt2}\sqrt{[-s,\frac54]^{\ang2}}\]
\[\z \sqrt{[o,\frac34][u,\frac14]}=\frac1{1+i}\sqrt{[s,\frac14]^{\ang2}}+\frac1{1-i}\sqrt{[-s,\frac54]^{\ang2}}\]
\[\z \sqrt{[o,\frac74][u,\frac54]}=\frac1{1-i}\sqrt{[s,\frac14]^{\ang2}}+\frac1{1+i}\sqrt{[-s,\frac54]^{\ang2}}\]
\[\z \sqrt{[o,\frac54][u,\frac34]}=-\frac1{\sqrt2}\sqrt{[s,\frac14]^{\ang2}}+(1+\frac1{\sqrt2})\sqrt{[-s,\frac54]^{\ang2}}\]
\end{Lem}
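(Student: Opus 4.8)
The plan is to reduce each identity, by substituting the definitions of $[d,\frac k4]$, $[u,\frac k4]$, $[o,\frac k4]$, $[s,\frac k4]$, $[-s,\frac k4]$ and squaring off the outermost radical, to the two linear $\eta$-relations
\[\eta^{\natural\ang{\frac12}}=\eta^{\natural\ang2}+2\eta^{\sharp\ang2},\qquad\eta^{\flat\ang{\frac12}}=\eta^{\natural\ang2}-2\eta^{\sharp\ang2},\]
which are the images under $\ang{\frac12}$ of $\frac12(\eta^{\natural}+\eta^{\flat})=\eta^{\natural\ang4}$ and $\frac14(\eta^{\natural}-\eta^{\flat})=\eta^{\sharp\ang4}$ from the proof of Lemma \ref{rel22}. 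Because every radical appearing is the normalized root $\sqrt[n]{\phi}$ of a series with a distinguished leading monomial, there is no branch ambiguity, so it suffices to verify equality after squaring, the leading Fourier terms matching automatically; throughout I use $\sqrt{\phi}\,\sqrt{\psi}=\sqrt{\phi\psi}$ for such normalized roots.

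For the first identity I would set $u=\sqrt{\eta^{\natural\ang{\frac12}}\eta^{\natural\ang2}}$, $v=\sqrt{\eta^{\flat\ang{\frac12}}\eta^{\sharp\ang2}}$, $u'=\sqrt{\eta^{\natural\ang{\frac12}}\eta^{\sharp\ang2}}$, $v'=\sqrt{\eta^{\natural\ang2}\eta^{\flat\ang{\frac12}}}$, so that $u'v'=uv$ and, by the two relations above, $u^2-2v^2=v'^2+2u'^2$. From $(1^{\frac1{16}}\pm1^{\frac{15}{16}})^2=2\pm\sqrt2$ one reads off $[s,0]=u+(2+\sqrt2)v$ and $[-s,1]=u+(\sqrt2-2)v$, whence $[s,0]+[-s,1]=2u+2\sqrt2\,v=2\sqrt{[\eta,\frac74][\eta,\frac14]}$ by Proposition \ref{aA}, and $[s,0][-s,1]=u^2+2\sqrt2\,uv-2v^2=(v'+\sqrt2\,u')^2$, so $\sqrt{[s,0][-s,1]}=v'+\sqrt2\,u'$. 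On the other side, multiplying out $[d,0][u,0]=\frac12\big(\sqrt{\eta^{\natural\ang2}}+\sqrt{2\eta^{\sharp\ang2}}\big)\big(\sqrt{\eta^{\natural\ang{\frac12}}}+\sqrt{\eta^{\flat\ang{\frac12}}}\big)=\frac12(u+v'+\sqrt2\,u'+\sqrt2\,v)$ gives $4[d,0][u,0]=([s,0]+[-s,1])+2\sqrt{[s,0][-s,1]}$, which is the claim. For the second and third identities I would repeat this with $[d,1][u,1]=\frac12(u-v'-\sqrt2\,u'+\sqrt2\,v)$ and $[d,\frac32][u,\frac32]=\frac1{1-i}(u-iv'-i\sqrt2\,u'-\sqrt2\,v)$; only the root-of-unity arithmetic ($1^{\frac18}=\frac{1+i}{\sqrt2}$, $1^{\frac14}=i$) changes, and it produces the stated coefficients $\frac12(1\mp1^{\frac18})$ and so on.

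For the last four identities I would run the identical scheme transported by the rescaling $\ang2$ and the twist $|(\BM 1&1\\0&1\EM)$: $[s,\frac14]^{\ang2}$ and $[-s,\frac54]^{\ang2}$ replace $[s,0]$ and $[-s,1]$, the mixed products $[o,\frac c4][u,\frac{c'}4]$ replace $[d,\frac k4][u,\frac k4]$ (matched through the $\ang2$-pairings $\sqrt{[o,\frac74][o,\frac14]}=[o,\frac14]^{\ang2}$, $\sqrt{[u,\frac34][u,\frac54]}=[o,\frac34]^{\ang2}$, etc. of Lemma \ref{ud} together with Lemma \ref{oud}), and the coefficients transform as $\frac12\mapsto\frac1{1\pm i}$ and $\frac12\mapsto1\mp\frac1{\sqrt2}$ exactly as in the displays following Lemma \ref{ud}. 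Squaring once more and applying the same two linear relations in their $\ang2$-rescaled form then closes the argument.

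I expect the only genuine obstacle to be recognizing $[s,0][-s,1]$, and its $\ang2$-twisted analogues, as a perfect square — that is, picking the right combination of the linear relations of Lemma \ref{rel22} (equivalently of Lemma \ref{rel21} and the $[\eta,\frac12]$, $[\eta,\frac32]$ identity of Proposition \ref{aA}) so that the cross-term $2\sqrt2\,uv$ together with $u^2$ and $-2v^2$ assembles into $(v'+\sqrt2\,u')^2$. Once that square root is in closed form, everything that remains is finite bookkeeping with $8$th and $16$th roots of unity, and the passage from equal squares back to equal forms costs nothing because of the normalized-branch convention.
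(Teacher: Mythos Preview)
The paper states Lemma \ref{uds} without proof, so there is nothing to compare against; your proposal supplies what the paper omits. Your argument for the first three identities is complete and correct: the key computation $[s,0][-s,1]=u^2+2\sqrt2\,uv-2v^2=(v'+\sqrt2\,u')^2$ does follow from the two linear relations $\eta^{\natural\ang{\frac12}}\pm\eta^{\flat\ang{\frac12}}=2\eta^{\natural\ang2},\,4\eta^{\sharp\ang2}$ exactly as you say, and the root-of-unity bookkeeping for the third identity checks (I verified $a^2[s,0]+b^2[-s,1]+2ab(v'+\sqrt2\,u')$ against $[d,\tfrac32][u,\tfrac32]$ term by term).

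For the last four identities your sketch is plausible but thinner than it looks. The forms $[s,\tfrac14]$ and $[-s,\tfrac54]$ are not simply $\ang2$-rescalings of $[s,0]$ and $[-s,1]$: the paper's explicit formula $[s,\tfrac14]=\sqrt{[\eta,\tfrac12]\eta^{\natural\ang2}}+(1+1^{\frac18})^2\sqrt{[\eta,\tfrac32]\eta^{\sharp\ang2}}$ shows that the twist by $(\BM1&1\\0&1\EM)$ mixes the building blocks in a different pattern (it brings in $[\eta,\tfrac12]$ and $[\eta,\tfrac32]$ rather than $\eta^{\natural\ang{\frac12}}$ and $\eta^{\flat\ang{\frac12}}$). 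So the ``identical scheme transported by $\ang2$ and twist'' is not literally identical --- you would need to redo the perfect-square identification $[s,\tfrac14]^{\ang2}[-s,\tfrac54]^{\ang2}=(\cdots)^2$ from scratch, using Lemma \ref{oud} to express the $[o,\tfrac c4][u,\tfrac{c'}4]$ products in the new basis. This is still finite linear algebra over $\Z[1^{\frac1{16}}]$ and your method will go through, but the passage is not as automatic as your last paragraph suggests; writing out at least one of the four explicitly (say $\sqrt{[o,\tfrac34][u,\tfrac14]}$, whose coefficients $\tfrac1{1\pm i}$ are the cleanest) would close the gap.
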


\begin{Prop}\label{now}We have an identity in $\cM(8)_{\frac12^*}$
\[\z\sqrt{[s,0][-s,0]}=\eta^{\flat}+2\eta^{\sharp}\]
and in $\cM(8)_{(\frac12^*,\chi_8)}$
\[\z\sqrt{[s,0][s,1]}=\eta^{\flat\ang{\frac12}}-2\sqrt2\eta^{\sharp\ang2}\]
\end{Prop}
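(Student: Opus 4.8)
The plan is to prove both identities by squaring and then reducing everything to the eta-relations of Lemmas~\ref{rel21} and~\ref{rel22}; the spaces $\cM(8)_{\frac12^*}$ and $\cM(8)_{(\frac12^*,\chi_8)}$ then only serve to name the homogeneous components in which the final equalities hold. Put $A=\sqrt{\eta^{\natural\ang{\frac12}}\eta^{\natural\ang2}}$ and $B=\sqrt{\eta^{\flat\ang{\frac12}}\eta^{\sharp\ang2}}$, so that $A^2=\eta^{\natural\ang{\frac12}}\eta^{\natural\ang2}$ and $B^2=\eta^{\flat\ang{\frac12}}\eta^{\sharp\ang2}$. Using $(1^{\frac1{16}}+1^{\frac{15}{16}})^2=2+\sqrt2$ and $(1^{\frac1{16}}-1^{\frac{15}{16}})^2=-(2-\sqrt2)$, the definitions read $[s,0]=A+(2+\sqrt2)B$, $[-s,0]=A+(2-\sqrt2)B$ and $[s,1]=A-(2+\sqrt2)B$, which give the algebraic identities
\[[s,0][-s,0]=A^2+4AB+2B^2,\6[s,0][s,1]=A^2-(6+4\sqrt2)B^2.\]

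The key reduction is to apply the $\ang{\frac12}$-rescaling (a ring homomorphism on power series) to the two identities $\frac12(\eta^{\natural}+\eta^{\flat})=\eta^{\natural\ang4}$ and $\frac14(\eta^{\natural}-\eta^{\flat})=\eta^{\sharp\ang4}$ from the proof of Lemma~\ref{rel22}, which yields
\[\eta^{\natural\ang{\frac12}}=\eta^{\natural\ang2}+2\eta^{\sharp\ang2},\6\eta^{\flat\ang{\frac12}}=\eta^{\natural\ang2}-2\eta^{\sharp\ang2}.\]
Substituting these, $A^2$ and $B^2$ become explicit quadratics in $\eta^{\natural\ang2}$ and $\eta^{\sharp\ang2}$. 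For the second identity this is already enough: one checks $A^2-6B^2=(\eta^{\flat\ang{\frac12}})^2+8(\eta^{\sharp\ang2})^2$ (both sides equal $(\eta^{\natural\ang2})^2-4\eta^{\natural\ang2}\eta^{\sharp\ang2}+12(\eta^{\sharp\ang2})^2$), so $[s,0][s,1]=(\eta^{\flat\ang{\frac12}})^2-4\sqrt2\,\eta^{\flat\ang{\frac12}}\eta^{\sharp\ang2}+8(\eta^{\sharp\ang2})^2=(\eta^{\flat\ang{\frac12}}-2\sqrt2\,\eta^{\sharp\ang2})^2$; since $[s,0]$ and $[s,1]$ both have constant term $1$, the canonical square root is $+(\eta^{\flat\ang{\frac12}}-2\sqrt2\,\eta^{\sharp\ang2})$, which lies in $\cM(8)_{(\frac12^*,\chi_8)}$ by Theorem~\ref{4}. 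For the first identity I additionally invoke Lemma~\ref{rel22} in the form $(\eta^{\natural\ang2})^2-4(\eta^{\sharp\ang2})^2=\eta^{\flat2}$ (subtract its two displayed equalities), and Lemmas~\ref{rel21} and~\ref{rel22} together in the form $(\eta^{\natural\ang2}\eta^{\sharp\ang2})^2=\frac1{16}(\eta^{\natural4}-\eta^{\flat4})=\eta^{\sharp4}$, hence $\eta^{\natural\ang2}\eta^{\sharp\ang2}=\eta^{\sharp2}$ after matching the $q^{1/4}$-coefficient. Then $A^2+2B^2=\eta^{\flat2}+4\eta^{\sharp2}$, while $A^2B^2=(\eta^{\natural\ang{\frac12}}\eta^{\flat\ang{\frac12}})(\eta^{\natural\ang2}\eta^{\sharp\ang2})=\big((\eta^{\natural\ang2})^2-4(\eta^{\sharp\ang2})^2\big)\eta^{\sharp2}=\eta^{\flat2}\eta^{\sharp2}$, so $AB=\eta^{\flat}\eta^{\sharp}$ (by the $q^{1/8}$-coefficient). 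Therefore $[s,0][-s,0]=\eta^{\flat2}+4\eta^{\flat}\eta^{\sharp}+4\eta^{\sharp2}=(\eta^{\flat}+2\eta^{\sharp})^2$, and taking canonical square roots gives $\sqrt{[s,0][-s,0]}=\eta^{\flat}+2\eta^{\sharp}$, which lies in $\cM(8)_{\frac12^*}$ by Proposition~\ref{lev2} since $c_8$ and $b_8$ are trivial on $\Gamma_0(8)\cap\Gamma^0(8)$.

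The only delicate point is bookkeeping with the radical: at each step where a square root is extracted --- in $\eta^{\natural\ang2}\eta^{\sharp\ang2}=\pm\eta^{\sharp2}$, in $AB=\pm\eta^{\flat}\eta^{\sharp}$, and in the two final extractions --- one must verify that the series under the radical has leading Fourier coefficient $1$ (after removing the appropriate fractional power of $q$) and pin the sign down by comparing first nonzero coefficients. Each such check is a single line, so I expect no real obstacle; once the sign choices are settled, the two equalities are established as formal $q$-expansions and the membership claims follow because the right-hand sides are by then recognized as the known forms $\eta^{\flat}+2\eta^{\sharp}\in\cM(8)_{\frac12^*}$ and $\eta^{\flat\ang{\frac12}}-2\sqrt2\,\eta^{\sharp\ang2}\in\cM(8)_{(\frac12^*,\chi_8)}$.
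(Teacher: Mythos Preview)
Your proof is correct, and it takes a genuinely different route from the paper's. The paper argues via the factorizations in Lemma~\ref{uds}: it writes $\sqrt{[s,0]}=\sqrt{[d,0][u,0]}+\sqrt{[d,1][u,1]}$ (and companion identities for $\sqrt{[-s,0]}$, $\sqrt{[s,1]}$), multiplies these out, and then uses Lemma~\ref{oud} together with the definitions of $[d,\cdot]$ and $[u,\cdot]$ to collapse the result to $\eta^\flat+2\eta^\sharp$, respectively $\eta^{\flat\ang{\frac12}}-2\sqrt2\,\eta^{\sharp\ang2}$. You instead bypass the $[d,\cdot]$ and $[u,\cdot]$ machinery entirely: you square the claimed identities and reduce everything to the elementary relations of Lemmas~\ref{rel21} and~\ref{rel22} (plus their $\ang{\frac12}$-rescalings), then fix signs by leading Fourier coefficients. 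Your approach is more self-contained and needs none of \S6.2; the paper's approach is more structural, as it exhibits the $s$-forms through the $d$/$u$ decomposition that is used repeatedly elsewhere in \S6. One small cosmetic point: the membership of $\eta^{\flat\ang{\frac12}}-2\sqrt2\,\eta^{\sharp\ang2}$ in $\cM(8)_{(\frac12^*,\chi_8)}$ is really the identification $\cM(\Gamma(4))_{\frac12\M}=\cM(4)_{\ang{w_2}}$ with $w_2=(\frac12^*,\chi_8)$ (stated after Proposition~\ref{dec2}) combined with Theorem~\ref{4}, rather than Theorem~\ref{4} alone.
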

\begin{proof}
LHS of the former identity is
\begin{align*}&\big(\sqrt{[d,0][u,0]}+\sqrt{[d,1][u,1]}\big)\big(\sqrt{[d,1][u,0]}+\sqrt{[d,0][u,1]}\big)\\
&=\sqrt{[d,0][d,1]}\big([u,0]+[u,1]\big)+\big([d,0]+[d,1]\big)\sqrt{[u,0][u,1]}\end{align*}

LHS of the latter identity is
\begin{align*}&\big(\sqrt{[d,0][u,0]}+\sqrt{[d,1][u,1]}\big)\big(\sqrt{[d,1][u,0]}-\sqrt{[d,0][u,1]}\big)\\
&=\sqrt{[d,0][d,1]}\big([u,0]-[u,1]\big)-\big([d,0]-[d,1]\big)\sqrt{[u,0][u,1]}\end{align*}
\end{proof}
Acting $\sigma_8$ on the latter identity yields
\[\z\sqrt{[-s,0][-s,1]}=\eta^{\flat\ang{\frac12}}+2\sqrt2\eta^{\sharp\ang2}\]

\begin{Prop}We have identities in $\cM(4)_{\frac12^*+\frac{\ia+\iA}2}$ (cf. Proposition \ref{aA})
\[\z\sqrt{[s,\frac12][-s,\frac32]}=\sqrt{\eta^{\natural\ang{\frac12}}\eta^{\natural{\ang2}}}+\sqrt2i\sqrt{\eta^{\flat\ang{\frac12}}\eta^{\sharp\ang2}}\]
\end{Prop}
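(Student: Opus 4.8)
The plan is to argue exactly as for Proposition~\ref{aA}: present $\sqrt{[s,\frac12][-s,\frac32]}$, after a translation $(\BM 1&c\\0&1\EM)$, as the value of a product of two of the factorizations in Lemma~\ref{uds}, collapse the resulting $[o,\cdot],[u,\cdot],[d,\cdot]$-products by Lemma~\ref{oud} (with Lemma~\ref{ud} for the mixed pairs), and read off the coefficients. Multiplying a relation from the first block of Lemma~\ref{uds} against the matching one from the second block gives on one side a sum of four $\sqrt{[o,\cdot][u,\cdot][d,\cdot][\cdot]}$-terms, which pair off into $\eta^{\natural\ang{\frac12}},\eta^{\natural\ang2},\eta^{\flat\ang{\frac12}},\eta^{\sharp\ang2}$ by Lemma~\ref{oud}; on the other side the conjugate structure $\sqrt{[s,\cdot]}\pm\sqrt{[-s,\cdot]}$ cancels the cross terms and leaves $\alpha\sqrt{\eta^{\natural\ang{\frac12}}\eta^{\natural\ang2}}+\beta\sqrt{\eta^{\flat\ang{\frac12}}\eta^{\sharp\ang2}}$. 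Propagating the $8$th- and $16$th-roots of unity occurring in Lemma~\ref{uds}, together with the root picked up from the translation, yields $\alpha=1$, $\beta=\sqrt2\,i$.

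A fully self-contained alternative — and the check I would actually run — is to clear the radicals. Squaring the claimed identity and invoking $f^{\natural}f^{\flat}=f^{\flat\ang22}$, $f^{\natural}f^{\sharp}=f^{\sharp\ang{\frac12}2}$ from the $\natural$-operator subsection (which give $\eta^{\natural\ang{\frac12}}\eta^{\flat\ang{\frac12}}=\eta^{\flat2}$ and $\eta^{\natural\ang2}\eta^{\sharp\ang2}=\eta^{\sharp2}$, hence $\eta^{\natural\ang{\frac12}}\eta^{\natural\ang2}\eta^{\flat\ang{\frac12}}\eta^{\sharp\ang2}=(\eta^{\flat}\eta^{\sharp})^2$ on the distinguished branch) turns the statement into the radical-free eta-quotient identity
\[[s,\tfrac12][-s,\tfrac32]=\eta^{\natural\ang{\frac12}}\eta^{\natural\ang2}-2\,\eta^{\flat\ang{\frac12}}\eta^{\sharp\ang2}+2\sqrt2\,i\,\eta^{\flat}\eta^{\sharp}.\]
Both sides here are given by explicit $q$-series of eta-quotient type; after multiplying through by a suitable monomial in $\eta$ and its rescalings to land in an integer-weight space $\cM(\Gamma(M))_k$, the paper's recurring device applies — $f=\mathrm{LHS}-\mathrm{RHS}$ lies in a finite-dimensional space and vanishes at $\infty$ to order exceeding the dimension, hence $f=0$ (as in $f^2\in\cM(1)_8\cap\C[[q]]q^2=\{0\}$). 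One then recovers the original statement by taking square roots, the sign being pinned down by the constant terms: $[s,\frac12],[-s,\frac32],\eta^{\natural\ang{\frac12}},\eta^{\natural\ang2}$ all have constant term $1$, so both sides of the stated identity have constant term $1$ and thus lie on the branch $\sqrt[n]{\phi}\in x^{r/n}+\cdots$.

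For the membership in $\cM(4)_{\frac12^*+\frac{\ia+\iA}2}$: the radicals $\sqrt{\eta^{\natural\ang{\frac12}}},\sqrt{\eta^{\natural\ang2}},\sqrt{\eta^{\flat\ang{\frac12}}},\sqrt{\eta^{\sharp\ang2}}$ and the $[s],[-s]$-forms were each placed in explicit weight spaces in the preceding lemmas, so both sides lie a priori in $\cM(4|\chi_8)_{\frac12^*+\frac{\ia+\iA}2}$; since the right-hand side is an eta-quotient combination manifestly invariant under $\Gamma_0(4)\cap\Gamma^0(4)$, it descends through $\chi_8$ and the conclusion upgrades to $\cM(4)_{\frac12^*+\frac{\ia+\iA}2}$, exactly as was argued for the similar objects after Lemma~\ref{K4}.

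The main obstacle is the bookkeeping of roots of unity and branches rather than anything structural: keeping every nested radical on one consistent determination, and making the middle coefficient come out as $\sqrt2\,i$ rather than $\sqrt2$, $-\sqrt2\,i$, or with a stray sign — which in the first approach means pinning down the correct translation parameter $c$, and in the second means the one constant-term sign check. Conceptually it is just a longer version of the computation already carried out for Proposition~\ref{aA}.
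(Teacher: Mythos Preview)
Your first approach is close in spirit to the paper's, but the paper's execution is much cleaner and you've over-complicated the combinatorics. The paper does \emph{not} mix the two blocks of Lemma~\ref{uds}; it uses only the first two lines of the first block. From those,
\[\sqrt{[d,0][u,0]}+\sqrt{[d,1][u,1]}=\sqrt{[s,0]},\qquad \sqrt{[d,0][u,0]}-\sqrt{[d,1][u,1]}=\sqrt{[-s,1]},\]
so the product is a \emph{difference of squares}:
\[\sqrt{[s,0][-s,1]}=[d,0][u,0]-[d,1][u,1].\]
No four-term expansion, no Lemma~\ref{oud}, no Lemma~\ref{ud}. The right side is then read off directly from the \emph{definitions} of $[d,\cdot]$ and $[u,\cdot]$ (which are already linear combinations of $\sqrt{\eta^{\natural\ang2}},\sqrt{\eta^{\sharp\ang2}},\sqrt{\eta^{\natural\ang{\frac12}}},\sqrt{\eta^{\flat\ang{\frac12}}}$), giving
\[\sqrt{[s,0][-s,1]}=\sqrt{\eta^{\flat\ang{\frac12}}\eta^{\natural\ang2}}+\sqrt2\,\sqrt{\eta^{\natural\ang{\frac12}}\eta^{\sharp\ang2}}.\]
Finally one acts by $(\BM 1&2\\0&1\EM)$, which swaps $\eta^{\flat\ang{\frac12}}\leftrightarrow\eta^{\natural\ang{\frac12}}$, fixes $\eta^{\natural\ang2}$, sends $[s,0]\mapsto[s,\frac12]$, $[-s,1]\mapsto[-s,\frac32]$, and introduces the factor $i$ on the second term via the $q^{1/8}$-leading behaviour of $\sqrt{\eta^{\sharp\ang2}\,\cdot\,}$. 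That is the whole proof.

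Your squaring alternative is correct and would work, but it trades a two-line algebraic identity for a dimension-count verification; the paper's route avoids radicals and branch bookkeeping entirely by arranging for the square roots to cancel via $(a+b)(a-b)=a^2-b^2$ before any expansion is done.
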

\begin{proof}
Act $(\BM 1&2\\0&1\EM)$ on
\begin{align*}\sqrt{[s,0][-s,1]}&=\big(\sqrt{[d,0][u,0]}+\sqrt{[d,1][u,1]}\big)\big(\sqrt{[d,0][u,0]}-\sqrt{[d,1][u,1]}\big)\\
&=[d,0][u,0]-[d,1][u,1]\\
&=\sqrt{\eta^{\flat\ang{\frac12}}\eta^{\natural{\ang2}}}+\sqrt2\sqrt{\eta^{\natural\ang{\frac12}}\eta^{\sharp\ang2}}\end{align*}
\end{proof}

\begin{Prop}\label{bB}We have an identity in $\cM(8)_{\frac12^*+\frac{\ib+\iB}2}$
\[\z\sqrt[4]{[\eta,\frac34][\eta,\frac54]}\sqrt{[s,0]}=\sqrt{\eta^{\natural\ang{\frac14}}\eta^{\natural\ang4}}-\sqrt2\sqrt{\eta^{\flat\ang{\frac14}}\eta^{\sharp\ang4}}\]
and in $\cM(8)_{\frac12^*+\frac{\ia+\iA+\ib+\iB}2}$
\[\z(1-1^{\frac18})\sqrt[4]{[\eta,\frac74][\eta,\frac14]}\sqrt{[s,0]}=\sqrt{[\eta,\frac32]^{\ang{\frac12}}[\eta,\frac12]^{\ang2}}-1^{\frac18}\sqrt{[\eta,\frac12]^{\ang{\frac12}}[\eta,\frac32]^{\ang2}}\]
\end{Prop}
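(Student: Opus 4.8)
The plan is to imitate the proofs of Propositions \ref{aA} and \ref{Ab} and of the unnumbered propositions just above: each side of the two asserted equalities will be produced by acting with a translation $(\BM 1&c\\0&1\EM)$ — possibly preceded by a rescaling $\ang h$ and possibly followed by the Galois action $\sigma_8$ — on an identity obtained by multiplying out two of the product relations of Lemmas \ref{oud}, \ref{ud} and \ref{uds}. First I would record that the two sides lie in the stated graded pieces. The membership of $\sqrt{[s,0]}$ follows from Lemma \ref{uds}, which gives $\sqrt{[s,0]}=\sqrt{[d,0][u,0]}+\sqrt{[d,1][u,1]}$, together with the membership lemmas for the half- and quarter-powers of $\eta^{\natural\ang{\cdot}},\eta^{\flat\ang{\cdot}},\eta^{\sharp\ang{\cdot}}$ proved in the formal-weight subsections. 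For $\sqrt[4]{[\eta,\frac34][\eta,\frac54]}$ I would use that $[\eta,\frac14][\eta,\frac74]=\eta^{\flat2}+2\sqrt2\,\eta^{\flat}\eta^{\sharp}+4\eta^{\sharp2}$, its $\sigma_8$-conjugate $[\eta,\frac34][\eta,\frac54]=\eta^{\flat2}-2\sqrt2\,\eta^{\flat}\eta^{\sharp}+4\eta^{\sharp2}$, and Lemma \ref{rel21}, which together give $[\eta,\frac34][\eta,\frac54]\cdot[\eta,\frac74][\eta,\frac14]=\eta^{\natural4}$; the weight on the right of each identity is then read off from the membership lemmas for $\sqrt{\eta^{\natural\ang{\frac14}}},\sqrt{\eta^{\natural\ang4}},\sqrt{\eta^{\flat\ang{\frac14}}},\sqrt{\eta^{\sharp\ang4}}$ and for $\sqrt{[\eta,\frac32]^{\ang{\frac12}}},\sqrt{[\eta,\frac12]^{\ang2}}$. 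Multiplicativity of the weight then forces both sides into the same one-dimensional graded piece, so it suffices to match Fourier expansions.

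For the first identity I would multiply $\sqrt{[s,0]}=\sqrt{[d,0][u,0]}+\sqrt{[d,1][u,1]}$ by $\sqrt[4]{[\eta,\frac34][\eta,\frac54]}$ and then collapse every resulting product of two bracket-forms into an $\eta$-quotient or a rescaling, using the pairings $[d,0][d,1]=\eta^{\flat\ang{\frac12}}$, $[u,0][u,1]=\eta^{\sharp\ang2}$, $[o,\frac12][o,\frac32]=\eta^{\natural\ang2}$ of Lemma \ref{oud} and square-root identities such as $\sqrt{[o,0][o,\frac12]}=[o,0]^{\ang{\frac12}}$ of Lemma \ref{ud}, exactly as in the proof of Proposition \ref{Ab}. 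Regrouping the surviving terms into the two coefficients $\sqrt{\eta^{\natural\ang{\frac14}}\eta^{\natural\ang4}}$ and $\sqrt{\eta^{\flat\ang{\frac14}}\eta^{\sharp\ang4}}$ and tracking the scalars should leave $1$ and $-\sqrt2$. The second identity I would then deduce from the first and from Proposition \ref{aA}: since $\sqrt[4]{[\eta,\frac34][\eta,\frac54]}\cdot\sqrt[4]{[\eta,\frac74][\eta,\frac14]}=\eta^{\natural}$, applying a translation $(\BM 1&c\\0&1\EM)$ to the first identity — using that $\eta^{\natural\ang4},\eta^{\sharp\ang4}$ are fixed by integer translations while $\eta^{\natural\ang{\frac14}}$ passes to $[\eta,\frac c4]$ and the $\ang{\frac12}$- and $\ang2$-rescaled brackets shift as in the formal-weight subsections — should convert it into the second; alternatively it can be proved by the same direct expansion, now starting from $\sqrt{[s,0]}$ and the $\ang{\frac12},\ang2$-relations of Lemma \ref{oud}.

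The step I expect to be the main obstacle is the bookkeeping of the nested roots of unity and of the square-root branches. The coefficient $(1^{\frac1{16}}+1^{\frac{15}{16}})^2=2+\sqrt2$ buried in the definition of $[s,0]$, the factor $1-1^{\frac18}$ appearing in the second identity, and the convention of writing $\sqrt{[\eta,\frac c4]\,\eta^{\sharp\ang4}}$ as a single square root rather than $\sqrt{[\eta,\frac c4]}\sqrt{\eta^{\sharp\ang4}}$ (whose product branch may differ by a sign) must all be carried consistently through the substitution $(\BM 1&c\\0&1\EM)$ and through $\sigma_8$, where it is easy to acquire a spurious $\pm1$ or $\pm i$. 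The safe way to close the argument is to pin the branches down by matching the first two or three Fourier coefficients of each side; once the two sides agree there, the structure theorems of the $\chi_{16}$ subsection (Lemmas \ref{K4}, \ref{K41} and the explicit decompositions following them) guarantee that two equal modular forms in a space with an explicit monomial basis coincide.
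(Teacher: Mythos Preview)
Your plan drifts away from the paper's actual argument, which is considerably shorter. The paper does not expand $\sqrt{[s,0]}$ via Lemma \ref{uds} and then try to collapse products with $\sqrt[4]{[\eta,\tfrac34][\eta,\tfrac54]}$; instead it squares the left--hand side and uses Proposition \ref{aA} in one step. Concretely, since $(1^{1/16}+1^{15/16})^2=2+\sqrt2$, the definition of $[s,0]$ together with Proposition \ref{aA} gives
\[
[s,0]=\sqrt{[\eta,\tfrac74][\eta,\tfrac14]}+2\sqrt{\eta^{\flat\ang{\frac12}}\eta^{\sharp\ang2}},
\]
and then, using the relation $[\eta,\tfrac14][\eta,\tfrac34][\eta,\tfrac54][\eta,\tfrac74]=\eta^{\natural4}$ that you already spotted,
\[
\text{LHS}^2=\sqrt{[\eta,\tfrac34][\eta,\tfrac54]}\,[s,0]
=\eta^{\natural2}+2\sqrt{[\eta,\tfrac34][\eta,\tfrac54]}\sqrt{\eta^{\flat\ang{\frac12}}\eta^{\sharp\ang2}}.
\]
The remaining square root is then evaluated as $\eta^{\flat}\eta^{\sharp}-\sqrt2\,\eta^{\flat\ang{\frac12}}\eta^{\sharp\ang2}$ by squaring and using only the elementary rescaling relations $\eta^{\flat2}=\eta^{\natural\ang{\frac12}}\eta^{\flat\ang{\frac12}}$, $\eta^{\sharp2}=\eta^{\natural\ang2}\eta^{\sharp\ang2}$ coming from $f^{\natural}f^{\flat}=f^{\flat\ang22}$, $f^{\natural}f^{\sharp}=f^{\sharp\ang{\frac12}2}$. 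Matching this with $\text{RHS}^2$ is then an identity among $\eta$-quotients at scales $\ang{\frac14},\ang1,\ang4$, with no $[o,\cdot]$-, $[d,\cdot]$-, or $[u,\cdot]$-forms and no branch ambiguity left.

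The gap in your route is the ``collapsing'' step: Lemmas \ref{oud} and \ref{ud} give pairings among $[o,\cdot]$, $[d,\cdot]$, $[u,\cdot]$ and the $\ang{\frac12}$-, $\ang2$-rescalings of $\eta^{\natural},\eta^{\flat},\eta^{\sharp}$, but none of them produces the fourth root $\sqrt[4]{[\eta,\tfrac34][\eta,\tfrac54]}$, so there is no ready-made product identity to absorb that factor into $\sqrt{[d,i][u,i]}$. Your fallback --- declare the graded piece one--dimensional and compare a few Fourier coefficients --- is also not justified: the ambient space $\cM(8)_{\frac12^*+\frac{\ib+\iB}2}$ is not shown to be one--dimensional anywhere in the paper, and the structure theorems you cite live at a different grading and, in any case, are proved downstream of this proposition. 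The clean fix is exactly what the paper does: square first, so that Proposition \ref{aA} applies directly and the quartic root disappears.
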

\begin{proof}LHS of the former identity is
\[\z\sqrt{\eta^{\natural2}+2\sqrt{[\eta,\frac34][\eta,\frac54]}\sqrt{\eta^{\flat\ang{\frac12}}\eta^{\sharp\ang2}}}=\sqrt{\eta^{\natural2}+2\big(\eta^{\flat}\eta^{\sharp}-\sqrt2\eta^{\flat\ang{\frac12}}\eta^{\sharp\ang2}\big)}\]
\end{proof}

\begin{Prop}We have identities in $\cM(8)_{(\frac12^*+\frac{\iA+\ib}2,\overline{\chi_{16}})}$ (cf. Proposition \ref{Ab})
\[\z\sqrt{[s,\frac74][-s,\frac54]}=\sqrt{\eta^{\natural\ang{\frac14}}\eta^{\natural\ang2}}-\sqrt2i\sqrt{\eta^{\flat\ang{\frac14}}\eta^{\sharp\ang2}}\]
\[\z\sqrt{[s,\frac34][s,\frac54]}=\sqrt{\eta^{\natural\ang{\frac14}}\eta^{\natural\ang2}}-(1^{\frac1{16}}+1^{\frac{15}{16}})^2\sqrt{\eta^{\flat\ang{\frac14}}\eta^{\sharp\ang2}}\]
\end{Prop}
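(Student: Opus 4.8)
The plan is to reuse, essentially verbatim, the mechanism of Propositions \ref{aA}, \ref{Ab} and of the identity $\sqrt{[s,\frac12][-s,\frac32]}=\dots$ proved just above: peel off a common $\sqrt{\eta^{\natural\ang{\frac14}}}$ factor, rewrite each side as a short chain of equalities among products of $[o,\cdot],[u,\cdot],[d,\cdot],[s,\cdot],[-s,\cdot]$, and then collapse those products to $\eta^{\natural}$-, $\eta^{\flat}$-, $\eta^{\sharp}$-power pieces by Lemmas \ref{oud}, \ref{ud}, \ref{uds}. Both target identities lie in the single graded piece $\cM(8)_{(\frac12^*+\frac{\iA+\ib}2,\overline{\chi_{16}})}$ — the same space as Proposition \ref{Ab}, a finite-dimensional space once the generators of \S6.4 are fixed — so it suffices to produce the claimed equality of Fourier series, i.e. to match one or two leading coefficients once all square-root branches are pinned down.

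For the first identity I would start from the chain displayed in the proof of Proposition \ref{Ab},
\[\sqrt{\eta^{\natural\ang{\frac14}}}\;\sqrt{[\eta,\frac12]^{\ang{\frac12}}[\eta,\frac12]}=\big(\eta^{\flat\ang{\frac12}}+2(1+i)\eta^{\sharp\ang2}\big)\sqrt{\eta^{\natural\ang2}}+(1+i)\big(\eta^{\natural\ang{\frac14}}-2\eta^{\sharp}\big)\sqrt{\eta^{\sharp\ang2}},\]
act on it by a translation $(\BM1&k\\0&1\EM)$ chosen so that $[\eta,\frac12]^{\ang{\frac12}},[\eta,\frac12],\eta^{\natural\ang{\frac14}}$ move to $[\eta,\frac32]^{\ang{\frac12}},[\eta,\frac32],\eta^{\natural\ang{\frac14}}$ carrying the correct $8$th- and $16$th-roots of unity, cancel the common $\sqrt{\eta^{\natural\ang{\frac14}}}$, and recognise the remaining expression as $\sqrt{[s,\frac74][-s,\frac54]}$. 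The last step uses the definition of $[d,\cdot]$ and Lemma \ref{uds} to expand $\sqrt{[s,\frac74][-s,\frac54]}$ as a $\pm$-combination of $\sqrt{\eta^{\natural\ang2}}\cdot\{\sqrt{\eta^{\natural\ang{\frac14}}},\sqrt{\eta^{\flat\ang{\frac14}}}\}$ and $\sqrt{\eta^{\sharp\ang2}}\cdot\{\sqrt{\eta^{\natural\ang{\frac14}}},\sqrt{\eta^{\flat\ang{\frac14}}}\}$, and matching against the translated chain — recombining mixed roots via $\sqrt{[\eta,\frac34][\eta,\frac74]}=\eta^{\natural}$, $[\eta,\frac34][\eta,\frac54]=[\eta,\frac12]$ from \S6.1 and Lemma \ref{oud} — forces the coefficient to be exactly $-\sqrt2i$.

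For the second identity I would not redo the computation but deduce it from the first: since $[-s,0]=[s,0]^{\sigma_8}$, and the two products $[s,\frac74][-s,\frac54]$ and $[s,\frac34][s,\frac54]$ differ only by $\sigma_8\colon 1^{\frac18}\mapsto1^{\frac38}$ composed with a translation (exactly the relation recorded in \S6.6 between $[s,\frac k4]$ and $[-s,\frac k4]$), applying $\sigma_8$ to a suitably translated form of the first identity yields the second, with the unit $-\sqrt2i$ turning into $-(1^{\frac1{16}}+1^{\frac{15}{16}})^2=-(2\cos\frac\pi8)^2$; the single numerical consistency check is $(2\cos\frac\pi8)^2=2+\sqrt2$ against the $\sigma_8$-image of $\sqrt2i$. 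This is the same trick that produced $\sqrt{[-s,0][-s,1]}=\eta^{\flat\ang{\frac12}}+2\sqrt2\eta^{\sharp\ang2}$ from $\sqrt{[s,0][s,1]}=\eta^{\flat\ang{\frac12}}-2\sqrt2\eta^{\sharp\ang2}$ in Proposition \ref{now}.

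The main obstacle is not the scheme but the branch bookkeeping. Every $\sqrt{\ \cdot\ }$ here is a fixed power series in $q^{1/M}$ with a definite leading term — its $q$-valuation is forced by the eta-quotient order formula of \S3.2 together with the $\theta$-expansions of \S6.1--\S6.2 — so each step $\sqrt{A}\sqrt{B}=\pm\sqrt{AB}$ carries a sign that must be pinned by first coefficients, and an off-by-a-root-of-unity slip anywhere in the action by $(\BM1&k\\0&1\EM)$ would break the final identification of the two summands with $\sqrt{\eta^{\natural\ang{\frac14}}\eta^{\natural\ang2}}$ and $-\sqrt2i\,\sqrt{\eta^{\flat\ang{\frac14}}\eta^{\sharp\ang2}}$ (resp. with the $\sigma_8$-images). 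So the real work is choosing $k$ so that all intermediate square roots remain on the principal branch and then checking one or two leading Fourier coefficients; once those match, equality of the full series is automatic, since both sides lie in $\cM(8)_{(\frac12^*+\frac{\iA+\ib}2,\overline{\chi_{16}})}$ and agree to leading order.
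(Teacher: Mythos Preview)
The paper's own proof environment is empty, so there is no argument to compare against; I can only assess whether your plan would actually close the gap.

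Your scheme for the first identity --- start from the chain in the proof of Proposition~\ref{Ab}, translate, and strip the common $\sqrt{\eta^{\natural\ang{\frac14}}}$ --- is exactly the mechanism the paper uses throughout \S6.6, and with careful branch bookkeeping it should go through.

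The reduction of the second identity to the first via ``$\sigma_8$ composed with a translation'' does not work as stated. Observe that $\sigma_8(\sqrt2)=-\sqrt2$ (since $\sqrt2=1^{\frac18}+1^{\frac78}$) and $\sigma_8(i)=-i$, so $\sigma_8(\sqrt2\,i)=\sqrt2\,i$: the right-hand side of the first identity is $\sigma_8$-invariant. A translation $(\BM1&k\\0&1\EM)$ can only multiply each of the two summands $\sqrt{\eta^{\natural\ang{\frac14}}\eta^{\natural\ang2}}$ and $\sqrt{\eta^{\flat\ang{\frac14}}\eta^{\sharp\ang2}}$ by a root of unity, hence can only change the ratio of their coefficients by a root of unity; but $\big|(2+\sqrt2)/(\sqrt2\,i)\big|=1+\sqrt2\neq1$. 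So no composite of $\sigma_8$ with translations takes the first identity to the second. This is consistent with the paper's own use of $\sigma_8$ immediately after the Proposition: applied to the \emph{second} identity it produces a new $[-s,\cdot][-s,\cdot]$ relation, not the first.

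The second identity therefore needs an independent argument. The natural route is the one used for Proposition~\ref{now} and for $\sqrt{[s,\frac12][-s,\frac32]}$: translate the relations of Lemma~\ref{uds} so as to express $\sqrt{[s,\frac34]}$ and $\sqrt{[s,\frac54]}$ (or their product directly) through products of $[o,\cdot],[u,\cdot],[d,\cdot]$, multiply out, and collapse via Lemma~\ref{oud} and \S6.1. Your remarks about branch bookkeeping and checking leading coefficients apply equally here.
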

\begin{proof}

\end{proof}

Acting $\sigma_8$ on the last identity yields
\[\z\sqrt{[-s,\frac74][-s,\frac14]}=\sqrt{\eta^{\natural\ang{\frac14}}\eta^{\natural\ang2}}+(1^{\frac1{16}}-1^{\frac{15}{16}})^2\sqrt{\eta^{\flat\ang{\frac14}}\eta^{\sharp\ang2}}\]\\

Note
\[\iE_{\chi_4\chi_{32}}=1+(i-1^{\frac38})\VS{n\in\N}(\chi_4\chi_{32}*{\tt 1}_\N)(n)q^n\]
The dimension formula states $\dim\cM(4,8)_{(2,\chi_{16})}=6$ and we see
\[\z \iE_{\chi_4\chi_{32}}^{\ang{\frac18}}=\sqrt[4]{\eta^{\natural\ang{\frac14}}\eta^{\natural\ang{\frac12}}\eta^{\natural}\eta^{\natural\ang2}[\eta,\frac12]^{\ang{\frac12}}[\eta,\frac12]}\sqrt{[-s,\frac74]}\]\\

\newpage

\subsection{Structure theorems with $\chi_{32}$}

Let the sequence
\[\z W_8=\ang{(\frac18^*+\frac{\downarrow}4,\chi_{32}),(\frac18^*+\frac{\uparrow}4,\overline{\chi_{32}})}\]

\begin{Lem}
\[\cM(4|\chi_8)_{\ang{W_8}}=\bigoplus \z \C\Big[\sqrt{[o,\frac74]},\sqrt{[o,\frac12]}\Big]\]
\[\z \cS(4|\chi_8)_{\ang{W_8}}=\cM(4|\chi_8)_{\ang{W_8}}\eta^{\natural}\sqrt{[o,\frac74][o,\frac12]}[o,\frac34][o,\frac32]\eta^{\flat\ang{\frac12}}\eta^{\sharp\ang2}\]
\end{Lem}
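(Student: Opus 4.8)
The plan is to follow the two-step pattern of Lemmas~\ref{K4} and \ref{K41}: identify the ring with a (graded) polynomial ring in two variables by an injectivity-plus-Hilbert-function argument, and then pin down the cuspidal ideal by a dimension squeeze. The two generators belong to the claimed spaces by the displays immediately preceding the statement, $\sqrt{[o,\frac74]}\in\cM(4|\chi_8)_{(\frac18^*+\frac\downarrow4,\chi_{32})}$ and $\sqrt{[o,\frac12]}\in\cM(4|\chi_8)_{(\frac18^*+\frac\uparrow4,\overline{\chi_{32}})}$, so $\C\big[\sqrt{[o,\frac74]},\sqrt{[o,\frac12]}\big]\subset\cM(4|\chi_8)_{\ang{W_8}}$, respecting the $\ang{W_8}$-grading. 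This map is injective: on squaring, $[o,\frac74]$ and $[o,\frac12]$ already lie in $\cM(4|\chi_8)_{\ang{w_4}}=\C\big[\sqrt{[\eta,\frac12]},\sqrt{[\eta,\frac32]}\big]$ of Lemma~\ref{K4} (both of honest weight $\frac14$; and on $\ker\chi_8$ one has $\chi_4={\tt 1}$, hence $\chi_8={\tt 1}$, $\xi_8^2={\tt 1}$ and $\overline{\chi_{16}}=\chi_{16}$), where they are algebraically independent, so $\sqrt{[o,\frac74]}$ and $\sqrt{[o,\frac12]}$ are as well. Thus everything reduces to a dimension comparison.

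Both generators have honest weight $\frac18$, so the left side, graded by honest weight, has Hilbert function $\frac1{(1-t^{1/8})^2}=\VS{k\in\M}(k+1)t^{k/8}$. Hence the ring identity is equivalent to $\dim\cM(4|\chi_8)_{\ang{W_8}}\big|_{\frac k8}=k+1$ for all $k$, and, since the displayed cuspidal generator $g=\eta^{\natural}\sqrt{[o,\frac74][o,\frac12]}\,[o,\frac34][o,\frac32]\,\eta^{\flat\ang{\frac12}}\eta^{\sharp\ang2}$ has honest weight $\frac12+\frac14+\frac14+\frac14+\frac12+\frac12=\frac94=\frac{18}8$, the ideal identity is equivalent to $\dim\cS(4|\chi_8)_{\ang{W_8}}\big|_{\frac k8}=(k-17)^+$. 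Both counts I would establish by reduction to Theorem~\ref{8}, exactly as in Lemma~\ref{K4} (inducting through the residues of $k$ as in Lemma~\ref{16a}): on $\ker\chi_8$ the squared weights $2W_8$ collapse onto $\ang{w_4}$, so in the residues where the formal symbols $\downarrow,\uparrow$ close up, $\cM(4|\chi_8)_{\ang{W_8}}\big|_{\frac k8}$ is visibly a subspace of a weight already controlled by Theorem~\ref{8}, pinning down the dimension there; for the remaining residues, multiplication by $g$ embeds $\cS(4|\chi_8)_{\ang{W_8}}\big|_{\frac k8}$ into $\cS(4|\chi_8)_{\ang{W_8}}\big|_{\frac{k+18}8}$, whose dimension is then known, giving $\dim\cS(4|\chi_8)_{\ang{W_8}}\big|_{\frac k8}\le(k-17)^+$; the reverse inequality is the injective image of $\C\big[\sqrt{[o,\frac74]},\sqrt{[o,\frac12]}\big]\,g$, of dimension $k-17$, so all inequalities become equalities and both assertions follow at once. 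One also needs $g\in\cS(4|\chi_8)_{\ang{W_8}}$, i.e. that $g$ vanishes at every cusp; this is a routine cusp-order computation using the order-of-vanishing formula for eta-quotients (Section~3.2) after re-expressing the $[o,\cdot]$-factors of $g$ through Lemma~\ref{oud} (e.g. $[o,\frac34][o,\frac74]=\eta^{\natural\ang{\frac12}}$, $[o,\frac12][o,\frac32]=\eta^{\natural\ang2}$).

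The main obstacle is this dimension reduction: one must faithfully track, for each residue of $k$ modulo $8$, how the ${}^{*}$-twisting character $\xi_8$, the formal weights $\frac\downarrow4,\frac\uparrow4$ and the Dirichlet twist $\chi_{32}$ combine so that $\cM(4|\chi_8)_{\ang{W_8}}\big|_{\frac k8}$ and its cuspidal subspace do land, after the shift by $g$, in a space whose dimension has actually been computed in one of the preceding lemmas rather than in a genuinely new one — this is where the bookkeeping of the $\eta$-tower has to be carried out carefully. The algebraic-independence input and the cusp-order computation for $g$ are comparatively routine given Lemma~\ref{K4} and Lemma~\ref{oud}.
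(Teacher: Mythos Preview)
Your overall skeleton --- injectivity of the polynomial-ring map plus a dimension squeeze for $\cS$ --- is exactly what the paper does, and your algebraic-independence argument via squaring into $\cM(4|\chi_8)_{\ang{w_4}}$ is fine. The problem is the mechanism you propose for the upper bound on $\dim\cS$.

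Multiplying by $g$ does give an embedding $\cS(4|\chi_8)_{\ang{W_8}}\big|_{\frac k8}\hookrightarrow\cS(4|\chi_8)_{\ang{W_8}}\big|_{\frac{k+18}8}$, but this only yields $\dim\cS\big|_{\frac k8}\le\dim\cS\big|_{\frac{k+18}8}$; even granting the target dimension $(k+18-17)^+=(k+1)^+$, you get $\dim\cS\big|_{\frac k8}\le(k+1)^+$, not $(k-17)^+$. A shift by the big cuspidal generator is far too coarse to squeeze the dimension, and it does not ``step between residues'' in any useful way. So the argument as written has a genuine gap at this point.

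The paper's device is to shift by a \emph{small} element. On $\ker\chi_8$ one has $2W_8^{(1)}=2W_8^{(2)}=w_4$, so (at even total degree) the semigroup $\ang{W_8}$ sits over $\ang{w_4}$ and its single nontrivial coset $\ang{w_4}+w_0$, where $w_0=(\frac{\ia+\iA}2+\frac{\downarrow+\uparrow}4,\chi_{16})$ has $2w_0=0$. Lemma~\ref{K4} already gives $\dim\cS(4|\chi_8)_{\ang{w_4}}\big|_{\frac k4}=(k-9)^+$. For the other coset one multiplies by $\sqrt{[o,\frac74][o,\frac12]}\in\cM(4|\chi_8)_{w_4+w_0}$, which sends $\cS_{kw_4+w_0}$ into $\cS_{(k+1)w_4}$ (using $2w_0=0$), hence $\dim\cS_{kw_4+w_0}\le(k-8)^+$. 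Summing the two pieces gives exactly the bound needed; odd total degree is then handled by one further shift by a single generator $\sqrt{[o,\frac74]}$ or $\sqrt{[o,\frac12]}$. In short, the reduction is to Lemma~\ref{K4} via $\sqrt{[o,\frac74][o,\frac12]}$, not to Theorem~\ref{8} via $g$.
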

\begin{proof}Put $w_0=(\frac{\ia+\iA}2+\frac{\downarrow+\uparrow}4,\chi_{16})$ and start with
\[\z\dim\cS(4|\chi_8)_{\ang{w_4}}|_{\frac k4}=(k-9)^+.\]
by the proof of Lemma \ref{K4}. Since $\sqrt{[o,\frac74][o,\frac12]}\in\cM(4|\chi_8)_{w_4+w_0}$ we see
\[\dim\cS(4|\chi_8)_{\ang{w_4}+w_0}|_{\frac k4}\leq(k-8)^+\]
and
\[\dim\cS(4|\chi_8)_{\ang{W_8}}|_{\frac k8}\leq(k-9)^++(k-8)^+.\]
\end{proof}

Let the sequence
\[\z \beta_2=(\frac\ib2,\frac\iB2)\]

We see some expansions
\[\cM(4|\chi_8)_{\ang{W_8,\alpha_2}}=\bigoplus \z \cM(4|\chi_8)_{\ang{W_8}}\sqrt{[o,\frac34]}^{\{0,1\}}\sqrt{[o,\frac32]}^{\{0,1\}}\]
\[\cS(4|\chi_8)_{\ang{W_8,\alpha_2}}=\cM(4|\chi_8)_{\ang{W_8,\alpha_2}}\eta^{\natural}\sqrt{\eta^{\natural\ang{\frac12}}\eta^{\natural\ang2}}\eta^{\flat\ang{\frac12}}\eta^{\sharp\ang2}\]
and
\[\cM(4|\chi_8)_{\ang{W_8,\alpha_2,\beta_2}}=\bigoplus \z \cM(4|\chi_8)_{\ang{W_8,\alpha_2}}\sqrt{[o,\frac14]}^{\{0,1\}}\sqrt{[o,0]}^{\{0,1\}}\]
\[\z \cS(4|\chi_8)_{\ang{W_8,\alpha_2,\beta_2}}=\cM(4|\chi_8)_{\ang{W_8,\alpha_2,\beta_2}}\eta^{\natural}\sqrt{\eta^{\natural\ang{\frac12}}\eta^{\natural\ang2}[o,\frac14][o,0]}[o,\frac54][o,1]\]
and
\[\cM(4|\chi_8)_{\ang{W_8,\alpha_2,\beta_2,\ell_8}}=\bigoplus \z \cM(4|\chi_8)_{\ang{W_8,\alpha_2,\beta_2}}\sqrt{[o,\frac54]}^{\{0,1\}}\sqrt{[o,1]}^{\{0,1\}}\]
\[\cS(4|\chi_8)_{\ang{W_8,\alpha_2,\beta_2,\ell_8}}=\cM(4|\chi_8)_{\ang{W_8,\alpha_2,\beta_2,\ell_8}}\eta^3\]

\newpage

\begin{Lem}
\[\cM(4|\chi_8)_{\ang{w_4,\alpha_2,\beta_2}}=\bigoplus\cM(4|\chi_8)_{\ang{w_4}}F_4G_4\]
where
\[\z F_4=\big\{1,\sqrt{\eta^{\natural\ang{\frac12}}},[d,0]^{\ang2},[o,0]^{\ang2}\big\}\]
\[\z G_4=\big\{1,\sqrt{\eta^{\natural\ang2}},[u,0]^{\ang{\frac12}},[o,0]^{\ang{\frac12}}\big\}\]
The ideal $\cS(4|\chi_8)_{\ang{w_4,\alpha_2,\beta_2}}$ is not principal.
\end{Lem}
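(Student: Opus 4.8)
The plan is to establish the module decomposition by writing down the sixteen generators $fg$ ($f\in F_4$, $g\in G_4$) explicitly, proving directness of the sum by a weight/character separation argument and surjectivity by a graded-dimension comparison, and then to deduce non-principality of the cusp ideal from the Hilbert series via the usual graded-domain argument.

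For directness: the six functions $\sqrt{\eta^{\natural\ang{\frac12}}}$, $\sqrt{\eta^{\natural\ang2}}$, $[d,0]^{\ang2}$, $[o,0]^{\ang2}$, $[u,0]^{\ang{\frac12}}$, $[o,0]^{\ang{\frac12}}$ all lie in $\cM(4|\chi_8)$ with the weights recorded in the two preceding subsections, so each $fg$ is a genuine element of $\cM(4|\chi_8)_{\ang{w_4,\alpha_2,\beta_2}}$. One then checks that the sixteen weights $\|fg\|$ fall in sixteen pairwise distinct graded pieces, keeping track of the actual weight together with the $\ast$-character, the $\chi_{16}/\chi_8$-datum and the formal $\eta^{\ang h}$-exponents $\ia,\iA,\ib,\iB$. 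Since $\cM(4|\chi_8)_{\ang{w_4}}=\C[\sqrt{[\eta,\frac12]},\sqrt{[\eta,\frac32]}]$ is a polynomial ring (Lemma \ref{K4}) and a domain, each summand $\cM(4|\chi_8)_{\ang{w_4}}fg$ is free of rank one, whence the sum is direct. For surjectivity I would compare the Hilbert series of $\bigoplus_{f,g}\cM(4|\chi_8)_{\ang{w_4}}fg$, namely $(1-t^{\|w_4\|})^{-2}\big(\sum_{f}t^{\|f\|}\big)\big(\sum_{g}t^{\|g\|}\big)$, with $\sum_\kappa\dim\cM(4|\chi_8)_{\ang{w_4,\alpha_2,\beta_2}}|_\kappa\,t^\kappa$. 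The latter is computed as in all the fractional-weight dimension formulas of the preceding sections: on the ``integer-spaced'' weights it is the classical dimension formula for the relevant $\Gamma_G(N)$, transported through Lemma \ref{decom} exactly as in the proof of Theorem \ref{8}, and on the intermediate weights one needs only the one-sided estimate $\dim\cS_\kappa\le\dim\cS_{\kappa+\delta}-1$ furnished by Lemma \ref{sumPM}, which pins the rest down. Alternatively one bootstraps from the already-established expansion $\cM(4|\chi_8)_{\ang{w_4,\alpha_2}}=\bigoplus\cM(4|\chi_8)_{\ang{w_4}}\sqrt{\eta^{\natural\ang{\frac12}}}^{\{0,1\}}\sqrt{\eta^{\natural\ang2}}^{\{0,1\}}$ together with Lemma \ref{K41}, adjoins the two $\beta_2$-directions, and converts the resulting product basis into $F_4G_4$ by the scalar, weight-preserving identities of Lemmas \ref{oud}, \ref{ud}, \ref{uds} and Propositions \ref{aA}, \ref{Ab}. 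Note that Lemma \ref{EX} does \emph{not} apply in the adjunction step: the $\beta_2$-generators are of order two relative to $\ang{w_4,\alpha_2}$ (i.e. $2s\notin T$ in the notation of that Lemma), and this is exactly why the cusp ideal need not — and in fact does not — remain principal.

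For the last assertion: $\cM(4|\chi_8)_{\ang{w_4,\alpha_2,\beta_2}}$ is an integral domain with degree-$0$ part $\C$, so a homogeneous principal ideal is generated by a homogeneous element. If $\cS(4|\chi_8)_{\ang{w_4,\alpha_2,\beta_2}}=\cM(4|\chi_8)_{\ang{w_4,\alpha_2,\beta_2}}h$ with $h$ homogeneous of weight $v_0$, then multiplication by $h$ is a degree-shifting isomorphism of graded modules, so the Hilbert series of the cusp ideal would equal $t^{v_0}$ times that of $\cM(4|\chi_8)_{\ang{w_4,\alpha_2,\beta_2}}$. Using the module decomposition just proved, the Hilbert series of the cusp ideal is computed summand by summand — in the summand indexed by $(f,g)=(1,1)$ the cusp forms are $\cM(4|\chi_8)_{\ang{w_4}}\eta^{\natural}\eta^{\flat2}\eta^{\sharp2}$ by Lemma \ref{K4} (a form in the $\ang{w_4}$-sector is a cusp form for the large group iff it is one for the small group), and in the remaining fifteen summands one reads off the minimal cusp form of the sector exactly as in the expansion chains preceding Lemma \ref{K41} — and one checks that $H_{\cS}(t)/H_{\cM}(t)$ is not a single power of $t$. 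Concretely, the minimal-weight cusp forms in two suitable sectors (for instance those of $1\cdot 1$ and of $[o,0]^{\ang2}\cdot 1$) do not differ by an element of $\cM(4|\chi_8)_{\ang{w_4,\alpha_2,\beta_2}}$ of the forced degree, so at least two generators are needed; hence the ideal is not principal.

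The main obstacle is the explicit bookkeeping at two points: determining the sixteen weights, with their characters and formal $\eta^{\ang h}$-exponents, precisely enough that directness and the change of basis are unambiguous, and computing the Hilbert series of $\cS(4|\chi_8)_{\ang{w_4,\alpha_2,\beta_2}}$ finely enough — i.e. identifying which of the sixteen sectors carries an ``extra'' low-weight cusp form — to make the non-monomiality of $H_{\cS}/H_{\cM}$, hence the failure of principality, manifest. The dimension-inequality interpolation and the graded-domain argument themselves are routine given the machinery already in place.
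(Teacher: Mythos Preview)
Your plan diverges from the paper's argument in the crucial directness step, and the divergence exposes a real gap.

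You claim that the sixteen products $fg$ lie in sixteen pairwise distinct graded pieces, and that this plus ``each cyclic module is free of rank one'' yields directness. But distinct weights for the sixteen generators $fg$ is not what is needed: the cyclic module $\cM(4|\chi_8)_{\ang{w_4}}\cdot fg$ is supported on the whole translate $wt(fg)+\M w_4$, so what you must verify is that the sixteen weights lie in sixteen distinct $\ang{w_4}$-\emph{cosets}. You do not check this, and the paper's own proof indicates that under its grading conventions this is \emph{not} so: the paper explicitly places both $1\cdot[u,0]^{\ang{\frac12}}$ and $\sqrt{\eta^{\natural\ang{\frac12}}}\cdot[o,0]^{\ang{\frac12}}$ in the same sector ${\rm RHS}|_{\ang{w_4}+\frac\ib2}$, rewrites that sector over the (non-homogeneous) ring $\C[[o,0],[o,1]]$ with a modified second generator $x=\sqrt{\eta^{\natural\ang{\frac12}}}[o,0]^{\ang{\frac12}}-\eta^{\natural}$, and then establishes independence by comparing leading $q$-exponents ($[u,0]^{\ang{\frac12}}=1+\cdots$, $x=q^{1/8}+\cdots$). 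That $q$-expansion check is the content of the injectivity argument; it is not a formality that weight-separation makes redundant.

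The surjectivity side also differs. The paper does not appeal to a classical dimension formula for the $\beta_2$-extended weights (there is none); instead it reads off the Hilbert function of $\cM(4|\chi_8)_{\ang{w_4,\alpha_2,\beta_2}}$ by computing the Hilbert function of the already-established free module $\cM(4|\chi_8)_{\ang{W_8,\alpha_2,\beta_2}}$ (built in the preceding displays via iterated use of Lemma~\ref{EX} over the $\sqrt{[o,\bullet]}$ generators) and restricting to the part with no $\downarrow,\uparrow$ contribution. Your first proposed route (dimension formula plus the Lemma~\ref{sumPM} interpolation) cannot produce this, since the formal $\ib,\iB$-gradings have no intrinsic dimension formula; your ``alternatively bootstrap'' remark is closer, but you would still need the $W_8$-structure (or an equivalent) to pin down the target Hilbert series.

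Finally, the paper does not actually prove the non-principality assertion; your Hilbert-series sketch is a reasonable way to fill that in, but the sector-by-sector computation of minimal cusp weights that you allude to would again rely on the $q$-expansion bookkeeping, not on pure weight separation.
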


\begin{proof}Put
The Hibert function of $\cM(4|\chi_8)_{\ang{W_8,\alpha_2,\beta_2}}$ is
\[\frac{(1+t^{\frac18+\frac\iA2+\frac{\downarrow}4})(1+t^{\frac18+\frac\ia2+\frac{\uparrow}4})(1+t^{\frac18+\frac{\ia+\iB}2+\frac{\downarrow}4})(1+t^{\frac18+\frac{\iA+\ib}2+\frac{\uparrow}4})}{(1-t^{\frac18+\frac{\downarrow}4})(1-t^{\frac18+\frac{\uparrow}4})}\]
and its $\ang{\frac18,\alpha_2,\beta_2}$-part is
\[\frac{\big(1+(t^{\frac\ia2}+t^{\frac\iB2}+t^{\frac{\iA+\iB}2})t^{\frac14}\big)\big(1+(t^{\frac\iA2}+t^{\frac\ib2}+t^{\frac{\ia+\ib}2})t^{\frac14}\big)}{(1-t^{\frac14+\frac{\ia+\iA}2})^2}\]

To prove the former assertion, it is enoght to show the natural map ${\rm RHS}\to{\rm LHS}$ is injection. Decompose
\[{\rm RHS}={\rm RHS}|_{\ang{w_4,\alpha_2}}\oplus{\rm RHS}|_{\ang{w_4}+\frac\ib2}\oplus{\rm RHS}|_{\ang{w_4}+\frac\iB2}\oplus{\rm RHS}|_{\ang{w_4}+\frac{\ib+\iB}2}\oplus{\rm RHS}|_{\ang{w_4}+\frac{\ia+\ib}2}\oplus\cdots\]

Then, for example
\begin{align*}{\rm RHS}|_{\ang{w_4}+\frac\ib2}&=\bigoplus\cM(4|\chi_8)_{\ang{w_4}}\Big\{[u,0]^{\ang{\frac12}},\sqrt{\eta^{\natural\ang{\frac12}}}[o,0]^{\ang{\frac12}}\Big\}\\
&=\bigoplus\C\big[[o,0],[o,1]\big]\Big\{[u,0]^{\ang{\frac12}},x\Big\}\end{align*}
where $x=\sqrt{\eta^{\natural\ang{\frac12}}}[o,0]^{\ang{\frac12}}-\eta^{\natural}$. The natural map ${\rm RHS}|_{\ang{w_4}+\frac\ib2}\to{\rm LHS}$ is injective since $[o,0]=1+\cdots$, $[o,1]=q^{\frac14}+\cdots$, $[u,0]^{\ang{\frac12}}=1+\cdots$, $x=q^{\frac18}+\cdots$

\end{proof}

\begin{Lem}
\[\cM(4|\chi_8)_{\ang{w_4,\alpha_2,\beta_2,\ell_8}}=\bigoplus\cM(4|\chi_8)_{\ang{w_4}}F_8G_8\]
\[\cS(4|\chi_8)_{\ang{w_4,\alpha_2,\beta_2,\ell_8}}=\cM(4|\chi_8)_{\ang{w_4,\alpha_2,\beta_2,\ell_8}}\eta^3\]
where
\[\z F_8=F_4\cup\big\{\sqrt{\eta^{\flat\ang{\frac12}}},\eta^{\flat},[d,1]^{\ang2},[o,1]^{\ang2}\big\}\]
\[\z G_8=G_4\cup\big\{\sqrt{\eta^{\sharp\ang2}},\eta^{\sharp},[u,1]^{\ang{\frac12}},[o,1]^{\ang{\frac12}}\big\}\]
\end{Lem}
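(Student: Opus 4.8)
The plan is to obtain this as the coarse ($w_4$ in place of the finer $W_8$) counterpart of the structure results just established, by restricting the fully worked out ring $\cM(4|\chi_8)_{\ang{W_8,\alpha_2,\beta_2,\ell_8}}$ to the sub-monoid $\ang{w_4,\alpha_2,\beta_2,\ell_8}$, which embeds because $w_4=2v_1$ for $v_1=(\frac18^*+\frac{\downarrow}4,\chi_{32})$. The $\cM(4|\chi_8)$-part attached to a graded sub-monoid is a direct $\cM(4|\chi_8)_{\ang{w_4}}$-module summand of the ambient one, so the work is to select from the $W_8$-basis those monomials in $\sqrt{[o,\frac74]},\sqrt{[o,\frac12]},\sqrt{[o,\frac34]},\sqrt{[o,\frac32]},\ldots$ that land in the sub-monoid and repackage them: $F_8G_8$ is exactly this repackaging, via the identities of Lemmas \ref{oud}, \ref{ud}, \ref{uds} (for instance $[d,0][d,1]=\eta^{\flat\ang{\frac12}}$, $[o,0][o,1]=\eta^{\sharp\ang2}$, $\sqrt{[d,\frac74][d,\frac14]}=[o,\frac12]^{\ang{\frac12}}$) that turn $\sqrt{[o,\cdot]}$-monomials into the displayed $\sqrt{\eta^{\flat\ang{\frac12}}},\eta^{\flat},[d,1]^{\ang2},[o,1]^{\ang2}$ and their $G$-analogues. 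Alternatively one can reach the same module by two applications of Lemma \ref{EX}, first adjoining $c_8$ (which doubles $F_4$ to $F_8$) and then $b_8$ (which doubles $G_4$ to $G_8$) to the ring of the previous Lemma, though here some care is needed with the intermediate cusp ideals, which --- unlike along the $W_8$-ladder --- need not be principal.

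For the numerical check I would compute the Hilbert series of $\cM(4|\chi_8)_{\ang{W_8,\alpha_2,\beta_2,\ell_8}}$ from the displayed expansions and extract its $\ang{\frac18,\alpha_2,\beta_2,\ell_8}$-part; this must coincide with the Hilbert series of $\cM(4|\chi_8)_{\ang{w_4}}$ times $\big(\sum_{f\in F_8}t^{\deg f}\big)\big(\sum_{g\in G_8}t^{\deg g}\big)$. Combined with injectivity of the natural surjection $\bigoplus\cM(4|\chi_8)_{\ang{w_4}}F_8G_8\to\cM(4|\chi_8)_{\ang{w_4,\alpha_2,\beta_2,\ell_8}}$, checked grade by grade on $q$-expansions exactly as in the proof of the previous Lemma (decompose the source along the $\ang{\frac18,\alpha_2,\beta_2,\ell_8}$-grading into small-rank pieces and exhibit a triangular structure on leading $q$-coefficients), this establishes the module identity.

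For the ideal statement, note $\eta^3=\eta^{\natural}\eta^{\flat}\eta^{\sharp}$ with $\eta^{\natural}=\sqrt{[\eta,\frac12][\eta,\frac32]}\in\cM(4|\chi_8)_{\ang{w_4}}$ (Lemma \ref{K4}), $\eta^{\flat}\in F_8$, $\eta^{\sharp}\in G_8$; hence $\eta^3\in\cM(4|\chi_8)_{\ang{w_4,\alpha_2,\beta_2,\ell_8}}$, of degree $(\frac32^*,c_8b_8)$ by Proposition \ref{cuspform}. The inclusion $\cM(4|\chi_8)_{\ang{w_4,\alpha_2,\beta_2,\ell_8}}\eta^3\subset\cS(4|\chi_8)_{\ang{w_4,\alpha_2,\beta_2,\ell_8}}$ is immediate, and for equality I would restrict the already proved $\cS(4|\chi_8)_{\ang{W_8,\alpha_2,\beta_2,\ell_8}}=\cM(4|\chi_8)_{\ang{W_8,\alpha_2,\beta_2,\ell_8}}\eta^3$ to the sub-monoid: a cusp form of degree $s$ there is $h\eta^3$ with $\deg h=s-\deg\eta^3$, and that degree is again in $\ang{w_4,\alpha_2,\beta_2,\ell_8}$ (since $\deg\eta^3=\deg\eta^{\natural}+\deg\eta^{\flat}+\deg\eta^{\sharp}$ is a sum of degrees of sub-monoid elements), so $h$ already lies in $\cM(4|\chi_8)_{\ang{w_4,\alpha_2,\beta_2,\ell_8}}$. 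This is the mechanism behind $\cS(\Gamma(8))_{\frac12\M}=\cM(\Gamma(8))_{\frac12\M}\eta^3$ in Theorem \ref{8}, and it is precisely the simultaneous presence of $c_8$ and $b_8$ --- the two characters $\eta^3$ carries --- that upgrades the non-principal cusp ideal of the previous Lemma to a principal one.

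The hard part is the bookkeeping in the first two paragraphs: matching the $64$ products $fg$, $f\in F_8$, $g\in G_8$, against a sub-basis of the $W_8$-monomials through the web of identities in Lemmas \ref{oud}, \ref{ud}, \ref{uds} and Propositions \ref{theta16}, \ref{theta32}, while simultaneously proving the natural map injective. The delicate point is that many of these products share a leading $q$-power (all the half-integer-weight square roots of $[o,\cdot],[d,\cdot],[u,\cdot]$ and of $\eta^{\flat\ang h},\eta^{\sharp\ang h}$ begin with comparable powers of $q^{1/8}$), so injectivity cannot be read off from distinct leading terms; one must instead produce, within each graded component, a unitriangular matrix of $q$-expansion coefficients, and it is the Hilbert-series count that guarantees this matrix is square so that triangularity suffices.
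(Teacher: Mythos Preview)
Your approach matches the paper's: restrict the already-established $W_8$-structure $\cM(4|\chi_8)_{\ang{W_8,\alpha_2,\beta_2,\ell_8}}$ to the coarser monoid, read off the Hilbert series of the $\ang{\frac18,\alpha_2,\beta_2}$-part to get the dimension count, and verify injectivity of the natural map grade by grade via $q$-expansions (the paper gives the single example ${\rm RHS}|_{\ang{w_4}+(\frac\ib2,c_8)}$ and leaves the rest implicit, just as in the previous Lemma). Your treatment of the cusp ideal by restricting $\cS(4|\chi_8)_{\ang{W_8,\alpha_2,\beta_2,\ell_8}}=\cM(4|\chi_8)_{\ang{W_8,\alpha_2,\beta_2,\ell_8}}\eta^3$ is exactly the intended (unwritten) argument.
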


\begin{proof}
The Hibert function of $\cM(4|\chi_8)_{\ang{W_8,\alpha_2,\beta_2,\ell_8}}$ is
\[\frac{(1+t^{\frac18+\frac\iA2+\frac{\downarrow}4})(1+t^{\frac18+\frac\ia2+\frac{\uparrow}4})(1+t^{\frac18+\frac{\ia+\iB}2+\frac{\downarrow}4})(1+t^{\frac18+\frac{\iA+\ib}2+\frac{\uparrow}4})(1+t^{\frac18+\frac\iB2-\frac{\downarrow}4})(1+t^{\frac18+\frac\ib2-\frac{\uparrow}4})}{(1-t^{\frac18+\frac{\downarrow}4})(1-t^{\frac18+\frac{\uparrow}4})}\]
and its $\ang{\frac18,\alpha_2,\beta_2}$-part is
\[\frac{\big(1+2(t^{\frac\ia2}+t^{\frac\iB2}+t^{\frac{\iA+\iB}2})t^{\frac14}+t^{\frac12}\big)\big(1+2(t^{\frac\iA2}+t^{\frac\ib2}+t^{\frac{\ia+\ib}2})t^{\frac14}+t^{\frac12}\big)}{(1-t^{\frac14+\frac{\ia+\iA}2})^2}\]

To prove the former assertion, it is enoght to show the natural map ${\rm RHS}\to{\rm LHS}$ is injection. For example
\[{\rm RHS}|_{\ang{w_4}+(\frac\ib2,c_8)}=\bigoplus\cM(4|\chi_8)_{\ang{w_4}}\Big\{\eta^{\flat}[u,0]^{\ang{\frac12}},\sqrt{\eta^{\flat\ang{\frac12}}}[o,0]^{\ang{\frac12}}\Big\}\]
\end{proof}

Let the sequence
\[\z \alpha_4'=((\frac\ia4,\chi_{32}),(\frac\iA4,\chi_{32}))\]

We see some more expansions
\[\cM(4|\chi_8)_{\ang{\frac18^*,\alpha_2,\beta_2,\chi_{16},\ell_8}}=\bigoplus \z \cM(4|\chi_8)_{\ang{w_4,\alpha_2,\beta_2,\ell_8}}\sqrt[4]{\eta^{\natural}}^{\{0,1,2,3\}}\]
\[\z \cS(4|\chi_8)_{\ang{\frac14^*,\alpha_2,\beta_2,\chi_{16},\ell_8}}=\cM(4|\chi_8)_{\ang{\frac14^*,\alpha_2,\beta_2,\chi_{16},\ell_8}}\sqrt[4]{\eta^{\natural}}\eta^{\flat}\eta^{\sharp}\]
and
\[\cM(4|\chi_8)_{\ang{\frac18^*,\alpha_4',\beta_2,\chi_{16},\ell_8}}=\bigoplus \z \cM(4|\chi_8)_{\ang{\frac18^*,\alpha_2,\beta_2,\chi_{16},\ell_8}}\sqrt[4]{\eta^{\natural\ang{\frac12}}}^{\{0,1\}}\sqrt[4]{\eta^{\natural\ang2}}^{\{0,1\}}\]
\[\z \cS(4|\chi_8)_{\ang{\frac18^*,\alpha_4',\beta_2,\chi_{16},\ell_8}}=\cM(4|\chi_8)_{\ang{\frac18^*,\alpha_4',\beta_2,\chi_{16},\ell_8}}\sqrt[4]{\eta^{\natural}\eta^{\natural\ang{\frac12}}\eta^{\natural\ang2}}\sqrt{\eta^{\flat\ang{\frac12}}\eta^{\sharp\ang2}}\]
and
\[\cM(4|\chi_8)_{\ang{\frac18^*,\alpha_4',\beta_2,\chi_{16},\ell_{16}}}=\bigoplus\cM(4|\chi_8)_{\ang{w_8,\alpha_4',\beta_2,\chi_{16},\ell_8}}\sqrt[4]{\eta^{\flat\ang{\frac12}}}^{\{0,1\}}\sqrt[4]{\eta^{\sharp\ang2}}^{\{0,1\}}\]
\[\cS(4|\chi_8)_{\ang{\frac18^*,\alpha_4',\beta_2,\chi_{16},\ell_{16}}}=\cM(4|\chi_8)_{\ang{\frac18^*,\alpha_4',\beta_2,\chi_{16},\ell_{16}}}\sqrt[4]{\eta^{\natural}}\sqrt{\eta^{\flat}\eta^{\sharp}}\]
and
\[\cM(4|\chi_8)_{\ang{\frac18^*,\alpha_4',\beta_2,\chi_{16},\ell_{32}}}=\bigoplus\cM(4|\chi_8)_{\ang{\frac18^*,\alpha_4',\beta_2,\chi_{16},\ell_{16}}}\sqrt[4]{\eta^{\natural}}\,\!^{\{0,1\}}\sqrt[4]{\eta^{\flat}}\,\!^{\{0,1\}}\sqrt[8]{\eta^{\sharp}}\,\!^{\{0,1\}}\]
\[\cS(4|\chi_8)_{\ang{\frac18^*,\alpha_4',\beta_2,\chi_{16},\ell_{32}}}=\cM(4|\chi_8)_{\ang{\frac18^*,\alpha_4',\beta_2,\chi_{16},\ell_{32}}}\sqrt[4]{\eta^3}\]
and
\[\cM(4|\chi_8)_{\ang{\frac1{16}^*,\alpha_4',\beta_2,\chi_{16},\ell_{64}}}=\bigoplus\cM(4|\chi_8)_{\ang{\frac18^*,\alpha_4',\beta_2,\chi_{16},\ell_{32}}}\sqrt[8]{\eta^{\natural}}\,\!^{\{0,1\}}\sqrt[8]{\eta^{\flat}}\,\!^{\{0,1\}}\sqrt[8]{\eta^{\sharp}}\,\!^{\{0,1\}}\]
\[\cS(4|\chi_8)]_{\ang{\frac1{16}^*,\alpha_4',\beta_2,\chi_{16},\ell_{64}}}=\cM(4|\chi_8)_{\ang{\frac1{16}^*,\alpha_4',\beta_2,\chi_{16},\ell_{64}}}\sqrt[8]{\eta^3}\]\\

\begin{Lem}
\[\cM(64)_{\ang{\frac1{16}^*,\chi_{16}}}=\cM(4|\chi_8)_{\ang{\frac1{16}^*,\chi_{16},\ell_{64}}}\]
\end{Lem}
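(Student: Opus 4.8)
The plan is to run the same bootstrap that established Lemma \ref{16a} and the preceding lemma $\cM(32)_{\ang{\frac18^*,\chi_{16}}}=\cM(4)_{\ang{\frac18^*,\chi_{16},\ell_{32}}}$, now at level $64$ and with weight steps of $\frac1{16}$. Both sides of the asserted identity are gradings of the same ring of forms on the congruence subgroup $\{(\BM a&b\\c&d\EM):64\mid b,\ 64\mid c,\ d\equiv\pm1\bmod16\}=\ker(\chi_8)\cap\ker(\chi_{16})\cap\ker(c_{64})\cap\ker(b_{64})$, so the containment in the harmless direction is handled exactly as in the remark after Lemma \ref{K41}; the real work is the weight-by-weight inclusion in the other direction.

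First I would extract the cusp-ideal input. Restricting the last displayed identity $\cS(4|\chi_8)_{\ang{\frac1{16}^*,\alpha_4',\beta_2,\chi_{16},\ell_{64}}}=\cM(4|\chi_8)_{\ang{\frac1{16}^*,\alpha_4',\beta_2,\chi_{16},\ell_{64}}}\sqrt[8]{\eta^3}$ to the sub-semigroup $\ang{\frac1{16}^*,\chi_{16},\ell_{64}}$ — which contains $\deg\sqrt[8]{\eta^3}=(\frac3{16}^*,c_{64}b_{64})$ and is saturated with respect to it — yields
\[\cS(4|\chi_8)_{\ang{\frac1{16}^*,\chi_{16},\ell_{64}}}=\cM(4|\chi_8)_{\ang{\frac1{16}^*,\chi_{16},\ell_{64}}}\sqrt[8]{\eta^3}.\]

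Next the base case: extending the computation in Lemma \ref{b8c8} to exhibit $c_{64},b_{64}$ as homomorphisms of the relevant $2$-power congruence groups, Lemma \ref{decom} applied with $\varGamma=\ker(\chi_8)$, the finite character group generated by $c_{64},b_{64}$ together with the $d_{64}$-twist $\chi_{16}$, and $\varGamma'=\Gamma_0(64)\cap\Gamma^0(64)$, gives $\cM(64)_{\frac k{16}^*+\ang{\chi_{16}}}=\cM(4|\chi_8)_{\frac k{16}^*+\ang{\chi_{16},\ell_{64}}}$ for $k\in16\N$, i.e. at genuine integer weight — and, since the slash operators there permute the cusps, also for the cusp-form subspaces. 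Then I would bootstrap to the remaining residues mod $16$: for $k\equiv13\bmod16$ (so $\tfrac{k+3}{16}\in\N$), using $\sqrt[8]{\eta^3}\in\cS(2)_{(\frac3{16}^*,c_{64}b_{64})}$ together with the fact that $c_{64}b_{64}$ restricts trivially to $\Gamma_0(64)\cap\Gamma^0(64)$,
\[\cM(64)_{\frac k{16}^*+\ang{\chi_{16}}}\sqrt[8]{\eta^3}\subseteq\cS(64)_{\frac{k+3}{16}^*+\ang{\chi_{16}}}=\cS(4|\chi_8)_{\frac{k+3}{16}^*+\ang{\chi_{16},\ell_{64}}}=\cM(4|\chi_8)_{\frac k{16}^*+\ang{\chi_{16},\ell_{64}}}\sqrt[8]{\eta^3},\]
and dividing by the nonzero form $\sqrt[8]{\eta^3}$ gives the missing inclusion at weight $\frac k{16}$. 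Iterating with $k\equiv10,7,4,1,14,11,\dots\bmod16$ (subtracting $3$ each step, which exhausts all residues since $\gcd(3,16)=1$) settles every weight, and summing over $k$ gives the claimed equality of graded rings.

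The main obstacle is the bookkeeping hidden in the base case. One must check that $c_{64}$ and $b_{64}$ genuinely define characters on each intersection $\Gamma_0(2^a)\cap\Gamma^0(2^b)$ that occurs (a longer but entirely parallel version of Lemma \ref{b8c8}); that a transversal of $\Gamma_0(64)\cap\Gamma^0(64)$ in $\ker(\chi_8)$ can be chosen inside the kernel of the weight character, so that the orthogonality hypotheses of Lemma \ref{decom} hold; and — the delicate step — that the order-$4$ character $\chi_{16}$ of $\Gamma_0(64)\cap\Gamma^0(64)$ is matched correctly with its order-$2$ restriction to $\ker(\chi_8)$ twisted by appropriate powers of $c_{64},b_{64}$, so that the character-gradings on the two sides really are identified. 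Once this identification of the grading data is in place, the weight-shift induction is purely formal.
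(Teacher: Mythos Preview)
Your proposal is correct and follows essentially the same route as the paper: establish the integer-weight case via Lemma~\ref{decom} once $c_{64},b_{64}$ are known to be characters on $\Gamma_0(4)\cap\Gamma^0(4)\cap\ker(\chi_8)$, extract the cusp-ideal identity $\cS=\cM\cdot\sqrt[8]{\eta^3}$ by restriction from the larger grading, and then bootstrap through residues $13,10,7,\dots$ modulo~$16$ by multiplying into $\cS$ at integer weight and dividing back out. The paper's own proof is terser --- it simply asserts the homomorphism property and runs the induction --- whereas you spell out the bookkeeping concerns about matching the character gradings, but the skeleton is identical.
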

\begin{proof}First note
\[c_{64},b_{64}:\Gamma_0(4)\cap\Gamma^0(4)\cap\ker(\chi_8)\to\C^\times\]
are homomorphisms and $\cM(64)_{\frac k{16}^*+\ang{\chi_{16}}}=\cM(4|\chi_8)_{\frac k{16}^*+\ang{\chi_{16},\ell_{64}}}$ for $k\in16\N$.

This assertion stays hold for $k\in16\M+13$ since
\begin{align*}\cM(64)_{\frac k{16}^*+\ang{\chi_{16}}}\sqrt[8]{\eta}^3&\subset\cS(64)_{\frac{k+3}{16}^*+\ang{\chi_{16}}}\\
&=\cS(4|\chi_8)_{\frac{k+3}{16}^*+\ang{\chi_{16},\ell_{64}}}\\
&=\cM(4|\chi_8)_{\frac k8^*+\ang{\chi_{16},\ell_{64}}}\sqrt{\eta}^3\end{align*}

It stays hold also for $k\in16\M+10$ and then for $k\in16\M+7$, ....
\end{proof}

\newpage

\section{Level 3-power cases}

\subsection{Formal weights $\bot,\top$}

Write $\bot=\ang{\frac13}$ and $\top=\ang3$.

Weight $\frac\bot3$ and $\frac\top3$ operators are exanpded by
\[\sqrt[3]{\eta^{\ang{\frac13}8}}\in\cS(1,9)_{(\frac43-\frac\bot3,c_3b_{27})}\6\sqrt[3]{\eta^{\ang38}}\in\cS(9,1)_{(\frac43-\frac\top3,c_{27}b_3)}\]

\begin{Lem}
\[\sqrt[3]{\eta^{\bot\ang{\frac13}}}\in\cM(1,9)_{(\frac13-\frac\bot3,\chi_3c_3)} \6 \sqrt[3]{\eta^{\top\ang3}}\in\cM(9,1)_{(\frac13-\frac\top3,\chi_3b_3)}\]
\[\sqrt[3]{\eta^{\nwarrow\ang{\frac13}}}\in\cM(1,9)_{(\frac13-\frac\bot3,\chi_9)} \6 \sqrt[3]{\eta^{\swarrow\ang3}}\in\cM(9,1)_{(\frac13-\frac\top3,\overline{\chi_9})}\]
\[\sqrt[3]{\eta^{\swarrow\ang{\frac13}}}\in\cM(1,9)_{(\frac13-\frac\bot3,\overline{\chi_9})} \6 \sqrt[3]{\eta^{\nwarrow\ang3}}\in\cM(9,1)_{(\frac13-\frac\top3,\chi_9)}\]
\end{Lem}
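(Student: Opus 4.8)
The plan is to derive all six memberships from the three weight-$\frac13$ forms of the Integer/$3$-power weight subsection, namely $\sqrt[3]{\eta^{\bot}}\in\cM(3)_{(\frac13,\chi_3c_9)}$, $\sqrt[3]{\eta^{\nwarrow}}\in\cM(3)_{(\frac13,\chi_9)}$ and $\sqrt[3]{\eta^{\swarrow}}\in\cM(3)_{(\frac13,\overline{\chi_9})}$, in two moves: a rescaling by $\ang{\frac13}$ producing the three $\bot$-statements over $\cM(1,9)$, then the Fricke involution $(\BM0&-1\\1&0\EM)$ producing the three $\top$-statements over $\cM(9,1)$. Here $\bot,\top$ in the $\eta$-exponents retain their eta-quotient meanings $/3/$, $"3"$, while $\frac\bot3$, $\frac\top3$ record the formal shifts attached to the rescalings $\ang{\frac13}$, $\ang3$; in each case the underlying classical weight is $\frac13$, matching the forms we start from.

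For the rescaling, note $(\Gamma_0(3)\cap\Gamma^0(3))\TL(\BM\frac13&0\\0&1\EM)=\Gamma_0(1)\cap\Gamma^0(9)$ via $(\BM a&b\\c&d\EM)\TL(\BM h&0\\0&1\EM)=(\BM a&b/h\\ch&d\EM)$ (so $b\in3\Z$ becomes $3b\in9\Z$ and $c\in3\Z$ becomes $c/3\in\Z$). By the conjugation Proposition — in its rational-weight form, legitimate because the slash is multiplicative over products and reduces to integer weight on cubing, and because at level $3$ the cube-root multiplier of each of $\sqrt[3]{\eta^{\bot}},\sqrt[3]{\eta^{\nwarrow}},\sqrt[3]{\eta^{\swarrow}}$ is already an honest character — the map $f\mapsto f^{\ang{\frac13}}=f|(\BM\frac13&0\\0&1\EM)$ sends $\cM(3)_{(\frac13,\chi)}$ into $\cM(1,9)_{(\frac13-\frac\bot3,\,\chi\TL(\BM\frac13&0\\0&1\EM))}$, so $\sqrt[3]{\eta^{\bot\ang{\frac13}}}=(\sqrt[3]{\eta^{\bot}})^{\ang{\frac13}}$ and its two analogues lie in $\cM(1,9)$. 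The character is read off from $(\BM\frac13&0\\0&1\EM)(\BM a&9\beta\\c&d\EM)(\BM3&0\\0&1\EM)=(\BM a&3\beta\\3c&d\EM)$: the lower-right entry $d$ is unchanged, so $\chi_9$ and $\overline{\chi_9}$ survive, while $c_9=1^{-\frac{cd}9}$ evaluated there equals $1^{-\frac{3cd}9}=1^{-\frac{cd}3}=c_3$; this gives the characters $\chi_3c_3$, $\chi_9$, $\overline{\chi_9}$. Holomorphy at the cusps carries over because conjugation by a fixed matrix permutes the cusps, and the formal part $-\frac\bot3$ of the weight is exactly the one recorded in the display $\sqrt[3]{\eta^{\ang{\frac13}8}}\in\cS(1,9)_{(\frac43-\frac\bot3,c_3b_{27})}$ just before the Lemma.

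For the Fricke step, use $\Gamma^0(9)\TL(\BM0&-1\\1&0\EM)=\Gamma_0(9)$ together with $f^{\ang{\frac13}}|(\BM0&-1\\1&0\EM)=(f|(\BM0&-1\\1&0\EM))^{\ang3}$ and the transforms already on record: $\eta^{"3"}|_1(\BM0&-1\\1&0\EM)=\frac1{\sqrt3^3i}\eta^{/3/}$, hence also $\eta^{/3/}|(\BM0&-1\\1&0\EM)\propto\eta^{"3"}$ (apply $(\BM0&-1\\1&0\EM)$ once more, the forms being weight $1$); and $\eta^{\nwarrow}|(\BM0&-1\\1&0\EM)=1^{\frac23}\eta^{\swarrow}$, hence likewise $\eta^{\swarrow}|(\BM0&-1\\1&0\EM)\propto\eta^{\nwarrow}$. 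Thus $\sqrt[3]{\eta^{\bot\ang{\frac13}}}|(\BM0&-1\\1&0\EM)\propto\sqrt[3]{\eta^{\top\ang3}}$, $\sqrt[3]{\eta^{\nwarrow\ang{\frac13}}}|(\BM0&-1\\1&0\EM)\propto\sqrt[3]{\eta^{\swarrow\ang3}}$ and $\sqrt[3]{\eta^{\swarrow\ang{\frac13}}}|(\BM0&-1\\1&0\EM)\propto\sqrt[3]{\eta^{\nwarrow\ang3}}$, placing the three $\top$-forms in $\cM(9,1)$. The characters transform by $(\BM0&-1\\1&0\EM)(\BM a&b\\c&d\EM)(\BM0&1\\-1&0\EM)=(\BM d&-c\\-b&a\EM)$: for $\gamma\in\Gamma_0(9)$ one has $ad\equiv1\pmod{9}$, so $\chi_9(a)=\overline{\chi_9}(d)$ (which exchanges $\chi_9\leftrightarrow\overline{\chi_9}$), and $\chi_3(a)=\chi_3(d)$; since then $a\equiv d\pmod{3}$, the $c_3$-contribution $1^{\frac{ab}3}$ equals $1^{\frac{bd}3}=b_3$. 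This yields the characters $\chi_3b_3$, $\overline{\chi_9}$, $\chi_9$, and the formal $-\frac\bot3$ becomes $-\frac\top3$ because $\ang{\frac13}$ and $\ang3$ are $(\BM0&-1\\1&0\EM)$-conjugate.

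The one genuinely delicate point is the bookkeeping of the root-of-unity proportionality constants and of the sign of the formal-weight shift: one must confirm that after the $\ang{\frac13}$-rescaling and the Fricke involution the cube-root multiplier of each form is precisely the stated character, normalized against the standard automorphy factor — in particular that the formal term is $-\frac\bot3$ (resp.\ $-\frac\top3$) and not $+$. This is mechanical, the level-$3$ multipliers being already pinned down and rescaling and conjugation acting on them in the evident way, but it is where a sign or root-of-unity slip would lurk.
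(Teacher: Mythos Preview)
Your argument is correct and is a genuinely different route from the paper's. The paper does not argue by transporting the level-$3$ cube roots via $\ang{\frac13}$ and Fricke; instead it writes each target as an explicit eta-quotient in forms whose automorphy is already fixed. For the first line it uses
\[
\sqrt[3]{\eta^{\bot}}\;=\;\dfrac{\eta^{\bot\ang3}}{\sqrt[3]{\eta^{\bot\top}}},
\qquad
\sqrt[3]{\eta^{\bot\top}}^{\ang{\frac13}}\;=\;\dfrac{\eta^{\bot}\,\eta^{\top\ang{\frac19}}}{\sqrt[3]{\eta^{\ang{\frac13}8}}}\in\cM(1,9)_{\frac23+\frac\bot3},
\]
so that $\sqrt[3]{\eta^{\bot\ang{\frac13}}}$ is a ratio of integer-weight eta quotients and the defining form $\sqrt[3]{\eta^{\ang{\frac13}8}}$; the formal tag $-\frac\bot3$ and the character $\chi_3c_3$ are then read off directly, with no conjugation calculus. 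For the $\nwarrow,\swarrow$ lines it invokes the Eisenstein identity $\iE_{\chi_9}^{\ang{\frac13}}=\sqrt[3]{\eta^{\nwarrow2}\eta^{\swarrow}}$ (and its conjugate), which after one more rescaling pins down $\sqrt[3]{\eta^{\nwarrow\ang{\frac13}}}$ and $\sqrt[3]{\eta^{\swarrow\ang{\frac13}}}$ against the already-established $\sqrt[3]{\eta^{\bot\top}}^{\ang{\frac13}}$.

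What each approach buys: your conjugation argument is uniform --- one rescaling plus one Fricke handles all six statements at once, and it makes transparent why the formal labels $-\frac\bot3,-\frac\top3$ appear (they are literally the rescalings of the level-$3$ automorphy factor $\sqrt[3]{\eta^8}$). The cost is the careful bookkeeping of how $c_9,b_9,\chi_9$ transform under $(\BM\frac13&0\\0&1\EM)$ and $(\BM0&-1\\1&0\EM)$, which you carry out. The paper's eta-quotient approach avoids that bookkeeping entirely: since everything is written as a ratio of forms whose multipliers are already recorded, the character falls out by multiplication, and there is no proportionality constant to track. Your closing caveat about root-of-unity constants is thus precisely the place where the paper's method is cleaner; conversely, your method explains the $\top$-column without a separate argument, whereas the paper's sketch leaves those to symmetry.
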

\begin{proof}
The first assertion follows from $\sqrt[3]{\eta^{\bot}}=\dfrac{\eta^{\bot\ang3}}{\sqrt[3]{\eta^{\bot\top}}}$ and
\[\sqrt[3]{\eta^{\bot\top}}^{\ang{\frac13}}=\dfrac{\eta^{\bot}\eta^{\top\ang{\frac19}}}{\sqrt[3]{\eta^{\ang{\frac13}8}}}\in\cM(1,9)_{\frac23+\frac\bot3}\]

The second assertion follows from $\iE_{\chi_9}^{\ang{\frac19}}=\sqrt[3]{\eta^{\nwarrow2}\eta^{\swarrow}}$.
\end{proof}

\begin{Lem}
\[\sqrt[9]{\eta^{\bot}}\in\cM(9)_{(\frac19+\frac\bot3,\chi_3c_{27})} \6 \sqrt[9]{\eta^{\top}}\in\cM(9)_{(\frac19+\frac\top3,\chi_3b_{27})}\]
\end{Lem}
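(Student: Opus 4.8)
The plan is to realize $\sqrt[9]{\eta^{\bot}}$ as a quotient of two eta-objects whose transformation laws are already on record, exactly in the spirit of the treatment of $\sqrt[3]{\eta^{\bot}}$ in \S5.3. Since $\eta^{\bot}=\eta^{\ang{\frac13}3}/\eta=1+O(q)$ is a unit power series, comparing $q$-expansions (equivalently, raising to the ninth power and matching the leading term) yields the identity
\[\sqrt[9]{\eta^{\bot}}=\frac{\sqrt[9]{\eta^8}}{\big(\sqrt[3]{\eta^{\top}}\big)^{\ang{\frac13}}},\]
both sides being equal to $\eta(\tau/3)^{1/3}\eta(\tau)^{-1/9}$. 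First I would invoke $\sqrt[9]{\eta^8}\in\cS(3)_{(\frac49,c_{27}b_{27})}$, the weight-$\frac49$ operator on $\Gamma_0(3)\cap\Gamma^0(3)$ introduced at the end of \S5.3, together with $\sqrt[3]{\eta^{\top}}\in\cM(3)_{(\frac13,\chi_3b_9)}$ from the Lemma of \S5.3. Then I would pin down $\big(\sqrt[3]{\eta^{\top}}\big)^{\ang{\frac13}}$ by the $\ang{\frac13}$-rescaling computation mirroring the one already carried out for $\sqrt[3]{\eta^{\top\ang3}}$ in the preceding Lemma of this subsection: rescaling by $\ang{\frac13}$ carries $\cM(3)$ into $\cM(1,9)$, shifts the weight by $-\frac\bot3$ and replaces $b_9$ by $b_{27}$, so $\big(\sqrt[3]{\eta^{\top}}\big)^{\ang{\frac13}}\in\cM(1,9)_{(\frac13-\frac\bot3,\chi_3b_{27})}$.

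Dividing the automorphy factors, the quotient has weight $\frac49-\big(\frac13-\frac\bot3\big)=\frac19+\frac\bot3$ and character $c_{27}b_{27}\cdot(\chi_3b_{27})^{-1}=\overline{\chi_3}\,c_{27}=\chi_3c_{27}$ (using that $\chi_3$ is real), as a function on $\Gamma_0(9)\cap\Gamma^0(9)$, which is precisely the group on which numerator and denominator are simultaneously defined. Holomorphy on $\cH$ is automatic since $\eta$ is nonvanishing there, so every power of the unit $\eta^{\bot}$ is holomorphic; holomorphy at the cusps of $X(\Gamma_0(9)\cap\Gamma^0(9))$ I would obtain by comparing orders of vanishing, using that the numerator is a cusp form and the eta-quotient order formula of \S3.2 for the denominator --- the order of the quotient comes out $\geq 0$ at every cusp, and equals $0$ at $i\infty$ (consistent with $\eta^{\bot}=1+O(q)$). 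This establishes $\sqrt[9]{\eta^{\bot}}\in\cM(9)_{(\frac19+\frac\bot3,\chi_3c_{27})}$.

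For the second assertion I would either apply $|(\BM 0&-1\\1&0\EM)$ to the first --- this involution fixes $\cM(9)=\cM(\Gamma_0(9)\cap\Gamma^0(9))$ and $\chi_3$, interchanges $\cM(1,9)\leftrightarrow\cM(9,1)$ and $c_{27}\leftrightarrow b_{27}$, and sends $\eta^{\bot}$ to a constant multiple of $\eta^{\top}$ (cf.\ \S4.5) --- or argue directly from the mirror identity $\sqrt[9]{\eta^{\top}}=\sqrt[9]{\eta^8}\big/\big(\sqrt[3]{\eta^{\bot}}\big)^{\ang3}$ together with $\sqrt[3]{\eta^{\bot}}\in\cM(3)_{(\frac13,\chi_3c_9)}$, which under $\ang3$ becomes an element of $\cM(9,1)_{(\frac13-\frac\top3,\chi_3c_{27})}$; the same division then gives weight $\frac19+\frac\top3$ and character $\chi_3b_{27}$.

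The step I expect to be the genuine obstacle is the character/automorphy-factor bookkeeping: because we are extracting a ninth root of $\eta^{\bot}$, one must carry the eta-multiplier system correctly through the rescaling operator $\ang{\frac13}$ and confirm that the $b$-characters, any residual $(\frac cd)$-type Kronecker factors, and the formal weight symbol $\frac\bot3$ assemble to exactly $(\frac19+\frac\bot3,\chi_3c_{27})$ and not to a character twisted by a stray quadratic or cubic factor. Once the transformation law of $\big(\sqrt[3]{\eta^{\top}}\big)^{\ang{\frac13}}$ is secured, the $q$-expansion identity and the cusp estimates are routine.
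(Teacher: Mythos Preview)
Your approach is essentially the paper's: both use the quotient identity $\sqrt[9]{\eta^{\bot}}=\sqrt[9]{\eta^8}\big/\sqrt[3]{\eta^{\top\ang{\frac13}}}$ and then read off the weight and character. The only difference is in how the automorphy of the denominator is pinned down: the paper writes $\sqrt[3]{\eta^{\top\ang{\frac13}}}=\sqrt[3]{\eta^{\ang{\frac13}8}}\big/\eta^{\bot}$ and combines the already-recorded data $\sqrt[3]{\eta^{\ang{\frac13}8}}\in\cS(1,9)_{(\frac43-\frac\bot3,\,c_3b_{27})}$ and $\eta^{\bot}\in\cM(1,3)_{(1,\,\chi_3c_3)}$, which gives $(\frac13-\frac\bot3,\,\chi_3b_{27})$ at once. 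Your rescaling heuristic reaches the same endpoint, but since rational-weight automorphy is defined here through specific $\eta$-objects rather than a general $\ang{\frac13}$-rule, the paper's second quotient is the cleaner way to secure exactly the point you flagged as the genuine obstacle.
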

\begin{proof}$\sqrt[9]{\eta^{\bot}}=\dfrac{\sqrt[9]{\eta^8}}{\sqrt[3]{\eta^{\top\ang{\frac13}}}}$ and $\sqrt[3]{\eta^{\top\ang{\frac13}}}=\dfrac{\sqrt[3]{\eta^{\ang{\frac13}8}}}{\eta^{\bot}}$.
\end{proof}

\vspace{0.5cm}

Formally let $\frac{\bot+\top+\nwarrow+\swarrow}3=0$ and make new operators of weight $\frac\nwarrow3$ and $\frac\swarrow3$ by
\[\sqrt[9]{\eta^{\nwarrow}}\in\cM(9)_{(\frac19+\frac\nwarrow3,\chi_{27})} \6 \sqrt[9]{\eta^{\swarrow}}\in\cM(9)_{(\frac19+\frac\swarrow3,\overline{\chi_{27}})}\]\\

Next, put $[\eta,\frac k9]=\eta^{\bot\ang{\frac13}}|(\BM 1&k\\0&1\EM)$ then
\begin{align*}\z [\eta,\frac19]&=(\iE_{\chi_3}-3\eta^{\top})^{\ang{\frac13}}|(\BM 1&1\\0&1\EM)\\
&=(\eta^{\bot}+9\eta^{\top}-3\sqrt[3]{\eta^{\nwarrow}\eta^{\swarrow}\eta^{\top}})|(\BM 1&1\\0&1\EM)\\
&=\eta^{\nwarrow}+9\cdot1^{\frac13}\eta^{\top}-3\cdot1^{\frac19}\sqrt[3]{\eta^{\swarrow}\eta^{\nwarrow}\eta^{\top}}\in\cM(\Gamma(9))_1\end{align*}
and
\[\z \sqrt[3]{[\eta,\frac19][\eta,\frac49][\eta,\frac79]}=\eta^{\nwarrow}.\]

\newpage

Moreover put
\[\z [E,\frac k9]=\BC \frac{1^{\frac23}-1^{\frac13}}3\sqrt[3]{[\eta,\frac k9]}+\frac{1-1^{\frac23}}3\big(\sqrt[3]{[\eta,\frac{k+3}9]}+\sqrt[3]{[\eta,\frac{k-3}9]}\big)&\text{for }k=1,4,7\\
\frac{1^{\frac13}-1^{\frac23}}3\sqrt[3]{[\eta,\frac k9]}+\frac{1-1^{\frac13}}3\big(\sqrt[3]{[\eta,\frac{k+3}9]}+\sqrt[3]{[\eta,\frac{k-3}9]}\big)&\text{for }k=8,5,2\EC\]
\begin{Lem}
\[\z [E,\frac19]\in\cM(9)_{(\frac13-\frac\nwarrow3,\chi_3)}\6[E,\frac89]\in\cM(9)_{(\frac13-\frac\swarrow3,\chi_3)}\]
\[\z [E,\frac49]\in\cM(9)_{(\frac13-\frac\nwarrow3,\chi_9)}\6[E,\frac59]\in\cM(9)_{(\frac13-\frac\swarrow3,\overline{\chi_9})}\]
\[\z [E,\frac79]\in\cM(9)_{(\frac13-\frac\nwarrow3,\overline{\chi_9})}\6[E,\frac29]\in\cM(9)_{(\frac13-\frac\swarrow3,\chi_9)}\]
\end{Lem}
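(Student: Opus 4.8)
The plan is to reduce the six assertions to the three cases $k=1,4,7$ by complex conjugation, then prove those by transporting the known identities for $\eta^{\bot},\eta^{\top},\eta^{\nwarrow},\eta^{\swarrow}$ and for $\iE_{\chi_3},\iE_{\chi_9},\iG_{\chi_9^2,\overline{\chi_9}}$ through the cube roots, and finally to check holomorphy at the cusps; the real work is the bookkeeping of roots of unity.

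For the reduction, $\eta^{\bot\ang{\frac13}}=\VS{m,n\in\Z}1^{\frac{m-n}3}q^{\frac{m^2+mn+n^2}9}$ is fixed by $\sigma_4$, so $\sigma_4$ sends $[\eta,\frac k9]=\eta^{\bot\ang{\frac13}}|(\BM1&k\\0&1\EM)$ to $[\eta,\frac{9-k}9]$ (the label being well-defined mod $9$, since $|(\BM1&9\\0&1\EM)$ acts trivially on $\C[[q^{\frac19}]]$), hence interchanges the two orbits $\{1,4,7\}$ and $\{2,5,8\}$ of $k\mapsto k+3$. Since $\sigma_4$ also swaps $1^{\frac13}\leftrightarrow1^{\frac23}$ and commutes with cube-rooting a $q$-series of constant term $1$, it carries $[E,\frac k9]$ to $[E,\frac{9-k}9]$ for $k=1,4,7$; and because $\sigma_4$ replaces a character $\chi$ by $\overline{\chi}$ and conjugates the automorphy factor, exchanging the conjugate symbols $\nwarrow\leftrightarrow\swarrow$ (so $-\frac\nwarrow3\mapsto-\frac\swarrow3$) and fixing $\chi_3$, the assertions for $k=2,5,8$ follow from those for $k=7,4,1$.

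For $k\in\{1,4,7\}$ put $a_0=\sqrt[3]{[\eta,\frac19]}$, $a_1=\sqrt[3]{[\eta,\frac49]}$, $a_2=\sqrt[3]{[\eta,\frac79]}$, each in $1+q^{\frac19}\C[[q^{\frac19}]]$; this branch is legitimate because each $[\eta,\frac k9]$ lies there, and $\eta$ is nonvanishing on $\cH$. By construction $([E,\frac19],[E,\frac49],[E,\frac79])$ is the image of $(a_0,a_1,a_2)$ under the circulant matrix with first row $(\frac{1^{2/3}-1^{1/3}}3,\frac{1-1^{2/3}}3,\frac{1-1^{2/3}}3)$, and this circulant shape is precisely what makes the combinations respect the cyclic permutation of the $a_j$. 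Since each $[\eta,\frac k9]\in\cM(\Gamma(9))_1$, the $a_j$ are holomorphic weight-$\frac13$ functions on a congruence subgroup and are already invariant under the translations in $\Gamma_0(9)\cap\Gamma^0(9)$; the transformation under the other generators is obtained by pushing $\eta^{\top}|_1(\BM0&-1\\1&0\EM)=\frac1{\sqrt3^3i}\eta^{\bot}$, $\eta^{\nwarrow}|(\BM0&-1\\1&0\EM)=1^{\frac23}\eta^{\swarrow}$ and $\sqrt[3]{[\eta,\frac19][\eta,\frac49][\eta,\frac79]}=\eta^{\nwarrow}$ through the cube roots and the circulant matrix, identifying the resulting weight-$1$ objects from $\iE_{\chi_3}=\eta^{\bot}+3\eta^{\top}$, $\iE_{\chi_9}^{\ang{\frac13}}=\sqrt[3]{\eta^{\nwarrow2}\eta^{\swarrow}}$, $\iG_{\chi_9^2,\overline{\chi_9}}^{\ang{\frac19}}=\sqrt[3]{\eta^{\bot}\eta^{\nwarrow}\eta^{\top}}$ and the preceding Lemma's $\sqrt[3]{\eta^{\bot}}\in\cM(3)_{(\frac13,\chi_3c_9)}$, $\sqrt[3]{\eta^{\nwarrow}}\in\cM(3)_{(\frac13,\chi_9)}$. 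The nebentype is read off on a matrix $\gamma\in\Gamma_0(9)\cap\Gamma^0(9)$ with $d_9(\gamma)=2$, where $|\gamma$ permutes the $a_j$ up to cube roots of unity and the eigenvalues on $[E,\frac19],[E,\frac49],[E,\frac79]$ come out $\chi_3(2),\chi_9(2),\overline{\chi_9}(2)$. Holomorphy at the cusps is immediate: the theta expansion of $\eta^{\bot\ang{\frac13}}$ and its $(\BM0&-1\\1&0\EM)$-image show that at each cusp every $[\eta,\frac k9]$ has leading exponent $\ge0$ with unit leading coefficient, so its cube root, and any $\C$-linear combination of such, remains bounded.

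The main obstacle is the phase bookkeeping: since a cube root of a modular form transforms only up to a cube root of unity, one must confirm that the globally fixed branches $a_j\in1+q^{\frac19}\C[[q^{\frac19}]]$ assemble into an honest automorphy factor over $\Gamma_0(9)\cap\Gamma^0(9)$, and that the accumulated phases yield exactly $\chi_3,\chi_9,\overline{\chi_9}$ and the twist $-\frac\nwarrow3$ rather than these times a cube root of unity. An alternative sidestepping part of this is a dimension count: the rational-weight dimension formula together with the decomposition results pins down $\dim\cM(9)_{(\frac13-\frac\nwarrow3,\chi_3)}$ and its $\chi_9,\overline{\chi_9}$ analogues, whereupon each $[E,\frac k9]$ is identified by matching a few leading $q$-coefficients computed from the $[\eta,\frac k9]$.
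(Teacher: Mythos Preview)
The paper states this lemma without proof, so there is nothing to compare your argument against directly. Your outline is sound: the $\sigma_4$-reduction from $k\in\{2,5,8\}$ to $k\in\{1,4,7\}$ works as you say (one checks that $\sigma_4$ swaps $[\eta,\frac k9]\leftrightarrow[\eta,\frac{9-k}9]$ and conjugates the scalar coefficients in the definition, whence $[E,\frac k9]\leftrightarrow[E,\frac{9-k}9]$), and for the remaining three cases your strategy of transporting the known behaviour of $\sqrt[3]{\eta^{\bot\ang{\frac13}}}\in\cM(1,9)_{(\frac13-\frac\bot3,\chi_3c_3)}$ through the translations $(\BM1&k\\0&1\EM)$ and then through the circulant is exactly how the paper treats the parallel assertions elsewhere (for instance $\sqrt[3]{\eta^{\nwarrow}}\in\cM(3)_{(\frac13,\chi_9)}$ via the identity $\iE_{\chi_9}^{\ang{\frac13}}=\sqrt[3]{\eta^{\nwarrow2}\eta^{\swarrow}}$ in \S4.3, or the level-$2$ analogues in \S6). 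You correctly identify the cube-root phase bookkeeping as the only substantive step, and your proposed alternative of a dimension count plus matching of leading $q$-coefficients is also in the paper's spirit; the paper uses both devices freely elsewhere but supplies neither here.
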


\vspace{0.5cm}

At last, remark
\[\z f_{27,3}=\sqrt[9]{\eta^{\top}}f_{9,1}^{\ang3}\]
\[f_{27,9}=\sqrt[9]{\eta^{\top}}\sqrt[3]{\eta^{\top\ang3}}\]
\[\z f_{27,15}=\sqrt[9]{\eta^{\top}}f_{9,5}^{\ang3}\]
\[\z f_{27,21}=\sqrt[9]{\eta^{\top}}f_{9,7}^{\ang3}\]
\[\z f_{27,1}-f_{27,17}-f_{27,19}=\sqrt[9]{\eta^{\bot}}f_{9,1}^{\ang{\frac13}}\]
\[\z f_{27,5}-f_{27,13}-f_{27,23}=\sqrt[9]{\eta^{\bot}}f_{9,5}^{\ang{\frac13}}\]
\[\z f_{27,7}-f_{27,11}-f_{27,25}=\sqrt[9]{\eta^{\bot}}f_{9,7}^{\ang{\frac13}}\]\\
Acting $(\BM 1&1\\0&1\EM)$ on the fifth identity yields
\[\z f_{27,1}-1^{\frac13}f_{27,17}-1^{\frac23}f_{27,19}=\sqrt[9]{\eta^{\nwarrow}}\frac13\big(\sqrt[3]{[\eta,\frac19]}+\sqrt[3]{[\eta,\frac49]}+\sqrt[3]{[\eta,\frac79]}\big)\]

\newpage

\subsection{s-form}

Put
\[\z [s,\frac03]=\eta^{\bot\ang3}+3\eta^{\top}\]
and $[s,\frac k3]=[s,\frac03]\big|(\BM 1&k\\0&1\EM)$ then
\[\z \iE_{\chi_3}=\sqrt[3]{[s,\frac03][s,\frac13][s,\frac23]}\]
since $\eta^{\bot2}+9\eta^{\bot}\eta^{\top}+27\eta^{\top2}=\eta^{\nwarrow}\eta^{\swarrow}$.

\begin{Prop}\label{lev9}We have identities in $\cM(3)_{\frac23}$

\begin{align*}\sqrt[3]{\eta^{\nwarrow}\eta^{\swarrow}}&\z =\sqrt[3]{\eta^{\bot2}}+3\sqrt[3]{[s,\frac03]\eta^{\top\ang3}}\\
&\z =\sqrt[3]{[s,\frac13]\eta^{\nwarrow\ang3}}+(1-1^{\frac13})\sqrt[3]{[s,\frac03]\eta^{\top\ang3}}\\
&\z =\sqrt[3]{\eta^{\bot}[s,\frac03]}^{\ang{\frac13}}+3\sqrt[3]{\eta^{\top2}}\\
&\z =\sqrt[3]{\eta^{\nwarrow}[s,\frac13]}^{\ang{\frac13}}+3\cdot1^{\frac23}\sqrt[3]{\eta^{\top2}}\end{align*}
\end{Prop}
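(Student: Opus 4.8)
The plan is to treat all five functions as weight $\tfrac23$ modular forms on $\Gamma_0(3)\cap\Gamma^0(3)$, the nebentypus being fixed by the chosen branches of the cube roots (via the convention $\sqrt[n]\phi\in x^{r/n}+\C[[x^{1/n}]]x^{(r+1)/n}$), and then to pin each identity down in two moves: first put both sides into one homogeneous piece, then cube. So I would begin by recording membership. By the weight-$\tfrac13$ lemmas of this section $\sqrt[3]{\eta^{\bot}},\sqrt[3]{\eta^{\top}},\sqrt[3]{\eta^{\nwarrow}},\sqrt[3]{\eta^{\swarrow}}$ are weight $\tfrac13$ forms with the listed characters; moreover $[s,\tfrac k3]=[s,\tfrac03]\big|(\BM 1&k\\0&1\EM)$ with $[s,\tfrac03]=\eta^{\bot\ang3}+3\eta^{\top}$, and $\eta^{\top\ang3}$, $\eta^{\nwarrow\ang3}$ are the obvious rescalings, so every summand on every right-hand side is a cube root of a weight-$2$ eta-product and lands in the same weight-$\tfrac23$ space as $\sqrt[3]{\eta^{\nwarrow}\eta^{\swarrow}}$. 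Using the structure result $\cM(\Gamma(9))_{\frac13\M}=\bigoplus\cM(3)_{\ang{\frac13,\chi_9}}\sqrt[3]{\eta^{\bot}}^{\{0,1,2\}}\sqrt[3]{\eta^{\top}}^{\{0,1,2\}}$ together with $\cM(3)_{\ang{\frac13,\chi_9}}=\C[\sqrt[3]{\eta^{\nwarrow}},\sqrt[3]{\eta^{\swarrow}}]$, the relevant homogeneous piece is small, so an identity there is forced once leading Fourier coefficients agree.

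For the arithmetic core I would cube each asserted identity $A=B+C$, $A=\sqrt[3]{\eta^{\nwarrow}\eta^{\swarrow}}$. On the left $A^3=\eta^{\nwarrow}\eta^{\swarrow}=\eta^{\bot2}+9\eta^{\bot}\eta^{\top}+27\eta^{\top2}$, which is exactly the identity recorded just after the definition of $[s,\tfrac03]$ and follows from the two linear relations $\eta^{\bot}+1^{\frac13}\eta^{\nwarrow}+1^{\frac23}\eta^{\swarrow}=0$ and $\sqrt3^3i\,\eta^{\top}+\eta^{\nwarrow}-\eta^{\swarrow}=0$. On the right $(B+C)^3=B^3+3B^2C+3BC^2+C^3$, where $B^3,C^3$ are the relevant eta-products (for the first line, $B^3=\eta^{\bot2}$ and $C^3=27[s,\tfrac03]\eta^{\top\ang3}$); the two cross terms must be rewritten so that everything collapses back onto $\eta^{\bot}$ and $\eta^{\top}$, and this is where the product relations $\eta^{\nwarrow}\eta^{\swarrow}=\eta^{\bot\top}$, $\eta^{\bot}\eta^{\top}\eta^{\bot\top}=\eta^8$, equivalently $(\eta^{\bot\ang3})^3=\eta^{\bot}\eta^{\nwarrow}\eta^{\swarrow}$, together with the explicit expansions of $[s,\tfrac k3]$, $\eta^{\top\ang3}$, $\eta^{\nwarrow\ang3}$ in terms of $\eta^{\bot},\eta^{\top},\eta^{\nwarrow},\eta^{\swarrow}$, are used. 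Once $A^3=(B+C)^3$ is verified and both sides have the same leading power of $q$ with the same coefficient, $A/(B+C)$ is a weight-$0$ modular function taking only the values $1,1^{\frac13},1^{\frac23}$ and equal to $1$ at $i\infty$, hence $\equiv 1$, which is the identity. (Alternatively one could argue that $A-(B+C)$ is a cusp form and that a weight-$\tfrac23$ cusp form on this group vanishes, its cube lying in $\cS(\Gamma(3))_2=\{0\}$; the cubing route is preferable since it sidesteps checking vanishing at every cusp.)

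I expect the last two lines to follow from the first two by applying the rescalings $\ang3$ and $\ang{\tfrac13}$ (and, for the fourth, the $(\BM 1&1\\0&1\EM)$-twist that sends $\eta^{\bot}\mapsto\eta^{\nwarrow}$ and $[s,\tfrac03]\mapsto[s,\tfrac13]$), since $\sqrt[3]{\eta^{\bot}[s,\tfrac03]}^{\ang{\frac13}}$ and $\sqrt[3]{\eta^{\bot2}}$, and likewise $3\sqrt[3]{\eta^{\top2}}$ and $3\sqrt[3]{[s,\tfrac03]\eta^{\top\ang3}}$, get interchanged under such an operation precisely because $(\eta^{\bot\ang3})^3=\eta^{\bot}\eta^{\nwarrow}\eta^{\swarrow}$.

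The step I expect to be hardest is the middle one: writing the translated and rescaled objects $[s,\tfrac k3]$, $\eta^{\top\ang3}$, $\eta^{\nwarrow\ang3}$ purely in terms of $\eta^{\bot},\eta^{\top},\eta^{\nwarrow},\eta^{\swarrow}$, so that each cubed identity becomes a genuinely finite algebraic verification, while keeping all cube-root branches mutually consistent — a single stray sign or cube root of unity invalidates the whole argument. By contrast, lifting the cubed identities back up via the one-dimensionality (or the genus-zero cusp-form vanishing) is routine.
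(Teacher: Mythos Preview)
The paper states this proposition without proof, so there is no argument of the author's to compare against; I can only assess your sketch on its own terms.

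Your overall plan---place everything in one graded piece and then force equality---is sound, but the cubing step as you describe it does not close. Writing $A=\sqrt[3]{\eta^{\nwarrow}\eta^{\swarrow}}$, $B=\sqrt[3]{\eta^{\bot2}}$, $C=3\sqrt[3]{[s,\tfrac03]\eta^{\top\ang3}}$, you propose to verify $A^3=(B+C)^3$ and then lift. But $(B+C)^3=B^3+C^3+3BC(B+C)$ still contains the factor $BC=3\sqrt[3]{\eta^{\bot2}[s,\tfrac03]\eta^{\top\ang3}}$, which is a genuine cube root; equivalently, the reformulation $A^3-B^3-C^3=3ABC$ leaves $ABC=3\,\eta^{\bot\ang3}\sqrt[3]{\eta^{\bot}[s,\tfrac03]\eta^{\top\ang3}}$ (using your correct relation $(\eta^{\bot\ang3})^3=\eta^{\bot}\eta^{\nwarrow}\eta^{\swarrow}$), and you give no argument that $\eta^{\bot}[s,\tfrac03]\eta^{\top\ang3}$ is a perfect cube. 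So ``cubing'' has only shifted the problem, and your honest remark that this is ``the step I expect to be hardest'' is in fact the whole difficulty.

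The finite-dimensionality route you mention first is the right primary argument, not the backup, but it also needs more care than you indicate. The individual summands on the right do \emph{not} lie in $\cM(3)_{\frac23}$: for instance $\sqrt[3]{\eta^{\bot2}}\in\cM(3)_{(\frac23,c_9^2)}$, while $\sqrt[3]{[s,\tfrac03]}$ and $\sqrt[3]{\eta^{\top\ang3}}$ live in $\cM(9,3)$ and $\cM(9,1)$ and carry the formal weight symbol $\tfrac\top3$. So the common ambient space is not $\cM(\Gamma(9))_{\frac23}$ but a piece of $\cM(9)_{\ang{\frac13,\chi_9,\ell_9,\frac\top3}}$ (or, after forgetting the formal labels, a space of level $27$). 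Your fallback ``$(A-(B+C))^3\in\cS(\Gamma(3))_2=\{0\}$'' fails for exactly this reason: the difference is not $\Gamma(3)$-modular, and on the correct larger group the weight-$2$ cusp space is nonzero. Once you pin down the correct homogeneous piece and its dimension, comparing a handful of Fourier coefficients finishes the first and third lines; the second and fourth then follow, as you say, by acting with $(\BM 1&1\\0&1\EM)$.
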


\begin{Lem}
\[\z \sqrt[3]{[s,\frac03]^{\ang{\frac13}}}\in\cM(3,9)_{(\frac13+\frac\bot3,\chi_3)} \6 \sqrt[3]{[s,\frac03]}\in\cM(9,3)_{(\frac13+\frac\top3,\chi_3)}\]
\[\z \sqrt[3]{[s,\frac13]^{\ang{\frac13}}}\in\cM(3,9)_{(\frac13+\frac\bot3,\overline{\chi_9})} \6 \sqrt[3]{[s,\frac23]}\in\cM(9,3)_{(\frac13+\frac\top3,\chi_9)}\]
\[\z \sqrt[3]{[s,\frac23]^{\ang{\frac13}}}\in\cM(3,9)_{(\frac13+\frac\bot3,\chi_9)} \6 \sqrt[3]{[s,\frac13]}\in\cM(9,3)_{(\frac13+\frac\top3,\overline{\chi_9})}\]
\end{Lem}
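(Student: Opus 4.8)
The plan is to read off every one of the six memberships from Proposition~\ref{lev9}, dividing by an eta--quotient whose space was pinned down in the two preceding subsections, exactly in the style of the proofs there of $\sqrt[3]{\eta^{\bot\ang{\frac13}}}\in\cM(1,9)_{(\frac13-\frac\bot3,\chi_3c_3)}$ and its companions. Throughout one uses only that $\eta$ is holomorphic and nowhere zero on $\cH$ (so cube roots of eta--quotients, and quotients by them, remain holomorphic on $\cH$), that the normalised cube root satisfies $\sqrt[3]{\phi\psi}=\sqrt[3]\phi\,\sqrt[3]\psi$ and commutes with rescaling, and that the spaces $\cM(M,N)_{(\kappa,\chi)}$ are closed under products and under division when the outcome is again a holomorphic modular form.

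First I would treat $[s,\frac03]$. The third line of Proposition~\ref{lev9} gives $\sqrt[3]{\eta^{\bot}[s,\frac03]}^{\ang{\frac13}}=\sqrt[3]{\eta^{\nwarrow}\eta^{\swarrow}}-3\sqrt[3]{\eta^{\top2}}$; writing the left side as $\sqrt[3]{\eta^{\bot\ang{\frac13}}}\cdot\sqrt[3]{[s,\frac03]^{\ang{\frac13}}}$ and dividing by $\sqrt[3]{\eta^{\bot\ang{\frac13}}}\in\cM(1,9)_{(\frac13-\frac\bot3,\chi_3c_3)}$ yields $\sqrt[3]{[s,\frac03]^{\ang{\frac13}}}\in\cM(3,9)_{(\frac13+\frac\bot3,\chi_3)}$, the character $c_3$ collapsing to $1$ on $\Gamma_0(3)\cap\Gamma^0(9)$ and the weights adding to $\frac23$. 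Dually, the first line gives $3\sqrt[3]{[s,\frac03]\eta^{\top\ang3}}=\sqrt[3]{\eta^{\nwarrow}\eta^{\swarrow}}-\sqrt[3]{\eta^{\bot2}}$, and dividing by $\sqrt[3]{\eta^{\top\ang3}}\in\cM(9,1)_{(\frac13-\frac\top3,\chi_3b_3)}$ gives $\sqrt[3]{[s,\frac03]}\in\cM(9,3)_{(\frac13+\frac\top3,\chi_3)}$. For $[s,\frac13]$ I would substitute the value of $\sqrt[3]{[s,\frac03]\eta^{\top\ang3}}$ from the first line into the second to isolate $\sqrt[3]{[s,\frac13]\eta^{\nwarrow\ang3}}$, then divide by $\sqrt[3]{\eta^{\nwarrow\ang3}}\in\cM(9,1)_{(\frac13-\frac\top3,\chi_9)}$ for $\sqrt[3]{[s,\frac13]}\in\cM(9,3)_{(\frac13+\frac\top3,\overline{\chi_9})}$, and divide the fourth line $\sqrt[3]{\eta^{\nwarrow}[s,\frac13]}^{\ang{\frac13}}=\sqrt[3]{\eta^{\nwarrow}\eta^{\swarrow}}-3\cdot1^{\frac23}\sqrt[3]{\eta^{\top2}}$ by $\sqrt[3]{\eta^{\nwarrow\ang{\frac13}}}\in\cM(1,9)_{(\frac13-\frac\bot3,\chi_9)}$ for $\sqrt[3]{[s,\frac13]^{\ang{\frac13}}}\in\cM(3,9)_{(\frac13+\frac\bot3,\overline{\chi_9})}$. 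Finally, the two remaining statements need no new identity: since $\iE_{\chi_3}=\sqrt[3]{[s,\frac03][s,\frac13][s,\frac23]}$ and all cube roots are normalised with leading coefficient $1$, one gets $\sqrt[3]{[s,\frac03]}\,\sqrt[3]{[s,\frac13]}\,\sqrt[3]{[s,\frac23]}=\iE_{\chi_3}$, hence $\sqrt[3]{[s,\frac23]}=\iE_{\chi_3}\big/\big(\sqrt[3]{[s,\frac03]}\,\sqrt[3]{[s,\frac13]}\big)\in\cM(9,3)_{(\frac13+\frac\top3,\chi_9)}$ — the character being $\chi_3/(\chi_3\overline{\chi_9})=\chi_9$ and the formal weight $\frac13-\frac{2\top}3=\frac13+\frac\top3$ — and, applying $\ang{\frac13}$ and using $\iE_{\chi_3}^{\ang{\frac13}}=\eta^{\bot}+9\eta^{\top}$, likewise $\sqrt[3]{[s,\frac23]^{\ang{\frac13}}}=\iE_{\chi_3}^{\ang{\frac13}}\big/\big(\sqrt[3]{[s,\frac03]^{\ang{\frac13}}}\,\sqrt[3]{[s,\frac13]^{\ang{\frac13}}}\big)\in\cM(3,9)_{(\frac13+\frac\bot3,\chi_9)}$. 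Alternatively one may apply $(\BM1&1\\0&1\EM)$ to the $[s,\frac13]$ identities, which sends $[s,\frac13]\mapsto[s,\frac23]$ and interchanges the $\chi_9$-- and $\overline{\chi_9}$--eta--quotients up to explicit cube roots of unity.

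The routine but delicate part is the bookkeeping: tracking the characters $\chi_3,\chi_9,\overline{\chi_9},b_3,c_3$ and the formal shifts $\pm\frac\bot3,\pm\frac\top3$ through each division, and recognising that the identities of Proposition~\ref{lev9} are to be read on the common group $\Gamma_0(9)\cap\Gamma^0(3)$ (respectively $\Gamma_0(3)\cap\Gamma^0(9)$), on which the remaining characters $c_3,c_9,b_3$ restrict to $1$ so that all displayed terms lie in a single space; one must also confirm the several cube roots are taken on compatible branches, which is immediate from the leading--term normalisation. The one genuinely non--formal point is holomorphy at the cusps of the cube roots obtained: on $\cH$ it is immediate from Proposition~\ref{lev9} and the non--vanishing of $\eta$, while at each cusp it follows from the order formula for eta--quotients together with the expansions $[s,\frac k3]=1+O(q^{\frac13})$ and $\iE_{\chi_3}=1+6\sum_{n\in3\M+1}(\chi_3*{\tt 1}_\N)(n)q^n$, which show each $[s,\frac k3]$ is holomorphic (order $\ge0$) at every cusp, hence so is its cube root. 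I expect this cusp verification to be the only step requiring real care.
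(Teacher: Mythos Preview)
Your proposal is correct and is precisely in the spirit of the paper. The paper itself gives no proof for this Lemma; it is stated immediately after Proposition~\ref{lev9} and left to the reader, just as several of the neighbouring Lemmas are. Your strategy --- isolate a product $\sqrt[3]{(\text{known eta--quotient})\cdot[s,\frac k3]}$ from one line of Proposition~\ref{lev9}, then divide by the known factor whose membership was established in \S7.1 --- is exactly the mechanism the paper uses in the proofs of the two preceding Lemmas (e.g.\ $\sqrt[3]{\eta^{\bot\ang{\frac13}}}=\sqrt[3]{\eta^{\ang{\frac13}8}}/\eta^{\top\ang{\frac19}}$ and $\sqrt[9]{\eta^{\bot}}=\sqrt[9]{\eta^8}/\sqrt[3]{\eta^{\top\ang{\frac13}}}$). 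Your bookkeeping of the characters and of the formal weights $\pm\frac\bot3,\pm\frac\top3$ is accurate, including the observation that $c_3,b_3,b_9,c_9$ become trivial on the appropriate $\Gamma_0(3)\cap\Gamma^0(9)$ or $\Gamma_0(9)\cap\Gamma^0(3)$, and the reduction $-\tfrac{2\top}3=\tfrac\top3$ in $(\Q/\Z)\top$.

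One small remark: your treatment of the last pair via the factorisation $\iE_{\chi_3}=\sqrt[3]{[s,\frac03][s,\frac13][s,\frac23]}$ is perfectly valid, but note that Proposition~\ref{lev9} does not literally contain a line for $[s,\frac23]$; your alternative route --- apply $(\BM1&1\\0&1\EM)$ to the $[s,\frac13]$ identities --- is therefore the more direct parallel to how the paper generates the missing cases elsewhere (compare how $\eta^{\swarrow}$ facts are obtained from $\eta^{\nwarrow}$ ones). Either argument is fine. Your final paragraph on cusp holomorphy is appropriately cautious; in the paper's framework this point is never made explicit and is absorbed into the standing convention that the displayed quotients are already known to be modular forms once the automorphy factor has been matched.
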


\begin{Prop}We have identities in $\cM(9)_{\frac23-\frac\top3}$
\[\z \sqrt[3]{[s,\frac13][s,\frac23]}+1^{\frac13}\sqrt[3]{\eta^{\nwarrow}\eta^{\swarrow\ang3}}+1^{\frac23}\sqrt[3]{\eta^{\swarrow}\eta^{\nwarrow\ang3}}=0\]
\[\z \sqrt[3]{[s,\frac03]^2}+1^{\frac23}\sqrt[3]{\eta^{\nwarrow}\eta^{\swarrow\ang3}}+1^{\frac13}\sqrt[3]{\eta^{\swarrow}\eta^{\nwarrow\ang3}}=0\]
\[\z \sqrt[3]{[s,\frac03]^2}-\sqrt[3]{[s,\frac13][s,\frac23]}=3\sqrt[3]{\eta^{\bot}\eta^{\top\ang3}}\]
\end{Prop}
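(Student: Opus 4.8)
The plan is to treat the three identities as the ``one level up, cube-rooted'' analogues of the level-$3$ relations $\eta^{\bot}+1^{\frac13}\eta^{\nwarrow}+1^{\frac23}\eta^{\swarrow}=0$ and $\sqrt3^3i\,\eta^{\top}+\eta^{\nwarrow}-\eta^{\swarrow}=0$ already established, and to prove each of them by the same device used there: form the product of the finitely many $\Gamma(1)$-translates of the left-hand side and show it lands in $\cM(1)_{12\ell}$ with $q$-order exceeding $\ell$, hence is $0$. First I would fix notation: $A=\sqrt[3]{[s,\frac03]^2}$, $B=\sqrt[3]{[s,\frac13][s,\frac23]}$, $U=\sqrt[3]{\eta^{\nwarrow}\eta^{\swarrow\ang3}}$, $V=\sqrt[3]{\eta^{\swarrow}\eta^{\nwarrow\ang3}}$, $W=\sqrt[3]{\eta^{\bot}\eta^{\top\ang3}}$.

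Using the two Lemmas immediately preceding the statement — which place $\sqrt[3]{[s,\frac k3]}$ and $\sqrt[3]{[s,\frac k3]^{\ang{1/3}}}$, as well as $\sqrt[3]{\eta^{\nwarrow}}$, $\sqrt[3]{\eta^{\swarrow\ang3}}$, $\sqrt[3]{\eta^{\bot}}$, $\sqrt[3]{\eta^{\top\ang3}}$, into explicit twisted spaces $\cM(9,\ast)_{(\ast,\ast)}$ — together with $\cM_{\kappa}\cM_{\kappa'}\subset\cM_{\kappa+\kappa'}$, I would check that $A,B,U,V,W$ all lie in $\cM(9)_{\frac23-\frac\top3}$: in each of these products the characters $\chi_3,\chi_9,\overline{\chi_9}$ and the formal slots $\pm\frac\top3$ add up correctly. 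I would also record the two inputs $\iE_{\chi_3}^3=[s,\frac03][s,\frac13][s,\frac23]$ and $\eta^{\nwarrow}\eta^{\swarrow}=\eta^{\bot\top}$ for later use.

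Next, let $F$ denote the left-hand side of any one of the three identities. Because $[s,\frac k3]|(\BM 1&1\\0&1\EM)=[s,\frac{k+1}3]$ with $[s,\frac k3+2]=[s,\frac k3]$, and because $\eta^{\bot},\eta^{\top},\eta^{\nwarrow},\eta^{\swarrow}$ carry explicit transformation laws under $(\BM 1&1\\0&1\EM)$ and $(\BM 0&-1\\1&0\EM)$ (in particular $\eta^{\top}|(\BM 0&-1\\1&0\EM)=\frac1{\sqrt3^3i}\eta^{\bot}$ and $\eta^{\nwarrow}|(\BM 0&-1\\1&0\EM)=1^{\frac23}\eta^{\swarrow}$), the three summands of $F$ get permuted and rescaled by cube roots of unity under these two matrices; hence $F|(\BM 1&1\\0&1\EM)$ and $F|(\BM 0&-1\\1&0\EM)$ are each a root of unity times a ``sibling'' of the same shape, and the $\Gamma(1)$-orbit of $F$ is finite. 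Moreover the $q$-expansion of each of $A,B,U,V$ begins $1+\cdots$ while $\eta^{\top\ang3}=O(q)$, so the leading coefficients cancel in each identity ($1+1^{\frac13}+1^{\frac23}=0$ for the first two, $1-1=0$ for the third); thus $F$, and then every one of its finitely many $\Gamma(1)$-translates, vanishes at $\infty$.

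Therefore the product $\Pi$ of all $\Gamma(1)$-translates of $F$ is, after multiplying by a suitable power of $\eta^{\top}$ to clear the formal slot and raising to a power $\ell$ to clear the remaining unitary character, a holomorphic modular form on ${\rm SL}_2(\Z)$ whose weight is a multiple of $12$; since every translate of $F$ vanishes at $\infty$, this form vanishes at $\infty$ to order exceeding that multiple, so it lies in $\cM(1)_{12\ell}$ and is $0$, whence $\Pi=0$; as we work inside an integral domain of fractional power series, some translate of $F$ vanishes, and therefore $F=0$. This proves identities (1) and (2); for (3) I would either repeat the argument with $F=A-B-3W$, or subtract (2) from (1) to obtain $A-B=(1^{\frac13}-1^{\frac23})(U-V)=\sqrt3\,i\,(U-V)$ and then prove the single extra relation $U-V=-\sqrt3\,i\,W$ the same way. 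The step I expect to be the genuine obstacle is the bookkeeping: pinning down the transformation of the level-$9$ cube-root forms under $(\BM 0&-1\\1&0\EM)$, checking that the leading terms of every companion of $F$ really do cancel (so that $\Pi$ vanishes at $\infty$ to high enough order), and reconciling all signs, roots of unity and formal-weight slots so that the membership claims and the product argument are consistent.
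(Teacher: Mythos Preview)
The paper actually states this Proposition without proof, as it does for Proposition~\ref{lev9} and the other lemmas in \S7.2. So there is no ``paper's own proof'' to compare against; I can only comment on your strategy and on the route the paper's framework most naturally suggests.

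Your orbit-product argument is modeled faithfully on the proof of the level-$3$ relations $\eta^{\bot}+1^{\frac13}\eta^{\nwarrow}+1^{\frac23}\eta^{\swarrow}=0$ earlier in the paper, and in outline it is sound: the weight/vanishing-order count does work out, since each $F$ has rational weight $\frac23$ and each translate contributes at least one order of vanishing, so the product lands in $\cM(1)_{12m}\cap q^{>m}\C[[q]]=\{0\}$. The acknowledged bookkeeping, however, is not a mere nuisance: you must actually compute how $(\BM 0&-1\\1&0\EM)$ acts on the six cube-root building blocks $\sqrt[3]{[s,\frac k3]}$, $\sqrt[3]{\eta^{\nwarrow\ang3}}$, $\sqrt[3]{\eta^{\swarrow\ang3}}$ --- the paper never records these, and fixing branches of cube roots under this inversion is exactly where such arguments can silently fail.

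A shorter route, and the one the surrounding material points to, is a dimension argument. By the two Lemmas just before the statement, $A,B,U,V,W$ all lie in $\cM(9)_{\frac23-\frac\top3}$. Multiplying the claimed identity by $\sqrt[3]{[s,\frac03]}\in\cM(9,3)_{(\frac13+\frac\top3,\chi_3)}$ (or by any nonzero form of weight $\frac13+\frac\top3$) kills the formal slot and places everything in a genuine finite-dimensional space $\cM(9)_{(1,\ast)}$ whose structure is completely determined by the theorems of \S5.3. Then each identity reduces to matching finitely many $q$-coefficients --- no $\Gamma(1)$-orbit, no cusp-by-cusp vanishing check. This is the level-$9$ analogue of how the paper handles $\cM(\Gamma(4))$ and $\cM(\Gamma(5))$ identities once the structure theorem is in hand.
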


\vspace{0.5cm}

Moreover put $[s,\frac k9]=[s,\frac03]^{\ang{\frac13}}\big|(\BM 1&k\\0&1\EM)$ for $k$ prime to 3 and
\[\z [S,\frac k9]=\BC \frac{1^{\frac13}-1^{\frac23}}3\sqrt[3]{[s,\frac k9]}+\frac{1-1^{\frac13}}3\big(\sqrt[3]{[s,\frac{k+3}9]}+\sqrt[3]{[s,\frac{k-3}9]}\big)&\text{for }k=1,4,7\\
\frac{1^{\frac23}-1^{\frac13}}3\sqrt[3]{[s,\frac k9]}+\frac{1-1^{\frac23}}3\big(\sqrt[3]{[s,\frac{k+3}9]}+\sqrt[3]{[s,\frac{k-3}9]}\big)&\text{for }k=8,5,2\EC\]
\begin{Lem}
\[\z [S,\frac79]\in\cM(9)_{(\frac13+\frac\nwarrow3,\chi_3)}\6[S,\frac29]\in\cM(9)_{(\frac13+\frac\swarrow3,\chi_3)}\]
\[\z [S,\frac49]\in\cM(9)_{(\frac13+\frac\nwarrow3,\chi_9)}\6[S,\frac59]\in\cM(9)_{(\frac13+\frac\swarrow3,\overline{\chi_9})}\]
\[\z [S,\frac19]\in\cM(9)_{(\frac13+\frac\nwarrow3,\overline{\chi_9})}\6[S,\frac89]\in\cM(9)_{(\frac13+\frac\swarrow3,\chi_9)}\]
\end{Lem}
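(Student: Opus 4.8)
The plan is to follow the template of the preceding $[E,\cdot]$--Lemma, of which this is the exact analogue (with $\eta^{\bot\ang{\frac13}}$ replaced by $[s,\frac03]^{\ang{\frac13}}$ and the tag $-\frac\nwarrow3$ replaced by $+\frac\nwarrow3$). Two ingredients are needed. First, one shows that each cube root $\sqrt[3]{[s,\frac k9]}$, for $k$ prime to $3$, is a form of weight $\frac13+\frac\bot3$ on $\Gamma(9)$, holomorphic at the cusps. Second, one recognises $[S,\frac k9]$ as the translation--eigencomponent of the triple $\{\sqrt[3]{[s,\frac{k-3}9]},\sqrt[3]{[s,\frac k9]},\sqrt[3]{[s,\frac{k+3}9]}\}$ under $T^3=(\BM1&3\\0&1\EM)$, in the spirit of Lemma \ref{decom}; this is what upgrades the tag $\bot$ to $\nwarrow$ (for $k\equiv1\bmod3$) or $\swarrow$ (for $k\equiv2\bmod3$), and the Dirichlet character is then read off from the $\Gamma_0(9)\cap\Gamma^0(9)/\Gamma(9)\cong(\Z/9\Z)^\times$ action via $d_9$.

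For the first ingredient, the preceding Lemma gives $\sqrt[3]{[s,\frac03]^{\ang{\frac13}}}\in\cM(3,9)_{(\frac13+\frac\bot3,\chi_3)}$. Because $(\BM1&1\\0&1\EM)$ normalises $\Gamma(9)$, conjugating by $(\BM1&k\\0&1\EM)$ keeps $\Gamma(9)$ inside the defining group, so $\sqrt[3]{[s,\frac k9]}$, taken to be $\sqrt[3]{[s,\frac03]^{\ang{\frac13}}}\big|(\BM1&k\\0&1\EM)$ normalised by the unique cube root of unity making its $q^{\frac19}$--expansion start with $1$, is $|_{\frac13}\Gamma(9)$--invariant of weight $\frac13+\frac\bot3$. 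Holomorphy at the cusps follows from $[s,\frac03][s,\frac13][s,\frac23]=\iE_{\chi_3}^3$ together with the order--of--vanishing formula for eta--quotients: each $[s,\frac k3]$, hence each translate $[s,\frac k9]$, vanishes to an order divisible by $3$ at every cusp, so its cube root is holomorphic there.

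For the second ingredient, $T^3$ sends $[s,\frac k9]$ to $[s,\frac{k+3}9]$ and multiplies $q^{\frac19}$ by $1^{\frac13}$; from $[s,\frac k9]=1+3\cdot1^{\frac k9}q^{\frac19}+\cdots$ one reads off $\sqrt[3]{[s,\frac k9]}\big|_{\frac13}T^3=\zeta_k\sqrt[3]{[s,\frac{k+3}9]}$ with $\zeta_k$ an explicit cube root of unity, so $T^3$ permutes the three cube roots cyclically up to roots of unity. The coefficients $\frac{1^{1/3}-1^{2/3}}3$ and $\frac{1-1^{1/3}}3$ (and the analogous ones for $k\equiv2\bmod3$) in the definition of $[S,\frac k9]$ are arranged precisely so that $[S,\frac k9]\big|_{\frac13}T^3=\omega_k[S,\frac k9]$ for a cube root of unity $\omega_k$; this eigenvalue, equivalently the residue class mod $9$ of the $q^{\frac19}$--support, is exactly what the tag $\frac\nwarrow3$ versus $\frac\swarrow3$ encodes, and it is pinned down by comparing $\sqrt[3]{[s,\frac19][s,\frac49][s,\frac79]}$ with the $[s,\cdot]$--analogue of $\eta^{\nwarrow}$, paralleling $\eta^{\nwarrow}=\sqrt[3]{[\eta,\frac19][\eta,\frac49][\eta,\frac79]}$. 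Transporting the characters $\chi_3,\overline{\chi_9},\chi_9$ of the preceding Lemma through conjugation by $(\BM1&k\\0&1\EM)$ and evaluating $d_9$ then assigns to each of the six values $k\in\{1,2,4,5,7,8\}$ its stated character, a finite check.

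The main obstacle is the cube--root--of--unity bookkeeping in the second step: one must verify that the specific coefficients in the definition of $[S,\frac k9]$ really yield the $T^3$--eigen--projection, which forces one to track throughout the cube root of unity by which $\sqrt[3]{f|\alpha}$ and $(\sqrt[3]{f})|\alpha$ differ, using the $q^{\frac19}$--expansions. The remaining points---holomorphy, the $\Gamma(9)$--invariance, and the index--to--character matching---are routine and entirely parallel to the $[E,\cdot]$--Lemma.
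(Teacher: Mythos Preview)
The paper states this Lemma without proof, just as it does for the parallel $[E,\cdot]$--Lemma, so there is no written argument to compare against; your outline matches the implicit template the paper sets up.

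Your overall strategy is the right one and is clearly what the author has in mind: translate the established $\sqrt[3]{[s,\frac03]^{\ang{\frac13}}}\in\cM(3,9)_{(\frac13+\frac\bot3,\chi_3)}$ by $(\BM1&k\\0&1\EM)$ to get the six cube roots on $\Gamma(9)$, then form the specific linear combinations to trade the $\bot$--tag for $\nwarrow$ or $\swarrow$ and read off the residual Dirichlet character. One point to fix: your holomorphy argument via ``$[s,\frac03][s,\frac13][s,\frac23]=\iE_{\chi_3}^3$ together with the order--of--vanishing formula for eta--quotients'' does not work as stated. Each $[s,\frac k3]$ is a \emph{sum} $\eta^{\bot\ang3}|(\BM1&k\\0&1\EM)+3\cdot1^{\frac k3}\eta^{\top}$, not a single eta--quotient, so the vanishing formula does not apply to it, and in any case divisibility by $3$ of the product's order does not imply it for each factor. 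This detour is unnecessary anyway: holomorphy at the cusps is already part of the preceding Lemma's conclusion for $\sqrt[3]{[s,\frac03]^{\ang{\frac13}}}$, and acting by $(\BM1&k\\0&1\EM)\in{\rm SL}_2(\Z)$ preserves it.

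On the second step, your description is correct in spirit but be careful with what the tags $\frac\nwarrow3,\frac\swarrow3$ mean in this paper: they are not merely labels for $T^3$--eigenvalues but formal weights attached to the automorphy factors of $\sqrt[9]{\eta^{\nwarrow}}$ and $\sqrt[9]{\eta^{\swarrow}}$. So what must actually be checked is that $[S,\frac k9]$ transforms under $\Gamma_0(9)\cap\Gamma^0(9)$ with the same automorphy factor as the reference form $\sqrt[9]{\eta^{\nwarrow}}^3$ (resp.\ $\sqrt[9]{\eta^{\swarrow}}^3$) times the stated Dirichlet character. You correctly identify the cube--root--of--unity bookkeeping as the crux, but you have asserted rather than verified that the given coefficients achieve this; the honest way to close the argument is to compute $\sqrt[3]{[s,\frac k9]}\big|_{\frac13}T^3$ explicitly from the $q$--expansion normalisation and then check the six cases against the defining automorphy of $\sqrt[9]{\eta^{\nwarrow}},\sqrt[9]{\eta^{\swarrow}}$.
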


\newpage

Put
\[\z [u,\frac03]=\frac13\big(\sqrt[3]{[s,\frac03]}+\sqrt[3]{[s,\frac13]}+\sqrt[3]{[s,\frac23]}\big)\]
\[\z [u,\frac13]=\frac13\big(\sqrt[3]{[s,\frac03]}+1^{\frac23}\sqrt[3]{[s,\frac13]}+1^{\frac13}\sqrt[3]{[s,\frac23]}\big)\]
\[\z [u,\frac23]=-\frac13\big(\sqrt[3]{[s,\frac03]}+1^{\frac13}\sqrt[3]{[s,\frac13]}+1^{\frac23}\sqrt[3]{[s,\frac23]}\big)\]
then
\[\z [u,\frac03][u,\frac13][u,\frac23]=\eta^{\top\ang3}\]

\[\z \sqrt[3]{[u,\frac03]^2}\big(\sqrt[3]{[s,\frac13]}-\sqrt[3]{[s,\frac23]}\big)=\sqrt[3]{[u,\frac13][u,\frac23]}\big(1^{\frac13}\sqrt[3]{\eta^{\nwarrow}}-1^{\frac23}\sqrt[3]{\eta^{\swarrow}}\big)\]\\

We see
\[\iE_{\chi_{27}}=1+(1^{\frac19}-1^{\frac29})\VS{n\in\N}(\chi_{27}*{\tt 1}_\N)(n)q^n\]
and
\[\iE_{\chi_{27}}^{\ang{\frac19}}=\sqrt[9]{\eta^{\nwarrow2}\eta^{\swarrow}}E\]
where
\[\z E=\frac{1^{\frac23}-1^{\frac13}}3\sqrt[3]{[E,\frac29][S,\frac19]}+\frac{1-1^{\frac23}}3\big(\sqrt[3]{[E,\frac59][S,\frac49]}+\sqrt[3]{[E,\frac89][S,\frac79]}\big)\]
\[=\sqrt[9]{\eta^{\nwarrow\ang{\frac13}2}\eta^{\swarrow\ang{\frac13}}}\sqrt[3]{\sqrt[3]{\eta^{\nwarrow}\eta^{\swarrow2}}+(1^{\frac19}-1)^3\sqrt[3]{\eta^{\nwarrow2}\eta^{\top}}-3\cdot1^{\frac19}(1^{\frac29}-1^{\frac19}+1)(1^{\frac29}+1)\sqrt[3]{\eta^{\swarrow}\eta^{\top2}}}\]\\

Put
\[\z E_\bot=\big\{\sqrt[3]{\eta^{\bot\ang{\frac13}}},\sqrt[3]{\eta^{\nwarrow\ang{\frac13}}},\sqrt[3]{\eta^{\swarrow\ang{\frac13}}}\big\} \6 S_\bot=\big\{\sqrt[3]{[s,\frac03]^{\ang{\frac13}}},\sqrt[3]{[s,\frac13]^{\ang{\frac13}}},\sqrt[3]{[s,\frac23]^{\ang{\frac13}}}\big\}\]
\[\z E_\top=\big\{\sqrt[3]{\eta^{\top\ang3}},\sqrt[3]{\eta^{\swarrow\ang3}},\sqrt[3]{\eta^{\nwarrow\ang3}}\big\} \6 S_\top=\big\{\sqrt[3]{[s,\frac03]},\sqrt[3]{[s,\frac13]},\sqrt[3]{[s,\frac23]}\big\}\]
\[\z E_\nwarrow=\big\{\sqrt[3]{[E,\frac19]},\sqrt[3]{[E,\frac49]},\sqrt[3]{[E,\frac79]}\big\} \6 S_\nwarrow=\big\{\sqrt[3]{[S,\frac19]},\sqrt[3]{[S,\frac49]},\sqrt[3]{[S,\frac79]}\big\}\]
\[\z E_\swarrow=\big\{\sqrt[3]{[E,\frac89]},\sqrt[3]{[E,\frac59]},\sqrt[3]{[E,\frac29]}\big\} \6 S_\swarrow=\big\{\sqrt[3]{[S,\frac89]},\sqrt[3]{[S,\frac59]},\sqrt[3]{[S,\frac29]}\big\}\]

\newpage

\subsection{Structure theorems with $\chi_{27}$}

\begin{Lem}Let $R$ be direct sum of
\[\cM(\Gamma(9))_{\frac13\M}\]
\[\bigoplus\C\big[\sqrt[3]{\eta^{\bot}},\sqrt[3]{\eta^{\top}}\big]\Big(\sqrt[9]{\eta^{\bot}\eta^{\top}\eta^{\nwarrow2}\eta^{\swarrow2}}E_\nwarrow E_\swarrow\cup\sqrt[9]{\eta^{\bot2}\eta^{\top2}\eta^{\nwarrow}\eta^{\swarrow}}E_\bot E_\top\;\cup\]
\[\sqrt[9]{\eta^{\bot2}\eta^{\top}}S_\bot E_\top\cup\sqrt[9]{\eta^{\top2}\eta^{\bot}}S_\top E_\bot\Big)\]
\[\bigoplus\C\big[\sqrt[3]{\eta^{\nwarrow}},\sqrt[3]{\eta^{\swarrow}}\big]\sqrt[3]{\eta^{\bot}}^{\{0,1,2\}}\Big(\sqrt[9]{\eta^{\top}\eta^{\nwarrow}\eta^{\swarrow}}S_\bot\cup\sqrt[9]{\eta^{\top2}\eta^{\nwarrow2}\eta^{\swarrow2}}E_\bot\Big)\]
\[\bigoplus\C\big[\sqrt[3]{\eta^{\nwarrow}},\sqrt[3]{\eta^{\swarrow}}\big]\sqrt[3]{\eta^{\top}}^{\{0,1,2\}}\Big(\sqrt[9]{\eta^{\bot}\eta^{\nwarrow}\eta^{\swarrow}}S_\top\cup\sqrt[9]{\eta^{\bot2}\eta^{\nwarrow2}\eta^{\swarrow2}}E_\top\Big)\]Then $\cM(27)_{\ang{1,\chi_9}}=R|_\M$.
\end{Lem}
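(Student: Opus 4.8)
The plan is to follow the pattern of the level $2$-power structure theorems of the previous section: exhibit $R|_\M$ inside $\cM(27)_{\ang{1,\chi_9}}$, check that the tautological map $R\to\cM(27)_{\ang{1,\chi_9}}$ is injective on $q^{1/27}$-expansions, and match Hilbert series against the dimension formula; equality of the two graded objects then follows.

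\textbf{The inclusion $R|_\M\subseteq\cM(27)_{\ang{1,\chi_9}}$.} First I would show that each monomial occurring in the four blocks that define $R$ becomes, as soon as its total weight lies in $\M$, an element of $\cM(27)_{(k,\chi_9^j)}$ for suitable $k$ and $j$. The weight of every building block (including its formal $\bot,\top,\nwarrow,\swarrow$ component) and its character have already been recorded in the Lemmas of the two preceding subsections: the roots $\sqrt[9]{\eta^\bot},\sqrt[9]{\eta^\top},\sqrt[9]{\eta^\nwarrow},\sqrt[9]{\eta^\swarrow}$; the cube roots $\sqrt[3]{\eta^{\bot\ang{\frac13}}},\dots,\sqrt[3]{\eta^{\nwarrow\ang3}}$; the finite families $E_\bot,E_\top,E_\nwarrow,E_\swarrow,S_\bot,S_\top,S_\nwarrow,S_\swarrow$; and $\sqrt[3]{[s,\frac k3]},\sqrt[3]{[s,\frac k3]^{\ang{\frac13}}}$. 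Using the relation $\frac{\bot+\top+\nwarrow+\swarrow}3=0$, one checks block by block that the formal weight of each product cancels, that the $c_9,b_9,c_{27},b_{27}$ factors cancel (so the form descends from a subgroup of $3$-power index to $\Gamma_0(27)\cap\Gamma^0(27)$ itself), and that the surviving Dirichlet character is a power of $\chi_9$. This is precisely the point at which the shapes of the four blocks are forced: the simultaneous constraints ``formal part $=0$'' and ``actual weight $\in\M$'' leave room only for the monomial types listed. Holomorphy at the cusps is automatic from the valence formula for eta-quotients recalled earlier.

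\textbf{Injectivity and dimension count.} As in Theorems \ref{23}, \ref{4} and \ref{5}, within each graded piece $(k,\chi_9^j)$ the finitely many monomials of $R$ have $q^{1/27}$-expansions with pairwise distinct leading exponents — this follows from the explicit leading terms of the ingredients ($[\eta,\frac19]=\eta^\nwarrow+\cdots$, $[s,\frac03]=\eta^{\bot\ang3}+3\eta^\top$, the pentagonal expansions of $\eta^\bot,\eta^\top$, and the $f_{9,r}$-type identities) — so the natural map $R\to\cM(27)_{\ang{1,\chi_9}}$ is injective. It then remains to prove $\dim R|_{(k,\chi_9^j)}\ge\dim\cM(27)_{(k,\chi_9^j)}$ for all $k,j$. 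Since $R$ is a direct sum of free modules over polynomial rings in two fractional-weight generators, each with an explicit finite generating set, its Hilbert series in $t^{1/27}$ (carrying indeterminates for the formal weights) is an explicit rational function, and extracting the formal-part-zero, integer-weight component produces $\sum_k\dim R|_{(k,\chi_9^j)}\,t^k$. I would compare this with $\sum_k\dim\cM(27)_{(k,\chi_9^j)}\,t^k$, computed from the dimension formula for $\Gamma(27)$ together with the Nebentype decomposition $\cM(\Gamma(27))_k=\bigoplus_\chi\cM(27)_{(k,\chi)}$ (cf. Proposition \ref{dec1}); alternatively, exactly as in the proofs of Theorems \ref{4} and \ref{5}, one bounds $\dim\cS(27)_{(\,\cdot\,)}$ from above with Lemma \ref{sumPM} and bootstraps. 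The main obstacle is this dimension step together with the combinatorics of the inclusion step: one must verify simultaneously that the four blocks are \emph{correct} (every admissible monomial genuinely lies in the ring) and \emph{exhaustive} (the Hilbert series matches exactly, nothing having been omitted), which comes down to tracking the interplay of the vanishing of the formal weight, integrality of the actual weight, and the $\ell_9,\ell_{27}$ and $\chi_3,\chi_9,\chi_{27}$ character data with care.
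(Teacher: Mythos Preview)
Your plan matches the paper's: verify $R|_\M\subset\cM(27)_{\ang{1,\chi_9}}$, prove injectivity of the natural map via $q$-expansions, and then compare the Hilbert series of $R$ against the dimension formula $\dim\cM(27)_{k+\ang{\chi_9}}=243k-189$ (for $k\ge2$); the paper's proof is exactly this, with the inclusion step left implicit from the preceding lemmas.

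One point needs more care than ``pairwise distinct leading exponents'': within a single graded piece the generators do \emph{not} always have distinct leading orders. For instance $[E,\frac19][E,\frac89]$, $[E,\frac49][E,\frac59]$, $[E,\frac79][E,\frac29]$ all lie in the same weight-and-character piece and all begin with constant term $1$. The paper handles this by computing their $q^{1/27}$ and $q^{2/27}$ coefficients explicitly and observing that these three are $\C$-linearly independent; only after that row-reduction does the distinct-leading-exponent argument over $\C\big[\sqrt[3]{\eta^\bot},\sqrt[3]{\eta^\top}\big]$ go through (using that $\sqrt[3]{\eta^\bot}=1+\cdots$ and $\sqrt[3]{\eta^\top}=q^{1/9}+\cdots$, so the polynomial-ring monomials contribute exponents in $\frac3{27}\M$ while the reduced $E_\nwarrow E_\swarrow$ basis contributes $0,\frac1{27},\frac2{27}$). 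The same two-step check is needed for each of the other blocks; this explicit linear-independence verification is the actual content of the paper's proof, and your sketch should flag it rather than fold it into a leading-exponent claim.
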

\begin{proof}Note $\sqrt[3]{\eta^{\bot}}=1-q^{\frac13}+\cdots$ and $\sqrt[3]{\eta^{\top}}=q^{\frac19}+\cdots$. Compute
\[\z [E,\frac19][E,\frac89]=1-(1^{\frac29}+1^{\frac79})q^{\frac1{27}}+(1-1^{\frac19}-1^{\frac89})q^{\frac2{27}}+\cdots\]
\[\z [E,\frac49][E,\frac59]=1-(1^{\frac19}+1^{\frac89})q^{\frac1{27}}+(1-1^{\frac49}-1^{\frac59})q^{\frac2{27}}+\cdots\]
\[\z [E,\frac79][E,\frac29]=1-(1^{\frac49}+1^{\frac59})q^{\frac1{27}}+(1-1^{\frac29}-1^{\frac79})q^{\frac2{27}}+\cdots\]
Hence the natural map
\[\bigoplus\z \C\big[\sqrt[3]{\eta^{\bot}},\sqrt[3]{\eta^{\top}}\big]\big\{[E,\frac19][E,\frac89],[E,\frac49][E,\frac59],[E,\frac79][E,\frac29]\big\}\to\cM(9)_{\ang{\frac13,\chi_3}-\frac{\nwarrow+\swarrow}3}\]
is injective. So is
\[\bigoplus\z \C\big[\sqrt[3]{\eta^{\bot}},\sqrt[3]{\eta^{\top}}\big]\big\{[E,\frac19][E,\frac29],[E,\frac49][E,\frac89],[E,\frac79][E,\frac59]\big\}\to\cM(9)_{\ang{\frac13,\chi_3}+\chi_9-\frac{\nwarrow+\swarrow}3}\]
Gathering three parts, so is
\[\bigoplus\C\big[\sqrt[3]{\eta^{\bot}},\sqrt[3]{\eta^{\top}}\big]E_\nwarrow E_\swarrow\to\cM(9)_{\ang{\frac13,\chi_9}-\frac{\nwarrow+\swarrow}3}.\]

Similarly the natural map
\[\bigoplus\C\big[\sqrt[3]{\eta^{\nwarrow}},\sqrt[3]{\eta^{\swarrow}}\big]S_\bot\to\cM(3,9)_{\ang{\frac13,\chi_9}+\frac\bot3}\]
is injective. So is
\[\bigoplus\C\big[\sqrt[3]{\eta^{\nwarrow}},\sqrt[3]{\eta^{\swarrow}}\big]\sqrt[3]{\eta^{\bot}}S_\bot\to\cM(3,9)_{\ang{\frac13,\chi_9}+c_9+\frac\bot3}\]
Gathering three parts, so is
\[\bigoplus\C\big[\sqrt[3]{\eta^{\nwarrow}},\sqrt[3]{\eta^{\swarrow}}\big]\sqrt[3]{\eta^{\bot}}^{\{0,1,2\}}S_\bot\to\cM(9)_{\ang{\frac13,\chi_9}+\frac\bot3}.\]

Gathering all parts, we obtain injectivety of the natural map $R\to\cM(27)_{\ang{\frac13,\chi_9}}$.

The dimension formula states for $k\ge2$
\[\dim\cM(27)_{k+\ang{\chi_9}}=243k-189\]

Hilbert function is
\[\frac{(1+t^{\frac13}+t^{\frac23})^2+9t^{\frac43}\cdot2+9t\cdot2+(1+t^{\frac13}+t^{\frac23})(3t^{\frac23}+3t)\cdot2}{(1-t^{\frac13})^2}\]
\[=\frac{1+2t^{\frac13}+9t^{\frac23}+32t+31t^{\frac43}+6t^{\frac53}}{(1-t^{\frac13})^2}\]
\end{proof}

\newpage

\begin{Lem}Let $X$ be cup of
\[\sqrt[9]{\eta^{\nwarrow}}\Big(\sqrt[9]{\eta^{\nwarrow}\eta^{\swarrow}}S_{\nwarrow}E_{\swarrow}\cup\sqrt[9]{\eta^{\bot2}}S_{\bot}E_{\nwarrow}\cup\sqrt[9]{\eta^{\top2}}S_{\top}E_{\swarrow}\;\cup\]
\[\sqrt[9]{\eta^{\bot2}\eta^{\top}\eta^{\nwarrow}\eta^{\swarrow}}E_\bot E_\nwarrow\cup\sqrt[9]{\eta^{\bot}\eta^{\top2}\eta^{\nwarrow}\eta^{\swarrow}}E_\top E_\nwarrow\Big)\]
\[\sqrt[9]{\eta^{\swarrow2}}\Big(\sqrt[9]{\eta^{\bot}}E_\bot S_\swarrow\cup\sqrt[9]{\eta^{\top}}E_\top S_\swarrow\Big)\]\[\sqrt[9]{\eta^{\bot}\eta^{\top}}S_{\swarrow}\sqrt[3]{\eta^{\swarrow}}^{\{0,1,2\}}\cup\sqrt[9]{\eta^{\bot2}\eta^{\top2}}E_{\nwarrow}\sqrt[3]{\eta^{\nwarrow}}^{\{0,1,2\}}\]
Then $\cM(27)_{\ang{1,\chi_9}+\chi_{27}}=\bigoplus\C\big[\sqrt[3]{\eta^{\bot}},\sqrt[3]{\eta^{\top}}\big]X\big|_\M$.
\end{Lem}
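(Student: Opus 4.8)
The plan is to imitate, step for step, the proof of the preceding Lemma, the one identifying $\cM(27)_{\ang{1,\chi_9}}$ with $R|_\M$. Write $R'=\bigoplus\C[\sqrt[3]{\eta^{\bot}},\sqrt[3]{\eta^{\top}}]X$ for the right-hand side, a graded module over the polynomial ring $\C[\sqrt[3]{\eta^{\bot}},\sqrt[3]{\eta^{\top}}]$. First I would observe that every generator listed in $X$ is, by the Lemmas and Propositions of this section, a modular form on $\Gamma(27)$ whose weight-character has the form $(k,\chi_{27}^{j}\chi_9^{i})$, so the inclusion $R'|_\M\subset\cM(27)_{\ang{1,\chi_9}+\chi_{27}}$ is automatic; the content is the reverse inclusion, which I would obtain from injectivity of the natural map $R'\to\cM(27)_{\ang{1,\chi_9}+\chi_{27}}$ together with a dimension count.

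For injectivity I would fix a homogeneous component and, inside it, group the generators of $X$ into the packets that literally appear in the displayed union defining $X$. For each packet I would compute the leading $q$-expansion of the relevant product, exactly as the preceding proof does for $[E,\frac19][E,\frac89]$, $[E,\frac49][E,\frac59]$, $[E,\frac79][E,\frac29]$, using the known leading terms $\sqrt[3]{\eta^{\bot}}=1-q^{\frac13}+\cdots$, $\sqrt[3]{\eta^{\top}}=q^{\frac19}+\cdots$, $\sqrt[9]{\eta^{\nwarrow}}=1+\cdots$, $\sqrt[9]{\eta^{\swarrow}}=1+\cdots$, together with the pairwise-product expansions already recorded for the $[E,\frac k9]$ and the analogous ones I would first have to work out for the $[S,\frac k9]$ and for the mixed $E$–$S$ products. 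This shows each packet injects into the appropriate graded piece of $\cM(9)$, of $\cM(3,9)$, or of $\cM(9,3)$; then, as before, I would gather the three rotated copies (the $\chi_3$-, $\chi_9$- and $\overline{\chi_9}$-isotypic parts) and finally all the packets to conclude that $R'\hookrightarrow\cM(27)_{\ang{1,\chi_9}+\chi_{27}}$.

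To close the argument I would compute both graded dimensions and check that they agree in every degree. On the target side the dimension formula gives an explicit linear expression for $\dim\cM(\Gamma(27))_k$ for $k\geq2$, and hence, via the Nebentype decomposition of Proposition \ref{dec1}, for the $\chi_{27}^{j}$-isotypic pieces of $\cM(27)_k$; subtracting the $\ang{\chi_9}$-part $\dim\cM(27)_{k+\ang{\chi_9}}=243k-189$ from the preceding Lemma isolates $\dim\cM(27)_{\ang{1,\chi_9}+\chi_{27}}$ in weight $k$. On the $R'$ side I would read the Hilbert function straight off the generator list: each block of $X$ of the form $\C[\sqrt[3]{\eta^{\bot}},\sqrt[3]{\eta^{\top}}]\cdot(\text{generators})$ contributes
\[\z\frac{(\text{sum of }t^{\text{weight}}\text{ over the generators in that block})}{(1-t^{\frac13})^2},\]
with an extra factor $1+t^{\frac13}+t^{\frac23}$ for the blocks carrying a $\sqrt[3]{\eta^{\bot}}\,\!^{\{0,1,2\}}$ or $\sqrt[3]{\eta^{\swarrow}}\,\!^{\{0,1,2\}}$; summing over blocks and retaining only integer powers of $t$ yields $\sum_{k\in\M}\dim R'_k\,t^k$, which I expect to match the generating function of the target dimensions. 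Injectivity plus equality of graded dimensions in every degree then forces $R'|_\M=\cM(27)_{\ang{1,\chi_9}+\chi_{27}}$.

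The main obstacle will be the injectivity step: organizing the very large set $X$ and producing, for the $S$–$S$ and mixed $E$–$S$ packets, the analogues of the three leading-coefficient computations carried out before, and then verifying that inside each graded component the leading exponents of the different packets are suitably interleaved so that no linear relation can hold. Once all the leading terms are in hand this is pure bookkeeping, but extracting them — and getting the Hilbert function to coincide with the dimension formula on the nose — is where all the real work lies.
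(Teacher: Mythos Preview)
Your approach is essentially the paper's own: establish injectivity of the natural map by leading-term analysis (imitating the packet-by-packet argument of the preceding Lemma) and then match Hilbert functions. The paper's proof is in fact terser than your outline --- it does not repeat the injectivity argument at all, simply recording the dimension formula $\dim\cM(27)_{k+\ang{\chi_9}+\chi_{27}}=243k-189$ and the Hilbert function
\[\frac{3t^{\frac23}+51t+24t^{\frac43}+3t^{\frac53}}{(1-t^{\frac13})^2},\]
and leaving the rest as understood from the previous case. One small correction: the dimension of the $\chi_{27}$-coset piece is stated directly (it equals $243k-189$, the same as the $\ang{\chi_9}$-piece), so you do not obtain it by subtracting the $\ang{\chi_9}$-part from the whole of $\cM(\Gamma(27))_k$; that subtraction would leave both the $\chi_{27}$- and $\overline{\chi_{27}}$-cosets together, and you would still need the $\sigma_4$-symmetry to halve.
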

\begin{proof}The dimension formula states for $k\ge2$
\[\dim\cM(27)_{k+\ang{\chi_9}+\chi_{27}}=243k-189\]

Hilbert function is
\[\frac{t^{\frac19}\big(9t^{\frac89}\cdot3+3t^{\frac59}(1+t^{\frac13}+t^{\frac23})+9t^{\frac{11}9}\cdot2\big)+t^{\frac29}\big(9t^{\frac79}\cdot2+3t^{\frac79}(1+t^{\frac13}+t^{\frac23})\big)}{(1-t^{\frac13})^2}\]
\[=\frac{3t^{\frac23}+51t+24t^{\frac43}+3t^{\frac53}}{(1-t^{\frac13})^2}\]

\end{proof}

By the above two Lemmas, we get a generator of $\cM(\Gamma(27))_\M$.\\

\end{document}